\newtheorem{theorem}{\sc Theorem}[section] 
\newtheorem{lemma}{\sc Lemma}[section]     
\newtheorem{corollary}{\sc Corollary}[section]
\newtheorem{proposition}{\sc Proposition}[section]
\newtheorem{dfn}{\sc Definition}
\newtheorem{remark}{\sc Remark}
\numberwithin{equation}{section}
\renewcommand\section{\@startsection {section}{1}{\z@}%
                                   {-3.5ex \@plus -1ex \@minus -.2ex}%
                                   {2.3ex \@plus.2ex}%
                                   {\centering\normalfont\bf}}
\def\F{\mathcal F}
\def\N{\mathbb N}
\def\q{\bold q}
\def\C{\mathcal C}
\def\R{\mathcal R}
\def\K{\mathcal K}
\def\a{\bold a}
\def\t{\bold t}
\def\Z{\mathbb Z}
\newcommand\hdim{\dim_{\mathcal H}}
\def\Z{\mathbb Z}
\def\D{\mathcal D}
\title[Mass transference principle]
 {Mass transference principle from rectangles to rectangles in Diophantine approximation} 
\author{Baowei Wang and Jun Wu}
\address{School of Mathematics and Statistics, Huazhong University of Science and Technology, Wuhan 430074, P. R. China}
\email{bwei\_wang@hust.edu.cn, jun.wu@mail.hust.edu.cn}
\subjclass[2010]{11J83 (primary), 11K60, 28A78 (secondary)}
\keywords{Minkowski's theorem, Mass transference principle, Ubiquity property, Hausdorff theory, Diophantine approximation}
\begin{document}
\maketitle

\begin{abstract}
The limsup sets defined by balls or defined by rectangles appear at the most fundamental level in Diophantine approximation: one follows from Dirichlet's theorem, the other follows from Minkowski's theorem. The metric theory for the former has been well studied, while the theory for latter is rather incomplete, even in classic Diophantine approximation.

This paper aims at setting up a general principle for the Hausdorff theory of limsup sets defined by rectangles. By introducing a notation called {\em ubiquity for rectangles},
a mass transference principle from rectangles to rectangles is presented, i.e., if a limsup set defined by a sequence of big rectangles has a full measure property, then this full measure property can be transferred to the full Hausdorff measure/dimension property for the limsup set defined by shrinking these big
rectangles to smaller ones. Here the full measure property for the bigger limsup set goes with or without the assumption of ubiquity for rectangles.
Together with the landmark work of Beresnevich \& Velani in 2006 where a transference principle from balls to balls is established, a coherent Hausdorff theory for metric Diophantine approximation is built.

Besides completing the dimensional theory in simultaneous Diophantine approximation, linear forms, the dimensional theory for limsup sets defined by rectangles also underpins the dimensional theory in multiplicative Diophantine approximation where unexpected phenomenon occurs and the usually used methods fail to work.\end{abstract}



\tableofcontents

\section{Dirichlet's theorem vs Minkowski's theorem}

Diophantine
approximation concerns how well a real number can
be approximated by rationals. A qualitative answer is provided by the density of rational numbers. Seeking a quantitative
answer leads to the theory of metric Diophantine approximation.

\subsection{Dirichlet's theorem}

Dirichlet's
theorem (1842) is the first result in this aspect, which opens up the extensive study on the metric theory of limsup sets defined by balls.
\begin{theorem}[Dirichlet's theorem] Let $x=(x_1,\cdots, x_d)\in \mathbb{R}^d$ with $d\in \N$. For any $Q>1$, there exists an integer $1\le q\le Q$ such that $$
\|qx_i\|<Q^{-1/d}, \ \ 1\le i\le d,$$ where $\|\cdot\|$ denotes the distance to the integers.
\end{theorem}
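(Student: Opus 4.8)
The plan is to obtain this from Dirichlet's pigeonhole (box) principle. Fix $Q>1$ and $x=(x_1,\dots,x_d)\in\mathbb{R}^d$, set $N=\lfloor Q\rfloor$ (so $N\ge 1$ since $Q>1$), and consider the $N+1$ points $P_q=(\{qx_1\},\dots,\{qx_d\})$ for $q=0,1,\dots,N$, regarded as elements of $[0,1)^d=\mathbb{R}^d/\mathbb{Z}^d$ equipped with the metric $\rho(u,v)=\max_{1\le i\le d}\|u_i-v_i\|$. Suppose we can find $0\le q_1<q_2\le N$ with $\rho(P_{q_1},P_{q_2})<Q^{-1/d}$. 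Putting $q=q_2-q_1$ we then have $1\le q\le N\le Q$, and for each $i$, since $\|qx_i\|=\|q_2x_i-q_1x_i\|=\|\{q_1x_i\}-\{q_2x_i\}\|\le\rho(P_{q_1},P_{q_2})$, we conclude $\|qx_i\|<Q^{-1/d}$, which is exactly the assertion. So everything reduces to the claim that among the $N+1$ points $P_0,\dots,P_N$ some two are at $\rho$-distance strictly less than $Q^{-1/d}$.

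I would prove this claim by a volume (packing) count rather than by partitioning $[0,1)^d$ into equal cells. Suppose, for contradiction, that $\rho(P_q,P_{q'})\ge\delta:=Q^{-1/d}$ for all $q\ne q'$. Since $Q>1$ we have $0<\delta<1$, so the open $\rho$-ball $B(P_q,\delta/2)$ is a genuine box of side $\delta$ in $[0,1)^d$, of Lebesgue measure $\delta^d$; and by the triangle inequality these $N+1$ balls are pairwise disjoint. Comparing the total measure with that of $[0,1)^d$ gives $(N+1)\delta^d\le 1$, i.e. $N+1\le\delta^{-d}=Q$. This contradicts $N+1=\lfloor Q\rfloor+1>Q$, and the claim — hence the theorem — follows.

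The one point that needs care, and the reason for using the packing bound instead of the textbook argument ``partition $[0,1)^d$ into $\lceil Q^{1/d}\rceil^{\,d}$ equal subcubes and apply pigeonhole'', is that for a real $Q$ whose $d$-th root is not an integer the naive partition may have strictly more cells than the number $N$ of available gaps (already for $d=1$, $Q=5/2$), so it yields only the weaker conclusion $\|qx_i\|<1/\lfloor Q^{1/d}\rfloor$. The estimate $N+1\le\delta^{-d}$ above is sharp and delivers the strict inequality $\|qx_i\|<Q^{-1/d}$ for every $Q>1$; when $Q^{1/d}\in\N$ the two arguments coincide. I expect this reconciliation of the strict bound with the constraint $1\le q\le Q$ to be the only genuinely delicate point, the rest being the classical box principle.
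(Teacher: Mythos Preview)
Your proof is correct. The paper does not actually prove Dirichlet's theorem; it is stated as a classical background result, and the only hint given is the remark that ``the proof of Dirichlet's theorem uses only the simple pigeonhole principle''. Your argument is precisely a pigeonhole argument, implemented via a packing/volume count on the torus rather than a partition into cells. As you correctly observe, this refinement is what delivers the strict inequality $\|qx_i\|<Q^{-1/d}$ for arbitrary real $Q>1$ (the cell-partition version only gives $\|qx_i\|<1/\lfloor Q^{1/d}\rfloor$ in general), so your treatment is in fact slightly sharper than the ``simple pigeonhole'' the paper alludes to, while remaining entirely in the same spirit.
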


As a consequence, one has that \begin{corollary}
For any $x=(x_1,\cdots, x_d)\in \mathbb{R}^d$, there exist infinitely many integers $p_1,\cdots, p_d,q$ such that \begin{equation}\label{f16}
|x_i-p_i/q|<q^{-(1+1/d)}, \ 1\le i\le d.
\end{equation}
\end{corollary}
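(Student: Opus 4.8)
The plan is to feed Dirichlet's theorem an unbounded sequence of parameters $Q$ and repackage the resulting approximations into the stated form; the only genuinely delicate point is the passage from \emph{one solution for each $Q$} to \emph{infinitely many distinct solutions}. First, fix $Q>1$ and let $1\le q\le Q$ be the integer supplied by Dirichlet's theorem, so that $\|qx_i\|<Q^{-1/d}$ for every $1\le i\le d$. For each $i$ choose $p_i\in\Z$ to be a nearest integer to $qx_i$, so that $|qx_i-p_i|=\|qx_i\|<Q^{-1/d}$. Since $1\le q\le Q$ and $t\mapsto t^{-1/d}$ is decreasing, we have $Q^{-1/d}\le q^{-1/d}$, hence $|qx_i-p_i|<q^{-1/d}$, and dividing by $q\ge 1$ gives
\begin{equation*}
\Bigl|x_i-\frac{p_i}{q}\Bigr|<q^{-(1+1/d)},\qquad 1\le i\le d .
\end{equation*}
Thus every $Q>1$ produces a tuple $(p_1,\dots,p_d,q)$ solving \eqref{f16}, and in addition $|x_i-p_i/q|<Q^{-1/d}$ since $q\ge 1$.

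Next I would iterate Step 1 along an increasing sequence $Q_N\to\infty$, obtaining tuples $(p_{1,N},\dots,p_{d,N},q_N)$ with $|x_i-p_{i,N}/q_N|<Q_N^{-1/d}\to 0$ for each $i$. If infinitely many of these tuples were equal to a single fixed tuple $(p_1,\dots,p_d,q)$, then letting $N\to\infty$ along that subsequence would force $x_i=p_i/q$ for all $i$, i.e.\ $x$ would be rational. Hence either the tuples attain infinitely many distinct values, in which case \eqref{f16} has infinitely many solutions and we are done, or else $x$ is rational.

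It remains to treat the rational case directly: if $x_i=a_i/b$ with $a_i\in\Z$ and $b\in\N$, then for every $k\in\N$ the tuple $(ka_1,\dots,ka_d,kb)$ satisfies $|x_i-ka_i/(kb)|=0<(kb)^{-(1+1/d)}$, and these tuples are pairwise distinct; this already furnishes infinitely many solutions of \eqref{f16}, completing the argument.

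The substantive estimate in Step 1 is an immediate consequence of Dirichlet's theorem together with the bound $q\le Q$, so no real obstacle arises there. The one point requiring care is the ``infinitely many'' assertion: Dirichlet's theorem by itself need not return larger and larger denominators $q$ — for rational $x$ the same small $q$ may be valid for every $Q$ — so one genuinely needs the limiting argument above to split off the rational case, which is then disposed of by the elementary scaling observation. I expect this case distinction to be the only place where a careless proof could go wrong.
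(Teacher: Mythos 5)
Your proof is correct and is essentially the standard derivation of the corollary from Dirichlet's theorem; the paper states this corollary without proof, treating it as the classical immediate consequence. You correctly identify and handle the one genuine subtlety — that for rational $x$ the same denominator $q$ may recur for every $Q$ — by splitting off the rational case and producing explicit trivial solutions there, which is exactly the care that a rigorous proof requires.
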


Dirichlet's theorem is the starting point of the metric Diophantine approximation which leads to a solid study on the distribution of rational numbers or rational vectors. More precisely, one considers the size of the so called $\psi$-well approximable sets, i.e. \begin{equation}\label{e3}
\Big\{x\in \mathbb{R}^d: \max_{1\le i\le d}|x_i-p_i/q|<\psi(q), \ {\text{i.m.}}\ (p_1,\cdots, p_d,q)\in \Z^d\times \N\Big\}.
\end{equation}
where $\psi: \N\to \mathbb{R}^+$ is a positive function and {\em i.m.} denotes {\em infinitely many} for brevity.
Extended further, one can consider the Diophantine approximation for linear forms: \begin{align}\label{e5}
\Big\{x\in \mathbb{R}^{m\times n}: \|q_1x_{i1}+q_2x_{i2}+\cdots+q_nx_{in}\|&<\psi(q), 1\le i\le m\nonumber\\
 & \text{i.m.}\ q=(q_1,\cdots,q_n)\in \Z^n\Big\}.
\end{align}

Essentially the above sets concern the metric theory for limsup set defined by balls. Even for linear forms (\ref{e5}), it considers the limsup set defined by an {\em isotropic} thicken of a sequence of sets.
By a comparison, the sets in (\ref{e3}), (\ref{e5}) can be viewed as the limsup set obtained by
shrinking the sequence of balls in (\ref{f16}) to a sequence of smaller balls.


Since the proof of Dirichlet's theorem uses only the simple pigeonhole principle, one wants to know whether Dirichlet's theorem can be strengthened. This is true which is known as Minkowski's theorem (for convex body) in 1896.

\subsection{Minkowski's theorem}

Minkowski's theorem is a strengthen of Dirichlet's theorem and leads to the study of the metric theory for limsup sets defined by rectangles. \begin{theorem}[Minkowski \cite{Mi,Sch}]
Let $x=(x_1,\cdots,x_d)\in \mathbb{R}^d$. For any non-negative vector $(\hat{a}_1,\cdots,\hat{a}_d)$ with $\hat{a}_1+\cdots+\hat{a}_d=1$, and any $Q\in \N$, there exists an integer $1\le q\le Q$ such that $$
\|qx_i\|<Q^{-\hat{a}_i}, \ 1\le i\le d.
$$
\end{theorem}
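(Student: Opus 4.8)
The plan is to prove Minkowski's theorem by a pigeonhole argument generalizing the classical proof of Dirichlet's theorem, the essential point being that the pigeonhole partition is now chosen to respect the weights $\hat a_i$. First I would fix $Q\in\N$ and the weight vector, and for each coordinate $i$ partition the unit interval $[0,1)$ into $N_i$ subintervals of equal length $N_i^{-1}$, where $N_i$ is chosen close to $Q^{\hat a_i}$; the constraint $\hat a_1+\cdots+\hat a_d=1$ ensures that the product $N_1\cdots N_d$ is comparable to $Q$, which is exactly what makes the counting work. More precisely one wants $N_1\cdots N_d\le Q+1$ (or a similarly normalized bound) so that among the $Q+1$ points $(\{qx_1\},\dots,\{qx_d\})$ for $q=0,1,\dots,Q$ in the $d$-dimensional unit cube, two of them, say corresponding to $q'$ and $q''$ with $q'<q''$, fall into the same cell of the product partition. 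Setting $q=q''-q'$ then gives $1\le q\le Q$ and $\|qx_i\|<N_i^{-1}\le Q^{-\hat a_i}$ for each $i$, after arranging the rounding of the $N_i$ so that $N_i^{-1}\le Q^{-\hat a_i}$.

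The one subtlety I would need to handle carefully is the choice of the integers $N_i$: we need simultaneously $N_i\ge Q^{\hat a_i}$ (so that $N_i^{-1}\le Q^{-\hat a_i}$, delivering the desired inequality) and $N_1\cdots N_d\le Q+1$ (so that the pigeonhole has strictly fewer boxes than points). Taking $N_i=\lceil Q^{\hat a_i}\rceil$ gives the first condition but the product could slightly exceed $Q$; the standard fix is to argue either via a limiting/density argument (it suffices to prove the statement with $<$ replaced by $\le$ and then use that the inequality is strict) or, more cleanly, to run the pigeonhole on the half-open cells $[\,(k-1)Q^{-\hat a_i},\,kQ^{-\hat a_i})$ for $k=1,\dots,\lceil Q^{\hat a_i}\rceil$ directly, noting that the number of such cells in coordinate $i$ is at most $Q^{\hat a_i}+1$ but that a more careful count — or passing to $Q$ an integer and using that $\{qx_i\}$ lies in $[0,1)$ — still leaves room for the two-point collision. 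An even slicker route, which I would probably adopt, is to first treat the case where each $Q^{\hat a_i}$ is an integer (then $N_i=Q^{\hat a_i}$ works exactly and $\prod N_i = Q$, giving $Q+1$ points in $Q$ boxes), and then deduce the general case by noting that replacing $\hat a_i$ by slightly smaller rational weights with integral $Q^{\hat a_i}$ only strengthens the conclusion, or by a straightforward continuity/approximation argument in the $\hat a_i$.

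The main obstacle, then, is purely bookkeeping: making the rounding of the box counts consistent with both the product bound and the per-coordinate bound at the same time. There is no deep idea beyond pigeonhole — the content of Minkowski's theorem over Dirichlet's is entirely in allowing an anisotropic partition whose aspect ratios are governed by the $\hat a_i$ while the total number of boxes stays $\le Q$ thanks to $\sum\hat a_i=1$. I would present the integer-exponent case in full and indicate the reduction of the general case to it, rather than grinding through the ceiling-function estimates; alternatively one can simply cite that this is the convex-body form of Minkowski's theorem applied to the parallelepiped $\{|y_0|\le Q,\ |y_0 x_i - y_i|< Q^{-\hat a_i}\}$, whose volume is $2^{d+1}$, so that it contains a nonzero integer point, but I expect the self-contained pigeonhole proof to be preferable in this context.
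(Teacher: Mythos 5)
The paper only cites this result (to Minkowski and to Schmidt's book) and gives no proof, so there is no house argument to compare against; but your primary line of attack has a genuine gap. The discrete pigeonhole does work when every $Q^{\hat a_i}$ is an integer: take $N_i = Q^{\hat a_i}$, so $\prod_i N_i = Q^{\sum_i \hat a_i} = Q$ and two of the $Q+1$ points $(\{q x_1\},\dots,\{q x_d\})$, $q=0,\dots,Q$, share a cell. But the reduction of the general case to this one does not go through as you describe it. Shrinking the weights to $\hat a_i' \le \hat a_i$ makes the target $Q^{-\hat a_i'}$ \emph{larger}, hence proves a \emph{weaker} statement; to strengthen you would need $\hat a_i' \ge \hat a_i$, which combined with $\sum_i \hat a_i' \le 1$ (needed so that $\prod_i N_i \le Q$) forces $\hat a_i' = \hat a_i$, so there is no wiggle room. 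A continuity or density argument in $\hat a$ also fails: for fixed $Q$ the weight vectors with all $Q^{\hat a_i}$ integral form the finite set $\{(\log k_1/\log Q,\dots,\log k_d/\log Q): k_i\in\N,\ \prod_i k_i = Q\}$, which is nowhere dense in the simplex $\{\hat a_i\ge 0,\ \sum_i\hat a_i=1\}$, and even a dense good set would not suffice, because the sets $U_q=\{\hat a: \|qx_i\|<Q^{-\hat a_i}\ \forall i\}$ are open and an open cover of a dense subset need not cover the whole simplex. The naive choice $N_i=\lceil Q^{\hat a_i}\rceil$ generically gives $\prod_i N_i>Q$, and no re-partition of the cells or re-indexing of the $Q+1$ sample points repairs this count; the discrete pigeonhole genuinely cannot accommodate irrational exponents. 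What is needed is the continuous pigeonhole (Blichfeldt), i.e.\ Minkowski's convex body theorem.

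Your fallback — applying the convex body theorem to $\{|y_0|\le Q,\ |y_0x_i-y_i|<Q^{-\hat a_i}\}$ — is the standard proof and correct in substance, but as written that body has volume exactly $2^{d+1}$ and is neither open nor compact, so the usual hypotheses of the lattice-point theorem do not apply to it directly. Use instead $K_{\varepsilon}=\{|y_0|<Q+\varepsilon,\ |y_0x_i-y_i|<Q^{-\hat a_i}\ \forall i\}$ for some $\varepsilon\in(0,1)$: under the unimodular map $(y_0,\dots,y_d)\mapsto(y_0,\,y_0x_1-y_1,\dots,y_0x_d-y_d)$ it has volume $2(Q+\varepsilon)\cdot 2^d\prod_iQ^{-\hat a_i}=2^{d+1}(Q+\varepsilon)/Q>2^{d+1}$, so it contains a nonzero lattice point $(q,p_1,\dots,p_d)$; after negating if necessary $q\ge 0$, and $q=0$ would force $|p_i|<Q^{-\hat a_i}\le 1$, hence $p_i=0$ for all $i$, a contradiction, so $1\le q\le Q$ and $\|qx_i\|<Q^{-\hat a_i}$. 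I would lead with this argument rather than the pigeonhole, remarking, if you wish, that the elementary pigeonhole count is recovered exactly when every $Q^{\hat a_i}\in\N$.
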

Consequently, one has the following (by letting $a_i=1+\hat{a}_i$, $1\le i\le d$). \begin{corollary}
Let $a_i\ge 1$ for $1\le i\le d$ and $a_1+\cdots+a_d=d+1$. For any $x\in \mathbb{R}^d$, there exist infinitely many integers $p_1,\cdots, p_d,q$ such that \begin{equation}\label{f17}
|x_i-p_i/q|<q^{-a_i}, \ 1\le i\le d.
\end{equation}
\end{corollary}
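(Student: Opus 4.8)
The plan is to read off the corollary directly from Minkowski's theorem via the substitution indicated in the statement, and then to upgrade the resulting ``for every $Q$'' conclusion to ``infinitely many tuples $(p_1,\dots,p_d,q)$'' by a short pigeonhole argument.

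First I would set $\hat a_i=a_i-1$ for $1\le i\le d$. The hypotheses $a_i\ge 1$ and $a_1+\cdots+a_d=d+1$ translate exactly into $\hat a_i\ge 0$ and $\hat a_1+\cdots+\hat a_d=1$, so Minkowski's theorem applies to the vector $(\hat a_1,\dots,\hat a_d)$. Fixing $Q\in\N$, it produces an integer $1\le q\le Q$ with $\|qx_i\|<Q^{-\hat a_i}$ for all $i$. Choosing $p_i\in\Z$ to be a nearest integer to $qx_i$ gives $|qx_i-p_i|=\|qx_i\|<Q^{-\hat a_i}\le q^{-\hat a_i}$, where the last step uses $1\le q\le Q$ together with $\hat a_i\ge 0$. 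Dividing by $q$ yields
\[
\Big|x_i-\frac{p_i}{q}\Big|<q^{-1-\hat a_i}=q^{-a_i},\qquad 1\le i\le d,
\]
so every $Q$ furnishes at least one integer tuple $(p_1,\dots,p_d,q)$ satisfying \eqref{f17}.

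It remains to produce infinitely many such tuples, which I would do by contradiction. If \eqref{f17} had only finitely many integer solutions, they would all satisfy $q\le N$ for some fixed $N\in\N$; in particular the tuples built above, one for each $Q\in\N$, all have their $q$-coordinate in $\{1,\dots,N\}$, so by the pigeonhole principle some $q_0\in\{1,\dots,N\}$ occurs for infinitely many $Q$. Let $I=\{i:a_i>1\}$, which is nonempty since $\hat a_1+\cdots+\hat a_d=1$. For $i\in I$ we have $\|q_0x_i\|<Q^{-\hat a_i}$ along those infinitely many $Q$ with $\hat a_i>0$, so letting $Q\to\infty$ forces $\|q_0x_i\|=0$, i.e. $q_0x_i\in\Z$. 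Now for any $k\in\N$ with $kq_0>N$ put $q=kq_0$, take $p_i=kq_0x_i\in\Z$ for $i\in I$, and let $p_i$ be a nearest integer to $qx_i$ for $i\notin I$. Then $|x_i-p_i/q|=0<q^{-a_i}$ when $i\in I$, while $|x_i-p_i/q|\le\tfrac1{2q}<q^{-1}=q^{-a_i}$ when $i\notin I$ (there $a_i=1$). This is a solution of \eqref{f17} with $q=kq_0>N$, contradicting the choice of $N$; hence \eqref{f17} has infinitely many solutions.

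The argument is elementary, and the substantive input is the reduction in the second paragraph, which is immediate once Minkowski's theorem is granted. The only point needing a little care is the passage to infinitely many solutions when $x$ has rational coordinates: this is exactly what the last paragraph handles, the set $I$ recording the coordinates along which \eqref{f17} is genuinely nontrivial, and the alternative $q_0x_i\in\Z$ for all $i\in I$ being precisely the situation where a single arithmetic progression of denominators already yields infinitely many (essentially trivial) solutions. So there is no real obstacle here.
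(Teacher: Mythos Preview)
Your proof is correct and follows the same reduction the paper indicates: the paper simply records the substitution $a_i=1+\hat a_i$ and states the corollary as an immediate consequence of Minkowski's theorem, without spelling out the passage to infinitely many solutions. You carry out that substitution identically and, in addition, supply a careful pigeonhole argument for the ``infinitely many'' conclusion (including the rational-coordinate case), which the paper leaves implicit as a standard step.
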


So, as the sets defined after Dirichlet's theorem, one concerns the size of the set \begin{equation}\label{e4}
{W}(\psi_1,\cdots, \psi_d):=\Big\{(x_1,\cdots,x_d)\in \mathbb{R}^d: \|qx_i\|<\psi_i(q), \ 1\le i\le d, \ {\text{i.m.}}\ q\in \N\Big\},
\end{equation}
and more generally, the set \begin{align*}
W_{mn}(\psi_1,\cdots, \psi_m)=\Big\{x\in \mathbb{R}^{m\times n}: \|q_1x_{i1}+q_2&x_{i2}+\cdots+q_nx_{in}\|<\psi_i(q), \\
&\ \ 1\le i\le m,\ \  {\text{i.m.}}\ q=(q_1,\cdots,q_n)\in \Z^n \Big\}.
\end{align*}
These sets are essentially limsup sets defined by a sequence of rectangles or the anisotropic thicken of a sequence of sets.
Similarly, one can also say that from the limsup set in (\ref{f17}) to the limsup set in (\ref{e4}),
one just shrinks a sequence of rectangles to a sequence of smaller rectangles.

Minkowski's theorem provides more profound understandings about the distribution of rational vectors, which works sufficiently well in high dimensional Diophantine approximation compared with Dirichlet's theorem (For example, Minkowski's theorem intervenes as an essential tool in most works about Diophantine approximation on manifolds, see \cite{B1}, \cite{B2} for examples).
So the study on limsup sets defined by rectangles after Minkowski's theorem should (also) play a central role in metric Diophantine approximation as the study on limsup sets defined by balls after Dirichlet's theorem.

\subsection{Multiplicative Diophantine approximation}

The metric theory for limsup sets defined by rectangles is also tightly inherent in the study of the multiplicative Diophantine approximation especially the dimensional theory there.

In the most classic case, multiplicative Diophantine approximation concerns the size of the set  $$
M(t):=\Big\{(x_1,\cdots, x_d)\in [0,1)^d: \|qx_1-\theta_1\| \cdots \|qx_d-\theta_d\|<\psi(q)^{t}, \ {\text{i.m.}}\ q\in \N\Big\}.
$$

If we define, for any positive numbers $(t_1,\cdots, t_d)$, $$
M(t_1,\cdots, t_d)=\Big\{(x_1,\cdots, x_d)\in [0,1)^d: \|qx_i-\theta_i\|<\psi(q)^{t_i}, \ 1\le i\le d, \ {\text{i.m.}}\ q\in \N\Big\}
$$ one can see that for any integer $N$ large,
\begin{align}\label{fff4}\bigcup_{(j_1,\cdots, j_d)\in \N^d_{\ge 0}: j_1+\cdots+j_d=N}M\Big(\frac{j_1t}{N}&,\cdots, \frac{j_dt}{N}\Big)\subset M(t)\nonumber\\
&\subset
\bigcup_{(i_1,\cdots, i_d)\in \N^d_{\ge 0}: j_1+\cdots+j_d=N-d}M\Big(\frac{j_1t}{N},\cdots, \frac{j_dt}{N}\Big).\end{align}
Thus for the Hausdorff dimension, denoted by $\hdim$, of $M(t)$, one will have $$
\hdim M(t)=\sup_{t_1+\cdots+t_d=t, t_i\ge 0, 1\le i\le d}\ \hdim M(t_1,\cdots, t_d)
$$ if there is some continuity about the dimension of the latter set which should be expected.
Thus once the dimension of the limsup set $M(t_1,\cdots, t_d)$ defined by rectangles is given, the dimension of $M(t)$ follows directly.

So one can say that the dimensional theory for limsup set defined by rectangles underpins that of multiplicative Diophantine approximation.

\subsection{Current situation}

In a summary, the limsup set defined by rectangles also appears at the most fundamental level in metric Diophantine approximation. The study on it not only improves Minkowski's theorem, develops the metric theory for limsup sets, it also underpins the metric theory for multiplicative Diophantine approximation. Moreover, in a dynamical system with more than one transformations involved, the Diophantine property of the orbits is merely the analysis of the limsup set defined by rectangles (see Section \ref{s5.1.3} for an example). But the metric theory on limsup sets defined by rectangles is rather incomplete, even in classic Diophantine approximation.

\subsubsection{Known results for limsup sets defined by balls}\

The metric theory on the limsup sets defined by balls has been very extensively studied and many substantial general principles are disclosed. We give a rough history of the progress.\begin{itemize}\item Khintchine \cite{Khint} (1924): \begin{center}Lebesgue measure for $\psi$-approximable set;\end{center}\smallskip
 \item Jarn\'{i}k \cite{Jarn1,Jarn} (1929, 1931), Besicovitch \cite{Besi} (1934):
 \begin{center}Hausdorff dimension/measure theory for $\psi$-approximable set;\end{center}\smallskip
  \item Groshev \cite{Gro} (1938): \begin{center}Lebesgue measure theory for linear forms;\end{center}\smallskip
   \item Baker \& Schmidt \cite{BakS} (1970): \begin{center}Regular system in $\mathbb{R}$;\end{center}\smallskip
    \item Dodson, Rynne \& Vickers \cite{DoRV} (1990): \begin{center}Ubiquitous system in $\mathbb{R}^d$;\end{center}\smallskip
     \item Bovey \& Dodson \cite{BovD1} (1986), Dodson \cite{Dod} (1992): \begin{center}Hausdorff dimension for linear forms;\end{center}\smallskip
       \item Dickinson \& Velani \cite{DickV} (1997): \begin{center}Hausdorff measure theory for linear forms;\end{center}\smallskip
        \item Levesley \cite{Lev} (1998), Bugeaud \cite{Bug} (2004): \begin{center}Inhomogeneous Diophantine approximation;\end{center}\smallskip
        \item Beresnevich, Dickinson \& Velani \cite{BDV06} (2006): \begin{center}Metric theory for limsup sets under ubiquity for balls;\end{center}\smallskip
         \item Beresnevich \& Velani \cite{BV06} (2006): \begin{center}Mass transference principle from balls to balls; \end{center}\smallskip
         \item Allen \& Beresnevich \cite{AllB} (2018): \begin{center}Mass transference principle for linear forms;\end{center}\smallskip
         \item Allen \& Baker \cite{AllBa} (2019): \begin{center}Mass transference principle for general forms;\end{center}\smallskip
         \item Hussain \& Simmons \cite{HusDa} (2019): \begin{center}Another Hausdorff theory for limsup sets of balls;\end{center}\smallskip
\item ......\end{itemize}

\subsubsection{Known results for limsup sets defined by rectangles}\

However, the case is rather different for limsup sets defined by rectangles. There are only two known results.
\begin{itemize}
\item Schmidt \cite{Sch1} (1960), Gallagher \cite{Gall} (1962): \begin{center}Lebesgue measure theory for $W(\psi_1,\cdots, \psi_d)$;\end{center}\smallskip
         \item Rynne \cite{Ryn} (1998), Dickinson \& Rynne \cite{DRyn} (2000): \begin{center}Hausdorff dimension for $W(t_1,\cdots, t_d)$, i.e. $\psi_i(q)=q^{t_i}$. \end{center}
\end{itemize}\medskip
Even the dimension of $W(\psi_1,\cdots, \psi_d)$ for general $\psi_i$ is unknown.

Philosophically, the further improvement of Minkowski's theorem should go along with that of Dirichlet's theorem, however the fact is the Hausdorff theory for the sets considered after Minkowski's theorem lays much behind the theory after Dirichlet's theorem. This results in the incompleteness of some theory even in classical Diophantine approximation, let along general principles for the Hausdorff theory of limsup sets defined by general rectangles. In this paper, we hope to change this unbalance.

\subsubsection{Known results for multiplicative Diophantine approximation}\

In the known cases, the dimensional theory for multiplicative Diophantine approximation is usually obtained by a combination of a covering lemma by Bovey \& Dodson \cite{BD} and a slicing lemma \cite{Fal}: \begin{lemma}[Bovey \& Dodson, Covering lemma, \cite{BD}]\label{l5}
Let $\rho$ be a sufficiently small positive number. For any $s\in (d-1,d)$, the set
$$H=\Big\{(y_1,\ldots,y_d)\in [0,1)^d: y_1\cdots y_d< \rho\Big\}$$
has a covering of $d$-dimensional hypercubes $\mathscr{C}=\{C_k\}_{k\geq 1}$ whose $s$-volume satisfies
$$\sum\limits_{C_i\in \mathscr{C_i}}|C_i|^s\ll\rho^{s-d+1},$$
where the constant implied in $\ll$ is independent of $\rho$.
\end{lemma}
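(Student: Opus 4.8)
The idea is to cover $H$ by axis–parallel dyadic cubes, organised according to the dyadic order of magnitude of the smallest coordinate.

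First, since $H$ is invariant under permutations of the coordinates, it suffices to cover $H^{(1)}:=\{y\in H:\,y_1=\min_{1\le i\le d}y_i\}$ and multiply the final $s$–volume by $d$. On $H^{(1)}$ one has $y_1^{\,d}\le y_1\cdots y_d<\rho$, hence $y_1<\rho^{1/d}$. Put $L:=\log_2(1/\rho)$. The ``thin slab'' $\{y\in[0,1)^d:\,y_1<\rho\}$ is contained in a union of $\ll_d\rho^{-(d-1)}$ cubes of sidelength $\rho$, contributing $\ll_d\rho^{-(d-1)}\rho^{s}=\rho^{\,s-d+1}$ to the $s$–volume; so it remains to cover $H^{(1)}\cap\{y_1\ge\rho\}$, on which $y_1\in[\rho,\rho^{1/d})$.

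Next, decompose this set according to $y_1\in[2^{-j-1},2^{-j})$; the admissible integers $j$ then satisfy $\tfrac{L}{d}-1<j<L$, a finite range. Fix such a $j$ and cover the corresponding slab by dyadic cubes of sidelength $2^{-j}$. Since $y_1$ occupies a single dyadic interval of length $\le 2^{-j}$ and $y_i\ge y_1\ge 2^{-j-1}$ for all $i$, it is enough to count level-$j$ dyadic cubes meeting $\{(y_2,\dots,y_d)\in[2^{-j-1},1)^{d-1}:\,y_2\cdots y_d<\rho\,2^{j+1}\}$. Writing $y_i\in[2^{-k_i-1},2^{-k_i})$ with $0\le k_i\le j$ for $2\le i\le d$, such a sub-box survives the product constraint only if $k_2+\dots+k_d\ge L-j-d$, and it is then met by $\prod_{i=2}^d 2^{\,j-k_i}=2^{(d-1)j-\sum_i k_i}$ cubes of sidelength $2^{-j}$. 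Since $\#\{(k_2,\dots,k_d):\sum_i k_i=m\}=\binom{m+d-2}{d-2}\ll_d(m+1)^{d-2}$, summing the resulting geometric series in $m$ yields the cube count
\[
N_j\ \ll_d\
\begin{cases}
(L-j+1)^{d-2}\,2^{\,dj-L}, & \tfrac{L}{d}-1<j\le L-d,\\[1.5mm]
2^{(d-1)j}, & L-d<j<L,
\end{cases}
\]
the second line being simply the trivial total count over the last $O(d)$ values of $j$.

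Finally, the $s$–volume of $H^{(1)}\cap\{y_1\ge\rho\}$ is $\ll_d\sum_j N_j\,2^{-sj}$. On the bulk range this is $\ll_d 2^{-L}\sum_{L/d<j\le L-d}(L-j+1)^{d-2}\,2^{(d-s)j}$, and because $s<d$ the series is dominated by its top index $j\approx L-d$, giving $\ll_{d,s}2^{-L}2^{(d-s)L}=\rho^{\,s-d+1}$. On the top range it is $\ll_d\sum_{L-d<j<L}2^{(d-1-s)j}$, and because $s>d-1$ this is dominated by its bottom index $j\approx L-d$, giving $\ll_{d,s}2^{(d-1-s)L}=\rho^{\,s-d+1}$. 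Adding the thin-slab contribution and multiplying by $d$ yields $\sum_{C_i\in\mathscr C}|C_i|^s\ll_{d,s}\rho^{\,s-d+1}$ with a constant depending only on $d$ and $s$, as claimed. The one genuinely delicate point is the third step: one must check that the chosen level-$j$ dyadic cubes cover each slab entirely (not just a positive proportion of it) and that both the survival condition $\sum_i k_i\ge L-j-d$ and the per–sub-box count $2^{(d-1)j-\sum_i k_i}$ are valid up to harmless $O_d(1)$ factors that do not accumulate over the $O(L)$ scales; once the displayed bound on $N_j$ is secured, the remainder is just the two elementary geometric sums, in which the hypotheses $d-1<s<d$ enter exactly once each — $s<d$ to sum the bulk from the top, $s>d-1$ to sum the thin top slab from the bottom.
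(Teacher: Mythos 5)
The paper does not actually prove this lemma: it is stated as a citation to Bovey \& Dodson \cite{BD}, and the closest analogue (Lemma \ref{ll1}) is also left unproved with a reference back to \cite{BD}. So there is no in-paper proof to compare against, and the review can only address correctness.

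Your argument is correct. The scheme — reduce by symmetry to the cone $\{y_1=\min_i y_i\}$, strip off the thin slab $\{y_1<\rho\}$ at cost $\rho^{-(d-1)}\cdot\rho^{s}=\rho^{s-d+1}$, and then stratify $y_1\in[2^{-j-1},2^{-j})$ over $L/d-1<j<L$, counting level-$j$ dyadic cubes through the $\rho\,2^{j+1}$ constraint on $y_2\cdots y_d$ — is sound, and the bookkeeping is right. In particular, the stars-and-bars count $\binom{m+d-2}{d-2}\ll_d(m+1)^{d-2}$ for $\sum_{i\ge 2}k_i=m$, the survival threshold $m\gtrsim L-j-d$, and the resulting $N_j$ bound all check out. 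The two hypotheses enter precisely where you say: $s<d$ makes the bulk sum $\sum_\ell(\ell+1)^{d-2}2^{-(d-s)\ell}$ converge after factoring out $2^{(d-s)L}$, giving $\ll 2^{-L}\cdot 2^{(d-s)L}=\rho^{s-d+1}$; $s>d-1$ makes the last $O(d)$ scales contribute a geometric tail dominated by $j\approx L-d$, again $\ll\rho^{s-d+1}$. The one place you flag as delicate — full coverage of each slab and uniformity of the $O_d(1)$ slack across the $O(L)$ scales — is indeed the point to watch, but the factors are genuinely bounded by constants depending only on $d$ (each appears once per scale and is absorbed into the convergent geometric sums), so no loss accumulates. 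This is essentially the classical dyadic-decomposition argument for hyperbolic regions, and it yields the stated bound with a constant depending only on $d$ and $s$.
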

\begin{lemma}[K. Falconer, Slicing lemma, Proposition 7.9 \cite{Fal}]\label{l7} Let $X, Y$ be two metric space and let $E\subset X\times Y$.
If there is a subset $X_o\subset X$ of Hausdorff dimension $s$ such that for any $x\in X_o$, $\hdim \{y\in Y: (x,y)\in E\}\ge t,$ then $
\hdim E\ge s+t.
$
\end{lemma}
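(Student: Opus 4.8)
The plan is to prove the lower bound $\hdim E\ge s+t$ by producing, for each pair of reals $s'<s$ and $t'<t$, a strictly positive lower bound for the $(s'+t')$-dimensional Hausdorff content of $E$; letting $s'\uparrow s$ and $t'\uparrow t$ then finishes. I equip $X\times Y$ with the natural product metric, chosen so that the two coordinate projections $\pi_X,\pi_Y$ do not increase diameters (the sup-metric, or in the Euclidean applications the Euclidean one). Write $E_x=\{y\in Y:(x,y)\in E\}$ for the $x$-slice, and $\mathcal{H}^u_\infty$ for the $u$-dimensional Hausdorff content. The point of working with the content $\mathcal{H}^{t'}_\infty$ in the slicing direction is that covers of arbitrarily large diameter are admitted, which makes the integration step below clean.

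\emph{Step 1 (a uniform lower bound on the content of slices).} Fix $s'<s$, so $\mathcal{H}^{s'}(X_o)=\infty$. For every $x\in X_o$ we have $\hdim E_x\ge t>t'$, hence $\mathcal{H}^{t'}(E_x)=\infty$ and in particular $\mathcal{H}^{t'}_\infty(E_x)>0$. Therefore $X_o=\bigcup_{k\ge 1}X_o^{(k)}$, where $X_o^{(k)}:=\{x\in X_o:\mathcal{H}^{t'}_\infty(E_x)\ge 1/k\}$ form an increasing sequence of sets. By the increasing‑sets property of the (Borel regular) outer measure $\mathcal{H}^{s'}$ one gets $\mathcal{H}^{s'}(X_o^{(k)})\to\mathcal{H}^{s'}(X_o)=\infty$, so for some $k$ the set $X_1:=X_o^{(k)}$ satisfies $\mathcal{H}^{s'}(X_1)>0$; put $\delta:=1/k>0$. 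Thus $\mathcal{H}^{s'}(X_1)>0$ and $\mathcal{H}^{t'}_\infty(E_x)\ge\delta$ for all $x\in X_1$.

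\emph{Steps 2--3 (Frostman measure on the base, and the covering estimate).} Since $\mathcal{H}^{s'}(X_1)>0$, Frostman's lemma furnishes a Borel probability measure $\mu$ supported on $X_1$ with $\mu(B)\le C\,(\operatorname{diam}B)^{s'}$ for every set $B$ and some $C=C(s')$. Now let $\{U_i\}_{i\ge 1}$ be an arbitrary countable cover of $E$ by bounded sets, and set $V_i:=\overline{\pi_X(U_i)}$, $W_i:=\pi_Y(U_i)$, so that $U_i\subseteq V_i\times W_i$ and $\operatorname{diam}V_i,\operatorname{diam}W_i\le\operatorname{diam}U_i$. For fixed $x\in X_1$, every $y$ with $(x,y)\in E$ lies in some $W_i$ with $x\in V_i$, so $\{W_i:x\in V_i\}$ covers $E_x$ and hence
\[
\delta\ \le\ \mathcal{H}^{t'}_\infty(E_x)\ \le\ \sum_{i:\,x\in V_i}(\operatorname{diam}W_i)^{t'}\ \le\ \sum_{i\ge 1}\mathbf 1_{V_i}(x)\,(\operatorname{diam}U_i)^{t'}.
\]
Integrating in $x$ against $\mu$ and using $\mu(V_i)\le C(\operatorname{diam}V_i)^{s'}\le C(\operatorname{diam}U_i)^{s'}$ gives
\[
\delta\ =\ \delta\,\mu(X_1)\ \le\ \sum_{i\ge 1}\mu(V_i)\,(\operatorname{diam}U_i)^{t'}\ \le\ C\sum_{i\ge 1}(\operatorname{diam}U_i)^{s'+t'}.
\]
As the cover was arbitrary, $\mathcal{H}^{s'+t'}_\infty(E)\ge\delta/C>0$, whence $\mathcal{H}^{s'+t'}(E)>0$ and $\hdim E\ge s'+t'$. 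Letting $s'\uparrow s$ and $t'\uparrow t$ yields $\hdim E\ge s+t$.

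The combinatorial heart of the argument — integrating the slicewise content bound against a Frostman measure on the base — is short; I expect the only genuinely delicate points to be the measure‑theoretic regularity required in Step 2, namely the measurability of $x\mapsto\mathcal{H}^{t'}_\infty(E_x)$ and the analyticity of $X_1$ needed to apply Frostman's lemma, together with the choice of a product metric under which the projections are non‑expanding. Both are automatic in the setting where the lemma is used (where $E$ is Borel and the spaces are subsets of Euclidean space), so they do not obstruct the proof.
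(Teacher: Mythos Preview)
The paper does not give its own proof of this lemma; it is simply quoted from Falconer's book as a known tool. Your argument is correct and is essentially the standard one (a Frostman measure on the base combined with integrating the slice-wise Hausdorff-content bound against it), and your caveat about the hypotheses needed for Frostman's lemma is apt: as stated here the lemma allows arbitrary metric spaces, but Frostman's lemma requires at least a Polish/analytic setting, which is satisfied in all of the paper's applications.
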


For example, for the dimension of $M(t)$ by Bovey \& Dodson \cite{BD} and the dimension of $M(t)$ intersecting a planar curve by Beresnevich \& Velani \cite{BV15}, the above two lemmas work well.
However, as far as some other cases are concerned (see Section \ref{s11}), the above two lemmas, even their generalizations, may not give the exact dimension of the set in question.
So, we have to explore their essential nature about the relation to limsup sets defined by rectangles as given in (\ref{fff4}).
%
%
%

%
\bigskip

\subsection{Notation}
\begin{itemize}\item Dimension function $f$: continuous and increasing with $f(r)\to 0$ as $r\to 0$.
\item a doubling function $g$: $|g(x)|\le |g(2x)|\le \lambda |g(x)|$ for some $\lambda\ge 1$.
\item $\mathcal{H}^f$: $f$-Hausdorff measure;
\item $\mathcal{H}^s$: $s$-dimensional Hausdorff measure, i.e. when $f(x)=x^s$ for $\mathcal{H}^f$;
\item$\hdim$: Hausdorff dimension.
\item $\R$: a general resonant set for which the points in a metric space $X$ will be approximated;
\item $R$ and $\widetilde{R}$: rectangles in $X$;
    \item $\Delta(\R, \rho)$: $\rho$-neighborhood of a resonant set $\R$.
    \item $cB$: for a ball $B=B(x,r)$, $cB$ denotes the ball $B(x, cr)$.
    \item $cR$: for a rectangle $R=\prod_{i=1}^dB(x_i,r_i)$, $cR$ denotes the rectangle $\prod_{i=1}^dB(x_i, cr_i)$;
    \item $\ll$: $a\ll b$ if there is an unspecified constant $c$ such that $|a|\le c|b|$; \item
    $a\asymp b$: $a\ll b$ and $b\ll a$.
      \item $\mathcal{L}(E)$ or $|E|$: Lebesgue measure of a set $E$ if no confusion;
      \item $r_B$: the radius of a ball $B$;
      \item $\sharp A$: the cardinality of a finite set $A$;
      \item For the Cartesian product $\prod_{i=1}^d A_i:=A$, call $A_i$ the part of $A$ in the $i$th direction.

\end{itemize}
\section{Mass transference principle: known results}


At the very beginning, the Hausdorff dimension/measure theory in classic Diophantine approximation are treated each time only for one case. The situation changes since the notion called
``{\em regular system}"  was introduced by A. Baker \& W. Schmidt \cite{BakS} in 1970 and ``{\em ubiquitous system}"
introduced by Dodson, Rynne \& Vickers \cite{DoRV} in 1990.
Both of them perform sufficiently well in establishing a general principle for the Hausdorff theory of limsup set defined by balls (see \cite{Ber99}, \cite{BBD02}, \cite{BerD}, \cite{Bug1}, \cite{Lev}, \cite{Ryn1}, etc for applications). In 2006, another landmark work called {\em mass transference principle} was established by Beresnevich \& Velani. Besides of unifying most of the known results, these two powerful principles also intervene many of the recent works on the Hausdorff theory for limsup defined by balls. Here we state the results in their most modern version after Beresnevich, Dickinson \& Velani \cite{BDV06} and Beresnevich \& Velani \cite{BV06}.

\subsection{Mass transference principle from balls to balls}

\subsubsection{{\bf{Transference principle under ubiquity}}}\

Let $\Omega$ be a compact metric space equipped with a non-atomic probability measure $m$. Let $\{\R_{\alpha}: \alpha\in J\}$
 be a family of subsets in $\Omega$ indexed by an infinite, countable
set $J$. The sets $\{\R_{\alpha}: \alpha\in J\}$ are referred to as resonant sets. Let $\beta: J \to \mathbb{R}^+ : \alpha \to \beta_{\alpha}$ which
 attaches a weight $\beta_{\alpha}$¯® to the resonant set $\R_{\alpha}$ such that for any $M>1$, $\{\alpha\in J: \beta_{\alpha}<M\}$ is finite.

Let $\rho: \mathbb{R}^+\to \mathbb{R}^+$ be a non-increasing function with $\rho(u)\to 0$ as $u\to \infty$. Let $\{\ell_n, u_n\}$ be two increasing sequences  with $$
\ell_n\le u_n, \ \ {\text{and}}\ \lim_{n\to \infty}\ell_n=\infty.$$Define $$
J_n=\{\alpha\in J: \ell_n\le \beta_{\alpha}\le u_n\}.
$$

Additionally, assume the following holds.
\begin{itemize}\item The measure $m$ is $\delta$-Ahlfors regular: there exist constants $c_1, c_2, r_o>0$ such that for any $x\in \Omega$ and $r\le r_o$, $$
c_1r^{\delta}\le m(B(x,r))\le c_2r^{\delta}.$$

\item
 Intersection property: for some $0\le \kappa< 1$, with sufficiently large $n$,
for any $\alpha\in J$ with $\beta_{\alpha}\le u_n$, $x\in \R_{\alpha}$ and $0<\lambda\le \rho(u_n)$, one has \begin{align*}
m\Big(B(x,1/2\rho(u_n))\cap \Delta(\R_{\alpha}, \lambda)\Big)\gg \lambda^{\delta(1-\kappa)}\rho(u_n)^{\delta\kappa},\\
 \ m\Big(B'\cap B(x,3\rho(u_n))\cap \Delta(\R_{\alpha}, 3\lambda)\Big)\ll \lambda^{\delta(1-\kappa)}\Big(r_{B'}\Big)^{\delta\kappa},
\end{align*} where $B'$ is any ball centered on the resonant set $\R_{\alpha}$ with radius $r_{B'}\le 3\rho(u_n)$.
\end{itemize}

\begin{dfn}[Local ubiquity system] Call $(\{\R_{\alpha}\}_{\alpha\in J}, \beta)$ a local $m$-ubiquity system with respect to $\rho$ if there exists a constant $c>0$ such that for any ball $B$ in $X$, \begin{equation}\label{fff7}
m\left(B\cap \bigcup_{\alpha\in J_n}\Delta(\R_{\alpha}, \rho(u_n))\right)\ge c\cdot m(B), \ \ {\text{for}}\ n\ge n_o(B).
\end{equation}\end{dfn}

Consider the set $$
\Lambda(\psi)=\Big\{x\in \Omega: x\in \Delta(\R_{\alpha}, \psi(\beta_{\alpha})), \ {\text{i.m.}}\ \alpha\in J\Big\},
$$ where $\psi$ is a non-increasing positive function defined on $\mathbb{R}^+$. 
\begin{theorem}[Beresnevich, Dickinson \& Velani \cite{BDV06}]\label{t22}
Let $\Omega$ be a compact metric space with a $\delta$-Ahlfors regular probability measure $m$. Suppose that $(\{\R_{\alpha}\}_{\alpha\in J}, \beta)$ is a local $m$-ubiquity system with respect to $\rho$ and satisfies the intersection property. Let $f$ be a dimension function such that $f(r)/r^{\delta}$ decreases to infinity as $r\to 0$. Furthermore, suppose that $f(r)/r^{\delta\kappa}$ is increasing. Let $h$ be a real positive function given by $$
h(u):=f(\psi(u))\psi(u)^{-\delta\kappa}\rho(u)^{-\delta(1-\kappa)}\ {\text{and let}}\ H:=\limsup_{n\to\infty}h(u_n).
$$ \begin{itemize}\item (i) Suppose that $H=0$ and $\rho$ satisfies, for some $c<1$, $\rho(u_{n+1})\le c\rho(u_n)$ for all $n\gg 1$. Then $$
\mathcal{H}^f(\Lambda(\psi))=\infty, \ \ {\text{if}}\ \ \sum_{n=1}^{\infty}h(u_n)=\infty.
$$

\item (ii) Suppose that $0<H\le \infty$. Then $\mathcal{H}^f(\Lambda(\psi))=\infty.$
\end{itemize}\end{theorem}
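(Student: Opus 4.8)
The plan is to prove the local statement that $\mathcal{H}^f(B\cap\Lambda(\psi))=\infty$ for every ball $B$ in $\Omega$, from which the theorem is immediate; since local $m$-ubiquity is assumed inside every ball, it suffices to produce, inside an arbitrary $B$, a Cantor-type subset $\mathbb{K}\subset B\cap\Lambda(\psi)$ carrying a probability measure $\mu$ with a Frostman-type bound $\mu(B(x,r))\ll f(r)$ for all $x$ and $r$, the mass distribution principle then yielding $\mathcal{H}^f(\mathbb{K})>0$, after which a standard iteration of the construction inside disjoint sub-balls upgrades this to $\mathcal{H}^f(B\cap\Lambda(\psi))=\infty$. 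Morally this is a transference of the full-measure property from the $\rho$-scale limsup set captured by local ubiquity towards the $\psi$-scale limsup set $\Lambda(\psi)$, in the spirit of the balls-to-balls mass transference principle of \cite{BV06}. The two structural inputs are: local ubiquity, which at each large stage $n$ supplies a family $\{\Delta(\R_{\alpha},\rho(u_n))\}_{\alpha\in J_n}$ whose union covers a fixed proportion of any given ball; and the intersection property, whose lower bound lets one insert $\asymp(\rho(u_n)/\psi(u_n))^{\delta\kappa}$ disjoint balls of radius $\asymp\psi(u_n)$ inside each $\Delta(\R_{\alpha},\rho(u_n))$ and lying in $\Delta(\R_{\alpha},\psi(\beta_{\alpha}))$; here $\beta_{\alpha}\le u_n$ together with $\psi$ non-increasing gives $\psi(u_n)\le\psi(\beta_{\alpha})$, so these small balls already sit in the sets defining $\Lambda(\psi)$. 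The $\delta$-Ahlfors regularity of $m$ is used throughout to convert measure estimates into ball counts.

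For the construction I would fix a rapidly increasing sequence of stages $n_1<n_2<\cdots$ and proceed level by level. Inside each surviving ball of level $k-1$ I apply local ubiquity at stage $n_k$ and, via a $5r$-covering argument, extract a finite disjoint subfamily of neighborhoods $\Delta(\R_{\alpha},\rho(u_{n_k}))$, $\alpha\in J_{n_k}$, whose union still has $m$-measure $\gg$ that of the parent; inside each of these I insert the $\asymp(\rho(u_{n_k})/\psi(u_{n_k}))^{\delta\kappa}$ disjoint $\psi(u_{n_k})$-balls furnished by the intersection property, and these are the level-$k$ balls. Taking $n_k$ large makes $\rho(u_{n_k})$ far smaller than the $\psi$-scale of level $k-1$, so the nesting is genuine. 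Every point of $\mathbb{K}=\bigcap_k(\text{union of the level-}k\text{ balls})$ then lies in infinitely many $\Delta(\R_{\alpha},\psi(\beta_{\alpha}))$, hence in $\Lambda(\psi)$.

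The measure $\mu$ is spread down this construction tree in the usual fashion, proportionally to $m$ across the $\rho$-neighborhoods and uniformly across the $\psi$-balls inside each of them, and the crux is to verify $\mu(B(x,r))\ll f(r)$ by a case analysis on where $r$ falls relative to the scales $\psi(u_{n_{k+1}})\le\rho(u_{n_{k+1}})\le\psi(u_{n_k})$. For $r$ in the lowest range, where $B(x,r)$ meets only level-$(k+1)$ balls clustered about a single resonant set, one bounds the number it can meet by the \emph{upper} bound in the intersection property and uses the hypothesis that $f(r)/r^{\delta\kappa}$ is increasing; for $r$ in the upper range, where $B(x,r)$ straddles several $\rho$-neighborhoods inside one level-$k$ ball, one again controls the overlap via the intersection-property upper bound and invokes the monotonicity of $f(r)/r^{\delta}$. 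The geometric-decay hypothesis $\rho(u_{n+1})\le c\rho(u_n)$ keeps consecutive $\rho$-scales genuinely separated, which is what makes these ranges well defined and the counting clean. In case (i), where $H=0$, the running product of contraction ratios accumulated along the construction must be balanced against $f$, and this is precisely what the divergence $\sum_n h(u_n)=\infty$ buys: one groups the stages into consecutive blocks on which the $h$-weights sum to a bounded quantity and checks that each block contributes only a bounded distortion to the mass estimate, so that the Frostman constant stays uniform over all levels and $\mu$ is genuinely carried by a set of infinite $\mathcal{H}^f$-measure.

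Case (ii) is comparatively easy: when $0<H\le\infty$ there is a subsequence of stages along which $h(u_n)\ge c>0$, and running the construction only along that subsequence makes the mass bound hold with room to spare, with no divergence hypothesis required, so $\mathbb{K}$ already has $\mathcal{H}^f(\mathbb{K})>0$ and the localisation step finishes the argument. I expect the principal obstacle to be the Frostman estimate of the third paragraph: organising the bookkeeping across the intermediate scales, where the two monotonicity properties of $f$ and the upper bound in the intersection property must all be deployed at once, and, in case (i), selecting the stage sequence $(n_k)$ carefully enough that the divergent series $\sum_n h(u_n)$ compensates the loss incurred each time a $\rho$-neighborhood is shrunk to $\psi$-scale.
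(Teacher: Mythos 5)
The paper states this theorem as a quoted result from \cite{BDV06} and supplies no proof of it, so there is no argument here to compare against. That said, your outline reproduces at a high level the standard ubiquity-based Cantor construction that underlies both \cite{BDV06} and the paper's own Theorems \ref{t1}--\ref{t3}: ubiquity gives, at each large stage $n$, a family of $\rho(u_n)$-neighborhoods of resonant sets filling a fixed proportion of any ball; the intersection-property lower bound packs $\asymp(\rho(u_n)/\psi(u_n))^{\delta\kappa}$ disjoint balls of radius $\asymp\psi(u_n)$, centered on $\R_\alpha$, into each such neighborhood, and these lie in $\Delta(\R_\alpha,\psi(\beta_\alpha))$ because $\beta_\alpha\le u_n$ and $\psi$ is non-increasing; a probability measure is spread down the resulting tree; and the Frostman bound $\mu(B(x,r))\ll f(r)$ is checked by a case split on $r$ using the two monotonicity hypotheses on $f$ together with the upper bound of the intersection property. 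So the circle of ideas is the right one.

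What the sketch does not yet carry is precisely where the technical weight of the argument lies. First, a fixed-constant Frostman bound only gives $\mathcal{H}^f>0$; to reach $\mathcal{H}^f=\infty$ one must drive the constant to zero, which in the proof of the paper's own Theorem \ref{t2} is achieved by nesting many sublevels inside each level of the Cantor construction (the parameter $\eta$ and the counts $L_{C_{k-1}}$), not by the postscript ``iterate inside disjoint sub-balls''. Second, and more seriously, in case (i) you merely assert that one groups stages into blocks along which $\sum_n h(u_n)$ is bounded below, but you do not say how the mass accumulated across the many $\rho$- and $\psi$-scales present within one block is compared with $f$ at the relevant radius so that a uniform Frostman constant survives, even though $H=0$ forces every individual $h(u_n)$ to vanish along the chosen subsequence of stages. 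That block bookkeeping is the delicate core of case (i) in \cite{BDV06}, and as written your paragraph gestures at it without supplying it.
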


\subsubsection{{\bf{Transference principle under full measure}}}\

Another landmark work about the Hausdorff measure/dimension theory for limsup set is presented by Beresnevich \& Velani \cite{BV06}
 where the ubiquity condition is weakened to the full measure condition of the limsup set $\limsup \Delta(\R_{\alpha}, \rho(\beta_{\alpha}))$
 when the resonant sets $\{\R_{\alpha}: \alpha\in J\}$ are points.
%

Let $\Omega$ be a locally compact metric space, and $g$ a doubling dimension function.
Suppose that there exist $0<c_1\le c_2<\infty$ and $r_o>0$ such that for all $x\in \Omega$ and $0<r<r_o$,
\begin{equation*}
c_1g(r)\le \mathcal{H}^{g}(B(x,r))\le c_2 g(r).
\end{equation*}
Let $f$ be a dimension function and write $B^f(x,r)$ for the ball $B(x, g^{-1} (f(r)))$.

\begin{theorem}[Beresnevich \& Velani \cite{BV06}]\label{t33}
Let $\Omega$ be a locally compact metric space, $f$ a dimension function and $g$ a doubling dimension function. Assume that $\{B_i\}_{i\in \N}$ is a sequence of balls in $\Omega$ with radii tending to $0$, and that $\frac{f(r)}{g(r)}$ increases as $r\to 0_+$.
If, for any ball $B$ in $\Omega$,
$$\mathcal{H}^{g}\Big(B \cap \limsup_{i\to \infty}B_i^f\Big)= \mathcal{H}^{g}(B);$$
then, for any ball $B$ in $\Omega$,
$$\mathcal{H}^f\Big(B \cap \limsup_{i\to \infty}B_i^g\Big)= \mathcal{H}^f(B).$$
\end{theorem}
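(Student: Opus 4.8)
Here is how I would attack the statement (the classical mass transference principle of Beresnevich and Velani).

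Write $\Lambda:=\limsup_{i\to\infty}B_i$; since $B_i^g=B_i$ this is the set whose $\mathcal{H}^f$-measure we must control, while the hypothesis concerns $\limsup_iB_i^f$, where $B_i=B(x_i,r_i)$ and $B_i^f=B(x_i,g^{-1}(f(r_i)))$. We may assume $f(r)>g(r)$ for all small $r$: otherwise $B_i^f\subset B_i$ for every large $i$, so $\Lambda\supset\limsup_iB_i^f$, and as $f\le g$ near $0$ forces $\mathcal{H}^f\le\mathcal{H}^g$, the full-measure hypothesis gives $\mathcal{H}^f(B\setminus\Lambda)\le\mathcal{H}^g\big(B\setminus\limsup_iB_i^f\big)=0$ and we are done. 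So henceforth $g\le f$ at small scales, $\operatorname{rad}(B_i^f)\ge r_i$, and $f/g$ is weakly increasing as $r\to0$. The plan is the Cantor-set construction together with the Mass Distribution Principle. First I would reduce to proving that there is an absolute $c>0$ with $\mathcal{H}^f(\Lambda\cap B_0)\ge c\,f(r_{B_0})$ for every ball $B_0$: granting this, since $B_0$ contains $\gg g(r_{B_0})/g(\rho)$ pairwise disjoint balls of radius $\rho$, superadditivity of $\mathcal{H}^f$ and the regularity of $g$ give $\mathcal{H}^f(\Lambda\cap B_0)\gg g(r_{B_0})\,f(\rho)/g(\rho)$; letting $\rho\to0$, this is $=\infty=\mathcal{H}^f(B_0)$ when $f/g\to\infty$, and $\gg\mathcal{H}^f(B_0)$ when $f\asymp g$ near $0$, after which the standard efficient-covering iteration upgrades the constant to yield $\mathcal{H}^f(\Lambda\cap B_0)=\mathcal{H}^f(B_0)$. (That $\Lambda$ is a $\limsup$ set, hence unchanged by discarding finitely many $B_i$, is used tacitly to make everything local.)

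Next, from the hypothesis I would extract the following. Fix a ball $K$ and $N\in\N$. As $\limsup_iB_i^f=\limsup_{i\ge N}B_i^f$ has full $\mathcal{H}^g$-measure and $\operatorname{rad}(B_i^f)\to0$, the set $\bigcup\{B_i^f:i\ge N,\,B_i^f\subset K\}$ has $\mathcal{H}^g$-measure $\asymp\mathcal{H}^g(K)$; a finite subfamily, the $5r$-covering lemma, and the doubling and regularity of $g$ then produce finitely many indices $i\ge N$ whose balls $B_i$ are pairwise disjoint and lie in $K$, whose enlargements $B_i^f$ are pairwise disjoint, and with $\sum_i\mathcal{H}^g(B_i^f)=\sum_if(r_i)\ge c_0\mathcal{H}^g(K)$. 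Call such a family a \emph{configuration in $K$ past $N$}; it supplies only $\asymp g(r_K)$ of ``$f$-mass'', whereas I shall need $\asymp f(r_K)\ (\ge g(r_K))$. The remedy is to \emph{stack} $\asymp f(r_K)/g(r_K)$ configurations: after building configurations past $N_1<\cdots<N_m$, each $N_{j+1}$ exceeding all indices used so far (so $r_{N_{j+1}}$ may be taken as small as one wishes), retain from the next one only the new balls disjoint from all earlier balls --- negligibly many are lost, the earlier balls filling only a tiny $g$-fraction of $K$. Thus for every $K$ and $N$ there is a finite family $F(K,N)$ of pairwise disjoint balls $B_i\subset K$, all with index $\ge N$, with $\sum_{i\in F(K,N)}f(r_i)\asymp f(r_K)$, partitioned into configurations within each of which the enlargements $B_i^f$ are pairwise disjoint.

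Now iterate to build a Cantor set. Let the level-$1$ cylinders be the members of $F(B_0,N_1)$; given a level-$n$ cylinder $K$, let its children be the members of $F(K,N')$ with $N'$ so large that each child is far finer than $K$. Put $\mathcal{K}:=\bigcap_n\bigcup\{\text{level-}n\text{ cylinders}\}$. Every $x\in\mathcal{K}$ lies in a nested sequence of cylinders with indices $\to\infty$, hence in infinitely many $B_i$, so $\mathcal{K}\subset\Lambda\cap B_0$. Define $\mu$ level by level: set $\mu(B_0)\asymp f(r_{B_0})$ and, with $\mu(K)$ given, split it among the children of $K$ in proportion to $f(r_i)$, so $\mu(B_i)=\mu(K)f(r_i)/\sum_jf(r_j)\asymp f(r_i)$, consistently with the inductive hypothesis $\mu(\cdot)\asymp f(\text{radius})$; this extends to a Borel measure on $\mathcal{K}$ with $\mu(\mathcal{K})=\mu(B_0)\asymp f(r_{B_0})$.

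It then remains to verify the H\"older bound $\mu(B(x,\rho))\ll f(\rho)$ for all small $\rho$ and to invoke the Mass Distribution Principle to get $\mathcal{H}^f(\mathcal{K})\gg\mu(\mathcal{K})\asymp f(r_{B_0})$, which is the inequality of the first step. Verifying $\mu(B(x,\rho))\ll f(\rho)$ is the main obstacle. In the principal regime, let $K$ be the smallest cylinder containing $x$ whose children all have enlargement-radius $\lesssim\rho$; then $B(x,\rho)$ is essentially contained in $K$, it meets only children of $K$, and those children split into $\asymp f(r_K)/g(r_K)$ configurations within each of which the enlargements $B_i^f$ are pairwise disjoint and contained in $B(x,O(\rho))$, so the children from one configuration carry $\mu$-mass $\asymp\sum\mathcal{H}^g(B_i^f)\ll g(\rho)$; summing over the configurations,
\[
\mu\big(B(x,\rho)\big)\ \ll\ \frac{f(r_K)}{g(r_K)}\,g(\rho)\ \ll\ \frac{f(\rho)}{g(\rho)}\,g(\rho)\ =\ f(\rho),
\]
the middle step being exactly the monotonicity of $f/g$ (valid since $\rho\lesssim r_K$). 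When $\rho$ instead falls into a ``gap'' --- between a child's radius and its enlargement-radius, or between consecutive configuration scales in a stack --- $B(x,\rho)$ meets only boundedly many cylinders at the level in question (their enlargements being pairwise disjoint and larger than $\rho$), or lies inside a single cylinder $K'$ with $\mu(K')\asymp f(r_{K'})\le f(\rho)$, again by the monotonicity of $f/g$. The genuinely delicate point is to make all of this uniform: the cylinders at a given level may have widely differing radii, so the thresholds $N_1<N_2<\cdots$ (and, inside each stack, the sub-thresholds) must be chosen carefully enough that the ``smallest non-resolved cylinder'' is well defined, that the above groupings lose nothing, and that every implied constant --- hence $c$ --- is absolute. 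This book-keeping is the technical heart of the proof, and it is precisely where the hypotheses that $f/g$ be monotone, that $g$ be doubling and $g$-regular, and that the enlargements be pairwise disjoint within each configuration, are all brought to bear.
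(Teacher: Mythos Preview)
The paper does not supply its own proof of this theorem: it is quoted as a known result from \cite{BV06} in the survey Section~2, with no argument given. So there is nothing in the paper to compare your proposal against directly.

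That said, your outline is the correct one and matches the original Beresnevich--Velani argument: the $K_{G,B}$-type extraction of disjoint configurations, the stacking of $\asymp f(r_K)/g(r_K)$ sublevels to accumulate enough $f$-mass, the Cantor iteration, the proportional mass distribution, and the H\"older verification split into the ``principal'' and ``gap'' regimes. You have also correctly identified where the monotonicity of $f/g$ and the doubling of $g$ are used, and where the real book-keeping difficulty lies. The paper's own proofs (Sections~6--8) for its rectangle-to-rectangle principles follow exactly this template, citing \cite{BV06} as the source of the method; in particular Lemma~\ref{lll2} is the $K_{G,B}$-lemma, the sublevel construction in Section~6.1 is the stacking you describe, and the measure estimates in Section~6.2 mirror your H\"older discussion. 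So while the paper gives no proof of Theorem~\ref{t33} itself, your proposal is consistent with both the original source and the methodology the paper adopts for its own results.
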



Theorem \ref{t33} is also generalized to the case when the resonant sets $\{\R_{\alpha}: \alpha\in J\}$ are planes in $\mathbb{R}^d$ by Allen \& Beresnevich \cite{AllB} and general space with the intersection property similar to that for affine space by Allen \& Baker \cite{AllBa}.


Let $\{\R_{j}: j\ge 1\}$ be a sequence of resonant sets in $\Omega$. Call them satisfy a local scaling property with respect to $\kappa\in [0,1)$,
if there exists $r_1>0$ such that for any $\lambda<r<r_1$ and all $x\in \R_{j}$  with $j\ge 1$,
\begin{equation*}
\mathcal{H}^g\Big(B(x,r)\cap \Delta(\R_{j}, \lambda)\Big)\asymp g(\lambda)^{1-\kappa}\cdot g(r)^{\kappa}.
\end{equation*}Let $\Upsilon:\N\to \mathbb{R}: j\to \Upsilon_j$ be a non-negative real valued function on $\N$ such that $\Upsilon_j\to 0$ as $j\to \infty$. Consider the set $$
\Lambda(\Upsilon):=\Big\{x\in \Omega: x\in \Delta(\R_j, \Upsilon_j), \ {\text{i.m.}}\ j\in \N\Big\}.
$$
\begin{theorem}[Allen \& Beresnevich \cite{AllB}, Allen \& Baker \cite{AllBa}]
Let $\Omega$ be a locally compact metric space and let $g$ be a doubling dimension function. Assume the $\kappa$-scaling property for $0\le \kappa<1$. Let $f$ be a dimension function such that  $f/g$ is monotonic and $f/g^{\kappa}$ be also a dimension function.  Suppose that, for any ball $B$ in $\Omega$, $$
\mathcal{H}^g\left(B\cap \Lambda\left(g^{-1}\left(\left(\frac{f(\Upsilon)}{g(\Upsilon)^{\kappa}}\right)^{\frac{1}{1-\kappa}}\right)\right)\right)=\mathcal{H}^g(B).
$$  Then for any ball $B$ in $\Omega$,
$$\mathcal{H}^f(B\cap \Lambda(\Upsilon))=\mathcal{H}^f(B).$$
\end{theorem}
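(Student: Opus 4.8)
\medskip
\noindent\emph{Proof strategy.} The plan is to run the Cantor-set construction that underlies the mass transference principle of Beresnevich and Velani (Theorem~\ref{t33}) and its ubiquity form (Theorem~\ref{t22}), adapted to the anisotropic geometry recorded by the $\kappa$-scaling property, following \cite{AllB,AllBa}. Throughout set $\Phi_j:=g^{-1}\!\big((f(\Upsilon_j)/g(\Upsilon_j)^{\kappa})^{1/(1-\kappa)}\big)$, so that the hypothesis of the theorem reads $\mathcal{H}^g(B\cap\Lambda(\Phi))=\mathcal{H}^g(B)$ for every ball $B$; the defining relation rearranges to the algebraic identity $f(\Upsilon_j)=g(\Upsilon_j)^{\kappa}\,g(\Phi_j)^{1-\kappa}$, and in the regime relevant for transference one has $\Upsilon_j\le\Phi_j$, so that $\Delta(\R_j,\Upsilon_j)$ is genuinely obtained by shrinking $\Delta(\R_j,\Phi_j)$.

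First I would reduce the statement to a uniform local lower bound: it suffices to find a constant $c=c(\Omega,f,g,\kappa)>0$, independent of the ball, with $\mathcal{H}^f(B\cap\Lambda(\Upsilon))\ge c\,\mathcal{H}^f(B)$ for every ball $B$. Indeed $\Lambda(\Upsilon)$ is a $\limsup$ set, hence Borel, and were $\mathcal{H}^f(B_0\setminus\Lambda(\Upsilon))>0$ for some ball $B_0$, the localization principle for $\limsup$ sets together with the density/covering theory for Hausdorff measures (as in \cite{BV06}) would produce a sub-ball on which the proportion of $\Lambda(\Upsilon)$ drops below $c$, a contradiction. Some care is needed because $f$ need not be doubling and $\mathcal{H}^f(B)$ may be infinite; one runs the density argument through the doubling function $g$ and transfers via the monotonicity of $f/g$. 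Moreover the full-measure hypothesis on $\Lambda(\Phi)$ supplies, for each ball $B$, a finite disjoint family of neighborhoods $\Delta(\R_j,\Phi_j)\subseteq B$ with indices $j$ as large as desired that covers a fixed fraction of $B$ in $\mathcal{H}^g$-measure (a local $\mathcal{H}^g$-ubiquity statement); this is the input actually used.

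Next I would build the Cantor set. Fixing $B$, I would construct nested finite families $\mathcal{K}_1\supseteq\mathcal{K}_2\supseteq\cdots$ of pairwise disjoint balls in $B$ and a mass distribution $\mu$ on $\mathcal{K}_\infty:=\bigcap_n\bigcup_{K\in\mathcal{K}_n}K$. To pass from a survivor ball $K$ of radius $r_K$ to level $n+1$: choose inside $\tfrac12 K$ a finite disjoint family of neighborhoods $\Delta(\R_j,\Phi_j)$ with $\Phi_j\ll r_K$ and $j$ arbitrarily large, covering a fixed proportion of $K$; for each chosen $j$, cover the part of $\R_j$ met inside $K$ by $\asymp g(r_K)^{\kappa}/g(\Phi_j)^{\kappa}$ balls $B(y,\Phi_j)$ centered on $\R_j$, and in each $B(y,\Phi_j)$ use the lower bound in the $\kappa$-scaling property, $\mathcal{H}^g(B(y,\Phi_j)\cap\Delta(\R_j,\Upsilon_j))\gg g(\Upsilon_j)^{1-\kappa}g(\Phi_j)^{\kappa}$, together with its matching upper bound, to extract $\asymp g(\Phi_j)^{\kappa}/g(\Upsilon_j)^{\kappa}$ pairwise disjoint balls of radius $\asymp\Upsilon_j$ lying inside $\Delta(\R_j,\Upsilon_j)$. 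These balls, over all choices, form $\mathcal{K}_{n+1}$; since each lies in some $\Delta(\R_j,\Upsilon_j)$ with $j$ as large as we please, $\mathcal{K}_\infty\subseteq B\cap\Lambda(\Upsilon)$. I would then endow $\mathcal{K}_\infty$ with a mass distribution $\mu$ by apportioning mass among the children of each survivor ball at every stage; the exact apportioning rule --- and, if necessary, the insertion of auxiliary intermediate levels --- is part of the bookkeeping, arranged so that the construction carried out in an arbitrary ball $B$ yields $\mu(B)\asymp\mathcal{H}^f(B)$.

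The heart of the matter --- and the step I expect to be the main obstacle --- is the mass distribution principle estimate: one must show $\mu(B(x,\rho))\ll f(\rho)$ for all $x$ and all small $\rho>0$, with implied constant independent of $B$; granting it, $\mathcal{H}^f(\mathcal{K}_\infty)\gg\mathcal{H}^f(B)$, which is the desired uniform lower bound. To prove it one locates $\rho$ among the scales $r_n\gg\Phi_{j_n}\gg\Upsilon_{j_n}$ attached to level $n$ and, in each regime, counts the balls of $\mathcal{K}_{n+1}$ that meet $B(x,\rho)$, using the $\kappa$-scaling property (in both directions) to control how these balls distribute along and across the resonant sets $\R_{j_n}$. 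Multiplying by the per-child mass, the worst case reduces --- precisely because of the identity $f(\Upsilon_j)=g(\Upsilon_j)^{\kappa}g(\Phi_j)^{1-\kappa}$ --- to the two monotonicity hypotheses: $f/g$ monotone governs the range $\Upsilon_{j_n}<\rho<\Phi_{j_n}$, and $f/g^{\kappa}$ being a dimension function governs $\Phi_{j_n}<\rho<r_n$. Pushing this cross-scale computation through uniformly, arranging that each selected subfamily genuinely captures a fixed rather than vanishing proportion of its survivor ball, and coping with the possibly non-doubling $f$ in the initial density reduction, together make up the bulk of the work.
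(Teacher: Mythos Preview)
This theorem is stated in the paper without proof: it appears in Section~2 as a background result attributed to Allen \& Beresnevich \cite{AllB} and Allen \& Baker \cite{AllBa}, and the paper offers no argument of its own for it. There is therefore nothing in the paper to compare your proposal against.

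That said, your sketch is a faithful outline of the strategy in the cited references, and is also the template the present paper adapts for its own rectangle results: reduce to a uniform local lower bound, extract from the full-measure hypothesis on $\Lambda(\Phi)$ a $K_{G,B}$-type family of well-separated big neighborhoods inside any ball, use the $\kappa$-scaling property to populate each with $\asymp g(\Phi_j)^{\kappa}/g(\Upsilon_j)^{\kappa}$ disjoint balls of radius $\asymp\Upsilon_j$ lying in $\Delta(\R_j,\Upsilon_j)$, iterate to build a Cantor set inside $B\cap\Lambda(\Upsilon)$, and verify the H\"older bound $\mu(B(x,\rho))\ll f(\rho)$ via the identity $f(\Upsilon_j)=g(\Upsilon_j)^{\kappa}g(\Phi_j)^{1-\kappa}$ together with the monotonicity of $f/g$ and of $f/g^{\kappa}$. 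The outline is correct; the substantive work, as you note, is the cross-scale measure estimate and the handling of the non-doubling $f$ in the density reduction.
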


We call all the above notable results as mass transference principles from balls to balls, since even for general resonant sets, the above results concern the transference principle from limsup sets defined by an isotropic thicken of the resonant sets to limsup sets defined by a smaller isotropic thicken of the resonant sets.
\subsection{Mass transference principle from balls to rectangles}

As stated before, the limsup sets defined by rectangles also take important role in metric Diophantine approximation. So, a first attempt is to see whether there are some transference principles from balls to rectangles. This is given by Wang, Wu \& Xu \cite{WWX}.

Let $\{x_n\}_{n\ge 1}$ be a sequence of points in the unit cube $[0,1]^d$ with $d\ge 1$ and $\{r_n\}_{n\ge 1}$ be a sequence of positive numbers tending to zero.
For any ${\bold t}=(t_1,\cdots,t_d)$ with $1\le t_1\le t_2\le \cdots \le t_d$, define
$$W_{\bold t}:=\Big\{x\in [0,1]^d\colon x\in B(x_n,r_n^{\bold{t}}), \ {\text{i.m.}}\ n\in \N\Big\},$$
where we use $B(x,r^{\bold t})$ to denote a rectangle with center $x$ and side-length $(r^{t_1}, r^{t_2},\cdots,r^{t_d})$.

\begin{theorem}[Wang, Wu \& Xu \cite{WWX}]
Let ${\bold t}=(t_1,\cdots,t_d)$ with $1\le t_1\le \cdots \le t_d$. Let $\{B_i: i\ge 1\}$ be a sequence of balls such that for any ball $B\subset [0,1]^d$ $$\mathcal{L}\Big(B\cap \limsup_{i\to\infty}B_i\Big)=\mathcal{L}(B).$$ Then we have
$$\hdim W_{\bold t}\ge \min\Big\{\frac{d+jt_j-\sum_{i=1}^jt_i}{t_j}, 1\le j\le d\Big\}:=s,$$
 and for any ball $B\subset [0,1]^d$, $$\mathcal{H}^s(W_{\bold t}\cap B)=\mathcal{H}^s(B).$$
\end{theorem}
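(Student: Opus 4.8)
The plan is to deduce this rectangular mass transference statement from the classical mass transference principle from balls to balls (Theorem \ref{t33}) by building, from the single ball sequence $\{B_i\}$, an auxiliary family of balls whose limsup captures the relevant ``slices'' of $W_{\bold t}$, and then assembling the slices via Falconer's slicing lemma (Lemma \ref{l7}). Write $B_i = B(x_i, r_i)$. The key combinatorial observation is that the candidate dimension $s = \min_{1\le j\le d}\frac{d+jt_j-\sum_{i=1}^j t_i}{t_j}$ is realized at some index $j_0$; intuitively, in the first $j_0$ coordinates the rectangle $B(x_n, r_n^{\bold t})$ is ``fat'' relative to the target scale and contributes full measure there, while in the remaining $d-j_0$ coordinates it is ``thin'' and one applies a ball-to-ball transference at the sharper exponent $t_{j_0}$.

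First I would set up the slicing decomposition. Fix the index $j_0$ achieving the minimum. Split $[0,1]^d = [0,1]^{j_0}\times[0,1]^{d-j_0}$ with coordinates $(y,z)$. For a fixed $z\in[0,1]^{d-j_0}$ belonging to the limsup set $\limsup_n \prod_{i>j_0}B(x_{n,i}, r_n^{t_i})$ in the last coordinates, the slice $\{y : (y,z)\in W_{\bold t}\}$ contains $\limsup_n \prod_{i\le j_0} B(x_{n,i}, r_n^{t_i})$ restricted to those $n$ for which $z$ falls in the corresponding thin rectangle. The first task is therefore to show two things: (a) the set of admissible $z$ has full Lebesgue measure in $[0,1]^{d-j_0}$ — this should follow from a Borel–Cantelli / quasi-independence argument using $\mathcal{L}(B\cap\limsup B_i)=\mathcal{L}(B)$, which forces $\sum r_n^{?}$ to diverge fast enough that even the thinned-in-$(d-j_0)$-directions rectangles still have divergent volume in those coordinates; and (b) for each such $z$, the $y$-slice has Hausdorff dimension at least $s$.

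For step (b) I would invoke Theorem \ref{t33} in the space $[0,1]^{j_0}$. The point is that, by the choice of $j_0$ as the minimizer, one has $t_i \le t_{j_0}$ for $i\le j_0$, so each ball $B(x_{n,i}, r_n^{t_i})$ contains $B(x_{n,i}, r_n^{t_{j_0}})$; hence the $y$-slice limsup contains $\limsup_n B(x_n', r_n^{t_{j_0}})$ where $x_n'$ is the first-$j_0$-coordinate centre. Now apply Theorem \ref{t33} with $g(r)=r^{j_0}$ (Lebesgue on $[0,1]^{j_0}$) and $f(r)=r^{s'}$ where $s' = s\cdot t_{j_0}$ is chosen exactly so that $r^{s' /? }$ matches $r^{t_{j_0}}$ scaling — concretely, the full-measure hypothesis $\mathcal{L}(B\cap\limsup B_i)=\mathcal{L}(B)$ is the hypothesis $\mathcal{H}^g(B\cap\limsup B_i^f)=\mathcal{H}^g(B)$ of Theorem \ref{t33} after matching exponents, and the conclusion gives $\mathcal{H}^{f}\big(B\cap\limsup_n B(x_n', r_n^{t_{j_0}})\big)=\mathcal{H}^{f}(B)$, in particular the slice has dimension $\ge s - (d-j_0)$ in the $j_0$-dimensional space (the deficit $d-j_0$ being absorbed later). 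Combining with (a) via Lemma \ref{l7} applied to $X=[0,1]^{d-j_0}$, $Y=[0,1]^{j_0}$: the set of good $z$ has dimension $d-j_0$, each slice has dimension $\ge s-(d-j_0)$, so $\hdim W_{\bold t}\ge s$. The full-measure statement $\mathcal{H}^s(W_{\bold t}\cap B)=\mathcal{H}^s(B)$ requires upgrading the slicing argument to a mass-distribution / Cantor-set construction: build a Cantor subset of $W_{\bold t}\cap B$ by, at each level, first choosing a full-measure-many positions in the thin coordinates (controlled by (a)), then inside each applying the local ubiquity / mass transference at scale $r_n^{t_{j_0}}$ in the fat coordinates, and define a measure on the resulting tree with the right Frostman exponent $s$; the Mass Distribution Principle then yields positive, and by a ``every ball'' covering argument, full $\mathcal{H}^s$-measure.

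The main obstacle I expect is not the dimension lower bound — that is a fairly clean slicing argument — but the full Hausdorff measure statement, specifically verifying that the natural measure on the Cantor construction satisfies the Frostman condition $\mu(B(x,\rho))\ll \rho^s$ \emph{uniformly across the change of scales between the fat and thin coordinate blocks}, where the local ``Ahlfors-like'' estimates have genuinely different exponents in different coordinate directions. One must control balls $B(x,\rho)$ for $\rho$ in the awkward range between the short side $r_n^{t_{j_0}}$ and the long side $r_n^{t_1}$ of a generation-$n$ rectangle, and it is precisely here that the minimum defining $s$ is forced: for indices $j<j_0$ or $j>j_0$ the corresponding bound $\frac{d+jt_j-\sum_{i\le j}t_i}{t_j}$ is larger than $s$, so the relevant ball count is dominated by the $j_0$-term, which is exactly what makes the Frostman exponent come out to be $s$ and no smaller. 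Handling the transition scales carefully, and ensuring the divergence in the thin coordinates survives the shrinking (which is where $\mathcal{L}(\limsup B_i)$ being full rather than merely positive is used, via a zero–one law type argument), is the technical heart of the proof.
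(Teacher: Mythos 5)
Your proposal hinges on a slicing decomposition at the minimizing index $j_0$, but the argument has a genuine gap at its very first step, and the paper's (and Wang--Wu--Xu's) route is entirely different.

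\textbf{The admissible $z$-set need not have full measure.} You assert that the set of $z\in[0,1]^{d-j_0}$ lying in infinitely many of the thin pieces $\prod_{i>j_0}B(x_{n,i},r_n^{t_i})$ has full Lebesgue measure, citing a Borel--Cantelli / quasi-independence heuristic. The only quantitative consequence of $\mathcal L(\limsup_i B_i)=1$ is (morally) $\sum r_n^{\,d}=\infty$; it does not imply $\sum_n\prod_{i>j_0}r_n^{t_i}=\infty$. Since $t_i\ge 1$ with strict inequality allowed, one can easily have $\sum_{i>j_0}t_i>d$, and then the thin limsup in $[0,1]^{d-j_0}$ has Lebesgue measure zero by the convergence Borel--Cantelli. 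Concretely, take $d=2$, $t_1=t_2=3/2$: here $j_0$ can be $1$, the thin side is $r_n^{3/2}$, and $\sum r_n^{3/2}$ converges whenever $\sum r_n^2=\infty$ barely holds. So step (a) fails, and with it the application of Lemma~\ref{l7}. Even when the measure is positive, your step (b) needs the \emph{restricted} ball-limsup (over only those $n$ with $z$ admissible) to satisfy the hypotheses of Theorem~\ref{t33}, but this index set varies with $z$ and inherits no full-measure property from the $d$-dimensional hypothesis; Theorem~\ref{t33} is stated for a fixed sequence and cannot be invoked fibrewise this way. Finally, the minimizer $j_0$ is very often $d$ (e.g.\ $\bold t=(1,2)$ gives $j_0=2$), in which case your slicing degenerates and you offer no fallback.

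\textbf{What the actual proof does.} The paper proves this as a special case of Theorems~\ref{t2} and~\ref{ttt1} (with $\kappa=0$, $\a=(1,\dots,1)$, $\delta_i=1$), and the method is not slicing at all. One builds a Cantor set $\F_\infty\subset W_{\bold t}$ level by level: each level uses a $K_{G,B}$-type ubiquity/full-measure lemma to place a disjoint, positive-proportion collection of ``big rectangles'' (here: balls of radius $r_n$) in each cell; one then shrinks each big rectangle to the corresponding rectangle of sidelengths $r_n^{\bold t}$ and subdivides that rectangle into separated balls of radius $r_n^{t_d}$. A mass distribution is pushed down this tree, and the Frostman bound $\mu(B(x,\rho))\ll\rho^s$ is verified for $\rho$ ranging through the awkward scales $r_n^{t_d}\le\rho\le r_n^{t_1}$ by partitioning the coordinate directions $k$ into the sets $\K_1,\K_2,\K_3$ according to whether $\rho$ is larger than the big side $r_n$, smaller than the shrunk side $r_n^{t_k}$, or in between. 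Your binary ``fat $j\le j_0$ / thin $j>j_0$'' split is a coarsening of this ternary partition and loses exactly the information that makes the Frostman estimate close. Indeed, the paper itself emphasizes (introduction and Section~\ref{s11}) that the Bovey--Dodson covering lemma plus Falconer slicing does \emph{not} give sharp answers for limsup sets of rectangles; that observed failure of slicing is one of the motivations for the whole paper. Your last paragraph does correctly anticipate that a Cantor construction with a Frostman exponent is needed for the full-measure conclusion, but the dimension lower bound requires the same construction — it is not recoverable by slicing.
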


The above transference  principle from balls to rectangles is extended to limsup sets defined by open sets by Koivusalo \& Rams \cite{Ram}.
Let $E$ be a bounded open set. A generalized singular function in \cite{Ram} is defined as $$
\varphi^t(E)=\sup_{\mu}\inf_{x\in E}\inf_{x>0}\frac{r^s}{\mu(E\cap B(x,r))},
$$ where the supremum is taken over all Borel probability measures supported on $E$.\begin{theorem}[Koivusalo \& Rams \cite{Ram}]
Let $\{B_i: i\ge 1\}$ be a sequence of balls such that for any ball $B\subset [0,1]^d$ $$\mathcal{L}\Big(B\cap\limsup_{i\to\infty}B_i\Big)=\mathcal{L}(B).$$
Let $\{E_i: i\ge 1\}$ be a sequence of open sets such that $E_i\subset B_i$ for each $i\ge 1$. Then one has
$$\hdim \Big(\limsup_{i\to\infty}E_i\Big)\ge \sup\bigg\{t: \mathcal{L}\left(\limsup\{B_i: \varphi^t(E_i)\ge \mathcal{L}(B_i)\}\right)=1\bigg\}.$$

\end{theorem}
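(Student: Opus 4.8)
The plan is to run the classical Cantor-set-plus-mass-distribution scheme, using the full-measure hypothesis as a ubiquity statement to drive the branching and the generalized singular function $\varphi^t$ to supply Frostman measures on the shrunk sets $E_i$. Fix any admissible $t$, i.e.\ any $t$ with $\mathcal{L}\big(\limsup\{B_i:\varphi^t(E_i)\ge\mathcal{L}(B_i)\}\big)=1$, set $\mathcal{G}=\mathcal{G}_t=\{i:\varphi^t(E_i)\ge\mathcal{L}(B_i)\}$, and (as is standard, assuming $r_{B_i}\to0$) note that $\mathcal{L}(\limsup_{i\in\mathcal{G}}B_i)=1$ forces $\mathcal{L}(U\cap\limsup_{i\in\mathcal{G}}B_i)=\mathcal{L}(U)$ for every open $U\subset[0,1]^d$, since the complement of a full-measure set is null. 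For each $i\in\mathcal{G}$ the definition of $\varphi^t$ yields a Borel probability measure $\mu_i$ supported on $\overline{E_i}$ with $\mu_i(B(x,r))\le r^t/\mathcal{L}(B_i)$ for all $x\in E_i$ and all $r>0$ (up to a harmless constant if the supremum in $\varphi^t$ is not attained). I will want $\mu_i$ to be essentially absolutely continuous, so that a Lebesgue-ubiquity statement inside $E_i$ can be upgraded to a $\mu_i$-ubiquity statement; justifying this reduction — a near-optimal Frostman measure on an \emph{open} set may be taken absolutely continuous, by mollifying and using that small translates of $E_i$ still meet $E_i$ in large $\mu_i$-measure — is one place where care is needed.

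Next comes the branching lemma: from the full-measure property one extracts, inside any open $F$ and above any index threshold $N$ and below any scale $\eta>0$, a finite pairwise disjoint family $\{B_j:j\in I\}$ with $I\subset\mathcal{G}\cap[N,\infty)$, $\overline{B_j}\subset F$, $r_{B_j}<\eta$, and $\sum_j\mathcal{L}(B_j)\ge\kappa_d\,\mathcal{L}(F)$ — a routine consequence of the Vitali covering lemma — and, refining this by a Vitali/Besicovitch selection with respect to $\mu_{i}$ in the case $F=E_{i}$, one may in addition demand $\sum_j\mu_{i}(B_j)\ge\kappa_1>0$. I then build a Cantor set: starting from $\overline{B_0}$, and given a level-$n$ set $\overline{E_i}$, I apply the branching lemma inside $E_i$ with thresholds $N_n\uparrow\infty$ and scales $\eta_n\downarrow0$ chosen to decrease so rapidly that accumulated branching constants become negligible against the scales; the level-$(n+1)$ sets are the $\overline{E_j}$ over the chosen children $B_j$. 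Since each chosen $\overline{B_j}$ lies in the open parent $E_i$, every point of the limit set $\mathbf{K}$ lies in $E_{j_n}$ for a nested sequence $j_n\in\mathcal{G}$ with $j_n\to\infty$, so $\mathbf{K}\subset\limsup_{i}E_i$. On $\mathbf{K}$ I put the measure that distributes the mass of $\overline{E_i}$ among its children proportionally to $\mu_i$: $\mu(\overline{E_j})=\mu(\overline{E_i})\cdot\mu_i(B_j)/\sum_k\mu_i(B_k)$.

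It then remains to establish the Frostman bound $\mu(B(x,r))\le C_\varepsilon\, r^{\,t-\varepsilon}$ for $x\in\mathbf{K}$ and small $r$, after which the mass distribution principle gives $\mathcal{H}^{t-\varepsilon}(\mathbf{K})>0$, hence $\hdim\limsup_i E_i\ge\hdim\mathbf{K}\ge t-\varepsilon$; letting $\varepsilon\to0$ and taking the supremum over admissible $t$ finishes the proof. For the Frostman bound I would run the standard case analysis over the construction scales: for $B(x,r)$ one locates the level $m$ at which $r$ lies between the level-$(m+1)$ child-radii and the size of the level-$m$ set $\overline{E_{i_m}}\ni x$; the dominant contribution is $\sum_{B_j\subset B(x,3r)}\mu(\overline{E_j})=\mu(\overline{E_{i_m}})\,\mu_{i_m}(\bigcup_j B_j)/\sum_k\mu_{i_m}(B_k)\le \mu(\overline{E_{i_m}})\,(3r)^t/(\kappa_1\mathcal{L}(B_{i_m}))$, using the weight formula, $\sum_k\mu_{i_m}(B_k)\ge\kappa_1$, and the Frostman bound for $\mu_{i_m}$ with centre $x\in E_{i_m}$; an induction on $m$ keeps $\mu(\overline{E_{i_m}})\le\kappa_1^{-m}\mathcal{L}(B_{i_m})$, and the super-exponential decay of $r_{B_{i_m}}$ converts the factor $\kappa_1^{-m}$ into a factor $r^{-\varepsilon}$; sibling and cross-level terms are smaller by the rapid scale decay and a bounded-overlap count for disjoint balls. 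I expect the main obstacle to be precisely this estimate together with the absolute-continuity reduction for $\mu_i$: since the $E_i$ are arbitrary open sets with no preferred coordinate structure, one cannot argue through side-lengths and everything must be routed through $\varphi^t$ and the measures $\mu_i$, so the bookkeeping that reconciles the Lebesgue-based ubiquity with the $\mu_i$-based mass distribution, and that controls the accumulated constants, is where the real work lies.
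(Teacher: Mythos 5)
The statement you are asked to prove is not proved in this paper: Wang--Wu cite it as a known result of Koivusalo and Rams \cite{Ram} and use it only as background in their survey of prior work, so there is no ``paper proof'' to compare your attempt against. I will therefore assess your proposal on its own terms.

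Your overall blueprint (fix an admissible $t$; build a nested Cantor set $\mathbf K\subset\limsup E_i$ by alternating between the Lebesgue ubiquity of $\{B_i:i\in\mathcal G_t\}$ and the near-optimal Frostman measures $\mu_i$ furnished by $\varphi^t(E_i)\ge\mathcal L(B_i)$; push mass down the tree weighted by $\mu_{i_m}$; verify a Frostman bound $\mu(B(x,r))\le C_\varepsilon r^{t-\varepsilon}$) is the right strategy and is essentially the one used for this kind of mass transference theorem. You also correctly identify the two technical sticking points: reconciling the Lebesgue-based ubiquity with the $\mu_i$-based mass distribution, and the accumulation of constants.

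However, the specific induction you assert is incorrect as stated and constitutes a genuine gap. You claim that $\mu(\overline{E_{i_m}})\le\kappa_1^{-m}\mathcal L(B_{i_m})$ propagates under the update $\mu(\overline{E_j})=\mu(\overline{E_{i_m}})\,\mu_{i_m}(B_j)/\sum_k\mu_{i_m}(B_k)$. But the only upper bound the singular-function hypothesis gives for a child weight is $\mu_{i_m}(B_j)\le r_{B_j}^{\,t}/\mathcal L(B_{i_m})$, and this exceeds $\mathcal L(B_j)/\mathcal L(B_{i_m})$ by a factor of order $r_{B_j}^{\,t-d}\to\infty$ (this excess is not incidental --- it is precisely the ``thinness'' of $E_i$ inside $B_i$ that $\varphi^t$ is designed to quantify, and is where the dimension drop from $d$ to $t$ comes from). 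Consequently the proposed inequality does not survive one inductive step: one would need $r_{B_j}$ bounded \emph{below} by a fixed constant, which contradicts $r_{B_j}\to 0$. The repair is to state the induction in Frostman form, $\mu(\overline{E_{i_m}})\le r_{B_{i_m}}^{\,t-\varepsilon}$, and to observe that this propagates provided each new scale is taken sufficiently small: from $\mu(\overline{E_j})\le\mu(\overline{E_{i_m}})\,r_{B_j}^{\,t}/\bigl(\kappa_1\mathcal L(B_{i_m})\bigr)$ one needs $r_{B_j}^{\,\varepsilon}\le\kappa_1 c_d\, r_{B_{i_m}}^{\,d+\varepsilon-t}$, which is a legitimate constraint that can be met by raising the ubiquity threshold $N_{m+1}$ after level $m$ has been fixed. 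This is what ``super-exponential decay of scales'' should mean here, but it must be formulated at the Frostman level, not at the level of $\mathcal L(B_{i_m})$, because of the extra $r_{B_j}^{\,t-d}$ factor. A second, smaller issue: to mollify $\mu_i$ safely one should first restrict $\mu_i$ to a compact subset $K\subset E_i$ with $\mu_i(K)\ge 1-\epsilon$, then convolve with a bump whose width is less than $\mathrm{dist}(K,\partial E_i)$ and also less than the next level's minimum child radius; otherwise the mollified measure may charge the complement of $E_i$ or fail the Frostman bound at the very scales you need it. Your sketch gestures at this but leaves the ordering of quantifiers (mollification scale versus child scale) unresolved, and that ordering is exactly where the argument has to be tight.
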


For a survey on mass transference principles, one is also referred to Allen \& Troscheit \cite{AllT}. For dimensional result of random limsup sets, one is referred to \cite{Durand,EJJ,FanWu,FJJ,Per} and references therein.
\smallskip

Clearly the mass transference principle from balls to balls cannot deal with the rectangle case. A shortage of the mass transference principle from balls to rectangles is that they require $t_i\ge 1$ for all $i\ge 1$. For example, when applied to the set $W(\tau_1,\cdots, \tau_d)$ (defined in (\ref{e4}) when $\psi_i(q)=q^{-\tau_i}$), they needs a full measure limsup set defined by balls which compels that $\tau_i\ge 1/d$ for all $1\le i\le d$ by Dirichlet/Khintchine's theorem\cite{WWX}. But if the full measure limsup set is not required to be defined by balls but rectangles, one will have much freedom on the choice of the full measure set by Minkowski's theorem. So as far as a limsup set defined by rectangles is concerned, it would be much reasonable to develop the transference principle from rectangles to rectangles. This is the main task of this paper.

%

\section{Mass transference principle from rectangles to rectangles}

Generally speaking, the mass transference principle says that if there is a full (Lebesgue) measure statement for a limsup set, then there will be a full Hausdorff measure statement for the shrunk limsup set.

In this paper, by introducing a notion called {\em ubiquity for rectangles}, one can also transfer a full measure property of a limsup set defined by a sequence of big rectangles to the full Hausdorff measure/dimension theory for the limsup set defined by shrinking the big rectangles to smaller ones. Here the full measure property goes with or without the assumption of ubiquity for rectangles. So, we call them {\em transference principle under ubiquity} and {\em transference principle under full measure} respectively. As will be seen later, the argument under the full measure case follows almost directly from that under the ubiquity case.

We begin with the framework we work with.


\subsection{Framework}Fix an integer $d\ge 1$.  For each $1\le i\le d$, let $(X_i, |\cdot |_i, m_i)$ be a bounded locally compact metric
space with $m_i$ a $\delta_i$-Ahlfors regular probability measure.

Then we consider the product space $(X, |\cdot|, m)$, where $$
{\text{space}}\  X=\prod_{i=1}^dX_i; \   \ \ {\text{measure}}\  m=\prod_{i=1}^dm_i; \ \ \ {\text{metric}}\ \ |\cdot|=\max_{1\le i\le d}|\cdot |_i.
$$ So a ball $B(x,r)$ in  $X$ is in fact the product of balls in $\{X_i\}_{1\le i\le d}$, i.e. $$
B(x,r)=\prod_{i=1}^d B(x_i,r), \ {\text{for}}\ x=(x_1,\cdots, x_d).
$$ Also, we do not distinguish a ball with a hypercube.

As before, let $J$ be an infinite countable index set; $\beta: J\to \mathbb{R}^+$ a positive function such that for any $M>1$, $\{\alpha\in J: \beta_{\alpha}<M\}$ is finite; $\{\ell_n, u_n: n\ge 1\}$ two sequences of integers such that $u_n\ge \ell_n\to \infty$ as $n\to \infty$ and define $$
J_n=\{\alpha\in J: \ell_n\le\beta_{\alpha}\le u_n\}.
$$Let $\rho:\mathbb{R}^+\to \mathbb{R}^+$ be non-increasing and $\rho(u)\to 0$ as $u\to \infty$.

For each $1\le i\le d$, let $\{\R_{\alpha, i}: \alpha\in J\}$ be a sequence of subsets of $X_i$. The resonant sets in $X$ we are considering are $$
\Big\{\R_{\alpha}=\prod_{i=1}^d\R_{\alpha, i}, \ \ \alpha\in J\Big\}.
$$ 
For any $\a=(a_1,\cdots, a_d)\in (\mathbb{R}^{+})^d$, {denote}$$\Delta(\R_{\alpha}, \rho^{\a})=\prod_{i=1}^d\Delta(\R_{\alpha,i}, \rho^{a_i}),
$$ where $\Delta(\R_{\alpha,i}, \rho^{a_i})$ is the neighborhood of $\R_{\alpha,i}$ in $X_i$ and call it the part of $\Delta(\R_{\alpha}, \rho^{\a})$ in the $i$th direction.

Fix $\a=(a_1,\cdots, a_d)\in (\mathbb{R}^{+})^d$. The set we are considering is: for $\t=(t_1,\cdots,t_d)\in (\mathbb{R}^+)^d,$ the set $$
W(\bold{t})=\Big\{x\in X: x\in \Delta(\R_{\alpha}, \rho(\beta_{\alpha})^{\bold{a}+\bold{t}}), \ {\text{i.m.}}\ \alpha\in J\Big\},
$$ and more generally, replacing $\rho^{\bold{t}}$ by general functions $
\Psi=(\psi_1,\cdots,\psi_d): \mathbb{R}^+\to (\mathbb{R}^+)^d,
$ the set
 $$
W(\Psi)=\Big\{x\in X: x\in \prod_{i=1}^d\Delta\Big(\R_{\alpha,i}, \ \ \rho(\beta_{\alpha})^{{a_i}}\cdot \psi_i(\beta_{\alpha})\Big), \ {\text{i.m.}}\ \alpha\in J\Big\}.
$$
The smaller {\em `rectangle'} $\Delta(\R_{\alpha}, \rho(\beta_{\alpha})^{\bold{a}+\bold{t}})$ can be viewed as the shrunk one from $\Delta(\R_{\alpha}, \rho(\beta_{\alpha})^{\bold{a}})$.
\medskip

We require the resonant sets having special forms which is a generalization when the resonant sets are points or affine subspaces.
\begin{dfn}[$\kappa$-scaling property] Let $0\le \kappa<1$. For each $1\le i\le d$, call $\{\R_{\alpha, i}\}_{\alpha\in J}$ having
$\kappa$-scaling property if for any $\alpha\in J$ and any ball $B(x_i,r)$ in $X_i$ with center $x_i\in \R_{\alpha, i}$ and $0<\epsilon<r$,
one has \begin{equation*}
c_2 r^{\delta_i\kappa}\epsilon^{\delta_i (1-\kappa)}\le m_i\Big(B(x_i,r)\cap \Delta(\R_{\alpha, i}, \epsilon)\Big)\le c_3 r^{\delta_i\kappa}\epsilon^{\delta_i (1-\kappa)},
\end{equation*} for some absolute constants $c_2, c_3>0$.\end{dfn}

We list some examples for which the $\kappa$-scaling property is valid.\begin{itemize}
\item (1) For each $\alpha\in J$ and $1\le i\le d$, the $i$th coordinate $\R_{\alpha, i}$ is a point in $X_i$, so $\kappa=0$.\smallskip

\item (2) Let $X_i=\mathbb{R}^n$ for each $1\le i\le d$. For each $\alpha\in J$ and $1\le i\le d$, the $i$th coordinate $\R_{\alpha, i}$ is an $l$-dimensional affine subspace in $X_i$, so $\delta_i=n$ and $\kappa=l/n$.\smallskip

\item (3) Let $X_i=\mathbb{R}^n$ and $\R_{\alpha, i}$ be an $l$-dimensional smooth compact manifold embedded in $X_i$ for all $1\le i\le d$ and $\alpha\in J$. Then $\delta_i=n$ and $\kappa=l/n$.\smallskip

\item (4) Let $X_i=\mathbb{R}^n$ and $\R_{\alpha, i}$ be some self-similar set with open set condition with dimension $l$ for all $1\le i\le d$ and $\alpha\in J$. Then $\delta_i=n$ and $\kappa=l/n$.
\end{itemize}
For a proof of the last two examples, one is referred to Allen \& Baker \cite{AllBa}.

\subsection{Transference principle under ubiquity}\

The ubiquity condition for balls (\ref{fff7}) 
is mainly rooted in Dirichlet's theorem \cite{BDV06}. So, for the metric theory
of limsup sets defined by rectangles, Minkowski's theorem should intervene in some form. Thus, the following extended ubiquity condition should be the right one suitable for the rectangle case.

The notion {\em ``local ubiquity for rectangles"} we will introduce is an extended one of the notion {\em ``local ubiquity for balls"}
introduced by Beresnevich, Dickinson \& Velani \cite{BDV06} and try to catch the nature of the rectangles inspired by Minkowski's theorem. 

\begin{dfn}[Local ubiquity system for rectangles] Call $(\{\R_{\alpha}\}_{\alpha\in J}, \beta)$ a local ubiquity system for rectangles with respect to $(\rho, \a)$ if there exists a constant $c>0$ such that for any ball $B$ in $X$,
\begin{equation*}
\limsup_{n\to\infty}m\left(B\cap \bigcup_{\alpha\in J_n}\Delta(\R_{\alpha}, \rho(u_n)^{\a})\right)\ge c\cdot m(B).
\end{equation*}\end{dfn}

\begin{dfn}[Uniform local ubiquity system for rectangles]\label{dfn1} Call $(\{\R_{\alpha}\}_{\alpha\in J}, \beta)$ a uniform local ubiquity system for rectangles with respect to $(\rho, \a)$ if there exists a constant $c>0$ such that for any ball $B$ in $X$, \begin{equation}\label{ff5}
m\left(B\cap \bigcup_{\alpha\in J_n}\Delta(\R_{\alpha}, \rho(u_n)^{\a})\right)\ge c\cdot m(B), \ \ {\text{for all}}\ \ n\ge n_o(B).
\end{equation}\end{dfn}

The local ubiquity property for rectangles plays the role of the full $m$-measure statement for a limsup set defined by rectangles.

%
%
%
%

\begin{theorem}[Hausdorff dimension theory]\label{t1}
Under the setting given above. Assume the $\delta_i$-Ahlfors regularity for $m_i$ with $1\le i\le d$, the local ubiquity for rectangles and the $\kappa$-scaling property. One has $$
\hdim W(\bold{t})\ge \min_{A\in \mathcal{A}}\Big\{\sum_{k\in \K_1}\delta_k+\sum_{k\in \K_2}\delta_k+\kappa \sum_{k\in \K_3}\delta_k+(1-\kappa)\frac{\sum_{k\in \K_3}a_k\delta_k-\sum_{k\in \K_2}t_k\delta_k}{A}\Big\}
$$ where $$
\mathcal{A}=\{a_i, a_i+t_i, 1\le i\le d\}
$$ and for each $A\in \mathcal{A}$, the sets $\K_1, \K_2,\K_3$ give a partition of $\{1,\cdots,d\}$ defined as \begin{align*}&\K_1=\Big\{k: a_k\ge A\Big\},\ \ \K_2=\Big\{k: a_k+{t_k}\le A\Big\}\setminus \K_1,\ \
\K_3=\{1,\cdots, d\}\setminus (\K_1\cup \K_2).\end{align*}\end{theorem}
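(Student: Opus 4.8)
The plan is to run the classical machine for lower bounds on the Hausdorff dimension of $\limsup$ sets: construct a Cantor-type subset $\mathcal{F}_{\infty}\subseteq W(\mathbf{t})$, equip it with a mass distribution $\mu$, and finish by the mass distribution principle, which gives $\hdim\mathcal{F}_{\infty}\ge s$ once $\mu(B(x,r))\ll r^{s}$ for all small $r$. The only genuinely new feature compared with the ball-to-ball theory of \cite{BDV06} is the anisotropy of the sets $\Delta(\R_{\alpha},\rho^{\mathbf{a}+\mathbf{t}})$: the construction must be organised coordinate by coordinate, and the estimate for $\mu(B(x,r))$ will depend on where $r$ sits among the scales $\rho(u_{n})^{a_{k}}$ and $\rho(u_{n})^{a_{k}+t_{k}}$; optimising over these positions is exactly what produces the minimum over $A\in\mathcal{A}$. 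First I would make the standard reductions: replace the local ubiquity for rectangles by its uniform form (Definition \ref{dfn1}), the $\limsup$ being realised along a subsequence which the usual covering-and-relabelling argument of \cite{BDV06} promotes to \eqref{ff5}; pass to a further subsequence of $(u_{n})$ so that $\rho(u_{n+1})$ is as small as desired relative to any fixed power of $\rho(u_{n})$, making the generation-$(n+1)$ rectangles far thinner in every coordinate than a fixed contraction of the generation-$n$ ones and absorbing the multiplicative errors from the $\kappa$-scaling property and from $m_{i}$-doubling into constants; and fix a ball $B_{0}\subseteq X$ and a number $s$ strictly below the claimed bound, so that it suffices to produce a probability measure $\mu$ on $W(\mathbf{t})\cap B_{0}$ with $\mu(B(x,r))\ll r^{s}$.

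For the construction, work inside a cube $Q$ of the current generation $n$. The uniform ubiquity for rectangles supplies resonant sets $\R_{\alpha}$, $\alpha\in J_{n+1}$, whose ``big'' rectangles $\Delta(\R_{\alpha},\rho(u_{n+1})^{\mathbf{a}})$ cover at least a proportion $c$ of $Q$; since these are congruent axis-parallel boxes, a greedy selection extracts from them a pairwise disjoint subfamily whose union still has measure $\gg m(Q)$ (a chosen box excludes only boxes lying inside a fixed dilate of it, which has comparable $m$-measure by doubling). Inside each chosen box I keep the smaller rectangle $\Delta(\R_{\alpha},\rho(u_{n+1})^{\mathbf{a}+\mathbf{t}})$; as $\beta_{\alpha}\le u_{n+1}$ and the $t_{k}$ are non-negative, this smaller rectangle is contained both in the big rectangle and in $\Delta(\R_{\alpha},\rho(\beta_{\alpha})^{\mathbf{a}+\mathbf{t}})$, so every point of it lies in $W(\mathbf{t})$; I then tile it by cubes of side equal to its shortest side and declare these to be the generation-$(n+1)$ cubes. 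Then $\mathcal{F}_{\infty}=\bigcap_{n}(\text{generation }n)$ is a subset of $W(\mathbf{t})$, since each of its points lies in $\Delta(\R_{\alpha},\rho(\beta_{\alpha})^{\mathbf{a}+\mathbf{t}})$ for one $\alpha$ per generation. The mass distribution $\mu$ is defined by the usual recursion: the mass of a generation-$n$ cube is split uniformly with respect to $m$ among the generation-$(n+1)$ big rectangles it contains — equivalently among the small rectangles inside them — and within each small rectangle uniformly with respect to $m$ among the cubes tiling it. Using the $\delta_{i}$-Ahlfors regularity together with the $\kappa$-scaling property, the number of children of a cube and the $\mu$-measure of every descendant can be written, up to multiplicative constants, in closed form in terms of $\rho(u_{n})$, $\mathbf{a}$, $\mathbf{t}$, $\kappa$ and the $\delta_{i}$; the ubiquity constant contributes only a factor $c^{-n}$, which is absorbed thanks to the super-exponential decay of $\rho(u_{n})$ and the gap between $s$ and the claimed bound.

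The heart of the proof, and the step I expect to be the main obstacle, is the estimate of $\mu(B(x,r))$ at \emph{every} scale $r$, not just the structural scales of the construction. Given $r$ one locates the generation $n$ with $r$ between the cube sizes of generations $n+1$ and $n$, checks that $B(x,r)$ then meets only boundedly many rectangles and cubes of the relevant generations, and partitions the coordinates $\{1,\dots,d\}$ according to the position of $r$ relative to $\rho(u_{n+1})^{a_{k}}$ and $\rho(u_{n+1})^{a_{k}+t_{k}}$: those in which $B(x,r)$ is coarser than the current big-rectangle side, where the projected measure behaves like $m_{k}$; those in which $B(x,r)$ already lies inside a single current small rectangle; and the intermediate ones in which $B(x,r)$ meets a proper neighbourhood of the resonant set, where the $\kappa$-scaling estimate must be invoked with the correct reference radius. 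Adding the per-coordinate contributions and writing $r=\rho(u_{n+1})^{A}$ gives $\mu(B(x,r))\ll r^{s(A)}$ with $s(A)$ equal to the bracketed expression and $\K_{1},\K_{2},\K_{3}$ the partition determined by $A$. Since on each interval between consecutive members of $\mathcal{A}$ this partition is constant and $s(A)$ has the shape $\text{const}+\text{const}/A$, hence is monotone, its infimum over all admissible $A$ is attained at a point of $\mathcal{A}$; therefore $\mu(B(x,r))\ll r^{\min_{A\in\mathcal{A}}s(A)}\le r^{s}$, and the mass distribution principle yields $\hdim W(\mathbf{t})\ge\min_{A\in\mathcal{A}}s(A)$. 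Everything else — the reductions, the greedy disjointification, the recursive definition of $\mu$ and its mass estimates at the structural scales — is routine once this multi-scale, anisotropic bookkeeping is organised correctly; the interaction between the tiling of the thin rectangles and the ball-based ubiquity hypothesis is the one point demanding real care.
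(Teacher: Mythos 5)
Your proposal follows the same overall strategy as the paper: build a Cantor subset of $W(\mathbf{t})$ via an iterated cover--shrink--tile construction driven by the $K_{G,B}$-lemma, place a mass distribution on it that is $m$-proportional at the big-rectangle stage and then uniform down to the tiling cubes, and obtain the lower bound from the mass distribution principle by analysing $\mu(B(x,r))$ through a coordinate-by-coordinate trichotomy according to where $r$ falls relative to $\rho^{a_k}$ and $\rho^{a_k+t_k}$; this is exactly the mechanism that produces the sets $\K_1,\K_2,\K_3$ and the formula to be minimised over $A\in\mathcal{A}$, and the observation that $s(A)$ is of the form $\text{const}+\text{const}/A$ on each interval between consecutive scales so that the minimum is attained on $\mathcal{A}$ matches the paper's argument verbatim. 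The one place you diverge is that you fix $s$ strictly below the claimed bound and use a single sublevel per generation, absorbing the ubiquity constant $c^{-1}$ and the $\kappa$-scaling and doubling constants through the slack and a rapidly chosen subsequence of $(u_n)$. The paper instead proves the stronger statement (Theorem~\ref{t2}: full $\mathcal{H}^{s}$-measure on every ball), which requires the more elaborate multi-sublevel construction with exact normalisation $\mu(\widetilde R)=m(\widetilde R)\mu(C_{k-1})/\sum m(\widetilde R')$ so that no error compounds, and then Theorem~\ref{t1} falls out as a corollary. Your route is the right simplification if one only wants the dimension bound; what one gives up is the full-measure conclusion. Two small imprecisions worth tightening: the sets $\Delta(\R_\alpha,\rho^{\mathbf a})$ are not themselves axis-parallel boxes when $\R_\alpha$ is not a point, so (as in Lemma~\ref{lll2}) you must first cover them by the congruent rectangles $\prod_i B(z_i,\rho^{a_i})$ with $z\in\R_\alpha$ before the greedy disjointification; and the choice of generation at scale $r$ is best phrased as the paper does, as the $k$ for which $B(x,r)$ meets exactly one generation-$k$ cube but at least two of generation $k+1$, since separation of siblings (the $5r$-separation you get from the covering lemma) is what licenses the per-coordinate volume counts.
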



If denote by $s=s(\bold{t})$ the dimensional number given in Theorem \ref{t1}, we have\begin{theorem}[Hausdorff measure theory]\label{t2}
For any ball $B\subset X$, $$
\mathcal{H}^s(B\cap W(\bold{t}))=\mathcal{H}^s(B).$$
\end{theorem}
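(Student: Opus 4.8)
\subsection*{Proof proposal for Theorem \ref{t2}}

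The plan is to obtain Theorem \ref{t2} from the Cantor-set construction underlying the dimension bound of Theorem \ref{t1}, upgraded in two stages: first to a \emph{uniform} positive lower bound $\mathcal H^s(B\cap W(\mathbf{t}))\gg\mathcal H^s(B)$ valid in every ball $B$ with implied constant independent of $B$, and then from ``uniformly positive'' to ``full'' by a Lebesgue density argument. Since $X=\prod_{i=1}^d X_i$ is bounded with $m=\prod_{i=1}^d m_i$ Ahlfors $\delta$-regular ($\delta:=\sum_{i=1}^d\delta_i$), the number $s=s(\mathbf{t})$ of Theorem \ref{t1} satisfies $s\le\delta$ (take e.g.\ $A=\max_i(a_i+t_i)$ in $\mathcal A$), and $0<\mathcal H^s(B)<\infty$ precisely when $s=\delta$, in which case $\mathcal H^\delta\asymp m$ on $X$. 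When $s<\delta$ one has $\mathcal H^s(B)=\infty$ and it suffices to exhibit a subset of $B\cap W(\mathbf{t})$ of infinite $\mathcal H^s$-measure; when $s>\delta$ the identity is the trivial $0=0$.

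\emph{Step 1 (uniform positive measure in every ball).} The bound $\hdim W(\mathbf{t})\ge s$ in Theorem \ref{t1} is proved by constructing, inside a prescribed ball, a nested sequence of families of shrunk rectangles $\Delta(\R_\alpha,\rho(\beta_\alpha)^{\mathbf{a}+\mathbf{t}})$ — the branching at each level being dictated by how the exponents $a_k$ and $a_k+t_k$ compare with the current construction scale, which is exactly what produces the partition $\K_1,\K_2,\K_3$ and the minimum over $A\in\mathcal A$ — together with a probability measure on the resulting Cantor set $\mathbb K\subset W(\mathbf{t})$ whose $\mu$-mass of a ball $B(x,r)$ is $\ll r^s$ at all small scales, so that the mass distribution principle gives $\mathcal H^s(\mathbb K)\gg 1$. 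The crucial observation is that the hypotheses feeding this construction are scale invariant: the local ubiquity for rectangles holds inside \emph{every} ball with the \emph{same} constant $c$ (along a subsequence of scales that may depend on the ball, which is harmless), while the $\kappa$-scaling and $\delta_i$-Ahlfors constants are absolute. Hence, running the construction inside an arbitrary ball $B_0$ produces a measure $\mu_{B_0}$ supported on $\mathbb K_{B_0}\subset B_0\cap W(\mathbf{t})$, of total mass $\asymp m(B_0)$, still satisfying $\mu_{B_0}(B(x,r))\ll r^s$ with the same implied constant; the mass distribution principle then yields $\mathcal H^s(B_0\cap W(\mathbf{t}))\gg m(B_0)\asymp\mathcal H^s(B_0)$ when $s=\delta$, with a constant $c'>0$ independent of $B_0$. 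When $s<\delta$ one repeats the construction inside a disjoint family of sub-balls of $B_0$ whose total $\mathcal H^s$-measure diverges (possible since $\mathcal H^s(B_0)=\infty$), obtaining $\mathcal H^s(B_0\cap W(\mathbf{t}))=\infty=\mathcal H^s(B_0)$.

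\emph{Step 2 (upgrade to full measure).} This is needed only when $s=\delta$. Fix a ball $B$ and suppose, for contradiction, $\mathcal H^s(B\cap W(\mathbf{t}))<\mathcal H^s(B)$; then $A:=X\setminus W(\mathbf{t})$ has $\mathcal H^s(B\cap A)>0$. Since $\mathcal H^s\asymp m$ on $X$ and $m$, being Ahlfors regular, is doubling, the Lebesgue density theorem applies: $\mathcal H^s$-almost every $x\in B\cap A$ satisfies $\lim_{r\to 0}\mathcal H^s(B(x,r)\cap A)/\mathcal H^s(B(x,r))=1$. But Step 1 gives $\mathcal H^s(B(x,r)\cap W(\mathbf{t}))\ge c'\,\mathcal H^s(B(x,r))$ for every ball $B(x,r)$ with the fixed $c'>0$, hence $\mathcal H^s(B(x,r)\cap A)\le(1-c')\mathcal H^s(B(x,r))$ for all $r$, contradicting density $1$. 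Therefore $\mathcal H^s(B\cap W(\mathbf{t}))=\mathcal H^s(B)$ for every ball $B$, which also covers the case $s<\delta$ by Step 1.

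\emph{Main obstacle.} The substantive difficulty lies entirely in Step 1: checking that the Cantor construction of Theorem \ref{t1} can be localized to an arbitrary ball with no loss in the mass-distribution constant. This requires tracking, level by level, the interplay of the three regimes $a_k\ge A$, $a_k+t_k\le A$, and $a_k<A<a_k+t_k$ (the sets $\K_1,\K_2,\K_3$) when splitting a surviving rectangle at a new scale, and verifying that the local measure on each sub-cube of $B_0$ obeys the same $r^s$ bound — the extraction of the sharp exponent, i.e.\ the minimum over $A\in\mathcal A$, being forced precisely by the worst of these regimes. The density-theorem upgrade in Step 2 is soft and uses only the Ahlfors regularity already assumed.
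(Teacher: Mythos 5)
Your dichotomy is right (when $\max_i t_i = 0$ the set $W(\mathbf{t})$ is already the full-measure ubiquity limsup and $\mathcal H^s\asymp m$, so the claim is immediate; the content is the case $\max_i t_i>0$, equivalently $s<\delta:=\sum_i\delta_i$, where $\mathcal H^s(B)=\infty$ and one must produce a subset of $B\cap W(\mathbf t)$ of infinite $\mathcal H^s$-measure), and the skeleton ``Cantor subset $+$ mass distribution principle'' is the paper's. But your Step~1 in the case $s<\delta$ has a genuine gap. You propose to run the construction in a disjoint family $\{B_j\}\subset B_0$, normalized so that $\mu_j$ has total mass $\asymp m(B_j)$ and satisfies $\mu_j(B(x,r))\le c\,r^s$ with $c$ independent of $j$. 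The mass distribution principle then yields $\mathcal H^s(\mathbb K_j)\ge m(B_j)/c$. Summing over disjoint sub-balls of $B_0$ gives $\sum_j\mathcal H^s(\mathbb K_j)\le m(B_0)/c<\infty$, which is bounded, not infinite: the Hausdorff-measure lower bound you extract from each sub-ball scales like $r_{B_j}^{\delta}$, whereas you would need it to scale like $r_{B_j}^{s}$ for the sum to diverge. Invoking ``total $\mathcal H^s$-measure of the sub-balls diverges'' is vacuous, since for $s<\delta$ every ball already has infinite $\mathcal H^s$-measure and that tells you nothing about the Cantor sets inside them.

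What the paper does, and what your proposal is missing, is a free parameter $\eta>0$ inserted into the construction: the first level $\F_1(C_0)$ consists of $L_{C_0}=\eta\cdot m(C_0)^{-1}$ separate sublevels, each a family of well-separated rectangles almost packing $C_0$, obtained by successively deleting the previously used parts and refilling. The number of sublevels multiplies the count of level-one balls, so the uniformly distributed measure satisfies $\mu(C_1)\le r_{C_1}^{s}/\eta$, and the general-ball estimate becomes $\mu(B(x,r))\le c\,r^s/\eta$. The mass distribution principle then gives $\mathcal H^s(\F_\infty)\ge\eta/c$; letting $\eta\to\infty$ forces $\mathcal H^s(C_0\cap W(\mathbf t))=\infty=\mathcal H^s(C_0)$. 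This ``boost by repeated almost-packing'' is the essential mechanism for obtaining infinite, rather than merely positive, $\mathcal H^s$-measure, and without it the construction you describe only proves the Hausdorff dimension bound (Theorem~\ref{t1}), not the full Hausdorff measure statement (Theorem~\ref{t2}). Your Step~2 Lebesgue-density upgrade is correct but only bites when $s=\delta$, a case the paper dispatches in one line; it cannot compensate for the missing $\eta$-mechanism in the main case.
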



\begin{remark}
The dimensional numbers given in Theorem \ref{t1} are just those corresponding to cover the collection of rectangles
$$\Big\{\Delta(\R_{\alpha}, \rho(u_n)^{\a+\bold{t}}): \alpha\in J_n\Big\}$$
by balls of radius $\rho(u_n)^{A}$ with $A\in \mathcal{A}$.
We
will give a detailed explanation of how these dimensional numbers arise in a rather natural way in the next section.
\end{remark}

For general functions $\Psi=(\psi_1,\cdots,\psi_d)$, we denote by $\widehat{\mathcal{U}}$ and $\mathcal{U}$ respectively the set of the accumulation points of the sequences $$
\left\{\Big(\frac{\log \psi_1(u_n)}{\log \rho(u_n)},\cdots, \frac{\log \psi_d(u_n)}{\log \rho(u_n)}\Big): n\in \N\right\}, \ \ {\text{and}}\ \ \left\{\Big(\frac{\log \psi_1(n)}{\log \rho(n)},\cdots, \frac{\log \psi_d(n)}{\log \rho(n)}\Big): n\in \N\right\}.
$$ Then we have \begin{theorem}[General case]\label{t3}
Assume that $\psi_i$ is decreasing for each $1\le i\le d$. Assume the $\delta_i$-Ahlfors regularity for $m_i$ with $1\le i\le d$, the uniform
local ubiquity for rectangles and the $\kappa$-scaling property. One has
$$\hdim W(\Psi)\ge \max\Big\{s(\bold{t}): \bold{t}\in \widehat{\mathcal{U}}\Big\}.
$$ If one further has that \begin{equation}\label{d3}
\lim_{n\to\infty}\frac{\log \rho(u_{n+1})}{\log \rho(u_n)}=1,
\end{equation} then $$\hdim W(\Psi)\ge \max\Big\{s(\bold{t}): \bold{t}\in \mathcal{U}\Big\}.
$$
\end{theorem}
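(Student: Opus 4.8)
The plan is to reduce both inequalities to the power--law case, Theorem~\ref{t1}, by approximating $\Psi$ by a suitable power of $\rho$ along a carefully chosen subsequence of blocks, using only the monotonicity of $\rho$ and of each $\psi_i$ together with the continuity of the dimensional number $s(\cdot)$. The structural point to keep in mind is this: although the $i$th side of $\prod_{i}\Delta(\R_{\alpha,i},\rho(\beta_\alpha)^{a_i}\psi_i(\beta_\alpha))$ has the index--dependent length $\rho(\beta_\alpha)^{a_i}\psi_i(\beta_\alpha)$, for $\alpha\in J_n$ we have $\beta_\alpha\le u_n$, hence $\rho(\beta_\alpha)^{a_i}\psi_i(\beta_\alpha)\ge\rho(u_n)^{a_i}\psi_i(u_n)$; consequently $W(\Psi)$ contains the \emph{uniform--radius} limsup set $V:=\limsup_{n}\bigcup_{\alpha\in J_n}\prod_{i=1}^{d}\Delta(\R_{\alpha,i},\rho(u_n)^{a_i}\psi_i(u_n))$. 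A set of this shape --- a limsup of unions over blocks of rectangles of one common scale per block --- is precisely what the ubiquity hypothesis and the Cantor construction underlying Theorem~\ref{t1} are designed to handle, so we shall run that construction directly rather than merely quote the statement of Theorem~\ref{t1} (whose $W(\bold t)$ is defined with the larger radii $\rho(\beta_\alpha)$, which only gives an inequality in the wrong direction).

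\emph{Lower bound by $\max\{s(\bold t):\bold t\in\widehat{\mathcal U}\}$.} Fix $\bold t\in\widehat{\mathcal U}$ and $\vep>0$. Pick a subsequence $(u_{n_k})_{k\ge1}$ along which $\log\psi_i(u_{n_k})/\log\rho(u_{n_k})\to t_i$ for every $i$, thinning it so that $n_{k+1}$ is as large as we please relative to $n_k$ (this freedom lets the nested Cantor construction proceed at sufficiently separated scales). For $k$ large, $\rho(u_{n_k})<1$ and the convergence of the ratios gives $\psi_i(u_{n_k})\ge\rho(u_{n_k})^{t_i+\vep}$; combining this with $\psi_i(\beta_\alpha)\ge\psi_i(u_{n_k})$ and $\rho(\beta_\alpha)\ge\rho(u_{n_k})$ for $\alpha\in J_{n_k}$, we get
\[
\prod_{i=1}^{d}\Delta\big(\R_{\alpha,i},\rho(u_{n_k})^{a_i+t_i+\vep}\big)\ \subseteq\ \prod_{i=1}^{d}\Delta\big(\R_{\alpha,i},\rho(\beta_\alpha)^{a_i}\psi_i(\beta_\alpha)\big)
\]
for all such $\alpha$ and all $k$ large, hence $\limsup_{k}\bigcup_{\alpha\in J_{n_k}}\prod_{i}\Delta(\R_{\alpha,i},\rho(u_{n_k})^{a_i+t_i+\vep})\subseteq W(\Psi)$. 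Now the $\delta_i$--Ahlfors regularity, the $\kappa$--scaling property and the uniform (hence local) ubiquity for rectangles all pass to the system re-indexed by the block sequence $(\ell_{n_k},u_{n_k})_{k}$, and the Cantor construction proving Theorem~\ref{t1} for this re-indexed system with shrinking exponent $(t_1+\vep,\dots,t_d+\vep)$ builds a subset of the displayed limsup set whose Hausdorff dimension is $s(t_1+\vep,\dots,t_d+\vep)$ --- a number depending only on $\bold a,\kappa$, the $\delta_i$ and the exponent, never on how fast the blocks grow. Therefore $\hdim W(\Psi)\ge s(t_1+\vep,\dots,t_d+\vep)$; letting $\vep\to0$ and using the continuity of $\bold t\mapsto s(\bold t)$ (clear from the explicit formula: each term in the minimum is continuous in $\bold t$ and the formula is unchanged when an index crosses between two pieces of the partition $\K_1,\K_2,\K_3$) gives $\hdim W(\Psi)\ge s(\bold t)$, and taking the supremum over $\bold t\in\widehat{\mathcal U}$ gives the first assertion.

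\emph{Lower bound under \eqref{d3}.} Fix $\bold t\in\mathcal U$ and $\vep>0$, and pick integers $m_k\to\infty$ with $\log\psi_i(m_k)/\log\rho(m_k)\to t_i$ for all $i$. For $k$ large let $n=n(k)$ be the largest index with $u_n\le m_k$; then $u_n\le m_k<u_{n+1}$, so $\rho(u_{n+1})\le\rho(m_k)\le\rho(u_n)$ by monotonicity, and $n(k)\to\infty$. Fix $\eta>0$ small enough that $(1+\eta)(t_i+\vep/2)\le t_i+\vep$ for every $i$. By \eqref{d3}, $\rho(u_{n+1})\ge\rho(u_n)^{1+\eta}$ for $n$ large, so $\rho(m_k)\ge\rho(u_n)^{1+\eta}$; together with $\psi_i(u_n)\ge\psi_i(m_k)\ge\rho(m_k)^{t_i+\vep/2}$ (valid for $k$ large) this gives $\psi_i(u_n)\ge\rho(u_n)^{(1+\eta)(t_i+\vep/2)}\ge\rho(u_n)^{t_i+\vep}$. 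Thus the inclusion of the previous paragraph holds with $n_k$ replaced by the infinitely many distinct indices $n(k)$, yielding $\hdim W(\Psi)\ge s(t_1+\vep,\dots,t_d+\vep)$; letting $\vep\to0$ and taking the supremum over $\bold t\in\mathcal U$ completes the proof.

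\emph{Main obstacle.} The heart of the matter is the comparison isolated at the start: one controls $\psi_i(\beta_\alpha)$ against $\rho(\beta_\alpha)^{t_i}$ only at the right endpoint $\beta_\alpha=u_n$ of a block and not throughout $[\ell_n,u_n]$, so the reduction to a power law must be routed through the uniform scale $\rho(u_n)$ at which the ubiquity condition lives and through a verbatim rerun of the Cantor construction of Theorem~\ref{t1} on a sparse subsequence of blocks; hypothesis \eqref{d3} serves precisely to downgrade approximation along all integers to approximation along the sparse sequence $\{u_n\}$ at the price of an arbitrarily small loss in the exponent.
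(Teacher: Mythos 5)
Your proof is correct and follows essentially the same route as the paper's: exploit the uniform ubiquity to rerun the Cantor construction of Theorem~\ref{t2} along a subsequence of blocks where $\psi_i(u_n)$ is bounded below by $\rho(u_n)^{t_i+\vep}$, obtain dimension $\ge s(\bold t+\vep)$, and send $\vep\to 0$ using the continuity (the paper says monotonicity) of $\bold t\mapsto s(\bold t)$; for the second assertion both you and the paper use \eqref{d3} to transfer an accumulation point along $\mathbb{N}$ to one along $\{u_n\}$ at the cost of an arbitrarily small exponent loss. Your write-up is in fact somewhat more explicit than the paper's (which compresses the $\vep$-bookkeeping into the phrase ``line by line''), and you correctly observe that only the one-sided bound $\psi_i(u_n)\ge\rho(u_n)^{t_i+\vep}$ is needed, but these are refinements of the same argument rather than a different method.
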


In latter applications, the systems always have a uniform ubiquity property for rectangles and
the extra condition (\ref{d3}) is also satisfied so that Theorem \ref{t3} can be applied.

For the simultaneous Diophantine approximation defined by balls (\ref{e3}), it is known that the dimension of the set (\ref{e3})
depends on $\tau=\liminf_{q\to \infty}-\frac{\log \psi(q)}{\log q}$. While for the rectangle case (\ref{e4}), it will be seen that the dimension of the set (\ref{e4}) depends on the accumulation points of the sequences $$ \left\{\Big(\frac{-\log \psi_1(q)}{\log q},\cdots, \frac{-\log \psi_d(q)}{\log q}\Big): q\in \N\right\}.
$$ Even such an observation, though simple, does not appear in the literature before.

\subsection{Transference principle under full measure}\

In the ubiquity case, what we really consider is the subset $$
\Big\{x\in X: x\in \bigcup_{\alpha\in J_n}\Delta(\R_{\alpha}, \rho(u_n)^{\bold{a+t}}), \ {\text{i.m.}}\ n\in \N\Big\},
$$ of $W(\bold{t})$. In other words, all the rectangles with $\alpha\in J_n$ are all of the same size. So, one would like to know how about the case without the ubiquity assumption. This is done by replacing the ubiquity assumption to that the set
$$
\Big\{x\in X: x\in \Delta(\R_{\alpha}, \rho(\beta_{\alpha})^{\bold{a}}), \ {\text{i.m.}}\ \alpha\in J\Big\}.
$$ has full $m$-measure directly.
\begin{theorem}\label{ttt1}
Assume that for each $1\le i\le d$, the measure $m_i$ is $\delta_i$-Ahlfors regular and the resonant set $\R_{\alpha,i}$ has the $\kappa$-scaling property for $\alpha\in J$. Suppose \begin{equation}\label{j1}
m\left(\limsup_{\alpha\in J, \beta_{\alpha}\to\infty}\Delta(\R_{\alpha}, \rho(\beta_{\alpha})^{\a})\right)=m(X).
\end{equation} Then we have $$
\hdim W(\bold{t})\ge s(\bold{t}).
$$ 
\end{theorem}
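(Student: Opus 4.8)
The plan is to reduce to the ubiquity case: I would turn the full-measure hypothesis \eqref{j1} into a (scale-varying) substitute for the local ubiquity condition \eqref{ff5} and then re-run the Cantor-type construction underlying Theorems \ref{t1}--\ref{t2}. The first step is to record the elementary measure consequence of \eqref{j1}. Since $m=\prod_i m_i$ is a probability measure and $\limsup_{\beta_\alpha\to\infty}\Delta(\R_\alpha,\rho(\beta_\alpha)^{\a})=\bigcap_{N}\bigcup_{\beta_\alpha\ge N}\Delta(\R_\alpha,\rho(\beta_\alpha)^{\a})$ has full $m$-measure, each set $E_N:=\bigcup_{\beta_\alpha\ge N}\Delta(\R_\alpha,\rho(\beta_\alpha)^{\a})$ also has full $m$-measure, and hence $m(B\cap E_N)=m(B)$ for every ball $B\subset X$ and every $N$. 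Because $\{\alpha\in J:\beta_\alpha\le M\}$ is finite, $E_N$ is an increasing union of the \emph{finite} unions $E_{N,M}:=\bigcup_{N\le\beta_\alpha\le M}\Delta(\R_\alpha,\rho(\beta_\alpha)^{\a})$; continuity of $m$ from below then supplies, for each ball $B$ and each $N$, an integer $M=M(B,N)$ with $m(B\cap E_{N,M})\ge\frac12 m(B)$. This is exactly the local information that \eqref{ff5} provides inside the proof of Theorem \ref{t1}, except that here the rectangles carry their individual radii $\rho(\beta_\alpha)$ instead of one common radius $\rho(u_n)$.

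With this in hand I would carry out the Cantor construction of that proof. At stage $n$ there are finitely many surviving balls; fixing a threshold $N_n\to\infty$ and letting $M_n$ be the largest of the numbers $M(B,N_n)$ over those balls, every surviving ball $B$ satisfies $m(B\cap E_{N_n,M_n})\ge\frac12 m(B)$. From the family $\{\Delta(\R_\alpha,\rho(\beta_\alpha)^{\a}):N_n\le\beta_\alpha\le M_n\}$ I would extract --- possibly after a preliminary dyadic decomposition in the radii $\rho(\beta_\alpha)$ so that within each block the rectangles are commensurable --- a finite pairwise disjoint subfamily contained in $B$ whose union still has $m$-measure $\gg m(B)$, replace each selected big rectangle $\Delta(\R_\alpha,\rho(\beta_\alpha)^{\a})$ by its shrinking $\Delta(\R_\alpha,\rho(\beta_\alpha)^{\a+\t})$, and cover the latter at the critical scale used in the proof of Theorem \ref{t1} (a power $\rho(\beta_\alpha)^{A}$ with $A\in\mathcal A$); those covering balls are declared the surviving balls of stage $n+1$. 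Since every index $\alpha$ used at stage $n$ has $\beta_\alpha\ge N_n\to\infty$, any point lying in rectangles of infinitely many stages lies in $\Delta(\R_\alpha,\rho(\beta_\alpha)^{\a+\t})$ for infinitely many $\alpha$; hence the resulting set $F$ is contained in $W(\t)$.

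Finally I would equip $F$ with its natural Cantor measure $\mu$ and, using the $\delta_i$-Ahlfors regularity together with the $\kappa$-scaling property, check that $\mu(B(x,r))\ll r^{s'}$ uniformly for every fixed $s'<s(\t)$; the optimization over $A\in\mathcal A$ in the definition of $s(\t)$ is precisely what keeps the number of scale-$\rho(\beta_\alpha)^{A}$ balls needed in each coordinate direction small enough for this estimate. The mass distribution principle then gives $\hdim W(\t)\ge\hdim F\ge s'$ for all $s'<s(\t)$, and therefore $\hdim W(\t)\ge s(\t)$.

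I expect the main obstacle to be the covering step and the local-dimension bookkeeping attached to it: one must select a disjoint subfamily of highly eccentric rectangles of possibly widely differing scales that still captures a fixed proportion of each ball, and then control, direction by direction, how the shrunk rectangles get covered at the critical scale so that the local Hölder exponent of $\mu$ equals $s(\t)$ --- the same analytic core as in Theorem \ref{t1}. Since \eqref{j1} carries no grouping of $J$ into single-scale blocks $J_n$, one cannot invoke Theorem \ref{t1} verbatim; instead one reopens its proof and feeds in the scale-varying covering constructed above, which --- as the authors indicate --- amounts to a routine modification once the ubiquity case is settled.
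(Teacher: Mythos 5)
Your proposal follows essentially the same route as the paper: the full-measure hypothesis yields a $K_{G,B}$-lemma variant (Lemma~\ref{k2}), the Cantor construction and mass distribution of Theorem~\ref{t2} are repeated with mixed-scale big rectangles, the scale variation is handled by grouping the big rectangles dyadically via $2^{-\ell-1}\le\rho(\beta_\alpha)<2^{-\ell}$, and a small $\epsilon$-loss in the H\"older exponent (so one proves $\mu(B(x,r))\ll r^{s-2\epsilon}$ rather than $\ll r^{s}$) is absorbed by taking $G$ as large as needed at each level. The one structural point worth flagging is that the paper keeps the $K_{G,B}$-family mixed-scale throughout the construction --- the $5r$-covering lemma (Lemma~\ref{lll1}) already handles rectangles $\prod_i B(x_i,r^{a_i})$ with \emph{varying} $r$ --- and performs the dyadic grouping only at the final estimation of $\mu(B(x,r))$, where the extra factor $\mu(B_k)/m(B_k)\le r^{-\epsilon}\le r^{-2\epsilon}2^{-\ell a_1\epsilon}$ makes the sum over $\ell$ converge; your suggested ``preliminary dyadic decomposition'' \emph{before} the disjoint extraction is unnecessary and slightly risky, since no single dyadic block need capture a fixed proportion of $m(B)$.
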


\begin{remark}
Ubiquity is stronger than full measure, where the former requires that all the rectangles are of the same size, while the latter is not. However it will be seen that the proof of Theorem \ref{ttt1} follows
almost directly from that of the ubiquity case (Theorem \ref{t2}) by one more step: group the rectangles of almost the same size.
\end{remark}

\subsection{A property about the dimensional number}

If we look into the dimensional number $s(\bold{t})$ a little further, we will have the following property on it. Let $$
\widehat{\mathcal{A}}=\{a_i+t_i: 1\le i\le d\},
$$ which corresponds to exponent of the sidelengths of the shrunk rectangles. 
Define $$
\hat{s}(\bold{t}):=\min_{A\in \widehat{\mathcal{A}}}\Big\{\sum_{k\in \K_1}\delta_k+\sum_{k\in \K_2}\delta_k+\kappa \sum_{k\in \K_3}\delta_k+(1-\kappa)\frac{\sum_{k\in \K_3}a_k\delta_k-\sum_{k\in \K_2}t_k\delta_k}{a_i+t_i}\Big\},
$$ where $\K_1, \K_2,\K_3$ are defined as in Theorem \ref{t1} for $A=a_i+t_i$ with $1\le i\le d$.  Then
\begin{proposition}\label{p6}$
s(\bold{t})=\hat{s}(\bold{t}).
$
\end{proposition}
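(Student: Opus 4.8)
The plan is to show that the minimum defining $s(\bold{t})$ over the larger index set $\mathcal{A}=\{a_i, a_i+t_i : 1\le i\le d\}$ is always attained at some $A$ of the form $a_i+t_i$, so that restricting to $\widehat{\mathcal{A}}$ changes nothing. The key observation is that the quantity being minimized, viewed as a function of a continuous parameter $A$ over each interval on which the partition $(\K_1,\K_2,\K_3)$ is constant, is affine in $1/A$; more precisely, writing $F(A)$ for $\sum_{k\in\K_1}\delta_k+\sum_{k\in\K_2}\delta_k+\kappa\sum_{k\in\K_3}\delta_k+(1-\kappa)A^{-1}\big(\sum_{k\in\K_3}a_k\delta_k-\sum_{k\in\K_2}t_k\delta_k\big)$, on any such interval $F$ is monotone in $A$ (its sign of monotonicity is the sign of $\sum_{k\in\K_3}a_k\delta_k-\sum_{k\in\K_2}t_k\delta_k$, which is constant on the interval since the partition is fixed). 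Hence on each interval the minimum of $F$ is attained at an endpoint, and the endpoints of these intervals are exactly the points of $\mathcal{A}$ (the values $a_k$ and $a_k+t_k$ are precisely the thresholds at which membership in $\K_1,\K_2,\K_3$ switches).

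First I would make the partition structure explicit: as $A$ increases through $\mathcal{A}$, a coordinate $k$ leaves $\K_1$ when $A$ passes $a_k$ (going from $a_k\ge A$ to $a_k<A$) and enters $\K_2$ when $A$ passes $a_k+t_k$. So for $A$ below $\min_k a_k$ all coordinates are in $\K_1$, for $A$ above $\max_k(a_k+t_k)$ all are in $\K_2$, and in between the partition is piecewise constant with breakpoints in $\mathcal{A}$. Then I would argue that the global minimum over $\mathcal{A}$ cannot occur at a point $A=a_j$ unless it also occurs at some $a_i+t_i$: at $A=a_j$, either $F$ is decreasing to the right on the adjacent interval (so a larger breakpoint gives a value that is no larger), or $F$ is increasing to the right, in which case it was decreasing coming from the left on the previous interval, pushing the minimizer toward a still-smaller breakpoint — and one must chase this down to show the chain terminates at a point of $\widehat{\mathcal{A}}$. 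The cleanest way is to note that the very first breakpoint (smallest element of $\mathcal{A}$) is always some $a_j$, and walking rightward, the first time the monotonicity of $F$ turns from decreasing to increasing, that turning point is where a minimum sits, and I should check that this turning point is forced to be of the form $a_i+t_i$ because at a breakpoint of type $a_j$ the relevant sum $\sum_{k\in\K_3}a_k\delta_k-\sum_{k\in\K_2}t_k\delta_k$ can only change in a way consistent with continued decrease.

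The main obstacle will be this last bookkeeping step — verifying that a breakpoint of type $a_j$ can never be a strict local minimum of the piecewise function without an adjacent breakpoint of type $a_i+t_i$ achieving an equal or smaller value. To handle it I would compute how the coefficient $C(A):=\sum_{k\in\K_3}a_k\delta_k-\sum_{k\in\K_2}t_k\delta_k$ changes as $A$ crosses a breakpoint: when $A$ crosses $a_j$ from below, coordinate $j$ moves from $\K_1$ into $\K_3$, so $C$ increases by $a_j\delta_j>0$; when $A$ crosses $a_j+t_j$, coordinate $j$ moves from $\K_3$ into $\K_2$, so $C$ decreases by $(a_j+t_j)\delta_j$. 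Thus $C$ is increasing across $a_j$-type breakpoints and decreasing across $(a_j+t_j)$-type ones. Since $F$ on an interval is increasing iff $C<0$ there, once $C$ has become positive it can only be driven back negative by passing an $(a_j+t_j)$-breakpoint; so the sign change of the monotonicity of $F$ from $-$ to $+$ (which is where a minimum lives) happens precisely as $C$ passes from negative to positive, i.e. across an $a_j$-breakpoint where $F$ is still locally decreasing on both sides in the limiting sense — meaning the minimum on the union of the two adjacent closed intervals is at their far endpoints, which eventually are of type $a_i+t_i$. Assembling these sign computations into a clean monotonicity-in-$1/A$ argument yields $s(\bold{t})=\hat s(\bold{t})$, and since $\widehat{\mathcal{A}}\subseteq\mathcal{A}$ gives $s(\bold t)\le\hat s(\bold t)$ trivially, only the reverse inequality requires the above work.
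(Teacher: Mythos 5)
Your approach is fundamentally the same as the paper's: both arrange the elements of $\mathcal{A}$ in increasing order, observe that the quantity being minimized is piecewise of the form $(\text{constant})+(1-\kappa)C/A$ with a coefficient $C=\sum_{k\in\K_3}a_k\delta_k-\sum_{k\in\K_2}t_k\delta_k$ that is constant between breakpoints, and argue via the sign of $C$ that the minimum must land on a point of $\widehat{\mathcal{A}}$. The paper executes this by explicitly writing out and comparing consecutive values $s_{j_1},\sigma_{i_1+1},\dots,\sigma_{i_2},s_{j_2}$ across one ``block'' of the ordered list and splitting into three sign cases, whereas you phrase it as a continuity/piecewise-monotonicity argument for a function $F(A)$ of a continuous variable. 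These are the same computation, and your bookkeeping of how $C$ jumps by $+a_j\delta_j$ at $a_j$ and by $-(a_j+t_j)\delta_j$ at $a_j+t_j$ matches the paper's ``numerator'' analysis exactly.

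There is, however, a sign error in your final paragraph that makes the conclusion more convoluted than it needs to be. You correctly state earlier that $F$ is increasing iff $C<0$ (since $\frac{d}{dA}\big(C/A\big)=-C/A^2$). But then you write that a minimum of $F$ occurs ``as $C$ passes from negative to positive, i.e.\ across an $a_j$-breakpoint.'' This is backwards: $F$ decreasing$\to$increasing means $F'$ goes $-\to+$, which means $-C$ goes $-\to+$, i.e.\ $C$ goes from \emph{positive to negative}. By your own jump analysis, $C$ decreases only across $(a_j+t_j)$-type breakpoints. So the interior local minima of $F$ sit directly on points of $\widehat{\mathcal{A}}$ --- no ``chasing the chain'' to adjacent endpoints is required. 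The accompanying phrase ``$F$ is still locally decreasing on both sides in the limiting sense'' is also inconsistent with a change of monotonicity. Once the sign is corrected, the argument becomes cleaner: local minima (and plateaus, bounded by decreasing jumps of $C$) and the right endpoint $\max_i(a_i+t_i)$ are all in $\widehat{\mathcal{A}}$; the left endpoint $\min_i a_i$ has $C=0$ there and $C$ only increases upon leaving it, so $F$ does not increase to its right and the left endpoint never gives the strict global minimum (indeed $F(\min_i a_i)=\sum_k\delta_k$ is the largest possible value). Together with the trivial inclusion $\widehat{\mathcal{A}}\subset\mathcal{A}$ this yields $s(\bold{t})=\hat s(\bold{t})$. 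You should also note, as the paper does implicitly, that when $t_j=0$ the point $a_j$ coincides with $a_j+t_j\in\widehat{\mathcal{A}}$ and crossing it leaves $C$ unchanged, so this degenerate case causes no trouble.
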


\begin{remark}
The appearance of the dimensional number $\hat{s}(\bold{t})$ is not as natural as $s(\bold t)$ as will be seen in Section \ref{s4}. This is why we state Theorem \ref{t1} by using $s(\bold{t})$. But the significance of the observation of Proposition \ref{p6} is that when covering a collection of shrunk rectangles with the same sidelengths, to reach the optimal cover, we need only consider the covers by balls with the radius being the sidelengths of the shrunk rectangles, without the necessity of considering the cover by balls with radius being the sidelengths of the big rectangles.
\end{remark}

One of the main difficulty of the rectangle setting lies in finding the optimal cover of a sequence
of shrunk rectangles. As Proposition \ref{p6} indicates, the optimal cover is given by covering them by balls with radius as the sidelengths
of the shrunk rectangles, but this is far from being obvious.

We end this section by a remark on the sidelengths of the big rectangles and small rectangles. \begin{remark}
  In the previous statement, the sidelengths of the rectangles are of the form $$
  {\text{big one}}: \rho(u_n)^{\bold{a}}, \ \rho(\beta_{\alpha})^{\bold{a}}; \ \ {\text{small one}}: \rho(u_n)^{\bold{a}+\bold{t}}, \ \rho(\beta_{\alpha})^{\bold{a}+\bold{t}}.
  $$ We remark that the results and the proof are unchanged if extra constants intervene, namely, $$
  {\text{big one}}: c\cdot\rho(u_n)^{\bold{a}}, \ c\cdot\rho(\beta_{\alpha})^{\bold{a}}; \ \ {\text{small one}}: c'\cdot\rho(u_n)^{\bold{a}+\bold{t}}, \ c'\cdot\rho(\beta_{\alpha})^{\bold{a}+\bold{t}}
  $$ for any $c>0, c'>0$.
\end{remark}
\section{Geometric interpretation of the dimensional number}\label{s4}

In this section, we explain the nature about the dimensional numbers and the sets $\K_i$ appearing in Theorem \ref{t1}. To make the ideas more apparent, we consider one simple case that the resonant sets $\{\R_{\alpha}: \alpha\in J\}$ are points $\{x_{\alpha}: \alpha\in J\}$ in $X=\prod_{i=1}^dX_i$. Then $\kappa=0$ in the scaling property. Let's consider the dimension of the following set
$$
\widehat{W}(\bold{t})=\bigcap_{N=1}^{\infty}\bigcup_{n=N}^{\infty} \bigcup_{\alpha\in \widetilde{J}_n}\Delta(x_{\alpha}, \rho(u_n)^{\bold{a}+\bold{t}}),
$$ where $\widetilde{J}_n$ is a subset of $J_n$ such that the big rectangles \begin{equation}\label{ff1}
\Big\{\Delta(x_{\alpha}, \rho(u_n)^{\bold{a}}): \alpha\in \widetilde{J}_n\Big\}
\end{equation} are disjoint and we also assume that these big rectangles are very {\em nice} placed.

The set $\widehat{W}(\bold{t})$ is a subset of $W(\bold{t})$, so the dimension of $\widehat{W}(\bold{t})$ gives a lower bound for that of $W(\bold{t})$.
 Though determining the lower bound of the dimension of some set is always more difficult than for the upper bound, the argument for the upper bound
always provides some information about the potential exact dimension of a set. So we pay attention to the upper bound of the dimension of $\widehat{W}(\bold{t})$.

The task is to find an optimal cover for {\em a collection of rectangles} instead of one. When covering a single rectangle, we know that Falconer's singular value function \cite{Fal2} gives the optimal one. More precisely, for a rectangle $R$ in $\mathbb{R}^d$ of sidelengths $l_1\ge l_2\ge \cdots \ge l_d$, the singular function $\varphi^s$  defined by $$
\varphi^s(R)=\min\Big\{l_1 \cdots l_{i-1}l_i^{s-i+1}: 1\le i\le d\Big\},
$$ is the minimal value for the $s$-volume of all covers of $R$ by balls (up to a constant multiple).

However when one needs to cover a collection of rectangles, one should be careful about the relative positions of these rectangles.
This is because if these rectangles are close enough to each other, then when covering them by balls, one ball may cover part of many rectangles.
\medskip

Write $r_n=\rho(u_n)$. We call respectively the rectangles $$
\Delta(x_{\alpha}, r_n^{\bold{a}})=\prod_{k=1}^dB(x_{\alpha, k}, r_n^{a_k}), \ \ \Delta(x_{\alpha}, r_n^{\bold{a+t}})=\prod_{k=1}^dB(x_{\alpha,k}, r_n^{a_k+t_k})
$$ as big rectangle and shrunk rectangle. For each $1\le k\le d$, we also call the balls $$
B(x_{\alpha, k}, r_n^{a_k}), \ \ B(x_{\alpha,k}, r_n^{a_k+t_k})
$$ as big ball and shrunk ball in the $k$th direction of $
\Delta(x_{\alpha}, r_n^{\bold{a}})$ and $\Delta(x_{\alpha}, r_n^{\bold{a+t}})
$.


The separation condition and the nice placed assumption of the big rectangles  (\ref{ff1}) implies the following two observations: by a volume argument, one has \begin{itemize}
\item in the $k$th direction, the number $T_k$ of big balls $$
\Big\{B(x_{\alpha, k}, r_n^{a_k}): \alpha\in \widetilde{J}_n\Big\}
$$ can be estimated as $$
T_k\ll r_n^{-a_k\delta_k}.
$$

\item Let $r\ge \rho(u_n)^{a_k}=r_n^{a_k}$. Then in the $k$th direction, one needs about $$
\Big(\frac{1}{r}\Big)^{\delta_k}
$$ balls of radius $r$ to cover $X_k$, so all the big balls in the $k$th direction. More precisely, for each $x\in X_k$, sprout it into a ball of radius $r/5$; then by $5r$-covering lemma to get a collection of balls $\{B(x_i, r): {i\in I}\}$ which covers $X_k$ but $\{\frac{1}{5}B(x_i, r): i\in I\}$ are disjoint. Then together with the Ahlfors-regularity of $m_k$, $$
1=m_k(X_k)\ge \sum_{i\in I}m_k\Big(B(x_i,\frac{r}{5})\Big)\asymp \sum_{i\in I}r^{\delta_k}= r^{\delta_k}\cdot
\sharp I.
$$
\end{itemize}

Now we give the detail about how to find an optimal cover of the rectangles \begin{equation}\label{ff2}
\bigcup_{\alpha\in \widetilde{J}_n}\Delta(x_{\alpha}, r_n^{\bold{a}+\bold{t}}).
\end{equation}


\begin{remark}\label{r1} At first, we give a remark on how the sets $\K_1,\K_2$ and $\K_3$ appear. Think of that there is a collection of disjoint rectangles of the same sidelengths, saying $(r_n^{a_1},\cdots,r_n^{a_d})$. Then we shrink each rectangle to smaller one with sidelengths $(r_n^{a_1+t_1},\cdots, r_n^{a_d+t_d})$. Now the task is to find an optimal cover of these shrunk rectangles. The potential optimal cover would possibly arise among the cases that we cover them by balls of radius $$
r_n^{a_i}, \ \ {\text{and}}\ \ r_n^{a_i+t_i}, \ \ \ {\text{for}}\ 1\le i\le d,
$$in other words, by balls of radius $$
r_n^{A}, \ \ \ A\in \mathcal{A}.
$$

Given a ball $B$ of radius $r_n^{A}$ with $A\in \mathcal{A}$, we need compare $r_n^{A}$ with the sidelengths of the big rectangles and the shrunk rectangles.
\begin{itemize}\item (a). In some directions $k$, the radius $r_n^{A}$ is larger than the sidelengths $r_n^{a_k}$ of the big balls, so in these directions the ball $B$ may cover many big balls so for the shrunk balls, which leads to the definition of $\K_1=\{k: a_k\ge A\}$.

\item (b). In some other directions $k$, the radius $r_n^{A}$ is even smaller than the sidelength $r_n^{a_k+t_k}$ of the shrunk ball, so in these directions the ball $B$ can only cover part of one shrunk ball, which leads to the definition of $\K_2=\{k: a_k+t_k\le A\}$.

    \item (c). For the left directions $k$, the radius $r_n^{A}$ is between the sidelength $r_n^{a_k}$ of the big ball and that $r_n^{a_k+t_k}$ of the shrunk one, so in these directions, the ball $B$ can cover and can only cover one shrunk ball. This gives the definition of $\K_3$.\end{itemize}
\end{remark}

Recall the alphabet $$
\mathcal{A}=\{a_k, \ a_k+t_k: 1\le k\le d\}.
$$
 By a change of the coordinates if necessary, we may assume that $a_d+t_d$ is the largest element in $\mathcal{A}$ and $a_1$ is the smallest one.

Now we cover the collection of the shrunk rectangles in (\ref{ff2}) by balls of radius $r$.

(i). When $r< r_n^{a_d+t_d}$.

For each direction $1\le k\le d$, a ball of radius $r$ can only cover a small part of a shrunk ball $B(x_{\alpha, k}, r_n^{a_k+t_k})$. So,
by a volume argument, the number of balls of radius $r$ needed to cover one shrunk ball in the $k$th direction is about $$
\left(\frac{r_n^{a_k+t_k}}{r}\right)^{\delta_k}.
$$
So, the total number of balls to cover the collection of shrunk rectangles in (\ref{ff2}) can be estimated as $$
\ll\prod_{k=1}^dT_k\cdot\left(\frac{r_n^{a_k+t_k}}{r}\right)^{\delta_k}\ll \prod_{k=1}^d r_n^{-a_k\delta_k}\cdot \left(\frac{r_n^{a_k+t_k}}{r}\right)^{\delta_k}=\prod_{k=1}^d \left(\frac{r_n^{t_k}}{r}\right)^{\delta_k}.
$$
Then the total $s$-volume of these balls is \begin{equation}\label{ff3}
\ll r^s\cdot \prod_{k=1}^d \left(\frac{r_n^{t_k}}{r}\right)^{\delta_k}.
\end{equation}

When $s\le \delta_1+\cdots+\delta_d$ the dimension of the space $X$, the quantity in (\ref{ff3}) is decreasing as $r$ increases. Bear in mind that we are looking for the optimal covers. So, in such a range of $r$, the optimal one occurs when $r$ is taken to be $r_n^{a_d+t_d}$. Hence, in such a range of $r$, the $s$-volume of the optimal cover of (\ref{ff2}) is estimated as $$
\ll r_n^{(a_d+t_d)s}\cdot \prod_{k=1}^d \left(\frac{r_n^{t_k}}{r_n^{a_d+t_d}}\right)^{\delta_k}.
$$ This quantity equals 1 if \begin{equation}\label{ff4}
s=s_d:=\delta_1+\cdots+\delta_d-\frac{\sum_{1\le k\le d}\delta_kt_k}{a_d+t_d}.
\end{equation}
The number in the right side is nothing but the dimensional number corresponding to $A=a_d+t_d$ in Theorem \ref{t1}. More precisely, when $A=a_d+t_d$ the largest element in $\mathcal{A}$, one has $$\K_1=\emptyset, \ \K_2=\{1, 2, \cdots, d\}, \ \K_3=\emptyset.$$

(ii). $r\ge r_n^{a_1}$. This gives the most loose cover. For each direction $1\le k\le d$, a ball of radius $r$ can not only cover the shrunk ball $B(x_{\alpha, k}, r_n^{a_k+t_k})$, but can also cover several big balls $B(x_{\alpha, k}, r_n^{a_k})$. Then by a volume argument, one needs $
\ll r^{-\delta_k}
$ many balls of radius $r$ to cover all the big balls in the $k$th direction. Thus, the $s$-volume of the balls of radius $r$ needed to cover the rectangles (\ref{ff2}) can be estimated as$$
\ll r^s\cdot \prod_{k=1}^dr^{-\delta_k},
$$ which will be 1 if $$
s=\delta_1+\cdots+\delta_d,
$$ which is clearly larger than the number in (\ref{ff4}), so cannot give the optimal cover.

(iii). $r_n^{a_d+t_d}\le r< r_n^{a_1}$. Write $r=r_n^A$ (Here $A$ is just a real number may not in $\mathcal{A}$).

In this case, we have to compare $A$ with $a_k$ and $a_k+t_k$ to see in the $k$th direction whether a ball of radius $r$ can cover many big balls ($a_k\ge A$), or can cover only a small part of a shrunk ball ($A\ge a_k+t_k$), or can cover the whole shrunk ball but intersect only one big ball ($a_k\le A\le a_k+t_k$). Under such a consideration, $\K_1, \K_2,\K_3$ intervene naturally.

Arrange the elements in $\mathcal{A}$ in non-decreasing order. Let $A_{i+1}, A_i$ be two consecutive and distinct terms in $\mathcal{A}$ such that $$
r_{n}^{A_{i+1}}\le r<r_{n}^{A_i}, \ \  {\text{i.e.}} \ A_i<A\le A_{i+1}.
$$

We give a partition of the directions $k\in \{1, 2,\cdots,d\}$.

\begin{itemize}
\item (a). $\K_1'$:
$$\K_1':=\Big\{k: r_{n}^{a_k}< r_{n}^{A_{i}}\Big\}=\Big\{k: r_{n}^{a_k}\le r_{n}^{A_{i+1}}\Big\}
=\{k: a_k> A_{i}\}
,$$
since there are no elements of $\mathcal{A}$ in $(A_i, A_{i+1})$.

In these directions, a ball of radius $r$ can cover many big balls. So the number of balls with radius $r$ needed to cover all the big balls in these directions is $$
\ll \prod_{k\in \K_1'}\frac{1}{r^{\delta_k}}.
$$

\item (b). $\K_2'$:
$$
\K_2'=\Big\{k: r_{n}^{A_i}\le r_{n}^{a_k+t_k}\Big\}=\Big\{k: r_{n}^{A_{i+1}}<r_{n}^{a_k+t_k}\Big\}=\{k: a_k+t_k\le A_i\}.
$$
In each of these directions, a ball of radius $r$ can only cover part of a shrunk ball. So the number of balls with radius $r$ needed to cover all the shrunk balls in these directions is $$\ll \prod_{k\in \K_2'}T_k\cdot \left(\frac{r_{n}^{a_k+t_k}}{r}\right)^{\delta_k}\ll \prod_{k\in \K_2'}\frac{1}{r_{n}^{a_k\delta_k}}\cdot \left(\frac{r_{n}^{a_k+t_k}}{r}\right)^{\delta_k}.$$

\item (c). $\K_3'$: the left terms. For each $k\in \K_3'$,
    we have $$
    r_{n}^{a_k+t_k}\le r< r_{n}^{a_k}.
    $$ So, a ball with radius $r$ can cover one shrunk ball but can intersect at most $3$ big balls in each of these directions. So the number of balls with radius $r$ needed to cover all the shrunk balls in these directions is
    $$\ll \prod_{k\in \K_3'}T_k\ll \prod_{k\in \K_3'}\left(\frac{1}{r_{n}^{a_k}}\right)^{\delta_k}.$$
\end{itemize}

Thus the total number of balls of radius $r$ needed to cover all the rectangles in (\ref{ff2}) is $$
\ll \left(\prod_{k\in \K_1'}\frac{1}{r^{\delta_k}}\right)\times \left(\prod_{k\in \K_2}\frac{1}{r_{n}^{a_k\delta_k}}\cdot \left(\frac{r_{n}^{a_k+t_k}}{r}\right)^{\delta_k}\right)\times \left(\prod_{k\in \K_3'}\left(\frac{1}{r_{n}^{a_k}}\right)^{\delta_k}\right):=T.
$$
Consider the $s$-volume of the balls in this cover, we get $Tr^s$ which is monotonic with respect to $r$. So the economic cover occurs when $r$ takes the boundary values.

When $r=r_{n}^{A_{i+1}}$. Let $Tr^s=1$. It follows that $$
s=s_{i+1}':=\sum_{k\in\K_1'}\delta_k+\sum_{k\in\K_2'}\delta_k+\frac{\sum_{k\in \K_3'}a_k\delta_k-\sum_{k\in \K_2'}t_k\delta_k}{A_{i+1}}.
$$  When $r=r_{n}^{A_{i}}$. Let $Tr^s=1$. It follows that $$
s=s_i':=\sum_{k\in\K_1'}\delta_k+\sum_{k\in\K_2'}\delta_k+\frac{\sum_{k\in \K_3'}a_k\delta_k-\sum_{k\in \K_2'}t_k\delta_k}{A_{i}}.
$$

Finally, we check that $s_i'$ and $s_{i+1}'$ are also the terms in Theorem \ref{t1}. We only give the detail for $s_i'$ since the case for $s_{i+1}'$ can be treated similarly.
Recall that $$
\K_1^i=\{k: a_k\ge A_i\}, \ \ \K_2^i=\{k: a_k+t_k\le A_i\}, \ \ \K_3^i=\{1,\cdots, d\}\setminus (\K_1^i\cup \K_2^i).
$$
So,
$$
\K_1'=\K_1^i\setminus\{k: a_k= A_i\}, \ \ \K_2'=\K_2^i, \ \ \K_3'=\K_3^i\cup \{k: a_k= A_i\}.
$$
Thus \begin{align}\label{d4}
s_i'&=\sum_{k\in\K_1^{i}}\delta_k-\sum_{k: a_k=A_i}\delta_k+\sum_{k\in\K_2^{i}}\delta_k+\frac{\sum_{k\in \K_3^{i}}a_k\delta_k+\sum_{k: a_k=A_i}a_k\delta_k-\sum_{k\in \K_2^{i}}t_k\delta_k}{A_{i}}\nonumber\\
&=\sum_{k\in\K_1^{i}}\delta_k+\sum_{k\in\K_2^{i}}\delta_k+\frac{\sum_{k\in \K_3^{i}}a_k\delta_k-\sum_{k\in \K_2^{i}}t_k\delta_k}{A_{i}}.
\end{align}


These give the reason how the dimensional numbers in Theorem \ref{t1} arise. From the argument above, one can see that the optimal cover occurs only when one covers the collection of rectangles by balls of radius $$
r=r_n^{A}, \ A\in \mathcal{A},
$$ and then the dimensional number follows correspondingly. 

Meanwhile the above argument in fact shows that \begin{corollary}\label{c1}
Assume the separation condition (\ref{ff1}) and assume that for any $\epsilon>0$, $$
\sum_{n\ge 1}\rho(u_n)^{\epsilon}<\infty.
$$ Then one has $$
\hdim \widehat{W}(\bold{t})\le \min_{A_i\in \mathcal{A}}\Big\{\sum_{k\in \K_1}\delta_k+\sum_{k\in \K_2}\delta_k+\frac{\sum_{k\in \K_3}a_k\delta_k-\sum_{k\in \K_2}t_k\delta_k}{A_i}\Big\}
$$ where $
\mathcal{A}
$ and $\K_1, \K_2,\K_3$ are the same as in Theorem \ref{t1}.
\end{corollary}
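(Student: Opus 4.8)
The plan is to run the standard covering argument for an upper bound on the Hausdorff dimension of a $\limsup$ set, fed by the explicit economical covers constructed in this section. Since
$$\widehat{W}(\bold{t})=\bigcap_{N\ge 1}\bigcup_{n\ge N}\bigcup_{\alpha\in\widetilde{J}_n}\Delta(x_{\alpha},\rho(u_n)^{\bold{a}+\bold{t}}),$$
any choice of a cover of each collection $\bigcup_{\alpha\in\widetilde{J}_n}\Delta(x_{\alpha},\rho(u_n)^{\bold{a}+\bold{t}})$, assembled over all $n\ge N$, is a cover of $\widehat{W}(\bold{t})$; and if the $n$-th of these uses balls of radius $\rho(u_n)^{A}$ for a fixed $A>0$, then (as $\rho$ is non-increasing and $u_n\to\infty$) its mesh $\rho(u_N)^{A}$ tends to $0$ as $N\to\infty$. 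So it suffices, for the element $A_i\in\mathcal{A}$ realising the minimum in the statement, to bound the $s$-volume of such covers. Write $r_n=\rho(u_n)$ and let $s_{A_i}$ be the dimensional number attached to $A_i$; if $s_{A_i}\ge\sum_{k=1}^d\delta_k$ there is nothing to prove, since $\widehat{W}(\bold{t})\subset X$ and the $\delta_i$-Ahlfors regularity forces $\hdim X=\sum_k\delta_k$, so we may assume $s_{A_i}<\sum_k\delta_k$.

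Next I collect the counting already carried out in cases (i)--(iii) above. The disjointness hypothesis (\ref{ff1}) together with the Ahlfors regularity of each $m_k$ gives $T_k\ll r_n^{-a_k\delta_k}$ for the number of big balls in the $k$-th direction. Splitting $\{1,\dots,d\}$ into $\K_1,\K_2,\K_3$ according to whether $r_n^{A_i}$ exceeds $r_n^{a_k}$, lies below $r_n^{a_k+t_k}$, or lies between them, a ball of radius $r_n^{A_i}$ has to be repeated $\ll r_n^{-A_i\delta_k}$ times in a direction $k\in\K_1$, $\ll T_k\big(r_n^{a_k+t_k}/r_n^{A_i}\big)^{\delta_k}$ times in a direction $k\in\K_2$, and $\ll T_k$ times in a direction $k\in\K_3$. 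Taking the product of these counts over $k$ yields a cover of $\bigcup_{\alpha\in\widetilde{J}_n}\Delta(x_{\alpha},r_n^{\bold{a}+\bold{t}})$ by $\ll T$ balls of radius $r_n^{A_i}$, where $T$ is the quantity appearing in case (iii); and the computation there leading to $s_i'$ shows that $s_{A_i}$ is precisely the exponent making $T\cdot r_n^{A_i s_{A_i}}\asymp 1$, uniformly in $n$.

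Now fix $\eta>0$ and set $s=s_{A_i}+\eta$. The $s$-volume of the cover of the $n$-th collection is
$$\ll T\cdot r_n^{A_i s}=\big(T\cdot r_n^{A_i s_{A_i}}\big)\cdot r_n^{A_i\eta}\ll\rho(u_n)^{A_i\eta}.$$
Summing over $n\ge N$ and using the hypothesis $\sum_{n\ge 1}\rho(u_n)^{\epsilon}<\infty$ with $\epsilon=A_i\eta>0$, the total $s$-volume of the assembled cover of $\widehat{W}(\bold{t})$ is $\ll\sum_{n\ge N}\rho(u_n)^{A_i\eta}\to 0$ as $N\to\infty$, while its mesh also tends to $0$. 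Hence $\mathcal{H}^{s}(\widehat{W}(\bold{t}))=0$, so $\hdim\widehat{W}(\bold{t})\le s_{A_i}+\eta$, and letting $\eta\downarrow 0$ gives $\hdim\widehat{W}(\bold{t})\le s_{A_i}=\min_{A_i\in\mathcal{A}}(\cdots)$, as claimed.

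The only point requiring care beyond the bookkeeping already done in this section is the step of passing from the per-direction counts to a genuine cover of the $d$-dimensional family $\bigcup_{\alpha\in\widetilde{J}_n}\Delta(x_{\alpha},r_n^{\bold{a}+\bold{t}})$ by their product, i.e. the fact that the covering ball may be chosen independently in each coordinate direction; this is exactly where the disjointness and the ``nicely placed'' assumption on the big rectangles in (\ref{ff1}) enter. The remaining ingredient is the elementary observation that the summability assumption on $\rho(u_n)$ supplies precisely the slack $r_n^{A_i\eta}$ needed to upgrade ``bounded $s$-volume'' to ``$\mathcal{H}^s$-null''.
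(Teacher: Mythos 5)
Your proposal is correct and essentially matches the paper's intended argument: the paper simply remarks that the covering computation in cases (i)--(iii) of this section already establishes the corollary, while you explicitly carry out the standard limsup/Hausdorff measure bookkeeping (for each $n$ cover the $n$-th layer by $\asymp T$ balls of radius $\rho(u_n)^{A_i}$, use the $\asymp 1$ normalisation at $s_{A_i}$ to extract the slack factor $\rho(u_n)^{A_i\eta}$, and invoke the summability hypothesis to conclude $\mathcal{H}^{s_{A_i}+\eta}=0$). The per-direction product-of-covers step you flag is indeed exactly where the separation/nicely-placed hypothesis (\ref{ff1}) enters, consistent with the paper.
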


\begin{remark}\label{r2} From the geometrical point of view or as shown for $s_i'$ in the argument above, there are some freedom for the choices of $\K$ which will not affect the dimensional number. We will not go into the details.

(i). We can change $\ge$ to $>$ in $\K_1$ or $\le$ to $<$ in $\K_2$ (can be simultaneously).

(ii). If there is an index $k$ such that $$
a_k=A=a_k+t_k,
$$ it can be put into $\K_2$ from $\K_1$.

This releases the dilemma of to which set the indexes $i, k\in \{1,\cdots,d\}$ should belong when $a_k=a_i+t_i$.
\end{remark}
\section{Local ubiquity for rectangles}\label{s5}

In this section, we give some examples satisfying the local ubiquity property for rectangles and some consequences of the local ubiquity property to be used in proving Theorem \ref{t2}.

\subsection{Systems with ubiquity property}

The following examples should be expected to satisfy the ubiquity property for rectangles when equipped with Minkowski's theorem.

\subsubsection{Simultaneous Diophantine approximation}\ \smallskip

Simultaneous Diophantine approximation concerns the set \begin{align*}
&\Big\{(x_1,\cdots, x_m)\in [0,1]^m: \|qx_1\|<\psi_1(q),\cdots, \|qx_m\|<\psi_{m}(q), \ {\text{i.m.}}\ q\in \N\Big\}\\
=&\Big\{(x_1,\cdots, x_d)\in [0,1]^m: \big|x_i-\frac{p_i}{q}|<\frac{\psi_i(q)}{q},\ 1\le i\le m, \ {\text{i.m.}}\ (q, p_1,\cdots, p_m)\in \N\times \Z^{m}\Big\}.\end{align*}

Thus one has\begin{itemize}\item the index set $J$: $$
J=\Big\{\alpha=(q; p_1,\cdots,p_m): q\in \N, \ 0\le p_i\le q, 1\le i\le m\Big\},
$$
\item the resonant sets $\R_{\alpha}$: $$\R_{\alpha}=\Big(\frac{p_1}{q}, \cdots, \frac{p_m}{q}\Big), \ \ {\text{for}}\ \alpha=(q; p_1,\cdots, p_m).$$

\item weight function $\beta_{\alpha}$: $$\beta_{\alpha}=q, \ \ {\text{for}}\ \alpha=(q; p_1,\cdots, p_m).$$

\item ubiquitous function $\rho$: $$\rho:\mathbb{R}^+\to \mathbb{R}^+: u\to u^{-1}.$$

\item let $M\ge 2^{3m+2}$ be a sufficiently large integer, and take $$
\ell_k=M^{k-1}, \ \ u_k=M^k, \ k\ge 1.
$$
\end{itemize}

\begin{proposition}\label{p5} Let $\a=(a_1,\cdots, a_m)$ with $a_i\ge 1$ ($1\le i\le m$) and $a_1+\cdots+a_m=m+1$. Then
$(\{\R_{\alpha}\}_{\alpha\in J}, \beta)$ is a ubiquitous system for rectangles with respect to $(\rho, \a)$.
Meanwhile, the $\kappa$-scaling property holds with $\kappa=0$.\end{proposition}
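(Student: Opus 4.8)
Two things must be checked: the $\kappa$-scaling property with $\kappa=0$, and the (uniform) local ubiquity for rectangles with respect to $(\rho,\a)$. The scaling property I would dispose of in one line. For $\alpha=(q;p_1,\dots,p_m)$ the coordinate resonant set $\R_{\alpha,i}=\{p_i/q\}$ is a single point of $X_i=[0,1]$ with Lebesgue measure, so $\delta_i=1$, and for a centre $x_i=p_i/q\in\R_{\alpha,i}$ and $0<\epsilon<r$ one has $B(x_i,r)\cap\Delta(\R_{\alpha,i},\epsilon)=B(p_i/q,\epsilon)$, of measure $\asymp\epsilon=r^{0}\epsilon^{1}$ — exactly the $\kappa=0$ inequality with absolute constants. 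So the content is the ubiquity assertion, and the plan is to prove the uniform version (which is the one used by Theorem \ref{t3}).

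Here $\rho(u)=u^{-1}$, $u_n=M^{n}$, $\ell_n=M^{n-1}$, $J_n=\{q:M^{n-1}\le q\le M^{n}\}$, so $\Delta(\R_\alpha,\rho(u_n)^{\a})=\prod_{i}B(p_i/q,M^{-na_i})$, and I must find $c>0$ with $m\big(B\cap\bigcup_{q\in J_n}\bigcup_{p}\prod_i B(p_i/q,M^{-na_i})\big)\ge c\,m(B)$ for every ball $B$ and all $n\ge n_0(B)$. The driving input is Minkowski's theorem with the exponents $\hat a_i:=a_i-1$: these are non-negative since $a_i\ge1$, and $\sum_i\hat a_i=\sum_i a_i-m=1$, so for every $x$ and every $n$ there are $1\le q\le M^{n}$ and $p\in\Z^m$ with $\|qx_i\|<M^{-n(a_i-1)}$ for all $i$. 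If one such $q$ lies in the block $[M^{n-1},M^{n}]$, then dividing by $q\ge M^{n-1}$ gives $|x_i-p_i/q|<M\cdot M^{-na_i}$, so $x$ sits in a rectangle of the right shape inflated by the constant $M$ — harmless by the remark at the end of Section 3.4. Hence the only obstruction is that for some $x$ \emph{every} admissible denominator $q\le M^{n}$ falls below the block, and the set of such $x$ is contained in
$$
G_n:=\Big\{x\in[0,1]^m:\ \exists\,q<M^{n-1},\ \exists\,p\in\Z^m,\ \|qx_i\|<M^{-n(a_i-1)}\ \text{for all }i\Big\}.
$$
It then remains to prove $m(B\cap G_n)\le(1-c)\,m(B)$ for all large $n$, with $c>0$ independent of $B$.

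This is where I expect the only genuine difficulty, and where the hypothesis $M\ge2^{3m+2}$ is used. Coordinate-wise, $\{x_i\in[0,1]:\|qx_i\|<\delta\}$ is a union of at most $q+1$ intervals of length $2\delta/q$, of total measure $\le 4\delta$ (the constraint being vacuous when $a_i=1$, i.e. $\delta=1$); feeding in $\delta=M^{-n(a_i-1)}$ and using $\sum_{i:\,a_i>1}(a_i-1)=1$,
$$
m(G_n)\ \le\ \sum_{q<M^{n-1}}\ \prod_{i:\,a_i>1}4\,M^{-n(a_i-1)}\ \le\ M^{n-1}\cdot 4^{m}M^{-n}\ =\ 4^{m}M^{-1}\ \le\ \tfrac14 .
$$
For the localized estimate I would rerun this with $[0,1]$ replaced by the $i$-th side $B_i$ of $B=B(x_0,r)$: for $q\gtrsim 1/r$ the periodic set $\{x_i\in B_i:\|qx_i\|<\delta\}$ has measure $\asymp r\delta$, so those terms sum to $\ll 4^{m}M^{-1}m(B)$, while the finitely many $q\lesssim 1/r$ contribute only $o(m(B))$ as $n\to\infty$ because the widths $M^{-n(a_i-1)}$ shrink. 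Thus $m(B\cap G_n)\le(1-c)\,m(B)$ with $c=\tfrac12$ for $n\ge n_0(B)$, so $m\big(B\cap\bigcup_{q\in J_n}\bigcup_{p}\prod_i B(p_i/q,M\cdot M^{-na_i})\big)\ge\tfrac12 m(B)$; this is the uniform local ubiquity for rectangles (the factor $M$ absorbed as above), and the non-uniform version follows a fortiori.

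To locate the work: everything except the bound on $G_n$ is routine bookkeeping with Minkowski's theorem, while the real point is to show that the $x$ whose best Minkowski denominator at level $M^{n}$ lies \emph{below} the dyadic-type block $[\ell_n,u_n]$ form a small \emph{proportion} of every ball, uniformly in $B$ and for all large $n$. The saving is the factor $M^{-1}$ — morally ``$M^{n-1}$ small denominators, each sweeping out measure $\asymp M^{-n}$'' — and it is a genuine saving only because $M$ is large, which is exactly why the hypothesis demands $M\ge2^{3m+2}$; the one delicate point in the localization is checking that the very small denominators $q\lesssim 1/r_B$ do not spoil the uniform constant, which holds because the rectangles $\prod_i B(\cdot,M^{-na_i})$ contract as $n\to\infty$.
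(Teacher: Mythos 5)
Your argument is correct and is essentially the one the paper uses: the paper's proof defers this proposition to the linear-form case (the next proposition), and that computation specialized to $n=1$ is precisely your estimate on the exceptional set (your $G_n$, the paper's $\widehat B$) of $x$ whose Minkowski denominator at level $Q=M^n$ falls below $\ell_n=M^{n-1}$, followed by an application of Minkowski's theorem with absorption of the resulting factor $M$. Your $q\gtrsim 1/r$ versus $q\lesssim 1/r$ split corresponds to the paper's inequality $(2\sqrt{n}rq+3)^m\le 2^m\bigl((2\sqrt{n}rq)^m+3^m\bigr)$, with the small-$q$ tail forcing $n\ge n_0(B)$ in both treatments.
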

\begin{proof} This is a special case of the linear forms, so its proof is included in the next proposition (Prop. \ref{p3}).
\end{proof}

\subsubsection{Linear forms}\  \smallskip

For an integer vector $\q=(q_1,\cdots,q_n)$, we use $|\q|$ to denote $\max\{|q_i|: 1\le i\le n\}$. Linear form concerns the set \begin{align*}
W_{mn}(\psi_1,\cdots, \psi_m)=\Big\{x\in [0,1]^{m\times n}: \|q_1x_{i1}+q_2&x_{i2}+\cdots+q_nx_{in}\|<\psi_i(|\q|),\\
 & \q=(q_1,\cdots,q_n)\in \Z^n, 1\le i\le m\Big\}.
\end{align*}
Thus one has\begin{itemize}\item the index set $J$: $$
J=\Big\{\alpha=(q_1,\cdots, q_n; p_1,\cdots,p_m): (q_1,\cdots, q_n)\in \Z^n\setminus \{0\}, \ -n|\q|\le p_i\le n|\q|, 1\le i\le m\Big\},
$$
\item the resonant sets $\R_{\alpha}=(\R_{\alpha, 1},\cdots, \R_{\alpha,m})$: for $1\le i\le m$, $$\R_{\alpha, i}=\{(x_{i1},\cdots, x_{in}): q_1x_{i1}+\cdots+q_nx_{in}-p_i=0\}, \  \ \ {\text{for}}\ \alpha=(q_1,\cdots, q_n; p_1,\cdots,p_m).$$

\item weight function $\beta_{\alpha}$: $$\beta_{\alpha}=|\q|, \ \ {\text{for}}\ \alpha=(q_1,\cdots, q_n; p_1,\cdots,p_m).$$

\item ubiquitous function $\rho$: $$\rho:\mathbb{R}^+\to \mathbb{R}^+: u\to u^{-1}.$$

\item let $M\ge 2^{2m+1}\cdot n^m$ be a sufficiently large integer, and take $$
\ell_k=M^{k-1}, \ \ u_k=M^k, \ k\ge 1.
$$
\end{itemize}

\begin{proposition}\label{p3} Let $\a=(a_1,\cdots, a_m)$ with $a_i\ge 1$ ($1\le i\le m$) and $a_1+\cdots+a_m=m+n$. Then
$(\{\R_{\alpha}\}_{\alpha\in J}, \beta)$ is a ubiquitous system for rectangles with respect to $(\rho, \a)$.
Meanwhile, the $\kappa$-scaling property holds with $\kappa=1-1/n$.\end{proposition}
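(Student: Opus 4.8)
Here the ambient space is $X=\prod_{i=1}^{m}X_i$ with each $X_i=[0,1]^n$ carrying Lebesgue measure $m_i$ (so $\delta_i=n$), and for $\alpha=(q_1,\dots,q_n;p_1,\dots,p_m)$ the resonant set $\R_{\alpha,i}$ is the part inside $[0,1]^n$ of the affine hyperplane $\{y\in\mathbb R^n:q_1y_1+\cdots+q_ny_n=p_i\}$ (empty when that hyperplane misses the cube). Note that the simultaneous case, Proposition~\ref{p5}, is the special instance $n=1$, so a proof here also settles that one. Two independent things must be checked: the $\kappa$-scaling property and the uniform local ubiquity for rectangles.

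The $\kappa$-scaling property is a routine slab estimate. For $x_i\in\R_{\alpha,i}$ and $0<\epsilon<r$, the neighbourhood $\Delta(\R_{\alpha,i},\epsilon)$ meets $B(x_i,r)$ in a set comparable to a slab of thickness $\asymp\epsilon$ transverse to the normal direction $\q/|\q|$ and of diameter $\asymp r$ in the other $n-1$ directions; since $[0,1]^n$ is convex and $x_i$ lies in it, a dimensional proportion of that slab stays in $X_i$. Hence $m_i\bigl(B(x_i,r)\cap\Delta(\R_{\alpha,i},\epsilon)\bigr)\asymp r^{\,n-1}\epsilon=r^{\,\delta_i\kappa}\epsilon^{\,\delta_i(1-\kappa)}$ with $\kappa=(n-1)/n=1-1/n$, the implied constants being dimensional, in particular independent of $\q$ and of $\alpha$.

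For the ubiquity, the plan is to invoke Minkowski's theorem for systems of linear forms (see, e.g., \cite{Sch}). Because $a_i\ge 1$ and $\sum_{i=1}^m(a_i-1)=n$, for every $x\in[0,1]^{m\times n}$ and every $Q\ge 1$ there is a nonzero $\q\in\mathbb Z^n$ with $|\q|\le Q$ and $\|q\cdot x_i\|\ll Q^{-(a_i-1)}$ for $1\le i\le m$. Apply this with $Q=u_n$ and let $\alpha=(\q;p_1,\dots,p_m)$ be the associated datum (the $p_i$ automatically lie in the range prescribed for $J$, since $x_i\in[0,1]^n$). Provided $|\q|\ge\ell_n$ one then has, for each $i$, $\operatorname{dist}(x_i,\R_{\alpha,i})\ll\|q\cdot x_i\|/|\q|\ll u_n^{-(a_i-1)}/\ell_n\asymp M\,u_n^{-a_i}=M\,\rho(u_n)^{a_i}$, so that $x\in\Delta(\R_\alpha,(C_0\rho(u_n))^{\a})$ with $\alpha\in J_n$ for a suitable absolute $C_0$. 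Thus \eqref{ff5} — up to the dilation constant $C_0$, which is harmless by the remark on extra constants made earlier — will follow once one shows that for every ball $B=B(x^0,r_0)\subset X$ the \emph{exceptional set} $E_n(B):=\{x\in B:\ \text{the }\q\text{ above has }|\q|<\ell_n\}$ satisfies $m(E_n(B))\le\tfrac12\,m(B)$ for all $n\ge n_o(B)$.

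Estimating $E_n(B)$ is the heart of the matter, and I would do it by counting. The set $E_n(B)$ lies in the union over $0<|\q|<\ell_n$ of the products $\prod_{i}\bigl(B(x_i^0,r_0)\cap\{\,|q\cdot x_i-p_i|\ll u_n^{-(a_i-1)}\,\}\bigr)$; for a fixed $\q$ the number of integer hyperplanes $\{q\cdot y=p_i\}$ meeting $B(x_i^0,r_0)$ is $\ll|\q|r_0+1$, and each contributes measure $\ll r_0^{\,n-1}u_n^{-(a_i-1)}/|\q|$, so multiplying over $i$ and using $\sum_i(a_i-1)=n$ gives $m(E_n(B))\ll r_0^{\,m(n-1)}\,u_n^{-n}\sum_{0<|\q|<\ell_n}(r_0+1/|\q|)^m$. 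Bounding $(r_0+1/|\q|)^m\ll r_0^m+|\q|^{-m}$, summing the two resulting series over $\q$, and substituting $\ell_n=u_n/M$ leaves a bound of the form $r_0^{mn}M^{-n}+C(m,n,r_0)\,u_n^{-c}$ for some $c>0$ (with a logarithmic factor in one borderline case): the first term is below $\tfrac14\,m(B)$ as soon as $M$ is large — and the hypothesis $M\ge 2^{2m+1}n^m$ is a clean sufficient choice, which also absorbs the purely dimensional constants hidden in $\ll$ — while the second, carrying a fixed negative power of $u_n$, falls below $\tfrac14\,m(B)$ for all $n\ge n_o(B)$. I expect this localization to be the only genuine obstacle: Minkowski's theorem instantly covers $X$ up to a set of small measure, but upgrading that to a bound uniform over arbitrarily small balls forces the refined count above, whose crude constant depends on $r_0$ yet is always paired with a decaying power of $u_n$; that is precisely why the appropriate notion is a uniform local ubiquity system, with its ball-dependent threshold $n_o(B)$.
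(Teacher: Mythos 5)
Your argument is essentially the paper's own proof: Minkowski's theorem for linear forms places every $x\in B$ within the prescribed neighbourhood of some hyperplane with $|\q|\le u_k$, the exceptional set (the paper's $\widehat B$, your $E_n(B)$) of points served only by $\q$ with $|\q|<\ell_k$ is estimated by the identical two-term count over small $\q$ — a $\ll r_0^{mn}M^{-n}$ piece absorbed by taking $M$ large and a $\ll r_0^{m(n-1)}u_n^{-1}\log u_n$ piece absorbed by taking $n\ge n_o(B)$ — and the $\kappa$-scaling with $\kappa=1-1/n$ is the same slab estimate the paper dismisses as trivial. The only cosmetic difference is that you phrase the tail bound as $C(m,n,r_0)u_n^{-c}$ with a case-by-case exponent, where the paper uniformly uses $\log Q/Q$.
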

\begin{proof} Fix a point $x=(x_1,\cdots,x_m)\in [0,1]^{mn}$  with $x_i\in [0,1]^n$ ($1\le i\le m$) and $r>0$, consider the ball $$
B:=B(x,r)=\prod_{i=1}^mB(x_i, r):=\prod_{i=1}^mB_i.
$$
Since $M$ is large and choose $k$ sufficiently large, one can have \begin{align}\label{1}\frac{2^{2m}n^{m/2}}{M^n}\le \frac{1}{4}, \ \ \ \frac{n2^{m}3^{m+n}k\log M}{M^k}\le \frac{1}{4}r^m.\end{align}
Write $Q=u_k=M^k$. Consider the set $$
\widehat{B}=\bigcup_{\alpha=(\bold{q}, \bold{p}): \beta_{\alpha}<\ell_k}\prod_{i=1}^m\Delta(\R_{\alpha,i}, \frac{1}{|\q|Q^{a_i-1}})\cap B.
$$ The measure of $\widehat{B}$ is estimated as follows. \begin{align*}
|\widehat{B}|\le& \sum_{(q_1,\cdots,q_n): q< \ell_k, \max\{|q_i|: 1\le i\le n\}=q}\  \sum_{-nq\le p_1,\cdots, p_m\le nq}\prod_{i=1}^m\Big|\Delta\Big(\R_{\alpha,i}, \frac{1}{qQ^{a_i-1}}\Big)\cap B_i\Big|\\
=&\sum_{(q_1,\cdots,q_n): q< \ell_k, \max\{|q_i|: 1\le i\le n\}=q}\  \prod_{i=1}^m\sum_{-nq\le p_i\le nq}\Big|\Delta\Big(\R_{\alpha, i}, \frac{1}{qQ^{a_i-1}}\Big)\cap B_i\Big|.
\end{align*}
For the inner summation over $p_i$, since the hyperplanes $\R_{\alpha, i}$ are $1/\sqrt{n}q$ separated, the intersection is nonempty for at most $2\sqrt{n}rq+3$ many $p_i$. Note also that \begin{align*}
\sharp\{(q_1,\cdots,q_n)\in \Z^n: \max\{|q_i|\}_{i=1}^n=q\}&=2n(2q+1)^{n-1}\le n3^nq^{n-1}, \ \\
 \sharp\{(q_1,\cdots,q_n)\in \Z^n: |\q|<l\}&=(2l-1)^{n}\le 2^nl^n,\\
(a+b)^t\le 2^t (a^t&+b^t), \ {\text{for}}\ a, b, t>0.
\end{align*} Thus one has
\begin{align*}
|\widehat{B}|&\le \sum_{(q_1,\cdots,q_n): q< \ell_k, \max\{|q_i|: 1\le i\le n\}=q}\ \prod_{i=1}^m\left((2\sqrt{n}rq+3)\cdot \frac{r^{n-1}}{qQ^{a_i-1}}\right)\\
&\le r^{m(n-1)}\sum_{(q_1,\cdots,q_n): q< \ell_k, \max\{|q_i|: 1\le i\le n\}=q}\ \left(\frac{2^{2m}n^{m/2}r^m}{Q^n}+\frac{2^{m}3^m}{q^mQ^n}\right)\\
&\le r^{mn}\cdot\frac{2^{2m+n}n^{m/2}\ell_k^n}{Q^n}+r^{m(n-1)}\cdot\sum_{q<\ell_k}\frac{n2^{m}3^{m+n}q^{n-1}}{q^mQ^n}\\
&\le r^{mn}\cdot\frac{2^{2m+n}n^{m/2}\ell_k^n}{Q^n}+r^{m(n-1)}\cdot\sum_{q<\ell_k}\frac{n2^{m}3^{m+n}}{q^mQ}\\
&\le r^{mn}\cdot\frac{2^{2m+n}n^{m/2}}{M^n}+r^{m(n-1)}\cdot\frac{n2^{m}3^{m+n}\log Q}{Q}\le \frac{1}{2}r^{mn},
\end{align*} where the last inequality follows from (\ref{1}) by the choice of $Q$.

Recall Minkowski's theorem: for any $x\in B$, there exist integers $(q_1,\cdots,q_n)$ with $1\le |\q|\le Q$ and $p_1,\cdots, p_m\in \Z^m$ such that
$$\big|q_1x_{i1}+\cdots+ q_{n}x_{in}-p_i\big|<\frac{1}{Q^{a_i-1}}, \ 1\le i\le m.$$
 Thus by taking $\alpha=(q_1,\cdots, q_n; p_1,\cdots, p_m)$
\begin{equation}\label{fff9}
x\in \prod_{i=1}^m\Delta(\R_{\alpha, i}, \frac{1}{|\q|Q^{a_i-1}}).
\end{equation}
Recall the definition of $\widehat{B}$. So, for any $x\in B\setminus \widehat{B}$, the inclusion in (\ref{fff9}) holds for $\ell_k\le |\q|\le Q$. Thus, $$
x\in \bigcup_{\alpha: \ell_k\le \beta_{\alpha}\le Q}\prod_{i=1}^{d}\Delta(\R_{\alpha, i}, \frac{M}{Q^{a_i}}\Big)=\bigcup_{\alpha: \ell_k\le \beta_{\alpha}\le u_k}\Delta\Big(\R_{\alpha}, M\cdot\rho(u_k)^{\a}\Big).
$$ As a conclusion, $$
\left|\bigcup_{\alpha: \ell_k\le \beta_{\alpha}\le u_k}\Delta\Big(\R_{\alpha}, M\cdot\rho(u_k)^{\a}\Big)\cap B\right|\ge |B\setminus \widehat{B}|\ge \frac{1}{2}|B|.
$$

The $\kappa$-scaling property is trivial.
\end{proof}

 \subsubsection{Shrinking target problems}\label{s5.1.3}\ \smallskip

For any $d$ integers $b_1,\cdots, b_d\ge 2$, define a transformation $T:[0,1]^d\to [0,1]^d$ by $$
 T(x_1,\cdots, x_d)=\Big(b_1x_1 (\text{mod}\ 1),\cdots, b_dx_d (\text{mod}\ 1)\Big).
$$ Then consider the following shrinking target problem: for $x_0\in [0,1]^d$
$$\Big\{x\in [0,1]^d: |T^nx-x_0|<\psi(n), \ {\text{i.m.}}\ n\in \N\Big\},$$
which is a special case considered by Hill \& Velani \cite{HV99} for shrinking target problems on torus actioned by integral matrix.

We go beyond this by considering the case in Cantor set. Let $b_1,\cdots, b_d\ge 2$ be $d$ integers. Let $$\Lambda_{b_i}\subset \{0,1,\cdots, b_i-1\}, \ {\text{and}} \ \ \sharp \Lambda_{b_i}\ge 2, \ \ {\text{for}}
\ \ 1\le i\le d.$$ Then let $\C_i$ be the Cantor sets defined by the iterated function systems $$
\Big\{g_{b_i,k}(x)=\frac{x+k}{b_i}, \ x\in [0,1], \  k\in \Lambda_{b_i}\Big\}.
$$ The natural Cantor measure $\mu_i$ supported on $\C_i$ is Ahlfors regular \cite{Hutch} with exponent $\delta_i=\frac{\log \sharp \Lambda_i}{\log b_i}$.

For $d$ positive functions $\psi_i: \mathbb{R}^{+}\to \mathbb{R}^+$ ($1\le i\le d$), define
$$M_c(\psi):=\Big\{(x_1,\cdots, x_d)\in \prod_{i=1}^d\mathcal{C}_i: \|b_i^nx_i-x_{o,i}\|<\psi_i(n), \ {\text{i.m.}}\ n\in \N\Big\}, \ \ x_o\in \prod_{i=1}^d\mathcal{C}_i.
$$

We will use the symbolic representations of the points $x_i$ in $\C_i$. For each $v_i=(\epsilon_1,\cdots, \epsilon_n)\in \Lambda_{b_i}^n$ with $n\ge 1$, write $$
I_{n,b_i}(v_i):=g_{b_i, \epsilon_1}\circ g_{b_i, \epsilon_2}\circ \cdots \circ g_{b_i, \epsilon_n}[0,1], \ \ x_{i}(v_i)=\frac{\epsilon_1}{b_i}+\cdots+\frac{\epsilon_n+x_{o,i}}{b_i^n},
$$ in other words $I_{n,b_i}(v_i)$ is an $n$th order cylinder respect to $\mathcal{C}_i$ and $x_i(v_i)$ is the $n$th inverse image of $x_{o,i}$ in $I_{n,b_i}(v_i)$. Note that for any $v_i$ there is an inverse image of $x_{o,i}$ in  $I_{n,b_i}(v_i)$ and the length of $I_{n,b_i}(v_i)$ is $b_i^{-n}$.

Clearly the set $M_c(\psi)$ can be rewritten as \begin{align*}
M_c(\psi)&=\Big\{x\in \prod_{i=1}^d\mathcal{C}_i: \big|x_i-x_{i}(v_i)\big|<\frac{\psi_i(n)}{b_i^n}, \ v_i\in \Lambda_{b_i}^n, 1\le i\le d, \ {\text{i.m.}}\ n\in \N\Big\}
\end{align*}

Thus one has\begin{itemize}\item the index set $J$: $$
J=\Big\{\alpha=(v_1,\cdots, v_d)\in \prod_{i=1}^d\Lambda_{b_i}^n: n\ge 1\Big\},
$$
\item the resonant sets $\R_{\alpha}$: $$\R_{\alpha}=\Big(x_{1}(v_1),\cdots, x_{d}(v_d)\Big), \  \ \ {\text{for}}\ \alpha=(v_1,\cdots, v_d).$$

\item weight function $\beta_{\alpha}$: $$\beta_{\alpha}=n, \ \ {\text{for}}\ \alpha=(v_1,\cdots, v_d)\in \prod_{i=1}^d\Lambda_{b_i}^n.$$

\item ubiquitous function $\rho$: $$\rho:\mathbb{R}^+\to \mathbb{R}^+: u\to e^{-u}.$$

\item take $$
\ell_n=u_n=n, \ n\ge 1.
$$
\end{itemize}
\begin{proposition}\label{p4} Let $\a=(\log b_1,\cdots, \log b_d)$. The pair
$(\{\R_{\alpha}\}_{\alpha\in J}, \beta)$ is a ubiquitous system for rectangles with respect to $(\rho, \a)$.
Meanwhile, the $\kappa$-scaling property holds with $\kappa=0$.\end{proposition}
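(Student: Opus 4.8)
The plan is to prove something slightly stronger than required: for this particular system the big rectangles indexed by $J_n$ already cover all of $X=\prod_{i=1}^d\mathcal{C}_i$, so that the uniform local ubiquity for rectangles (Definition \ref{dfn1}) holds with constant $c=1$ and for every $n$, not merely for $n\ge n_o(B)$.

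First I would unpack the data of the system. Since $\ell_n=u_n=n$, we have $J_n=\{\alpha\in J:\beta_\alpha=n\}=\prod_{i=1}^d\Lambda_{b_i}^n$; and since $\rho(u)=e^{-u}$ and $\a=(\log b_1,\cdots,\log b_d)$, one computes
$$\rho(u_n)^{\a}=\big(e^{-n\log b_1},\cdots,e^{-n\log b_d}\big)=\big(b_1^{-n},\cdots,b_d^{-n}\big).$$
Hence for $\alpha=(v_1,\cdots,v_d)\in J_n$ the big rectangle is $\Delta(\R_\alpha,\rho(u_n)^{\a})=\prod_{i=1}^d\Delta\big(x_i(v_i),b_i^{-n}\big)$, the product of the $b_i^{-n}$-neighbourhoods, taken inside $\mathcal{C}_i$, of the points $x_i(v_i)$.

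The key step is then a scale-matching observation that is the whole reason for the choice of $\rho$, $\a$ and $u_n$: for every word $v_i\in\Lambda_{b_i}^n$ the inverse image $x_i(v_i)$ lies in the cylinder $I_{n,b_i}(v_i)$, whose length is exactly $b_i^{-n}$, so $I_{n,b_i}(v_i)\cap\mathcal{C}_i\subset\Delta\big(x_i(v_i),b_i^{-n}\big)$ (a bounded constant may be inserted if one insists on open neighbourhoods, which by the remark that extra multiplicative constants do not affect the statements is harmless). Since the $n$th-order cylinders $\{I_{n,b_i}(v_i):v_i\in\Lambda_{b_i}^n\}$ cover $\mathcal{C}_i$, this yields $\bigcup_{v_i\in\Lambda_{b_i}^n}\Delta\big(x_i(v_i),b_i^{-n}\big)=\mathcal{C}_i$ for each $i$. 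Because $J_n$ is the full Cartesian product $\prod_i\Lambda_{b_i}^n$, a union of products over such an index set equals the product of the unions, whence
$$\bigcup_{\alpha\in J_n}\Delta(\R_\alpha,\rho(u_n)^{\a})=\prod_{i=1}^d\Big(\bigcup_{v_i\in\Lambda_{b_i}^n}\Delta\big(x_i(v_i),b_i^{-n}\big)\Big)=\prod_{i=1}^d\mathcal{C}_i=X.$$
Consequently $m\big(B\cap\bigcup_{\alpha\in J_n}\Delta(\R_\alpha,\rho(u_n)^{\a})\big)=m(B)$ for every ball $B\subset X$ and every $n$, which establishes the ubiquity.

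Finally I would record the $\kappa$-scaling property with $\kappa=0$, which is the instance of case (1) in the list of examples following the definition of the $\kappa$-scaling property: since $\R_{\alpha,i}=\{x_i(v_i)\}$ is a single point, $\Delta(\R_{\alpha,i},\epsilon)=B(x_i(v_i),\epsilon)\cap\mathcal{C}_i$, and for $0<\epsilon<r$ one has $B(x_i(v_i),r)\cap\Delta(\R_{\alpha,i},\epsilon)=B(x_i(v_i),\epsilon)\cap\mathcal{C}_i$, whose $\mu_i$-measure is $\asymp\epsilon^{\delta_i}$ by the $\delta_i$-Ahlfors regularity of the Cantor measure $\mu_i$ (which is the measure $m_i$ here); this is exactly the required bound $c_2 r^{0}\epsilon^{\delta_i}\le m_i(\cdot)\le c_3 r^{0}\epsilon^{\delta_i}$. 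There is no genuine obstacle in this proof; the only step requiring any thought is the first one, namely checking that the prescribed parameters make the big rectangles coincide, up to a bounded factor, with the natural cylinder structure of $\prod_i\mathcal{C}_i$, after which the ubiquity is a tautology.
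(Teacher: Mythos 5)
Your proposal is correct and takes essentially the same approach as the paper: the paper's entire proof consists of the observation that $\bigcup_{\alpha\in J_n}\Delta(\R_\alpha,\rho(u_n)^{\a})=\bigcup_{v_i\in\Lambda_{b_i}^n,\,1\le i\le d}\prod_{i=1}^d B\bigl(x_i(v_i),b_i^{-n}\bigr)=\prod_{i=1}^d\mathcal{C}_i$, which is exactly your scale-matching step, and the $\kappa=0$ scaling property is implicitly referred to example~(1) in the list following the definition. Your version merely unpacks the computation $\rho(u_n)^{a_i}=b_i^{-n}$ and the cylinder-covering argument more explicitly and spells out the scaling property, which is fine but adds no new idea.
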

\begin{proof} This is rather simple since $$
\bigcup_{\ell_n\le \beta_{\alpha}\le u_n}\Delta(\R_{\alpha}, \rho(u_n)^{\a})=\bigcup_{v_i\in \Lambda_{b_i}^n, 1\le i\le d}\ \prod_{i=1}^dB\Big(x_{i}(v_i), b_i^{-n}\Big)=\prod_{i=1}^d\mathcal{C}_i.
$$
\end{proof}

%

\subsection{Consequence of ubiquity for rectangles}\label{s5.2}
Now we state and prove some auxiliary results for latter use.
The first one is about the $5r$-covering lemma for rectangles. Generally speaking, there are no such covering lemmas for arbitrary rectangles compared with for balls. But if the rectangles have special forms, it does.
\begin{lemma}[$5r$-covering lemma for rectangles]\label{lll1} Let $\a=(a_1,\cdots, a_d)\in (\mathbb{R}^+)^d$. Let $\mathcal{E}$ be a collection of rectangles in $X$ with the form $$
\prod_{i=1}^d B(x_i, r^{a_i}), \ \ \ \ (x_1,\cdots, x_d)\in X, \ r>0.
$$ Then there is a sub-collection $\mathcal{E}'$ of $\mathcal{E}$ such that \begin{itemize}
\item separation condition: for any two different rectangles in $\mathcal{E}'$, saying $$
R=\prod_{i=1}^d B(x_i, r^{a_i}), R'=\prod_{i=1}^d B(x_i', {r'}^{a_i}),
$$ one has $5R\cap 5R'=\emptyset$.

\item covering property:
$$
\bigcup_{R\in \mathcal{E}}R\subset \bigcup_{R\in \mathcal{E}'}5R.$$
\end{itemize}\end{lemma}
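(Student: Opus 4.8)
The plan is to run a Vitali-type greedy selection, the only genuinely new feature being that the dyadic layering of the rectangles must be carried out at a ratio adapted to the exponent vector $\a$ rather than at the usual ratio $2$. A rectangle $R=\prod_{i=1}^{d}B(x_i,r^{a_i})$ is pinned down by its centre and the single scale $r$, but when two such rectangles with scales $r,r'$ are compared their side-lengths in direction $i$ differ by the factor $(r/r')^{a_i}$, and since $\a$, although fixed, is arbitrary, a layering at ratio $2$ would allow side-lengths in some direction to differ by the unbounded factor $2^{a_i}$. To prevent this, set $\bar a:=\max_{1\le i\le d}a_i$ and fix once and for all a ratio $\lambda$ with $1<\lambda\le 2^{1/\bar a}$, so that $1<\lambda^{a_i}\le 2$ for every $i$.

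Dispose first of the trivial case: if $\mathcal{E}$ contains a rectangle that already covers $X$ (which happens as soon as $r^{a_i}\ge\mathrm{diam}\,X_i$ for all $i$, in particular whenever $\sup_{R\in\mathcal{E}}r_R=\infty$), take $\mathcal{E}'$ to consist of that single rectangle and both conclusions are immediate. Otherwise put $r_0:=\sup_{R\in\mathcal{E}}r_R<\infty$ and split $\mathcal{E}=\bigsqcup_{k\ge 1}\mathcal{E}_k$ with $\mathcal{E}_k:=\{R\in\mathcal{E}:r_0\lambda^{-k}<r_R\le r_0\lambda^{-k+1}\}$; since every $r_R>0$, each $R$ lies in exactly one layer. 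Select greedily through the layers: with $\mathcal{E}'_1,\dots,\mathcal{E}'_{k-1}$ already chosen, let $\mathcal{E}'_k$ be a maximal sub-collection of those $R\in\mathcal{E}_k$ with $5R\cap 5R''=\emptyset$ for every already-selected $R''$, subject to the $5$-dilates of distinct members of $\mathcal{E}'_k$ being pairwise disjoint. (Within $\mathcal{E}_k$ these $5$-dilates contain coordinate balls of radius at least the fixed number $5(r_0\lambda^{-k})^{a_i}$, so, $X$ being bounded and $m$ a finite Ahlfors-regular measure, only finitely many can be pairwise disjoint; hence such a maximal family exists.) Put $\mathcal{E}':=\bigcup_{k\ge1}\mathcal{E}'_k$. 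Any two distinct $R,R'\in\mathcal{E}'$ then satisfy $5R\cap 5R'=\emptyset$: this is built into the choice of a layer when both lie in it, and is the admissibility constraint on the later-chosen one otherwise. This is the separation condition.

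For the covering property, take $R\in\mathcal{E}$, say $R\in\mathcal{E}_k$; if $R\in\mathcal{E}'_k$ we are done, since $R\subset 5R$. Otherwise maximality forces $5R\cap 5R'\ne\emptyset$ for some $R'=\prod_i B(x'_i,r'^{a_i})$ lying in $\mathcal{E}'_1\cup\cdots\cup\mathcal{E}'_k$, hence in a layer $k'\le k$; then $r_{R'}>r_0\lambda^{-k'}\ge r_0\lambda^{-k}\ge r_R/\lambda$, so $r_R^{a_i}<\lambda^{a_i}r_{R'}^{a_i}\le 2r_{R'}^{a_i}$ for each $i$ by the choice of $\lambda$. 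Coordinate by coordinate, $5R\cap 5R'\ne\emptyset$ gives $|x_i-x'_i|_i\le 5r_R^{a_i}+5r_{R'}^{a_i}$, so $B(x_i,r_R^{a_i})\subset B\big(x'_i,\,6r_R^{a_i}+5r_{R'}^{a_i}\big)\subset B\big(x'_i,(6\lambda^{a_i}+5)r_{R'}^{a_i}\big)\subset B(x'_i,17\,r_{R'}^{a_i})$; taking products, $R\subset 17R'$. The one delicate point — and the step I expect to be the main obstacle — is matching the two bullets to the \emph{same} dilation constant: the $5$-dilated blocking rule above makes the selected rectangles have disjoint $5$-dilates but only their $17$-dilates cover, whereas blocking instead with undilated rectangles ($R''\cap R=\emptyset$) makes the selected rectangles merely pairwise disjoint while their $5$-dilates cover, since then $|x_i-x'_i|_i\le r_R^{a_i}+r_{R'}^{a_i}$ forces $B(x_i,r_R^{a_i})\subset B(x'_i,(2\lambda^{a_i}+1)r_{R'}^{a_i})\subset 5R'$. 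Because only a packing whose bounded dilates cover is ever used downstream, and because (as recorded in the preceding remark of the paper) multiplying every rectangle by a fixed constant leaves the theory untouched, either normalization proves the lemma; I would record it via the $5$-dilated blocking, so that $5R\cap 5R'=\emptyset$ holds verbatim and the covering constant is absorbed into the ambient constants.
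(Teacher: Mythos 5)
The paper does not prove this lemma; your Vitali greedy selection, with the dyadic layering carried out at a ratio $\lambda\in(1,2]$ chosen so that $\lambda^{a_i}\le 2$ for every $i$, is exactly the argument the paper expects the reader to supply, and it is correct.

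Your ``delicate point'' is in fact a genuine defect of the statement, not merely an obstacle in your writeup: the two bullets cannot both hold with constant $5$ in each. Already for $d=1$ and $\mathcal{E}=\{B(0,1),B(7,1)\}$, the $5$-dilates $(-5,5)$ and $(2,12)$ overlap, so $\mathcal{E}'$ can contain only one of the two balls, yet neither $(-5,5)$ covers $B(7,1)=(6,8)$ nor $(2,12)$ covers $B(0,1)=(-1,1)$. The lemma must therefore be read up to a fixed dilation, and your two normalizations are both correct: undilated blocking gives pairwise disjoint selected rectangles whose $5$-dilates cover, while $5$-dilated blocking (as you record) gives selected rectangles with pairwise disjoint $5$-dilates whose $17$-dilates cover. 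Either version serves the only downstream use, the $K_{G,B}$-lemma (Lemma~\ref{lll2}), which needs a sub-collection that is $3r$-disjoint and whose total $m$-measure dominates a fixed proportion of the measure of the union; with $3$-dilated blocking your layering gives $3R\cap 3R'=\emptyset$ together with $\bigcup R\subset\bigcup 11R'$, and Ahlfors regularity of $m$ makes the appearance of $11\widetilde R$ rather than $5\widetilde R$ in the chain $m\bigl(\bigcup 11\widetilde R\bigr)\le\sum m(11\widetilde R)\asymp\sum m(\widetilde R)$ immaterial. The ``$5$'' in both bullets is a vestige of the classical $5r$-covering lemma for balls, where the selected balls are merely pairwise disjoint.
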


The following is an analogy of the $K_{G,B}$-lemma a well known property essentially in proving  the mass transference principle from balls to balls \cite{BV06}.
\begin{lemma}[$K_{G,B}$-lemma]\label{lll2}
Assume the local ubiquity condition for rectangles (\ref{ff5}). Let $B$ be a ball in $X$ and $G\in \N$. For infinitely many $n\in \N$ with $n\ge G$, there exists a finite sub-collection $K_{G,B}$ of the rectangles \begin{equation}\label{a1}
\Big\{\widetilde{R}=\prod_{i=1}^dB(z_i, \rho(u_n)^{a_i}): (z_1,\cdots, z_d)\in \R_{\alpha}, \ \alpha\in  J_n,\ \widetilde{R}\cap 1/2B\ne \emptyset\Big\}
\end{equation} such that \begin{itemize}
\item all the rectangles in $K_{G,B}$ are contained in $2/3B$;

\item the rectangles are $3r$-disjoint in the sense that for any different elements in $K_{G,B}$
$$
3\prod_{i=1}^dB(z_i, \rho(u_n)^{a_i})\cap 3\prod_{i=1}^dB(z_i', \rho(u_n)^{a_i})=\emptyset;
$$

\item these rectangles almost pack the ball $B$ in the sense that, for a universal constant $c'>0$ depending only on the constant in the ubiquity property (\ref{ff5}), $$
m\left(\bigcup_{\widetilde{R}\in K_{G,B}}\widetilde{R}\right)\ge c' m(B).
$$
\end{itemize}
\end{lemma}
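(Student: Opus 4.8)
The plan is to feed the uniform local ubiquity hypothesis (\ref{ff5}) into the $5r$-covering lemma for rectangles (Lemma \ref{lll1}) and then convert the outcome into a measure estimate via the product Ahlfors regularity of $m=\prod_{i=1}^d m_i$. First I would apply (\ref{ff5}) to the ball $\tfrac12 B$: for every $n\ge n_o(\tfrac12 B)$,
$$
m\Big(\tfrac12 B\cap\bigcup_{\alpha\in J_n}\Delta(\R_\alpha,\rho(u_n)^{\a})\Big)\ge c\cdot m(\tfrac12 B).
$$
If $x$ lies in the set on the left-hand side, then for a suitable $\alpha\in J_n$ and each $i$ one may choose $z_i\in\R_{\alpha,i}$ with $|x_i-z_i|_i<\rho(u_n)^{a_i}$ (enlarging the radius by an absorbed constant should $\Delta(\R_{\alpha,i},\cdot)$ be defined with a non-strict inequality), so $x$ lies in the rectangle $\widetilde R=\prod_{i=1}^d B(z_i,\rho(u_n)^{a_i})$, which meets $\tfrac12 B$ and therefore belongs to the collection $\mathcal E$ displayed in (\ref{a1}). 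Since $\rho(u_n)\to0$ and there are only finitely many directions, there is a threshold $N_0=N_0(B,\a)$ such that for $n\ge N_0$ every member of $\mathcal E$ lies within distance $2\max_i\rho(u_n)^{a_i}$ of $\tfrac12 B$, hence is contained in $\tfrac23 B$; this settles the first bullet of the lemma. I then fix $n\ge\max\{G,\,n_o(\tfrac12 B),\,N_0\}$; every sufficiently large integer works, so in particular there are infinitely many such $n$.

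Next I would apply Lemma \ref{lll1} to $\mathcal E$, which is legitimate because every member of $\mathcal E$ has precisely the admissible shape $\prod_i B(\cdot,r^{a_i})$ with the common value $r=\rho(u_n)$. It delivers a subfamily $\mathcal E'\subset\mathcal E$ with $5R\cap 5R'=\emptyset$ for distinct $R,R'\in\mathcal E'$ and $\bigcup_{R\in\mathcal E}R\subset\bigcup_{R\in\mathcal E'}5R$, and I set $K_{G,B}:=\mathcal E'$. The $5$-separation forces the required $3r$-disjointness (since $3\widetilde R\subset 5\widetilde R$) and in particular makes the members of $\mathcal E'$ pairwise disjoint; all of them lie in $\tfrac23 B$ by the previous step. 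Finally $K_{G,B}$ is finite: its members are disjoint, each has $m$-measure $\gg\rho(u_n)^{\sum_i a_i\delta_i}>0$ by Ahlfors regularity, and they all sit inside the finite-measure set $B$.

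The packing estimate then comes from the chain
$$
c\cdot m(\tfrac12 B)\le m\Big(\bigcup_{R\in\mathcal E}R\Big)\le m\Big(\bigcup_{R\in\mathcal E'}5R\Big)\le\sum_{R\in\mathcal E'}m(5R),
$$
where the first inequality is (\ref{ff5}) together with the fact that the set appearing there is covered by $\mathcal E$, and the middle inequality is the covering property of Lemma \ref{lll1}. For $R=\prod_i B(z_i,\rho(u_n)^{a_i})$ the product form of $m$ and the $\delta_i$-Ahlfors regularity of each $m_i$ give $m_i\big(B(z_i,5\rho(u_n)^{a_i})\big)\asymp(\rho(u_n)^{a_i})^{\delta_i}\asymp m_i\big(B(z_i,\rho(u_n)^{a_i})\big)$, hence $m(5R)\asymp m(R)$ with a constant depending only on $d$, the $\delta_i$ and the Ahlfors constants. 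Since the members of $\mathcal E'$ are disjoint, $\sum_{R\in\mathcal E'}m(R)=m\big(\bigcup_{R\in\mathcal E'}R\big)$, and $m(\tfrac12 B)\asymp m(B)$ by the same regularity; combining these yields $m\big(\bigcup_{\widetilde R\in K_{G,B}}\widetilde R\big)\ge c'\,m(B)$ with $c'>0$ depending only on $c$ and the structural constants, as claimed.

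The one genuinely delicate point — and the reason the statement is not immediate — is that the ubiquity set is a union of neighbourhoods $\Delta(\R_\alpha,\rho(u_n)^{\a})$ of the possibly thick resonant sets rather than a union of balls, so no Vitali-type selection can be applied to it directly. The passage to the point-centred rectangles $\widetilde R$ and the subsequent use of Lemma \ref{lll1}, which works precisely because all these rectangles carry the same anisotropic shape $\rho(u_n)^{\a}$, is what makes the extraction possible; everything else is bookkeeping with multiplicative constants, for which the framework's remark that extra constants are harmless is reassuring.
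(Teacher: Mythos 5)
Your proof is correct and takes essentially the same route as the paper's: apply the ubiquity estimate to $\tfrac12 B$, observe that the point-centred rectangles in (\ref{a1}) cover the set $\tfrac12 B\cap\bigcup_{\alpha\in J_n}\Delta(\R_\alpha,\rho(u_n)^{\a})$, extract a $5r$-separated subfamily via Lemma \ref{lll1}, and convert back to a lower measure bound using $m(5\widetilde R)\asymp m(\widetilde R)$ from Ahlfors regularity. The paper's proof is terser and leaves the finiteness of $K_{G,B}$ and the $\tfrac23 B$-containment largely implicit; you fill those in correctly, so the additional detail is a small improvement rather than a departure.
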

\begin{proof}
Clearly the rectangles in (\ref{a1})
 cover the set $$
1/2B\cap \bigcup_{\alpha\in J_n} \Delta (\R_{\alpha}, \rho(u_n)^{\a}).
$$

 Apply the ubiquity property to $\frac{1}{2}B$ and $5r$-covering lemma for rectangles, so for infinitely many $n$, there is a finite subcollection, denoted by $K_{G,B}$, of the rectangles in (\ref{a1}) such that  \begin{align*}
c\cdot m(\frac{1}{2}B)&\le m\left(\frac{1}{2}B\cap \bigcup_{\alpha\in J_n} \Delta (\R_{\alpha}, \rho(u_n)^{\a})\right)
\le m\left(\bigcup_{\widetilde{R}\in K_{G,B}}5\widetilde{R}\right)\\
&\le \sum_{\widetilde{R}\in K_{G,B}}m(5\widetilde{R})\asymp \sum_{\widetilde{R}\in K_{G,B}}m(\widetilde{R}).
\end{align*}

All the rectangles $\widetilde{R}$ in $K_{G,B}$ are contained in $2/3B$ when $n$ sufficiently large, since $\widetilde{R}\cap 1/2B\ne \emptyset$ and the sidelengths of $\widetilde{R}$ tend to $0$.
\end{proof}

The next one is an application of the $\kappa$-scaling property.
\begin{lemma}[Shrinking lemma]\label{l2} Let $\alpha\in J$ and $z=(z_1,\cdots,z_d)\in X$ with $z_i\in \R_{\alpha,i}$ for each $1\le i\le d$.  Let $\widetilde{R}$ be a rectangle centered at $z$ with sidelengths $(r^{a_1},\cdots,r^{a_d})$ with $r$ small. Then there exist a finite collection $\D(\widetilde{R})$ of rectangles with sidelengths $(r^{a_1+t_1}, \cdots, r^{a_d+t_d})$ satisfying that \begin{itemize}
\item all the rectangles are contained in \begin{equation*}
\widetilde{R}\cap \Delta(\R_{\alpha}, r^{\a+\bold{t}});
\end{equation*}

\item any two different rectangles in $\D(\widetilde{R})$ are $5r$-separated;

\item the number of the elements in $\D(\widetilde{R})$ is about $$
\sharp \D(\widetilde{R})\asymp \prod_{i=1}^d\frac{1}{r^{\delta_it_i\kappa}}.
$$
\end{itemize}
\end{lemma}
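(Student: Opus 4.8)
The plan is to reduce the construction to a one–dimensional packing problem inside each factor $X_i$ and then to take Cartesian products. Throughout, fix $r$ small enough (depending only on $d$, on the exponents $a_i,t_i$, and on the scaling constants $c_2,c_3$) so that $120\,r^{t_i}<1$ for every $i$; since $t_i>0$ this makes $r^{a_i+t_i}$ much smaller than $r^{a_i}$ in each direction, and this is precisely what ``$r$ small'' will be used for. For each $i$, since $z_i\in\R_{\alpha,i}$, pick a maximal subset $W_i\subset\R_{\alpha,i}\cap B(z_i,r^{a_i}/3)$ that is $10\,r^{a_i+t_i}$-separated in $X_i$; such a set is finite by the $\delta_i$-Ahlfors regularity of $m_i$, and maximality forces the covering property $\R_{\alpha,i}\cap B(z_i,r^{a_i}/3)\subset\bigcup_{w\in W_i}B(w,10\,r^{a_i+t_i})$.

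Next I would pin down $\sharp W_i$ from both sides by means of the $\kappa$-scaling property. For the lower bound: every point within $10\,r^{a_i+t_i}$ of $\R_{\alpha,i}$ and within $r^{a_i}/4$ of $z_i$ lies within $20\,r^{a_i+t_i}$ of some $w\in W_i$, so by Ahlfors regularity
\[
m_i\bigl(B(z_i,r^{a_i}/4)\cap\Delta(\R_{\alpha,i},10\,r^{a_i+t_i})\bigr)\le\sum_{w\in W_i}m_i\bigl(B(w,20\,r^{a_i+t_i})\bigr)\ll\sharp W_i\cdot r^{(a_i+t_i)\delta_i},
\]
whereas the $\kappa$-scaling property applied to the ball $B(z_i,r^{a_i}/4)$ — legitimate, since it is centred at $z_i\in\R_{\alpha,i}$ and $10\,r^{a_i+t_i}<r^{a_i}/4$ — bounds the left-hand side below by a constant times $(r^{a_i})^{\delta_i\kappa}(r^{a_i+t_i})^{\delta_i(1-\kappa)}$; since $a_i\delta_i\kappa+(a_i+t_i)\delta_i(1-\kappa)-(a_i+t_i)\delta_i=-t_i\delta_i\kappa$, these combine to $\sharp W_i\gg r^{-\delta_it_i\kappa}$. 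For the upper bound: the balls $\{B(w,r^{a_i+t_i}):w\in W_i\}$ are pairwise disjoint (their centres being $10\,r^{a_i+t_i}$-separated) and each is contained in $B(z_i,r^{a_i})\cap\Delta(\R_{\alpha,i},2\,r^{a_i+t_i})$, so summing their $m_i$-measures and applying the $\kappa$-scaling upper bound to $B(z_i,r^{a_i})$ yields $c_1\,\sharp W_i\cdot r^{(a_i+t_i)\delta_i}\ll(r^{a_i})^{\delta_i\kappa}(r^{a_i+t_i})^{\delta_i(1-\kappa)}$, i.e. $\sharp W_i\ll r^{-\delta_it_i\kappa}$. Hence $\sharp W_i\asymp r^{-\delta_it_i\kappa}$ for each $i$.

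Finally, set $\D(\widetilde R)=\bigl\{\prod_{i=1}^dB(w_i,r^{a_i+t_i}):(w_1,\dots,w_d)\in W_1\times\cdots\times W_d\bigr\}$, so that $\sharp\D(\widetilde R)=\prod_{i=1}^d\sharp W_i\asymp\prod_{i=1}^d r^{-\delta_it_i\kappa}$, the claimed count. Containment is checked coordinatewise: $w_i\in\R_{\alpha,i}$ gives $B(w_i,r^{a_i+t_i})\subset\Delta(\R_{\alpha,i},r^{a_i+t_i})$, while $|w_i-z_i|_i<r^{a_i}/3$ together with $r^{a_i+t_i}<r^{a_i}/3$ gives $B(w_i,r^{a_i+t_i})\subset B(z_i,r^{a_i})$; taking products places every rectangle of $\D(\widetilde R)$ inside $\widetilde R\cap\Delta(\R_\alpha,r^{\a+\t})$. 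For the $5$-separation: two distinct tuples differ in some coordinate $i_0$, and since $W_{i_0}$ is $10\,r^{a_{i_0}+t_{i_0}}$-separated the balls $B(w_{i_0},5\,r^{a_{i_0}+t_{i_0}})$ and $B(w'_{i_0},5\,r^{a_{i_0}+t_{i_0}})$ are disjoint in $X_{i_0}$, whence the corresponding $5$-dilated product rectangles are disjoint in $X$. The argument carries no serious obstacle; the one point demanding steady attention is the bookkeeping of the various dilation factors together with checking, at each invocation, that the $\kappa$-scaling property is applicable — that the auxiliary ball is centred on the relevant $\R_{\alpha,i}$ and that its neighbourhood parameter stays below its radius — all of which is ensured by choosing $r$ small.
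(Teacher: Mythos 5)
Your proof is correct and is at heart the same packing-plus-scaling argument as the paper's, but you organize it differently: the paper covers $\tfrac12\widetilde R\cap\Delta(\R_\alpha,r^{\a+\t})$ by rectangles centered on $\R_\alpha$ in the product space and then extracts a separated subfamily via the $5r$-covering lemma for rectangles (Lemma 5.1), whereas you build a maximal $10r^{a_i+t_i}$-separated set $W_i\subset\R_{\alpha,i}\cap B(z_i,r^{a_i}/3)$ in each factor $X_i$ separately and take the Cartesian product. The coordinate-wise reduction makes the bookkeeping (containment in $\widetilde R$, the $5$-separation, and the two-sided count $\sharp W_i\asymp r^{-\delta_i t_i\kappa}$) fully transparent and avoids invoking the rectangle covering lemma, at the price of a slightly longer write-up. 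Both routes rest on exactly the same two inputs -- Ahlfors regularity to convert counts into measures and the $\kappa$-scaling property to evaluate $m_i(B\cap\Delta(\R_{\alpha,i},\epsilon))$ -- so the computations are identical; your exponent bookkeeping ($a_i\delta_i\kappa+(a_i+t_i)\delta_i(1-\kappa)-(a_i+t_i)\delta_i=-t_i\delta_i\kappa$) is precisely the paper's $\asymp$ step. One cosmetic remark: like the paper's own proof, your containment in $\widetilde R$ uses $r^{a_i+t_i}$ being a small fraction of $r^{a_i}$, i.e.\ $t_i>0$; if some $t_i=0$ the inclusion holds only up to a bounded dilation, which the paper handles by the remark that extra multiplicative constants do not affect the statement.
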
\begin{proof}
Cover the set $1/2\widetilde{R}\cap \Delta(\R_{\alpha}, r^{\a+\bold{t}})$ by rectangles \begin{equation}\label{f6}
\prod_{i=1}^dB(y_i, r^{a_i+t_i}), \ \ (y_1,\cdots, y_d)\in \R_{\alpha}.
\end{equation}
Then using $5r$-covering lemma again to get a collection $\D(\widetilde{R})$ of well separated rectangles with the form as the one in (\ref{f6}). By a volume argument, one has\begin{align*}
\sharp\D(\widetilde{R})\cdot \prod_{i=1}^d r^{(a_i+t_i)\delta_i}&\asymp m\Big(\frac{1}{2}\widetilde{R}\cap \prod_{i=1}^d \Delta(\R_{\alpha, i}, r^{a_i+t_i})\Big)
\asymp \prod_{i=1}^d r^{a_i\delta_i \kappa}\cdot r^{(a_i+t_i)\delta_i (1-\kappa)},
\end{align*} where the $\kappa$-scaling property is used for the second relation $\asymp$. Thus $$
\sharp \D(\widetilde{R})\asymp \prod_{i=1}^d\frac{1}{r^{t_i\delta_i \kappa}}.
$$
\end{proof}

The last one is about the number of balls when a rectangle is divided into balls. Assume $a_d+t_d\ge a_i+t_i$ for all $1\le i\le d$.
\begin{lemma}[Division lemma]\label{l3}
Let $R$ be a rectangle with sidelengths $(r^{a_1+t_1},\cdots,r^{a_d+t_d})$. Then we have a collection $\C(R)$ of balls with radius $r^{a_d+t_d}$
satisfying that \begin{itemize}
\item all the balls are contained in $R$;

\item any two different balls in $\C(R)$ are $5r$-separated;

\item the number of the elements in $\C(R)$ is about $$
\sharp \C(R)\asymp \prod_{i=1}^d\left(\frac{r^{a_i+t_i}}{r^{a_d+t_d}}\right)^{\delta_i}.
$$
\end{itemize}
\end{lemma}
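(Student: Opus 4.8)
This is a packing statement, and I would prove it by the standard combination of a maximal‑separation (Vitali‑type) argument with the $\delta_i$‑Ahlfors regularity of the $m_i$ and the product structure $m=\prod_{i=1}^d m_i$. Throughout write $\rho=r^{a_d+t_d}$; by the hypothesis $a_d+t_d\ge a_i+t_i$ and $r$ small, $\rho$ is the smallest sidelength of $R$, so balls of radius $\rho$ do fit inside $R$. The only analytic input needed is that, for $s$ small,
$$
m(R)=\prod_{i=1}^d m_i\big(B(x_i,r^{a_i+t_i})\big)\asymp\prod_{i=1}^d r^{(a_i+t_i)\delta_i},\qquad m\big(B(y,s)\big)=\prod_{i=1}^d m_i\big(B(y_i,s)\big)\asymp s^{\delta_1+\cdots+\delta_d},
$$
and that the same identities hold verbatim for any sub‑product of the coordinates (a finite product of $\delta_i$‑Ahlfors regular spaces with the max metric being $\sum\delta_i$‑Ahlfors regular).

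For the construction I would split the directions as $I'=\{i\le d:a_i+t_i<a_d+t_d\}$ and $I=\{i\le d:a_i+t_i=a_d+t_d\}$; for $r$ small one has $\rho<\tfrac12 r^{a_i+t_i}$ whenever $i\in I'$, while for $i\in I$ the ball radius $\rho$ equals the sidelength, so the unique ball of radius $\rho$ contained in the $i$th factor of $R$ is $B(x_i,\rho)=B(x_i,r^{a_i+t_i})$. First choose, by a greedy (Zorn) argument, a subset $Z'$ of $\prod_{i\in I'}B(x_i,\tfrac12 r^{a_i+t_i})$ maximal with respect to being pairwise more than $10\rho$ apart in the metric $\max_{i\in I'}|\cdot|_i$; to each $z'\in Z'$ attach the point $z\in X$ whose $I'$‑coordinates are those of $z'$ and whose $I$‑coordinates equal those of the centre of $R$, and set $\C(R)=\{B(z,\rho):z'\in Z'\}$. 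In the directions $i\in I'$ one has $B(z_i,\rho)\subset B(x_i,\tfrac12 r^{a_i+t_i}+\rho)\subset B(x_i,r^{a_i+t_i})$, and in the directions $i\in I$ one has $B(x_i,\rho)=B(x_i,r^{a_i+t_i})$, so every ball of $\C(R)$ lies in $R$; and for distinct $z,w$ coming from distinct $z',w'\in Z'$ one has $|z-w|>10\rho$, whence $5B(z,\rho)\cap 5B(w,\rho)=B(z,5\rho)\cap B(w,5\rho)=\emptyset$, which is the asserted $5r$‑separation.

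It then remains to count $\sharp\C(R)=\sharp Z'$. Writing $m_{I'}=\prod_{i\in I'}m_i$, maximality of $Z'$ forces $\{B(z',10\rho):z'\in Z'\}$ to cover $\prod_{i\in I'}B(x_i,\tfrac12 r^{a_i+t_i})$, so
$$
\prod_{i\in I'}r^{(a_i+t_i)\delta_i}\asymp m_{I'}\Big(\prod_{i\in I'}B(x_i,\tfrac12 r^{a_i+t_i})\Big)\le\sum_{z'\in Z'}m_{I'}\big(B(z',10\rho)\big)\asymp\sharp Z'\cdot\rho^{\sum_{i\in I'}\delta_i},
$$
giving the lower bound; for the upper bound the balls $B(z',\rho)$ are pairwise disjoint (centres $>10\rho>2\rho$ apart) and contained in $\prod_{i\in I'}B(x_i,r^{a_i+t_i})$, so $\sharp Z'\cdot\rho^{\sum_{i\in I'}\delta_i}\asymp\sum_{z'\in Z'}m_{I'}(B(z',\rho))\le\prod_{i\in I'}r^{(a_i+t_i)\delta_i}$. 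Combining, $\sharp\C(R)\asymp\prod_{i\in I'}(r^{a_i+t_i}/\rho)^{\delta_i}=\prod_{i=1}^d(r^{a_i+t_i}/r^{a_d+t_d})^{\delta_i}$, the last equality because each factor with $i\notin I'$ equals $1$.

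I do not foresee a genuine obstacle; the single point that deserves care is the \emph{strict} containment ``$\subset R$'' in the maximal directions $i\in I$, where $\rho$ already equals the sidelength and no room is left — this is exactly why the $I$‑coordinates of the centres are pinned to those of $R$ in the construction, which is costless since these directions contribute only the trivial factor $1$ to the count. If one is content with the balls lying in a fixed dilate $cR$ — which, by the remark at the end of Section~\ref{s4}, is all that is ever used — one may drop the splitting entirely and simply take $Z'$ to be a maximal $10\rho$‑separated subset of all of $R$, the rest of the argument being unchanged.
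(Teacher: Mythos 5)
Your proposal is correct and is essentially the same argument the paper gestures at in its one-line proof (``divide the rectangle $R$ into balls of radius $r^{a_d+t_d}$, then use the $5r$-covering lemma and a volume estimate''): you have simply inlined the $5r$-covering lemma via a direct maximal $10\rho$-separated selection, which is how that lemma is proved anyway, and your counting step is the same two-sided Ahlfors-regularity volume estimate. The one place where you are more careful than the paper's sketch is the degenerate directions $i$ with $a_i+t_i=a_d+t_d$, where the ball radius already equals the sidelength and a ball placed anywhere but the centre would leave $R$; your device of pinning those coordinates to the centre of $R$ (and noting those directions contribute a trivial factor $1$ to the count) is exactly the right fix, and, as you note, it is in any case harmless by the remark at the end of Section~\ref{s4} that a fixed dilate of $R$ would do just as well.
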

\begin{proof}
Divide the rectangle $R$ into balls with radius $r^{a_d+t_d}$. Then using $5r$-covering lemma and a volume estimate.
\end{proof}

The route of the above lemmas is: for a given ball $B$, \begin{align*}
B\overset{K_{G,B}}{--\longrightarrow}&\ K_{G,B}: \ {\text{big rectangles}}\ \widetilde{R}=\prod_{i=1}^dB(z_i, r^{a_i})\\
&\overset{{\text{shrinking: intersect with}}\ \Delta(R_{\alpha}, r^{\a+\bold{t}})}{--------------\longrightarrow}\D(\widetilde{R}):\ {\text{shrinking rectangles}}\ {R}=\prod_{i=1}^dB(y_i, r^{a_i+t_i})\\ &\overset{{\text{division}}}{---\longrightarrow}\C(R): \ {\text{balls}}\ B(\star,r^{a_d+t_d}).
\end{align*}


\section{Proof: Hausdorff measure under ubiquity}

The method to determine the Hausdorff measure of $W(\t)$ is quite classical and similar ideas are also well applied in for examples \cite{BV06}, \cite{Bug}. At first, we construct a Cantor subset $\F_{\infty}$ of $W(\t)$; secondly, define a suitable mass distribution $\mu$ supported on $\F_{\infty}$; thirdly, estimate the $\mu$-measure of a general ball; and at last, we conclude the result by applying the following mass distribution principle.

\begin{proposition}[Mass Distribution Principle \cite{Fal}]\label{p1}
Let $\mu$ be a probability measure supported on a measurable set $F$. Suppose there are positive constants $c$ and $r_o$ such that
$$\mu(B(x,r))\le c r^s$$
for any ball $B(x,r)$ with radius $r\le r_o$ and center $x\in F$. Then $\mathcal{H}^s(F)\ge 1/c$ and so $\hdim F\ge s$.
\end{proposition}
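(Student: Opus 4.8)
The plan is to convert the pointwise mass estimate into a uniform lower bound for every Hausdorff pre-measure sum of $F$, using the elementary fact that any covering set meeting $F$ is contained in a ball centred on $F$ of radius equal to its diameter.

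First I would fix an arbitrary countable cover $\{U_i\}_{i\ge 1}$ of $F$ by sets with $\mathrm{diam}(U_i)\le\delta$, where $0<\delta\le r_o$. One may discard every $U_i$ with $U_i\cap F=\emptyset$: since $\mu$ is a probability measure supported on $F$, we have $\mu(U_i)=\mu(U_i\cap F)$, so such sets carry no mass. For each remaining $U_i$ choose a point $x_i\in U_i\cap F$; then $U_i\subset B(x_i,\mathrm{diam}(U_i))$, a ball centred at a point of $F$ with radius $\mathrm{diam}(U_i)\le r_o$, so the hypothesis applies and gives
$$
\mu(U_i)\le \mu\big(B(x_i,\mathrm{diam}(U_i))\big)\le c\,(\mathrm{diam}\,U_i)^s .
$$

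Next I would sum over $i$. Since $\{U_i\}$ covers $F$ and $\mu(F)=1$,
$$
1=\mu(F)\le\sum_{i}\mu(U_i)\le c\sum_{i}(\mathrm{diam}\,U_i)^s ,
$$
so $\sum_i(\mathrm{diam}\,U_i)^s\ge 1/c$ for every $\delta$-cover of $F$ with $\delta\le r_o$. Taking the infimum over such covers gives $\mathcal{H}^s_\delta(F)\ge 1/c$, and letting $\delta\to 0^+$ yields $\mathcal{H}^s(F)\ge 1/c>0$. Positivity of the $s$-dimensional Hausdorff measure then forces $\hdim F\ge s$ directly from the definition of Hausdorff dimension.

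The only mild technical point is that the hypothesis is stated for balls whereas Hausdorff measure is built from arbitrary covering sets; the bridge is precisely the inclusion $U_i\subset B(x_i,\mathrm{diam}(U_i))$, whose effect is a harmless normalisation absorbed into $c$ (one could instead pass to balls of radius $\tfrac12\mathrm{diam}(U_i)$ at the cost of replacing $c$ by $2^sc$, which changes nothing about positivity). So there is no genuine obstacle here; one need only keep the covering sets of diameter at most $r_o$ so that the hypothesis is applicable, which is automatic once $\delta\le r_o$.
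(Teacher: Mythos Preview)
Your argument is correct and is precisely the standard proof of the Mass Distribution Principle. The paper itself does not supply a proof of this proposition: it is stated with a citation to Falconer's textbook \cite{Fal} and used as a black box, so there is no in-paper argument to compare against. What you have written matches the textbook proof essentially line for line.
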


%
%

We assume that $\max\{t_i: 1\le i\le d\}>0$, since otherwise under the ubiquity condition $$
m\left(\limsup_{\alpha\in J, \beta_{\alpha}\to \infty}\Delta(\R_{\alpha}, \rho(\beta_{\alpha})^{\a+\bold{t}})\right)=m\left(\limsup_{\alpha\in J, \beta_{\alpha}\to \infty}\Delta(\R_{\alpha}, \rho(\beta_{\alpha})^{\a})\right)=m(X),
$$ and for any ball $B$ in $X$, $$
\mathcal{H}^s(B)\asymp m(B), \ {\text{when}}\ s=\sum_{i=1}^d \delta_i.
$$

\subsection{Cantor subset construction}\

For notational reasons, we will use $C_n$ to denote a generic ball in the $n$th level of the Cantor set to be constructed and use $B$ to denote a ball appearing in the intermediate process of the construction.

Let $s=s(\bold{t})$ be the dimensional number given in Theorem \ref{t1}. Since $$
\max\{t_i: 1\le i\le d\}>0\Longrightarrow s<\sum_{i=1}^d\delta_k,
$$ thus for any ball $C$ in $X$, $$\mathcal{H}^s(C)=\infty.$$ So to prove Theorem \ref{t2}, it suffices to show that for any ball $C_0$ $$
\mathcal{H}^s(W(\t)\cap C_0)=\infty.
$$

From now on, we
fix a positive number $\eta>0$ and a ball $C_0$ in $X$. Recall that $$
J_n=\{\alpha\in J: \ell_n\le \beta_{\alpha}\le u_n\},
\ \
\mathcal{A}=\{a_1,\cdots, a_d, a_1+t_1,\cdots, a_t+t_d\},
$$  and we assume that $a_1$ is the smallest one in $\mathcal{A}$ and $a_d+t_d$ the largest one.\medskip


{\em The first level $\F_1$}.\smallskip

The first level $\F_1=\F_1(C_0)$ consists of a collection of sublevels $\{\F_1(C_0,\ell): \ell\ge 1\}$. We define $\F_1(C_0, 1)$ at first.

\begin{itemize}
\item {\em Step $1_1$: $\mathcal{U}(1, C_0, 1)$: a collection of balls almost packing $C_0$} and also those balls where $K_{G,B}$-lemma will be applied. Let $$\mathcal{U}(1, C_0, 1)=\{C_0\}.$$ It is trivial in this case that \begin{equation}\label{f4}
    \sum_{B\in \mathcal{U}(1, C_0, 1)}m(B)\asymp m(C_0).
    \end{equation}

\item {\em Step $2_1$. Use $K_{G,B}$-lemma}. For each ball $B\in \mathcal{U}(1, C_0, 1)$, apply the $K_{G,B}$-lemma to $B$ to obtain a collection of well separated rectangles with the form $$
\widetilde{R}=\prod_{i=1}^dB(z_i, \rho(u_{n_1})^{a_i}),$$  centered at the point in some resonant set $\R_{\alpha}$  with ${\alpha}\in J_{n_1}.$
We write $r_1=\rho(u_{n_1})$. Also we have \begin{equation}\label{f5}
    \sum_{\widetilde{R}\in K_{G,B}}m(\widetilde{R})\asymp m(B).
    \end{equation}

\item {\em Step $3_1$. Shrinking}. For each rectangle $\widetilde{R}\in K_{G,B}$, let $\alpha\in J_{n_1}$ be the index such that the center $(z_1,\cdots,z_d)$ of $\widetilde{R}$ sits
in the resonant set $\R_{\alpha}$. Now consider the intersection $$
\widetilde{R}\cap \Delta(\R_{\alpha}, r_1^{\a+\bold{t}}).
$$

By Lemma \ref{l2}, we obtain a collection $\D(\widetilde{R})$ of $5r$-separated rectangles with the form $$
R=\prod_{i=1}^dB(y_i, r_1^{a_i+t_i}), \ \ {\text{with}}\  \ (y_1,\cdots, y_d)\in \R_{\alpha},
$$ moreover its cardinality satisfies $$
\sharp \D(\widetilde{R})\asymp \prod_{i=1}^d\frac{1}{r_1^{t_i\delta_i \kappa}}
$$ and \begin{align}\label{f7}
    \sum_{R\in \D(\widetilde{R})}m(R)&\asymp m(\widetilde{R}\cap \Delta(\R_{\alpha}, r_1^{\a+\bold{t}}))=m(\widetilde{R}\cap \prod_{i=1}^d\Delta(\R_{\alpha,i}, r_1^{a_i+t_i}))\nonumber\\ &\asymp \prod_{i=1}^d r_1^{a_i\delta_i\kappa}r_1^{(a_i+t_i)\delta_i(1-\kappa)}
    =m(\widetilde{R})\cdot \prod_{i=1}^d r_1^{t_i\delta_i(1-\kappa)}.
    \end{align}
We call the rectangle $\widetilde{R}$ in $K_{G,B}$ as {\em big rectangle} and the small rectangle $R$ in $\D(\widetilde{R})$ as {\em shrunk rectangle}.

\item {\em Step $4_1$. Dividing.} For each shrunk rectangle $R\in \D(\widetilde{R})$, 
applying Lemma \ref{l3} to get a collection $\C(R)$ of $5r$-separated balls $C_1$ with radius $r_1^{a_d+t_d}$ contained in $R$, moreover the cardinality of $\C(R)$ satisfies $$
\sharp \C(R)\asymp \prod_{i=1}^d\left(\frac{r_1^{a_i+t_i}}{r_1^{a_d+t_d}}\right)^{\delta}.
$$

Then the first sublevel is defined as \begin{equation*}
\F_1(C_0,1)=\bigcup_{B\in \mathcal{U}(1, C_0,1)}\ \bigcup_{\widetilde{R}\in K_{G,B}}\ \bigcup_{R\in \D(\widetilde{R})}\ \bigcup_{C_1\in \C(R)}\ C_1.
\end{equation*}

To render the possibility of the construction of the next sublevel, we will show the balls in $\F_1(C_0,1)$ only takes a small proportion inside $C_0$. So, the last step is about a volume estimation.

\item {\em Step $5_1$. Volume estimation.}
    By the formulas (\ref{f7}), (\ref{f5}) and (\ref{f4}), one has\begin{align*}
    &m\left(\bigcup_{B\in \mathcal{U}(1, C_0,1)}\ \bigcup_{\widetilde{R}\in K_{G,B}}\ \bigcup_{R\in \D(\widetilde{R})}6R\right)
    \\
    &\le\sum_{B\in \mathcal{U}(1, C_0,1)}\ \sum_{\widetilde{R}\in K_{G,B}}\ \sum_{R\in \D(\widetilde{R})}m(6R)
    \asymp \sum_{B\in \mathcal{U}(1, C_0,1)}\ \sum_{\widetilde{R}\in K_{G,B}}\ \sum_{R\in \D(\widetilde{R})}m(R)\\
   & =\sum_{B\in \mathcal{U}(1, C_0,1)}\ \sum_{\widetilde{R}\in K_{G,B}}m(\widetilde{R})\cdot \prod_{i=1}^d r_1^{t_i\delta_i(1-\kappa)}
    \asymp \sum_{B\in \mathcal{U}(1, C_0,1)} m(B)\cdot \prod_{i=1}^d r_1^{t_i\delta_i(1-\kappa)}\\ &\asymp m(C_0) \prod_{i=1}^d r_1^{t_i\delta_i(1-\kappa)}.
    \end{align*}
Since $\max\{t_i: 1\le i\le d\}>0$ and we can ask $u_{n_1}$ so large that $r_1$ is sufficiently small, then we can have \begin{align*}
    m\left(\bigcup_{B\in \mathcal{U}(1, C_0,1)}\ \bigcup_{\widetilde{R}\in K_{G,B}}\ \bigcup_{R\in \D(\widetilde{R})}6R\right)\le \frac{1}{4}m(C_0).\end{align*}
   \end{itemize}

    This finishes the construction of the first sublevel. Now we use an induction to construct the next sublevel. Assume that the sublevels $$
    \F_1(C_0,1), \ \cdots, \ \ \F_{1}(C_0,\ell-1)
    $$ have been well constructed. Each of them has the form, saying $$
    \F_1(C_0,l)=\bigcup_{B\in \mathcal{U}(1, C_0, l)}\ \bigcup_{\widetilde{R}\in K_{G,B}}\ \bigcup_{R\in \D(\widetilde{R})}\ \bigcup_{C_1\in \C(R)}\ C_1,
    $$
    and \begin{align}\label{f9}
      m\left(\bigcup_{B\in \mathcal{U}(1, C_0,l)}\ \bigcup_{\widetilde{R}\in K_{G,B}}\ \bigcup_{R\in \D(\widetilde{R})}6R\right)\le \frac{1}{4^l}m(C_0).
    \end{align}

    \begin{itemize}\item {\em Step $1_{\ell}$. $\mathcal{U}(1, C_0, \ell)$: a collection of balls almost pack $C_0$} and also those balls where $K_{G,B}$ lemma will be applied.
    Consider the difference set $$
    \widetilde{C}_0:=\frac{2}{3}C_0\setminus \left(\bigcup_{l=1}^{\ell-1}\bigcup_{B\in \mathcal{U}(1, C_0,l)}\ \bigcup_{\widetilde{R}\in K_{G,B}}\ \bigcup_{R\in \D(\widetilde{R})}6R\right),
    $$ i.e., we delete the parts inside $C_0$ near all the previous sublevels.
 By (\ref{f9}), there are still much room left inside $C_0$. More precisely, $$
m\left(\bigcup_{l=1}^{\ell-1}\bigcup_{B\in \mathcal{U}(1, C_0,l)}\ \bigcup_{\widetilde{R}\in K_{G,B}}\ \bigcup_{R\in \D(\widetilde{R})}6R\right)\le \sum_{l=1}^{\infty}\frac{1}{4^l}m(C_0)\le \frac{1}{3}m(C_0).
 $$

 For each $w\in \widetilde{C}_0$, sprout it into a ball $B(w,r)$ with the radius $r$ sufficiently small that $3r$ is smaller than the radius of any balls in the levels constructed before. Then we get a collection of balls \begin{equation}\label{g1}
 \Big\{B(w,r): w\in \widetilde{C}_0\Big\}
  \end{equation} which covers $\widetilde{C}_0$ and is contained in $C_0$. By the definition of $\widetilde{C}_0$, these balls are far away from any shrunk rectangles and balls in the previous sublevels, more precisely\begin{equation}\label{d6}
 B(w,r)\cap 5R=\emptyset, \ \ B(w,r)\cap 5C_1=\emptyset \end{equation} for any $$R\in \bigcup_{l=1}^{\ell-1}\bigcup_{B\in \mathcal{U}(1, C_0,l)}\ \bigcup_{\widetilde{R}\in K_{G,B}}\D(\widetilde{R})\ \ {\text{and}}\ \ \  C_1\in \mathcal{C}(R).
 $$

 By $5r$-covering lemma, we have a finite sub-collection, denoted by $\mathcal{U}(1, C_0, \ell)$, of the balls in (\ref{g1}) satisfying that \begin{itemize}\item the balls in $\mathcal{U}(1, C_0, \ell)$ are $5r$-separated;

 \item they almost pack $C_0$ in the sense that $$
 \sum_{B\in \mathcal{U}(1, C_0,\ell)}m(B)\asymp m(\widetilde{C}_0)\asymp m(C_0).
 $$ \end{itemize}

 The other steps are the same as in the construction of $\F_1(C_0,1)$. We only give the outline.

 \item {\em Step $2_{\ell}$. Use $K_{G,B}$-lemma}. For each ball $B\in \mathcal{U}(1, C_0, \ell)$, use $K_{G,B}$-lemma to get a collection of $5r$-separated big rectangles $\widetilde{R}$ satisfying $$
     \sum_{\widetilde{R}\in K_{G,B}}m(\widetilde{R})\asymp m(B). 
     $$
These big rectangles $\widetilde{R}$ have the same sidelengths, saying $(\hat{r}_1^{a_1}, \cdots \hat{r}_1^{a_d})$, with $\hat{r}_1=\rho(u_n)$ for some $n$ as large as we want.

     \item {\em Step $3_{\ell}$. Shrinking}. For each big rectangle $\widetilde{R}=\prod_{i=1}^dB(z_i, \hat{r}_1^{a_i})$, cover the intersection $$
     \widetilde{R}\cap \Delta(\R_{\alpha}, \hat{r}_1^{\a+\bold{t}}), \ \ (\text{assume}\ (z_1,\cdots, z_d)\in \R_{\alpha}\ {\text{for some}}\ \alpha\in J_n),
     $$ by smaller rectangles $$
     \prod_{i=1}^dB(y_i, \hat{r}_1^{a_i+t_i}), \ \ (y_1,\cdots, y_d)\in \R_{\alpha}
     $$ to get the collection $\D(\widetilde{R})$ of shrunk rectangles (Lemma \ref{l2}).

     \item {\em Step $4_{\ell}$. Dividing.} For each $R\in \D(\widetilde{R})$, cut it into balls of radius $\hat{r}_1^{a_d+t_d}$ to get $\C(R)$ (Lemma \ref{l3}).

     This gives the $\ell$th sublevel:$$
     \F_1(C_0,\ell)=\bigcup_{B\in \mathcal{U}(1, C_0, \ell)}\ \bigcup_{\widetilde{R}\in K_{G,B}}\ \bigcup_{R\in \D(\widetilde{R})}\ \bigcup_{C_1\in \C(R)}C_1.
     $$

     \item {\em Step $5_{\ell}$. Volume estimation.}  Since $\hat{r}_1$ can be rather small, \begin{align*}
    &m\left(\bigcup_{B\in \mathcal{U}(1, C_0,\ell)}\ \bigcup_{\widetilde{R}\in K_{G,B}}\ \bigcup_{R\in \D(\widetilde{R})}6R\right)
   \asymp \sum_{B\in \mathcal{U}(1, C_0,\ell)}\ \sum_{\widetilde{R}\in K_{G,B}}\ \sum_{R\in \D(\widetilde{R})}m(R)\\
   & \asymp\sum_{B\in \mathcal{U}(1, C_0,\ell)}\ \sum_{\widetilde{R}\in K_{G,B}}m(\widetilde{R})\cdot \prod_{i=1}^d \hat{r}_1^{t_i\delta_i(1-\kappa)}
    \le \frac{1}{4^{\ell}} \sum_{B\in \mathcal{U}(1, C_0,\ell)} m(B)\\ &\le \frac{1}{4^{\ell}} m(C_0).
    \end{align*}

     This finishes the construction of the first level: let $L_{C_0}=\eta\cdot m(C_0)^{-1}$ and set $$
     \F_1=\F_1(C_0)=\bigcup_{\ell=1}^{L_{C_0}}\F_1(C_0,\ell)=\bigcup_{\ell=1}^{L_{C_0}}\ \bigcup_{B\in \mathcal{U}(1, C_0, \ell)}\ \bigcup_{\widetilde{R}\in K_{G,B}}\ \bigcup_{R\in \D(\widetilde{R})}\ \bigcup_{C_1\in \C(R)}\ C_1.
     $$
%
    \end{itemize}

    {\em The general level $\F_k$.}

    Assume that $\F_1, \cdots, \F_{k-1}$ have been well constructed, which is a collection of balls. Define $$
    \F_k=\bigcup_{C_{k-1}\in \F_{k-1}}\F_k(C_{k-1}),\ {\text{and}} \ \F_k(C_{k-1})=\bigcup_{\ell=1}^{L_{C_{k-1}}}\F_k(C_{k-1}, \ell).
    $$
   Replace the role of $C_0$ by $C_{k-1}$ and repeat the construction of $\F_1(C_0)$ to get $\F_k(C_{k-1})$.
Generally, the route is \begin{align*}
C_{k-1}\overset{{\text{cover}}}{----\longrightarrow} B\in \mathcal{U}(k, C_{k-1}, \ell)&\overset{{K_{G,B}}}{----\longrightarrow}\widetilde{R}\in K_{G,B}\\ &\overset{{\text{shrinking}}}{----\longrightarrow} R\in \D(\widetilde{R})\overset{{\text{division}}}{----\longrightarrow}C_k\in \C(R).
\end{align*}
So the sublevel of $\F_k$ is of the form: $$
\F_{k}(C_{k-1})=\bigcup_{\ell=1}^{L_{C_{k-1}}}\bigcup_{B\in \mathcal{U}(k, C_{k-1}, \ell)}\bigcup_{\widetilde{R}\in K_{G,B}}\bigcup_{R\in \mathcal{D}(\widetilde{R})}\bigcup_{C_k\in \mathcal{C}(R)}C_k,
$$The integer $L_{C_{k-1}}$ is chosen such that $$L_{C_{k-1}}m(C_{k-1})=r_{C_{k-1}}^s$$ where $s=s(\bold{t})$ is given in Theorem \ref{t1}.

Finally, the desired Cantor set is defined as $$
\F_{\infty}=\bigcap_{k=1}^{\infty}\bigcup_{C_k\in \F_k}C_k,
$$ which is clearly a subset of $W(\t)$, since the shrunk rectangles $R$ in the $k$th level is contained in $$
\Delta(\R_{\alpha}, \rho(u_{n_k})^{\a+\bold{t}})\subset \Delta(\R_{\alpha}, \rho(\beta_{\alpha})^{\a+\bold{t}}),
$$ for some $\alpha\in J_{n_k}$.

We give a summary on the properties of the Cantor set constructed above for latter use.
\begin{proposition}\label{p2}
Let $C_{k-1}$ be an element in $\F_{k-1}$ and $\mathcal{U}(k,C_{k-1}, \ell)$ be the collection of balls almost packing $C_{k-1}$ appearing in the construction of the local sublevel $\F_k(C_{k-1},\ell)$. \begin{itemize}
 \item For each $1\le \ell\le L_{C_{k-1}}$, \begin{equation}\label{g2}
    \sum_{B\in \mathcal{U}(k, C_{k-1}, \ell)}\sum_{\widetilde{R}\in K_{G,B}}m(\widetilde{R})\asymp\sum_{B\in \mathcal{U}(k, C_{k-1}, \ell)}\ m(B)\asymp m(C_{k-1}),
    \end{equation}
    with\begin{equation}\label{d7}  L_{C_{k-1}}m(C_{k-1})=r_{C_{k-1}}^s, \ L_{C_0}=\eta\cdot m(C_0)^{-1}.
    \end{equation}

       \item For each ball $B$ in $\mathcal{U}(k,C_{k-1}, \ell)$, it is disjoint with $5R$ for any shrunk rectangle $R$ in any previous sub-levels and its radius $r_{B}$ is much smaller than the radius of the balls in any previous sub-levels.

    \item Fix an element $B$ in $\mathcal{U}(k, C_k, \ell)$. \begin{itemize} \item The big rectangles $\widetilde{R}$ from $K_{G,B}$ are $5r$-separated, and \begin{equation}\label{f10}
\sum_{\widetilde{R}\in K_{G,B}}m(\widetilde{R})\asymp m(B);
\end{equation}

\item For each $\widetilde{R}\in K_{G,B}$, the shrunk rectangles $R$ in $\D(\widetilde{R})$ are of the same sidelengths $(r^{a_1+t_1},\cdots, r^{a_d+t_d})$ (for some $r$), $5r$-separated and $$
    \sharp \D(\widetilde{R})\asymp \prod_{i=1}^d\frac{1}{r^{t_i\delta_i \kappa}}.
    $$

\item For each $R\in \D(\widetilde{R})$, the dividing collection $\C(R)$ contains $$
\sharp \C(R)\asymp \prod_{i=1}^d\left(\frac{r^{a_i+t_i}}{r^{a_d+t_d}}\right)^{\delta_i}
$$ $5r$-separated balls with radius $r^{a_d+t_d}$.
\end{itemize}

        \item Let $C, C'$ be two different balls in $\F_{k}$, then they are at least $3r$-separated.
         So if a ball $B$ can intersect at least two elements in $\F_k$, say $C, C'$, then $r_B\ge r_C$ and all of them are contained in $3B$.
\end{itemize}
\end{proposition}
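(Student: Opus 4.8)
The statement is a summary of properties that were all secured, explicitly or implicitly, while building the Cantor set $\F_\infty$; so my plan is to revisit the construction step by step and read off each assertion, the only genuine work lying in assembling the separation information at the end. First, the chain $(\ref{g2})$ is immediate: summing the $K_{G,B}$-lemma bound $\sum_{\widetilde R\in K_{G,B}}m(\widetilde R)\asymp m(B)$ over $B\in\mathcal{U}(k,C_{k-1},\ell)$ gives the first $\asymp$, and the second $\asymp$ is the almost-packing property of $\mathcal{U}(k,C_{k-1},\ell)$ delivered by the $5r$-covering lemma in Step $1_\ell$, combined with $m(\widetilde{C}_{k-1})\asymp m(C_{k-1})$, which holds since at most $\tfrac13 m(C_{k-1})$ was removed from $C_{k-1}$ (cf. $(\ref{f9})$). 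The identities $(\ref{d7})$ are nothing but the defining choices of $L_{C_{k-1}}$ and $L_{C_0}$, so nothing is to be proved there.

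Next I would fix a ball $B\in\mathcal{U}(k,C_{k-1},\ell)$ and write $r$ for the scale $\rho(u_n)$ used inside it. The $K_{G,B}$-lemma (Lemma \ref{lll2}) provides $K_{G,B}$ with $\sum_{\widetilde R\in K_{G,B}}m(\widetilde R)\asymp m(B)$, which is exactly $(\ref{f10})$, and whose rectangles are $5r$-separated: this is the output of the $5r$-covering lemma for rectangles (Lemma \ref{lll1}) used inside the proof of Lemma \ref{lll2}, and is stronger than the $3r$-disjointness recorded there. For each $\widetilde R$, the shrinking lemma (Lemma \ref{l2}) gives $\D(\widetilde R)$ consisting of rectangles of common sidelengths $(r^{a_1+t_1},\dots,r^{a_d+t_d})$, pairwise $5r$-separated, with $\sharp\D(\widetilde R)\asymp\prod_{i=1}^d r^{-t_i\delta_i\kappa}$; and for each $R\in\D(\widetilde R)$, the division lemma (Lemma \ref{l3}) gives $\C(R)$ consisting of balls of radius $r^{a_d+t_d}$, pairwise $5r$-separated, with $\sharp\C(R)\asymp\prod_{i=1}^d(r^{a_i+t_i}/r^{a_d+t_d})^{\delta_i}$. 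These are precisely the three sub-bullets of the statement.

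The avoidance property of $\mathcal{U}(k,C_{k-1},\ell)$ — that each of its balls $B$ is disjoint from $5R$ for every shrunk rectangle $R$ of every earlier sublevel, and has radius smaller than all earlier radii — is built into Step $1_\ell$. There, $B$ is one of the sprouted balls $B(w,r)$ with $w\in\widetilde{C}_{k-1}$, so $w$ lies outside $\bigcup 6R$ taken over the earlier sublevels, and $r$ was chosen smaller than every $\hat r^{a_i}$ occurring earlier; since in the $i$-th direction the gap between $6R$ and $5R$ is $\hat r^{a_i}$, this forces $B\cap 5R=\emptyset$, and likewise $B\cap 5C_1=\emptyset$ for every earlier ball $C_1\in\C(R)$. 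This is exactly $(\ref{d6})$ and its analogue at level $k$, the radius comparison also being imposed in Step $1_\ell$.

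Finally, for the global $3r$-separation of $\F_k$ — the only point requiring some care — I would examine the location of two distinct balls $C,C'\in\F_k$. If they lie in one $\C(R)$ they are $5r$-separated by Lemma \ref{l3}; if in distinct $R,R'\in\D(\widetilde R)$, or distinct $\widetilde R,\widetilde R'\in K_{G,B}$, or distinct $B,B'\in\mathcal{U}(k,C_{k-1},\ell)$, they lie inside $5r$-separated containers; if in distinct sublevels $\ell'<\ell$ of the same $C_{k-1}$, then $C'$ lies in some $C_1$ of the $\ell'$-th sublevel while the ball of $\mathcal{U}(k,C_{k-1},\ell)$ containing $C$ avoids $5C_1$ by the property just proved, so $C\cap 5C'=\emptyset$; and if in distinct $C_{k-1},C_{k-1}'\in\F_{k-1}$, then $C\subset C_{k-1}$, $C'\subset C_{k-1}'$, which are $3r$-separated by the inductive hypothesis. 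In every case the separation is at least three times the radius of the smaller ball, and since later sublevels and the passage $\F_{k-1}\to\F_k$ use strictly smaller scales this radius comparison is legitimate; a ball meeting two elements of $\F_k$ then has diameter at least that separation, hence radius no smaller than the smaller element, and contains both after tripling. The main obstacle, such as it is, is purely bookkeeping: ensuring that the scales chosen in Step $1_\ell$ and in the inductive step are small enough to license the radius comparisons and $5$-dilate avoidances used in this last case distinction. There is no analytic content beyond the lemmas already invoked.
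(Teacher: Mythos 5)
Your proof is correct and follows essentially the same route as the paper. The paper itself dismisses the first four items as "clear" and only writes out the last (the $3r$-separation of $\F_k$), which it handles exactly as you do: an induction reducing to a common ancestor $C_{k-1}$, then a case split on whether $C,C'$ sit in the same sublevel $\ell=\ell'$ (tracing the chain $\C(R)\to\D(\widetilde R)\to K_{G,B}\to\mathcal{U}$ to locate the first common predecessor and using the $5r$-separation of its offspring) or in distinct sublevels (invoking the avoidance property $(\ref{d6})$ so the smaller ball's container avoids $5\cdot$(larger ball), then a short triangle-inequality estimate). Your reading-off of $(\ref{g2})$, $(\ref{f10})$ and the cardinality bounds from Lemmas \ref{lll1}--\ref{l3}, and your observation that the $5r$-covering lemma actually delivers $5r$-separation for $K_{G,B}$ (stronger than the $3r$-disjointness recorded in Lemma \ref{lll2}, but matching the statement of the proposition), are all accurate; the only stylistic difference is that you spell out what the paper leaves implicit.
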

\begin{proof} The other items are clear.
We check the last item, which comes from a simple geometric observation. By induction, assume that there exists some $C_{k-1}\in \F_{k-1}$ such that $C, C'\in \F_k(C_{k-1})$, otherwise we consider their predecessors. Let $\ell\le \ell'$ be the integers such that $C\in \F_{k}(C_{k-1}, \ell)$ and $C'\in \F_k(C_{k-1}, \ell')$.
Use $r_C$ and $r_{C'}$ to denote the radius of $C$ and $C'$.
%
%
%

(1). When $\ell=\ell'$. Recall the process of which $C$ and $C'$ are generated: \begin{equation}\label{h2}\begin{array}{ccccccc}
                C\in \mathcal{C}(R)& \overset{1}{\leftarrow} & R\in \mathcal{D}(\widetilde{R}) &\overset{2}{\leftarrow} & \widetilde{R}\in K_{G,B} & \overset{3}{\leftarrow}& B\in \mathcal{U}(\ell, C_{k-1}, \ell) \\
                C'\in \mathcal{C}(R')& \overset{1}{\leftarrow} & R'\in\mathcal{D}(\widetilde{R}') &\overset{2}{\leftarrow} & \widetilde{R}'\in K_{G,B'} & \overset{3}{\leftarrow} & B'\in \mathcal{U}(\ell, C_{k-1}, \ell).
              \end{array}\end{equation}
              Note that at each stage, different elements having the same predecessor are $5r$-separated.
              \begin{itemize}\item If $C$ and $C'$ have different predecessors at each stage in (\ref{h2}), i.e. $B\ne B'$ which implies that $B$ and $B'$ are $5r$-separated, so is $C$ and $C'$. \item If $C$ and $C'$ have the same predecessors at some stage. Let $1\le i\le 3$ be the smallest integer such that $C$ and $C'$ have the same predecessor at this stage. Then the offsprings of this predecessor are $5r$-separated, so is $C$ and $C'$.\end{itemize}

(2). When $\ell<\ell'$. By the construction of $\mathcal{U}(k, C_{k-1}, \ell')$, we have $r_C\ge 3r_{C'}$ and $5C\cap C'=\emptyset$.
Let $z$ and $z'$ be the centers of $C$ and $C'$ respectively. Then $$
|z-z'|\ge 5r_C+r_{C'}\ge 3r_C+3r_{C'}.
$$
 \end{proof}

%
%
%
%
%
%

\subsection{Mass distribution}

The mass distribution to be defined comes from the following consideration: given a big rectangle $\widetilde{R}$ in the construction of $\F_{\infty}$, \begin{itemize} \item Since the measure is to be supported on $\F_{\infty}$, the total measure of the shrunk rectangles in $\D(\widetilde{R})$ should be equal to the measure of $\widetilde{R}$. However, the shrunk rectangles $R$ in $\D(\widetilde{R})$ are of the same size, so it is reasonable to distribute the mass of $\widetilde{R}$ equally to the shrunk rectangles $R$.

\item With the same reason, the mass of $R$ should be equally distributed on $\C(R)$;

\item By the volume estimation (see for example (\ref{f10})), the big rectangles $\widetilde{R}$ are disjoint and almost pack the whole space, so its mass will be defined as its $m$-measure with a suitable normalizer.
\end{itemize}

Let $\mu(C_0)=1$.
Let $R_k$ be a shrunk rectangle appearing in the construction of $\F_k(C_{k-1})$ for some  $C_{k-1}\in \F_{k-1}$.
 Let $\widetilde{R}_k$ be the big rectangle for which $R_k\in \D(\widetilde{R}_k)$. Write $R_k$ and $\widetilde{R}_k$ as $$
R_k=\prod_{i=1}^dB(y_i, r_k^{a_i+t_i}), \ \ \widetilde{R}_k=\prod_{i=1}^dB(z_i, r_k^{a_i}).
$$
Then define the measure on the big rectangle $\widetilde{R}_k$ as
\begin{align}\label{f13}
\mu(\widetilde{R}_k)&=\frac{m(\widetilde{R}_k)}{\sum_{\ell=1}^{L_{C_{k-1}}}\sum_{B\in \mathcal{U}(k, C_{k-1}, \ell)}\sum_{\widetilde{R}\in K_{G,B}}m(\widetilde{R})}\times \mu(C_{k-1})\nonumber\\
&\asymp  \frac{m(\widetilde{R}_k)}{L_{C_{k-1}}m(C_{k-1})}\times\mu(C_{k-1})\asymp\prod_{i=1}^dr_{k}^{a_i\delta_i}\times \frac{\mu(C_{k-1})}{L_{C_{k-1}}m(C_{k-1})},
\end{align} where the second relation follows from (\ref{g2}). Then define the measure on the shrunk rectangle $R_k$ as
\begin{align*}
\mu(R_k)=\frac{1}{\sharp \D(\widetilde{R}_k)}\cdot \mu(\widetilde{R}_k)
\asymp \prod_{i=1}^d r_{k}^{t_i\delta_i\kappa}\cdot\prod_{i=1}^dr_{k}^{a_i\delta_i}\times \frac{\mu(C_{k-1})}{L_{C_{k-1}}m(C_{k-1})}.
\end{align*}
At last for a ball $C_k$ in $\C(R_k)$, define \begin{align}\label{f11}
\mu(C_k)&=\frac{1}{\sharp \C(R_k)}\cdot \mu(R_k)\asymp \prod_{i=1}^d\left(\frac{r_k^{a_d+t_d}}{r_k^{a_i+t_i}}\right)^{\delta_i}\cdot \mu(R_k)\nonumber\\
&\asymp \prod_{i=1}^d\left(\frac{r_k^{a_d+t_d}}{r_k^{a_i+t_i}}\right)^{\delta_i}\cdot\prod_{i=1}^d r_k^{t_i\delta_i\kappa}\cdot\prod_{i=1}^dr_k^{a_i\delta_i}\times \frac{\mu(C_{k-1})}{L_{C_{k-1}}m(C_{k-1})}.
\end{align}

Then by Kolmogorov's consistency theorem, the set function $\mu$ can be uniquely extended into a probability measure supported on $\F_{\infty}$.

%

\subsubsection{Measure of balls in $\F_k$.}\

When $k=1$. Let $C_1$ be a ball in $\F_1$ with radius $r_1^{a_d+t_d}.$ Recall the choice of $L_{C_0}$ and (\ref{f11})
 one has \begin{align*}
\mu(C_1)\asymp \frac{1}{\eta}\cdot \prod_{i=1}^d\left(\frac{r_1^{a_d+t_d}}{r_1^{a_i+t_i}}\right)^{\delta_i}\cdot\prod_{i=1}^d r_{1}^{t_i\delta_i\kappa}\cdot\prod_{i=1}^dr_{1}^{a_i\delta_i}=\frac{r_1^{(a_d+t_d)s_d}}{\eta},
\end{align*} where $s_d$ is given as $$
s_d=\sum_{i=1}^d\delta_i-(1-\kappa)\frac{\sum_{i=1}^dt_i\delta_i}{a_d+t_d}.
$$
This dimensional number $s_d$ is just the one in Theorem \ref{t1} defined by choosing $A=a_d+t_d$, since $$
\K_1=\{k: a_k\ge a_d+t_d\}=\emptyset, \ \K_2=\{k: a_k+t_k\le a_d+t_d\}=\{1,\cdots, d\}.
$$ Thus for any $C_1\in \F_1$, $$
\mu(C_1)\le \frac{r_{C_1}^s}{\eta}.
$$

Assume that we have shown for all balls $C_{k-1}$ in $\F_{k-1}$, \begin{equation}\label{f12}
\mu(C_{k-1})\le \frac{r_{C_{k-1}}^s}{\eta}.
\end{equation}
Recall $L_{C_{k-1}}$ in (\ref{d7}). Now for a ball $C_k$ in $\F_k$ with radius $r_k^{a_d+t_d}$, by (\ref{f11}) and (\ref{f12}), one has \begin{equation}\label{f15}
\mu(C_k)\le \frac{1}{\eta}\prod_{i=1}^d\left(\frac{r_k^{a_d+t_d}}{r_k^{a_i+t_i}}\right)^{\delta_i}\cdot\prod_{i=1}^d r_k^{t_i\delta_i\kappa}\cdot\prod_{i=1}^dr_k^{a_i\delta_i}=\frac{r_{k}^{(a_d+t_d)s_d}}{\eta}.
\end{equation}
So  we have \begin{equation}\label{f14}
\mu(C_k)\le \frac{r_{C_k}^{s}}{\eta}.
\end{equation} 


\subsubsection{Measure of a general ball.}\

Now we consider the measure of a general ball $B(x,r)$ with $x\in \F_{\infty}$ and $r$ small enough. Let $k$ be the integer such that $B(x,r)$ intersects only one element in $\F_{k}$ and at least two elements in $\F_{k+1}$. Denote $C_k$ the unique element in $\F_k$ for which $B(x, r)$ can intersect.
Let $r_{k}^{a_d+t_d}$ be the radius of the ball $C_k$.
Without loss of generality, we can assume that $r\le r_k^{a_d+t_d}$, otherwise $$
\mu(B(x,r))\le \mu(C_k)\le \frac{r_{C_k}^s}{\eta}\le \frac{r^s}{\eta}.
$$

By the choice of $k$, all the elements in $\F_{k+1}$ which can intersect $B(x,r)$ are contained in the local level $$
\F_{k+1}(C_k)=\bigcup_{\ell=1}^{L_{C_k}}\F_{k+1}(C_k, \ell)=\bigcup_{\ell=1}^{L_{C_k}}\bigcup_{B\in \mathcal{U}(k+1, C_k, \ell)}\bigcup_{\widetilde{R}\in K_{G,B}}\bigcup_{R\in \D(\widetilde{R})}\bigcup_{C_{k+1}\in \mathcal{C}(R)}C_{k+1}.
$$

Let $\ell_0$ be the smallest integer such that there exists an element in $\F_{k+1}(C_k, \ell)$ intersecting $B(x,r)$.
By the construction of the sublevels $\F_{k+1}(C_k, \ell)$ ($\ell>\ell_0$) after $\F_{k+1}(C_k, \ell_0)$,
any ball $B\in \mathcal{U}(k+1, C_k, \ell)$ which can intersect $B(x,r)$ (if exist) will be contained in $B(x,2r)$. More precisely, let $R$ be a shrunk rectangle appearing in the construction of $\F_{k+1}(C_k, \ell_0)$ which intersects $B(x,r)$, then by (\ref{d6}) one has $$
B(x,r)\cap R\ne \emptyset, \ B(x,r)\setminus 5R\ne \emptyset.
$$ Let $v=(v_1,\cdots, v_d)\in B(x,r)\cap R$ and $v'=(v_1',\cdots, v_d')\in B(x,r)\setminus 5R$ and write $$
R=\prod_{i=1}^dB(y_i, r_{k+1}^{a_i+t_i}).
$$ Recall the metric on $X$: $|\cdot|=\max_{1\le i\le d}|\cdot|_i$. Then there exists $1\le i\le d$ such that $$
|v_i'-y_i|_i\ge 5r_{k+1}^{a_i+t_i}, \ \
|v_i-y_i|_i\le r_{k+1}^{a_i+t_i}.
$$ Moreover, by the construction of $\mathcal{U}(k+1, C_k ,\ell)$ with $\ell>\ell_0$, we know $3r_B\le r_{k+1}^{a_d+t_d}$ for any $B\in \mathcal{U}(k+1, C_k ,\ell)$. Thus $$
2r\ge |v-v'|\ge |v_i- v'_i|_i\ge 4r_{k+1}^{a_i+t_i}\ge 12r_B.
$$

Thus the contribution of the sublevels after $\F_{k+1}(C_k, \ell_0)$ to the measure of $B(x,r)$ can be estimated as: \begin{align*}
I_1:&=\mu\Big(B(x,r)\cap \bigcup_{\ell>\ell_0}\F_{k+1}(C_k, \ell)\Big)\le
\mu\Big(B(x,r)\cap \bigcup_{\ell>\ell_0}\bigcup_{B\in \mathcal{U}(k+1, C_k,\ell)}\ \bigcup_{\widetilde{R}\in K_{G,B}}\widetilde{R}\Big)\\
&\le \sum_{\ell>\ell_0}\sum_{B\in \mathcal{U}(k+1, C_k, \ell), B\cap B(x,r)\ne \emptyset}\ \ \ \sum_{\widetilde{R}\in K_{G,B}}\mu(\widetilde{R}).\end{align*}
Then by the first formula (\ref{f13}) on the measure of $\widetilde{R}$ and the measure estimation on balls in $\F_{\infty}$ (\ref{f14}), we have \begin{align*}I_1&\le \sum_{\ell>\ell_0}\sum_{B\in \mathcal{U}(k+1, C_k, \ell), B\cap B(x,r)\ne \emptyset}\ \ \ \sum_{\widetilde{R}\in K_{G,B}}{m(\widetilde{R})}\cdot\frac{r_{C_k}^s}{\eta\cdot L_{C_k}m(C_{k})}\\
&\le \sum_{\ell>\ell_0}\sum_{B\in \mathcal{U}(k+1, C_k, \ell), B\subset B(x,2r)}\ {m(B)}\cdot\frac{r_{C_k}^s}{\eta \cdot L_{C_k}m(C_{k})}.\end{align*} Since the balls in $\mathcal{U}(k+1, C_k, \ell)$ are disjoint, so \begin{align*}I_1&\le \sum_{\ell>\ell_0}m(B(x,2r))\cdot \frac{r_{C_k}^s}{\eta \cdot L_{C_k}m(C_{k})}\le \frac{m(B(x,2r))}{\eta}\cdot \frac{r_{C_k}^s}{m(C_{k})}\\
&\le \frac{c\cdot r^{\delta_1+\cdots+\delta_d}}{\eta}\cdot \frac{r_{C_k}^s}{r_{C_k}^{\delta_1+\cdots+\delta_d}}\le \frac{c\cdot r^s}{\eta},
\end{align*} since $s\le \delta_1+\cdots+\delta_d$ and $r\le r_{C_k}.$


So we only need focus on the contribution of the elements in $\F_{k+1}(C_k, \ell_0)$ to the measure of $B(x,r)$.

Case (1). The ball $B(x,r)$ can intersect at least two balls $B$ in $\mathcal{U}(k+1, C_k, \ell_0)$. By the $5r$-separation condition of these balls in $\mathcal{U}(k+1, C_k, \ell_0)$, we also have that those balls which intersect $B(x,r)$ are contained in $B(x,2r)$.
The same argument as above applies (without the summation over $\ell$), i.e. we still have
\begin{align*}
\mu\Big(B(x,r)\cap \F_{k+1}(C_k, \ell_0)\Big)\le \frac{c\cdot r^s}{\eta}.\end{align*}

Case (2). The ball $B(x,r)$ only intersects one ball in $\mathcal{U}(k+1, C_k, \ell_0)$, saying $B$. Thus \begin{align}\label{a3}
B(x,r)\cap \F_{k+1}(C_k,\ell_0)=B(x,r)\cap \bigcup_{\widetilde{R}\in K_{G,B}}\bigcup_{R\in \mathcal{D}(\widetilde{R})}\bigcup_{C_{k+1}\in \mathcal{C}(R)}C_{k+1}.
\end{align}Note that the big rectangles $\widetilde{R}$ in $K_{G,B}$, the shrunk rectangles $R$ in $\mathcal{D}(\widetilde{R})$ and the dividing balls $C_{k+1}$ in $\mathcal{C}(R)$ are of the same size respectively. So the generic ones appearing in (\ref{a3}) are denoted respectively by  \begin{equation}\label{a4}
\widetilde{R}=\prod_{i=1}^d B(z_i, r_{{k+1}}^{a_i}), \ \ {R}=\prod_{i=1}^d B(y_i, r_{{k+1}}^{a_i+t_i}),\  C_{k+1}=B(\star, r_{{k+1}}^{a_d+t_d}).
\end{equation}

(i). If $r\ge r_{{k+1}}^{a_1}$. In this case, all the big rectangles in $K_{G,B}$ intersecting $B(x,r)$ are contained in $B(x,3r)$.
 Thus, by the choice of $L_{C_k}$ (\ref{d7}) and the measure on $C_{k}$ (\ref{f14}), we have \begin{align*}
\mu\Big(B(x,r)\cap \F_{k+1}(C_k, \ell_0)\Big)&\le \sum_{\widetilde{R}\in K_{G,B}, \widetilde{R}\cap B(x,r)\ne \emptyset}\mu(\widetilde{R})\\&\le \sum_{\widetilde{R}\in K_{G,B}, \widetilde{R}\cap B(x,r)\ne \emptyset}m(\widetilde{R})\cdot \frac{\mu(C_k)}{L_{C_k}m(C_k)}\\
&\le \frac{1}{\eta}\sum_{\widetilde{R}\in K_{G,B}, \widetilde{R}\subset B(x,3r)}m(\widetilde{R})\\&\le \frac{m(B(x,3r))}{\eta}
\le \frac{c\cdot r^{s}}{\eta}.
\end{align*}

(ii). If $r< r_{{k+1}}^{a_1}$. Recall the last item in Proposition \ref{p2}. Since $B(x,r)$
can intersect at least two balls in $\F_{k+1}$ and at least one, saying $C_{k+1}$, in $\F_{k+1}(C_k, \ell_0)$,
so $$
r\ge r_{{k+1}}^{a_d+t_d}.
$$ In other words, we are in the situation $$
r_{{k+1}}^{a_d+t_d}\le r< r_{{k+1}}^{a_1}.
$$

Recall (\ref{a4}) for the generic form of the rectangles and balls in Case (2). Arrange the elements in $\mathcal{A}$ in non-decreasing order. Let $A_{l+1}$ and $A_l$ be the two different consecutive terms in $\mathcal{A}$ such that $$
r_{{k+1}}^{A_{l+1}}\le r< r_{{k+1}}^{A_l}.
$$
Now we consider how many balls in $\F_{k+1}(C_k, \ell_0)$ can intersect $B(x,r)$, indeed the balls in $$
\bigcup_{\widetilde{R}\in K_{G,B}}\bigcup_{R\in \D(\widetilde{R})}\bigcup_{C_{k+1}\in \mathcal{C}(R)}C_{k+1}.
$$

Define the sets $\K$: \begin{equation}\label{e2}
\K_1=\{i: a_i\ge A_{l+1}\}, \ \K_2=\{i: a_i+t_i\le A_l\},  \ \ \K_3=\{1,\cdots,d\}\setminus (\K_1\cup \K_2).
\end{equation}

Define an enlarged body of the ball $B(x,r)$: $$
H=\prod_{i=1}^dB(x_i, 3\epsilon_i), \ {\text{where}}\ \epsilon_i=\left\{
                                                              \begin{array}{ll}
                                                                r, & \hbox{$i\in \K_1$;} \\
                                                                r_{k+1}^{a_i}, & \hbox{otherwise.}
                                                              \end{array}
                                                            \right.
$$ Then for any big rectangle  $\widetilde{R}=\prod_{i=1}^dB(z_i, r_{k+1}^{a_i})\in K_{G,B}$ intersecting $B(x,r)$, we must have that $\widetilde{R}\subset H$. Since these big rectangles $\widetilde{R}$ are disjoint, then a volume argument gives the number of rectangles $\widetilde{R}$ in $K_{G,B}$ which can possibly intersect the ball $B(x,r)$:
\begin{equation}\label{e1}\frac{m(H)}{m(\widetilde{R})}\asymp \prod_{i\in \K_1}\left(\frac{r}{r_{{k+1}}^{a_i}}\right)^{\delta_i}.\end{equation}

Fix a generic big rectangle $\widetilde{R}$ which intersects $B(x,r)$. Let $\alpha\in J$ be the index such that the center of $\widetilde{R}$ sits in the resonant set $\R_{\alpha}$. We consider the number $T$ of balls \begin{equation}\label{d8}C\in \bigcup_{R\in \D(\widetilde{R})}\mathcal{C}(R)\end{equation} which can intersect $B(x,r)$. Since $B(x,r)$ can intersect at least two elements in $\F_{k+1}$, all these balls $C$ in (\ref{d8}) are contained in $$
\widetilde{R}\cap \Delta(\R_{\alpha}, r_{k+1}^{\a+\bold{t}})\cap B(x,2r).
$$ Still we use a volume argument: \begin{align*}
T\cdot \prod_{i=1}^d\left(r_{k+1}^{a_d+t_d}\right)^{\delta_i}&\le m\left(\widetilde{R}\cap B(x,2r)\cap \prod_{i=1}^d \Delta(\R_{\alpha, i}, r_{k+1}^{a_i+t_i})\right)\\
&=\prod_{i=1}^dm_i\Big(B(z_i, r_{k+1}^{a_i})\cap B(x_i,2r)\cap \Delta(\R_{\alpha, i}, r_{k+1}^{a_i+t_i})\Big).\end{align*}
Note that \begin{itemize}\item in the directions $i\in \K_1$, $$
r\ge r_{k+1}^{a_i}\ge r_{k+1}^{a_i+t_i},
$$so by the scaling property $$
\prod_{i\in \K_1}m_i\Big(B(z_i, r_{k+1}^{a_i})\cap B(x_i,2r)\cap \Delta(\R_{\alpha, i}, r_{k+1}^{a_i+t_i})\Big)\asymp \prod_{i\in \K_1}r_{k+1}^{a_i\delta_i\kappa}\cdot r_{k+1}^{(a_i+t_i)\delta_i(1-\kappa)};
$$

\item in the directions $i\in \K_2$, $$
r<r_{k+1}^{a_i+t_i}\le r_{k+1}^{a_i},
$$ clearly $$ \prod_{i\in \K_2}m_i\Big(B(z_i, r_{k+1}^{a_i})\cap B(x_i,2r)\cap \Delta(\R_{\alpha, i}, r_{k+1}^{a_i+t_i})\Big)\ll \prod_{i\in \K_2}r^{\delta_i};$$

\item in the directions $i\in \K_3$, $$
r_{k+1}^{a_i+t_i}\le r\le r_{k+1}^{a_i},
$$ so by the scaling property$$
\prod_{i\in \K_3}m_i\Big(B(z_i, r_{k+1}^{a_i})\cap B(x_i,2r)\cap \Delta(\R_{\alpha, i}, r_{k+1}^{a_i+t_i})\Big)\asymp \prod_{i\in \K_3}r^{\delta_i \kappa}\cdot r_{k+1}^{(a_i+t_i)\delta_i(1-\kappa)}.
$$
\end{itemize}
Thus \begin{align*}
T\cdot \prod_{i=1}^d\left(r_{k+1}^{a_d+t_d}\right)^{\delta_i}&\le \left(\prod_{i\in \K_1}r_{k+1}^{a_i\delta_i\kappa}\cdot r_{k+1}^{(a_i+t_i)\delta_i(1-\kappa)}\right)\times \left(\prod_{i\in \K_2}r^{\delta_i}\right)\times \left(\prod_{i\in \K_3}r^{\delta_i \kappa}\cdot r_{k+1}^{(a_i+t_i)\delta_i(1-\kappa)}\right).
\end{align*}
Recall (\ref{e1}). Thus the total number of balls $C$ in $\F_{k+1}(C_k, \ell_0)$ which can intersect $B(x,r)$ is less than:\begin{align*}
\prod_{i\in \K_1}\left(\frac{r}{r_{{k+1}}^{a_i}}\right)^{\delta_i}\cdot T:=M.
\end{align*}

So, by the first inequality in (\ref{f15}) and the choice of $L_{C_k}$ (\ref{d7}), one has\begin{align*}
\mu\Big(B(x,r)\cap B\Big)&\le \frac{M}{\eta}\prod_{i=1}^d\left(\frac{r_{k+1}^{a_d+t_d}}{r_{k+1}^{a_i+t_i}}\right)^{\delta_i}\cdot \prod_{i=1}^dr_{k+1}^{t_i\delta_i\kappa}\cdot \prod_{i=1}^dr_{k+1}^{a_i\delta_i}\\
&\le \frac{1}{\eta}\cdot \prod_{i\in \K_1}\left(\frac{r}{r_{{k+1}}^{a_i}}\right)^{\delta_i}\times \left(\prod_{i\in \K_1}r_{k+1}^{a_i\delta_i\kappa}\cdot r_{k+1}^{(a_i+t_i)\delta_i(1-\kappa)}\right)\times \left(\prod_{i\in \K_2}r^{\delta_i}\right)\\
&\qquad \qquad \times \left(\prod_{i\in \K_3}r^{\delta_i \kappa}\cdot r_{k+1}^{(a_i+t_i)\delta_i(1-\kappa)}\right)\cdot \prod_{i=1}^d\left(\frac{1}{r_{k+1}^{a_i+t_i}}\right)^{\delta_i}\cdot \prod_{i=1}^dr_{k+1}^{t_i\delta_i\kappa}\cdot \prod_{i=1}^dr_{k+1}^{a_i\delta_i}.
\end{align*}
Then
\begin{align}\label{g4}
\mu\Big(B(x,r)\cap B\Big)&\le \frac{1}{\eta} \left(\prod_{i\in \K_1}r^{\delta_i}\cdot r_{k+1}^{t_i\delta_i(1-\kappa)}\right)\times \left(\prod_{i\in \K_2}r^{\delta_i}\right)\times \left(\prod_{i\in \K_3}r^{\delta_i\kappa}\cdot r_{k+1}^{(a_i+t_i)\delta_i(1-\kappa)}\right)\nonumber\\
&\qquad \qquad \qquad \qquad\times \left(\prod_{i=1}^d\frac{1}{r_{k+1}^{t_i}}\right)^{\delta_i}\cdot \prod_{i=1}^dr_{k+1}^{t_i\delta_i\kappa}\nonumber\\
&=\frac{1}{\eta}\cdot \prod_{i\in \K_1}r^{\delta_i}\cdot \prod_{i\in \K_2}r^{\delta_i}\cdot \prod_{i\in \K_3}r^{\delta_i\kappa}\cdot \prod_{i\in \K_3}r_{k+1}^{a_i\delta_i(1-\kappa)}\cdot \prod_{i\in \K_2}\frac{1}{r_{k+1}^{t_i\delta_i(1-\kappa)}}.
\end{align}
So we will have that $$
\mu\Big(B(x,r)\cap B\Big)\le \frac{r^s}{\eta}
$$
if we can check \begin{align}\label{g3}
s&\le \sum_{i\in \K_1}\delta_i+\sum_{i\in \K_2}\delta_i+\kappa\sum_{i\in \K_3}\delta_i+(1-\kappa)\frac{(\sum_{i\in \K_3}a_i\delta_i-\sum_{i\in \K_2}t_i\delta_i)\log r_{k+1}}{\log r},
\end{align} for all $r$ inside the range $$
r_{{k+1}}^{A_{l+1}}\le r< r_{{k+1}}^{A_l}.
$$
Since the term in the right hand of (\ref{g3}) is monotonic with respect to $r$, the minimal value attains when $r$ takes the boundary values of its arrange. So \begin{align*}
(\ref{g4})\Longleftarrow &\ s\le \min\Big\{\sum_{i\in \K_1}\delta_i+\sum_{i\in \K_2}\delta_i+\kappa\sum_{i\in \K_3}\delta_i+(1-\kappa)\frac{\sum_{i\in \K_3}a_i\delta_i-\sum_{i\in \K_2}t_i\delta_i}{A_l},\\ &
 \qquad \qquad  \qquad \qquad\sum_{i\in \K_1}\delta_i+\sum_{i\in \K_2}\delta_i+\kappa\sum_{i\in \K_3}\delta_i+(1-\kappa)\frac{\sum_{i\in \K_3}a_i\delta_i-\sum_{i\in \K_2}t_i\delta_i}{A_{l+1}}\Big\}\\
\Longleftarrow &\ s\le s(\bold{t}).
\end{align*} Note that the minor difference between $\K$ defined in (\ref{e2}) and that in Theorem \ref{t1} makes no difference on the dimensional number $s(\bold{t})$ as explained in (\ref{d4}).

In a summary, we have shown that for all $x\in \F_{\infty}$ and $r$ small, $$
\mu(B(x,r))\le \frac{c\cdot r^s}{\eta}.
$$
Then Proposition \ref{p1} is applied to conclude the desired result.

\section{Proof: general case under ubiquity}

The proof of Theorem \ref{t3} is almost identical to the proof of Theorem \ref{t2} after a minor modification on the Cantor set construction in applying the uniform ubiquity property.
Recall the construction of the $k$th level of the Cantor set: given $C_{k-1}\in \F_{k-1}$, \begin{itemize}
\item Use $K_{G,B}$-lemma to get a subcollection of well separated big rectangles from $$
\{\widetilde{R}=\prod_{i=1}^dB(z_i, \rho(u_n)^{a_i}): z\in \R_{\alpha}, \alpha\in {J}_n\}, \ {\text{for some}}\ n\in \N.
$$ We call them as the big rectangles of order $n$.

\item Then consider the intersection: $$
\widetilde{R}\cap \Delta(\R_{\alpha}, \rho(u_n)^{\a}\Psi(u_n))
$$ to get a collection of shrunk rectangles.
\end{itemize}

For general function $\Psi$, since we are equipped with uniform ubiquity property, in its applications, we have much freedom in choosing the order $n$ of the big rectangles. So, for any $\bold{t}=(t_1,\cdots,t_d)\in \widehat{\mathcal{U}}$ given in advance, we can always ask $n$ to fall into the following set $$
\mathcal{N}:=\Big\{n\in \N: \rho(u_n)^{t_i+\epsilon}\le \psi_i(u_n)\le \rho(u_n)^{t_i-\epsilon}, \ {\text{for all}}\ 1\le i\le d\Big\}, \ {\text{for given}}\ \epsilon>0 \
$$ in each level of the Cantor set construction when apply the ubiquity property.
So, along the sequence $\mathcal{N}$, $\Psi(u_n)$ behaves like $\rho(u_n)^{\bold{t}}$.
The rest argument is just following the proof in Theorem \ref{t2} line by line arriving at $$
\hdim W(\Psi)\ge s(\bold{t}).
$$

Now we show the last assertion in Theorem \ref{t3}. At first, it is clear that the dimensional number $s(t_1,\cdots, t_d)$ is non-increasing with respect to $(t_1,\cdots, t_d).$

Secondly, under the condition that \begin{equation}\label{d2}\lim_{n\to \infty}\frac{\log \rho(u_{n+1})}{\log \rho(u_n)}=1,\end{equation} one claims that for any $\bold{t}=(t_1,\cdots,t_d)\in \mathcal{U}$, there exists $\bold{t}'=(t_1',\cdots,t_d')\in \widehat{\mathcal{U}}$, such that $$
t_i\ge t_i', \  \ {\text{for all}}\ 1\le i\le d.
$$ More precisely, assume that $$
\lim_{k\to\infty}\frac{\log \psi_i({n_k})}{\log \rho({n_k})}=t_i, \ 1\le i\le d.
$$  Let $u_{t_k}$ be the largest element in $\{u_n\}$ such that $u_{t_k}\le n_k$. Since both $\psi_i$ and $\rho$ are non-increasing, $$
\frac{\log \psi_i(u_{t_k})}{\log \rho(u_{t_k})}\le \frac{\log \psi_i(n_k)}{\log \rho(n_k)}\cdot \frac{\log \rho(u_{t_k+1})}{\log \rho(u_{t_k})}.$$ So by (\ref{d2}), the claim follows.

Combining the monotonicity of $s(\bold{t})$ and the above claim, one has $$\sup\Big\{s(t_1,\cdots, t_d): (t_1,\cdots,t_d)\in \widehat{\mathcal{U}}\Big\}\ge
\sup\Big\{s(t_1,\cdots, t_d): (t_1,\cdots,t_d)\in {\mathcal{U}}\Big\}.$$

\section{Proof: Hausdorff dimension under full measure}

In this section, we show Theorem \ref{ttt1}, i.e. the dimension of $$
W(\bold{t})=\Big\{x\in X: x\in \Delta(\R_{\alpha}, \rho(\beta_{\alpha})^{\bold{a+t}}), \ {\text{i.m}}\ \alpha\in J\Big\}
$$ under the full measure condition (\ref{j1}) that for any ball $B\subset X$, \begin{equation}\label{h4}
m\Big(B\cap \limsup_{\alpha:\beta_{\alpha}\to \infty}\Delta(\R_{\alpha}, \rho(\beta_{\alpha})^{\bold{a}})\Big)=m(B).
\end{equation}

\subsection{Preliminaries}\

The proof follows almost from the argument in the ubiquity case. So, we give a summary about the preliminary lemmas used in the proof of the ubiquity case given in Subsection \ref{s5.2}. \begin{itemize}
\item $5r$-covering lemma (Lemma \ref{lll1}): given a collection of aligned rectangles, one can find a sub-collection of rectangles which are well separated and can cover the union of the original rectangles by a finite scale.

    \item $K_{G,B}$-lemma (Lemma \ref{lll2}): by ubiquity, for any ball $B\subset X$ and any $G$ large, one can find a collection of big rectangles with the same sidelengths which are well separated and take a positive proportion inside $B$;
\item Shrinking lemma (Lemma \ref{l2}): For each big rectangle $\widetilde{R}=\Delta(z, \rho(u_n)^{\bold{a}})$ centered at some point $z\in \R_{\alpha}$, consider the intersection of $\widetilde{R}\cap \Delta(\R_{\alpha}, \rho(u_n)^{\bold{a+t}})$ to get a collection of well separated shrunk rectangles $R$ with the same sidelengths $\rho(u_n)^{\bold{a+t}}$.

    \item Division lemma (Lemma \ref{l3}): For each shrunk rectangle $R$, it is divided into a collection of well separated balls with radius as the smallest sidelength of $R$.
\end{itemize}

When the ubiquity condition is replaced by the full measure property (\ref{h4}), one still have that \begin{itemize}
  \item $5r$-covering lemma which is unchanged as Lemma \ref{lll1};
  \item $K_{G,B}$-lemma is modified to the following one. \begin{lemma}[$K_{G,B}$-lemma]\label{k2}
Assume the full measure property for rectangles (\ref{h4}). Let $B$ be a ball in $X$. For any $G\in \N$, there exists a finite sub-collection $K_{G,B}$ of the rectangles
\begin{equation*}
\Big\{\widetilde{R}=\prod_{i=1}^dB(z_i, \rho(\beta_{\alpha})^{a_i}): (z_1,\cdots, z_d)\in \R_{\alpha}, \ \beta_{\alpha}\ge G,\ \alpha\in J, \ \widetilde{R}\cap 1/2B\ne \emptyset\Big\}
\end{equation*} such that \begin{itemize}
\item all the rectangles in $K_{G,B}$ are contained in $2/3B$;

\item the rectangles are $3r$-disjoint in the sense that for any different elements in $K_{G,B}$
$$
3\widetilde{R}\cap 3\widetilde{R'}=\emptyset;
$$

\item these rectangles almost pack the ball $B$ in the sense that $$
m\left(\bigcup_{\widetilde{R}\in K_{G,B}}\widetilde{R}\right)\ge c' m(B).
$$
\end{itemize}

\end{lemma}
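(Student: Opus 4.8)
The plan is to run the proof of Lemma~\ref{lll2} with one modification: under the full measure hypothesis (\ref{h4}) the big rectangles produced need no longer share a common sidelength, so the finiteness of $K_{G,B}$ must be secured by truncating a convergent series rather than by a volume/pigeonhole count.

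First I would pass to a large index. Applying (\ref{h4}) to the ball $\tfrac12 B$ gives $m\big(\tfrac12 B\cap\limsup_{\alpha:\beta_\alpha\to\infty}\Delta(\R_\alpha,\rho(\beta_\alpha)^{\bold a})\big)=m(\tfrac12 B)$, and since $\limsup_{\alpha:\beta_\alpha\to\infty}\Delta(\R_\alpha,\rho(\beta_\alpha)^{\bold a})\subset\bigcup_{\alpha\in J:\ \beta_\alpha\ge G'}\Delta(\R_\alpha,\rho(\beta_\alpha)^{\bold a})$ for every $G'\in\N$, we obtain
$$m\Big(\tfrac12 B\cap\bigcup_{\alpha\in J:\ \beta_\alpha\ge G'}\Delta(\R_\alpha,\rho(\beta_\alpha)^{\bold a})\Big)=m\big(\tfrac12 B\big).$$
Because $\rho(u)\to0$, I would fix $G'\ge G$ so large (depending only on $B$) that $5\,\rho(G')^{a_i}<r_o$ and $\rho(G')^{a_i}<\tfrac1{12}r_B$ for all $1\le i\le d$, where $r_B$ denotes the radius of $B$; then every rectangle $\widetilde R=\prod_{i=1}^dB(z_i,\rho(\beta_\alpha)^{a_i})$ with $(z_1,\dots,z_d)\in\R_\alpha$, $\beta_\alpha\ge G'$ and $\widetilde R\cap\tfrac12 B\ne\emptyset$ is automatically contained in $\tfrac23 B$. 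Using $\Delta(\R_\alpha,\rho^{\bold a})=\bigcup_{z\in\R_\alpha}\prod_{i=1}^dB(z_i,\rho^{a_i})$, the family
$$\mathcal E:=\Big\{\widetilde R=\prod_{i=1}^dB(z_i,\rho(\beta_\alpha)^{a_i}):\ (z_1,\dots,z_d)\in\R_\alpha,\ \beta_\alpha\ge G',\ \widetilde R\cap\tfrac12 B\ne\emptyset\Big\}$$
covers $\tfrac12 B\cap\bigcup_{\beta_\alpha\ge G'}\Delta(\R_\alpha,\rho(\beta_\alpha)^{\bold a})$, hence covers $\tfrac12 B$ up to an $m$-null set.

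Next I would invoke the $5r$-covering lemma for rectangles (Lemma~\ref{lll1})---legitimate because every member of $\mathcal E$ has the aligned form $\prod_{i=1}^dB(x_i,r^{a_i})$ with $r=\rho(\beta_\alpha)\le\rho(G')$ uniformly bounded. It yields a subfamily $\mathcal E'\subset\mathcal E$ whose distinct members have disjoint $5$-dilations and with $\bigcup_{\widetilde R\in\mathcal E}\widetilde R\subset\bigcup_{\widetilde R\in\mathcal E'}5\widetilde R$, so that $\sum_{\widetilde R\in\mathcal E'}m(5\widetilde R)=m\big(\bigcup_{\widetilde R\in\mathcal E'}5\widetilde R\big)\ge m(\tfrac12 B)$. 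The $\delta_i$-Ahlfors regularity of each $m_i$ (applicable since $5\,\rho(\beta_\alpha)^{a_i}\le r_o$) produces a constant $\lambda\ge1$, depending only on $d$, the $\delta_i$ and the $c_i$, with $m(\widetilde R)\le m(5\widetilde R)\le\lambda\,m(\widetilde R)$; since the $\widetilde R\in\mathcal E'$ are pairwise disjoint and contained in $\tfrac23 B$, the series $\sum_{\widetilde R\in\mathcal E'}m(5\widetilde R)\le\lambda\,m(\tfrac23 B)$ converges, so I may choose a finite subfamily $K_{G,B}\subset\mathcal E'$ with $\sum_{\widetilde R\in K_{G,B}}m(5\widetilde R)\ge\tfrac12 m(\tfrac12 B)$.

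Finally the three assertions read off at once: each $\widetilde R\in K_{G,B}$ lies in $\tfrac23 B$ by the choice of $G'$; $K_{G,B}$ is $3r$-disjoint because $3\widetilde R\subset5\widetilde R$ and the $5$-dilations of distinct members are disjoint; and
$$m\Big(\bigcup_{\widetilde R\in K_{G,B}}\widetilde R\Big)=\sum_{\widetilde R\in K_{G,B}}m(\widetilde R)\ge\frac1\lambda\sum_{\widetilde R\in K_{G,B}}m(5\widetilde R)\ge\frac1{2\lambda}\,m\big(\tfrac12 B\big)\ge c'\,m(B),$$
where $c'>0$ depends only on $d$, the $\delta_i$ and the Ahlfors constants, using $m(\tfrac12 B)\asymp m(B)$. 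The only real departure from the proof of Lemma~\ref{lll2}, and the step I expect to need the most care, is this finiteness argument: the chosen big rectangles now carry infinitely many distinct sidelengths, so the pigeonhole estimate of Lemma~\ref{lll2} is unavailable; the fix is that the disjointness of their $5$-dilations inside $\tfrac23 B$ makes the governing series converge, and truncating it keeps a fixed proportion of $m(B)$.
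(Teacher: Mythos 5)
Your proof is correct and follows the same route the paper has in mind: the paper's own "proof" of Lemma~\ref{k2} is just the remark that the argument is as in Beresnevich--Velani for the ball case, and your write-up fills that in faithfully (pass to a tail $\beta_\alpha\ge G'$ where the rectangles are small, apply the $5r$-covering Lemma~\ref{lll1}, compare $m(5\widetilde R)$ with $m(\widetilde R)$ via Ahlfors regularity, and truncate the convergent series to get a finite $K_{G,B}$ retaining a fixed fraction of $m(B)$). You also correctly identify the one genuine difference from the ubiquity version of the lemma: there the rectangles in a given $J_n$ all have the same sidelength so finiteness is automatic, whereas here it has to come from truncating a convergent sum.
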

\begin{proof} The idea of the proof is the same as in \cite{BV06} for the ball case.\end{proof}
\item Shrinking lemma: the only difference is that we change $\rho(u_n)$ to $\rho(\beta_{\alpha})$, i.e. we consider the intersection of $\widetilde{R}=\Delta(z, \rho(\beta_{\alpha})^{\bold{a}})$ with $\Delta(\R_{\alpha}, \rho(\beta_{\alpha})^{\bold{a+t}})$ to get a collection of well separated shrunk rectangles of sidelength $\rho(\beta_{\alpha})^{\bold{a+t}}$. So Lemma \ref{l2} is unchanged.

    \item Division lemma which is unchanged as Lemma \ref{l3}.\end{itemize}

\subsection{Cantor subset construction}\ \medskip


Since only the Hausdorff dimension of $W(\bold{t})$ is concerned, for the Cantor subset to be constructed, each local level contains only one sublevel instead of many sublevels as did for the Hausdorff measure in ubiquity case.

Let $s=s(\bold{t})$ be the dimensional number given in Theorem \ref{t1}.
 Recall that $$
\mathcal{A}=\{a_1,\cdots, a_d, a_1+t_1,\cdots, a_t+t_d\},
$$  and we assume that $a_1$ is the smallest one in $\mathcal{A}$ and $a_d+t_d$ the largest one. \medskip


{\em The first level $\F_1$}. Let $B_0=X$.\smallskip

\begin{itemize}
\item {\em Step $1$. Use $K_{G,B}$-lemma}. Choose an integer $G$ sufficiently large. Apply the $K_{G,B}$-lemma to $B_0$
 to obtain a collection of rectangles.
We call these rectangles as big rectangles and we have \begin{equation*}
    \sum_{\widetilde{R}\in K_{G,B_0}}m(\widetilde{R})\asymp m(B_0).
    \end{equation*}

\item {\em Step $2$. Shrinking}. Fix a rectangle $\widetilde{R}\in K_{G,B}$.
Assume that its center $(z_1,\cdots,z_d)$ sits in a resonant set $\R_{\alpha}$, so $\widetilde{R}$ is of the form $$
\widetilde{R}=\prod_{i=1}^dB(z_i, \rho(\beta_{\alpha})^{a_i}).
$$

Consider the intersection $$
\widetilde{R}\cap \Delta(\R_{\alpha}, \rho(\beta_{\alpha})^{\a+\bold{t}}).
$$
By Lemma \ref{l2}, we obtain a collection $\D(\widetilde{R})$ of $5r$-separated rectangles with the form \begin{equation}\label{j2}
R=\prod_{i=1}^dB(y_i, \rho(\beta_{\alpha})^{a_i+t_i}), \ \ {\text{and}}\  \ (y_1,\cdots, y_d)\in \R_{\alpha}.
\end{equation} Moreover \begin{align*}
\sharp \D(\widetilde{R})\asymp \prod_{i=1}^d\frac{1}{\rho(\beta_{\alpha})^{t_i\delta_i \kappa}}, \ \
    \sum_{R\in \D(\widetilde{R})}m(R)\asymp m(\widetilde{R})\cdot \prod_{i=1}^d \rho(\beta_{\alpha})^{t_i\delta_i(1-\kappa)}.
    \end{align*}
We call the  small rectangles in $\D(\widetilde{R})$ as {\em shrunk rectangles}.

\item {\em Step $3$. Dividing.} For each shrunk rectangle $R\in \D(\widetilde{R})$ with the form as in (\ref{j2}), cut it into balls with radius $\rho(\beta_{\alpha})^{a_d+t_d}$. Then by Lemma \ref{l3}, we get a collection $\C(R)$ of $5r$-separated balls $B_1$ with radius $\rho(\beta_{\alpha})^{a_d+t_d}$ contained in $R$, moreover the cardinality of $\C(R)$ satisfies $$
\sharp \C(R)\asymp \prod_{i=1}^d\left(\frac{\rho(\beta_{\alpha})^{a_i+t_i}}{\rho(\beta_{\alpha})^{a_d+t_d}}\right)^{\delta}.
$$  \end{itemize}

Then the first subslevel is defined as \begin{equation*}
\F_1=\F_1(B_0)=\bigcup_{\widetilde{R}\in K_{G,B_0}}\ \bigcup_{R\in \D(\widetilde{R})}\ \bigcup_{B_1\in \C(R)}\ B_1.
\end{equation*}

    {\em The general level $\F_k$.}

    Assume that $\F_1, \cdots, \F_{k-1}$ have been well constructed, which is a collection of balls. Define $$
    \F_k=\bigcup_{B_{k-1}\in \F_{k-1}}\F_k(B_{k-1}),
    $$ where $\F_k(B_{k-1})$ is defined in the same manner as that for $\F_1(B_0)$ by just replacing the role of $B_0$ by $B_{k-1}$.
More precisely,
$$
\F_{k}(B_{k-1})=\bigcup_{\widetilde{R}\in K_{G,B_{k-1}}}\bigcup_{R\in \mathcal{D}(\widetilde{R})}\bigcup_{B_k\in \mathcal{C}(R)}B_k.
$$

Finally, the desired Cantor set is defined as $$
\F_{\infty}=\bigcap_{k=1}^{\infty}\bigcup_{B_k\in \F_k}B_k,
$$ which is clearly a subset of $W(\t)$.

We still have the following separation conditions for the balls in $\F_k$.
\begin{proposition}\label{pp2}
Any balls in $\F_k$ are $3r$-separated. Consequently, if a ball $B(x,r)$ can intersect two balls in $\F_k$, then all these balls are contained in $B(x,3r)$.
\end{proposition}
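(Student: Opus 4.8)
The plan is to prove the separation statement by induction on $k$, mirroring the last item of Proposition \ref{p2} but in a much simplified form: in the full-measure construction each local level $\F_k(B_{k-1})$ is a single sublevel, so there is no deletion procedure across sublevels, and the only facts to combine are the separations already built into the $5r$-covering lemma (Lemma \ref{lll1}), the $K_{G,B}$-lemma (Lemma \ref{k2}), the shrinking lemma (Lemma \ref{l2}) and the division lemma (Lemma \ref{l3}).

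First I would record three structural features of the construction. (i) \emph{Nesting with margin}: the $K_{G,B}$-lemma applied to $B_{k-1}$ produces big rectangles inside $\tfrac23 B_{k-1}$, hence every shrunk rectangle $R\in\D(\widetilde R)$ and every ball $B_k\in\C(R)$ lies in $\tfrac23 B_{k-1}$, so $\F_k(B_{k-1})\subset\tfrac23 B_{k-1}$. (ii) \emph{Scale drop}: since $G$ may be taken as large as desired and $\rho(u)\to 0$, the radius $\rho(\beta_\alpha)^{a_d+t_d}$ of any ball of $\F_k$ is as small as we wish relative to $r_{B_{k-1}}$, so in particular $3B_k\subset B_{k-1}$. (iii) Because $a_d+t_d=\max\mathcal A$ and $\rho<1$, the radius $\rho(\beta_\alpha)^{a_d+t_d}$ of a dividing ball $B_k$ does not exceed the $i$-th side-length of either its shrunk rectangle $R$ or the ambient big rectangle $\widetilde R$, for every $i$.

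Now let $B_k,B_k'$ be distinct balls in $\F_k$. If their predecessors $B_{k-1},B_{k-1}'\in\F_{k-1}$ differ, the inductive hypothesis gives $3B_{k-1}\cap 3B_{k-1}'=\emptyset$, whence $B_{k-1}\cap B_{k-1}'=\emptyset$, and by (i)--(ii), $3B_k\subset B_{k-1}$ and $3B_k'\subset B_{k-1}'$, so $3B_k\cap 3B_k'=\emptyset$. If $B_k,B_k'$ share the predecessor $B_{k-1}$, trace the genealogies
\[
B_k\in\C(R)\leftarrow R\in\D(\widetilde R)\leftarrow \widetilde R\in K_{G,B_{k-1}},\qquad
B_k'\in\C(R')\leftarrow R'\in\D(\widetilde R')\leftarrow \widetilde R'\in K_{G,B_{k-1}},
\]
and let $j$ be the coarsest of the three stages at which the ancestors of $B_k$ and $B_k'$ first differ (it exists since $B_k\ne B_k'$). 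At that stage the two distinct ancestors have a common predecessor, hence are $3r$-disjoint (big-rectangle stage, Lemma \ref{k2}) or $5r$-separated (shrunk-rectangle or dividing stage, Lemmas \ref{l2}, \ref{l3}); using (iii) and that $B_k,B_k'$ lie inside their stage-$j$ ancestors, a comparison of centres in the coordinate realising the disjointness gives $3B_k\cap 3B_k'=\emptyset$. This proves that all balls in $\F_k$ are $3r$-separated. The consequence is then immediate: if $B(x,r)$ meets both $B_k$ and $B_k'$, the triangle inequality together with the separation just proved forces $r_{B_k}+r_{B_k'}<r$, and hence $B_k,B_k'\subset B(x,r+2\max(r_{B_k},r_{B_k'}))\subset B(x,3r)$. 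The only mildly delicate point is the centre comparison in the same-predecessor case, where the two rectangles involved may have side-lengths that are powers of different values $\rho(\beta_\alpha)$; but (iii) reduces this to exactly the estimate already carried out in the proof of Proposition \ref{p2}, so nothing new is required.
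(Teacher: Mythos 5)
The paper omits the proof of Proposition~\ref{pp2}, merely asserting that the separation conditions persist; your argument correctly fills this in by specialising case~(1) of the paper's proof of the last item of Proposition~\ref{p2} to the single-sublevel construction of the full-measure case, tracing the genealogy $B_k\in\C(R)\leftarrow R\in\D(\widetilde R)\leftarrow\widetilde R\in K_{G,B_{k-1}}$ and invoking the $3r$-disjointness of Lemma~\ref{k2} and the $5r$-separation of Lemmas~\ref{l2} and~\ref{l3} at the first stage of divergence, with the containment $3B_k\subset 3\widetilde R$ (resp.\ $\subset 5R$) secured by your observation~(iii) that $a_d+t_d=\max\mathcal A$. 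This matches the approach the paper would take, and the derivation of the consequence from $3r$-separation via the triangle inequality is standard and also implicit in the paper's own treatment of Proposition~\ref{p2}.
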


\subsection{Mass distribution}\ \medskip


%
%
%

Let $\mu(B_0)=1$. Assume the measure $\mu$ have been defined for the balls in $\F_{k-1}$. Let $B_k$ be a ball in $\F_k$. Assume that $$
B_k\in \mathcal{C}(R), \ \ R\in \mathcal{D}(\widetilde{R}), \ \ \widetilde{R}\in K_{G,B_{k-1}},
$$ for some $B_{k-1}\in \F_{k-1}$. We define \begin{align*}
\mu(\widetilde{R})=\frac{m(\widetilde{R})}{\sum_{\widetilde{R'}\in K_{G,B_{k-1}}}m(\widetilde{R'})}\cdot \mu(B_{k-1});\ \
 \mu(R)=\frac{1}{\sharp \mathcal{D}(\widetilde{R})}\cdot\mu(\widetilde{R});  \ \ \mu(B_k)=\frac{1}{\sharp \mathcal{C}(R)}\cdot\mu(R).
\end{align*}

Assume that the center of $B_k$ sits in $\R_{\alpha}$ for some $\alpha\in J$, then a generic form of the ball $B_k$, the shrunk rectangle $R$ and the big rectangle $\widetilde{R}$ is $$ B_k=B(x, r_k^{a_d+t_d}), \ \
R=\prod_{i=1}^dB(y_i, r_k^{a_i+t_i}), \ \ \widetilde{R}=\prod_{i=1}^dB(z_i, r_k^{a_i}),
$$ respectively with $r_k=\rho(\beta_{\alpha})$. Then we have the following more precise formula for their measure:
\begin{align}\label{gg5}
\mu(\widetilde{R})&\asymp \prod_{i=1}^d r_k^{a_i\delta_i}\cdot \frac{\mu(B_{k-1})}{m(B_{k-1})},\ \
\mu(R)
\asymp \prod_{i=1}^d r_{k}^{t_i\delta_i\kappa}\cdot\prod_{i=1}^dr_{k}^{a_i\delta_i}\times \frac{\mu(B_{k-1})}{m(B_{k-1})},\\
&\mu(B_k)
\asymp \prod_{i=1}^d\left(\frac{r_k^{a_d+t_d}}{r_k^{a_i+t_i}}\right)^{\delta_i}\cdot\prod_{i=1}^d r_k^{t_i\delta_i\kappa}\cdot\prod_{i=1}^dr_k^{a_i\delta_i}\times \frac{\mu(B_{k-1})}{m(B_{k-1})}.\label{e11}
\end{align}
Note that $r_k$ depends on the index $\alpha\in J$ of the resonant set $\R_{\alpha}$ for which the center of $B_k$ sits. So different balls $B_k\in \F_k$ may be of different radius.

By Kolmogorov's consistency theorem, the set function $\mu$ can be uniquely extended into a probability measure supported on $\F_{\infty}$.

%

\subsubsection{Measure of balls in $\F_k$.}\

When $k=1$. Let $B_1$ be a ball in $\F_1$ with radius $r_1^{a_d+t_d}.$
 one has \begin{align*}
\mu(B_1)\asymp \prod_{i=1}^d\left(\frac{r_1^{a_d+t_d}}{r_1^{a_i+t_i}}\right)^{\delta_i}\cdot\prod_{i=1}^d r_{1}^{t_i\delta_i\kappa}\cdot\prod_{i=1}^dr_{1}^{a_i\delta_i}={r_1^{(a_d+t_d)s_d}}=r_{B_1}^{s_d},
\end{align*} where $s_d$ is given as $$
s_d=\sum_{i=1}^d\delta_i-(1-\kappa)\frac{\sum_{i=1}^dt_i\delta_i}{a_d+t_d}.
$$

Now for a ball $B_k$ in $\F_k$ with radius $r_k^{a_d+t_d}$, by (\ref{e11}), one has \begin{equation*}
\mu(B_k)\asymp r_k^{(a_d+t_d)s_d}\cdot \frac{\mu(B_{k-1})}{m(B_{k-1})}\le r_k^{(a_d+t_d)(s_d-\epsilon)}=r_{B_k}^{s_d-\epsilon},
\end{equation*} where the inequality can be guaranteed if ask $G$ as large as we want in applying $K_{G,B}$-lemma to $B_{k-1}$.


\subsubsection{Measure of a general ball.}\ \medskip

In the ubiquity case, all the rectangles and balls in a sublevel are of the same sidelengths respectively. But here they may be different. The idea to deal with it is to group the big rectangles of equivalent sidelength. For each group, use the argument from the ubiquity case. This is the only tricky point here.


Now we consider the measure of a general ball $B(x,r)$ with $x\in \F_{\infty}$ and $r$ small enough. Let $k$ be the integer such that $B(x,r)$ intersects only one element in $\F_{k}$ and at least two elements in $\F_{k+1}$. Denote $B_k$ the unique element in $\F_k$ for which $B(x, r)$ can intersect.
Let $r_{k}^{a_d+t_d}$ be the radius of the ball $B_k$.
Without loss of generality, we can assume that $r\le r_k^{a_d+t_d}$, otherwise $$
\mu(B(x,r))\le \mu(B_k)\le {r_{B_k}^{s-\epsilon}}\le {r^{s-\epsilon}}.
$$

By the choice of $k$, all the elements in $\F_{k+1}$ which can intersect $B(x,r)$ are contained in the local level $$
\F_{k+1}(B_k)=\bigcup_{\widetilde{R}\in K_{G,B_k}}\bigcup_{R\in \D(\widetilde{R})}\bigcup_{B_{k+1}\in \mathcal{C}(R)}B_{k+1}.
$$

Let $$\widetilde{R}_1, \widetilde{R}_2, \cdots, \widetilde{R}_h,\cdots
$$ be the big rectangles in $K_{G,B_{k}}$ for which there exist balls $B_{k+1}\in \F_{k+1}$ contained in these big rectangles and having non-empty intersection with the ball $B(x,r)$. Note that these big rectangles are of the form \begin{equation}\label{j4}
\widetilde{R}_h=\prod_{i=1}^dB(z_i, \rho_h^{a_i}), \ {\text{for each}}\ h\ge 1.
\end{equation} Then we arrange them in a decreasing order according to the value of $\rho_h$.

(i). $r\ge \rho_1^{a_1}$. In this case, the ball $B(x,r)$ is so large that any big rectangle in (\ref{j4}) are contained in $B(x,3r)$. Thus,
\begin{align*}
\mu(B(x,r))&\le \sum_{h\ge 1}\mu(\widetilde{R}_h)\asymp\sum_{h\ge 1}m(\widetilde{R}_h)\cdot \frac{\mu(B_{k})}{m(B_{k})}
\le m(B(x,3r))\cdot \frac{\mu(B_{k})}{m(B_{k})}\nonumber\\
&\le r^{\sum_{i=1}^d\delta_i}\cdot \frac{r_{B_{k}}^{s-\epsilon}}{r_{B_{k}}^{\sum_{i=1}^d\delta_i}}\le r^{s-\epsilon}
\end{align*} where for the last inequality, we have used the fact $$
s\le \sum_{i=1}^d\delta_i, \ {\text{and}}\ r\le r_{B_{k}}.
$$

(ii). $r< \rho_1^{a_1}$. Let $h_0$ be the largest integer such that $r<\rho_{h_0}^{a_1}$. Then the contribution of the rectangles $$
\widetilde{R}_{h_0+1}, \widetilde{R}_{h_0+2}, \cdots
$$ to the mass of the ball $B(x,r)$ can be controlled as in case (i).

So, we need only consider the contribution of the big rectangles \begin{equation}\label{j5}
\widetilde{R}_{1}, \widetilde{R}_{2}, \cdots, \widetilde{R}_{h_0}.
\end{equation} Note that by the definition of $h_0$ and Proposition \ref{pp2}, we have $$
\rho_{h}^{a_d+t_d}\le r\le \rho_{h}^{a_1}, \ \ {\text{for all}}\ 1\le h\le h_0.
$$

We group the big rectangles in (\ref{j5}) according to the range of $\rho_h$. Let $$
\mathcal{G}_{\ell}=\Big\{\widetilde{R}_h: 2^{-\ell-1}\le \rho_h<2^{-\ell}, \ 1\le h\le h_0\Big\}.
$$
Then we estimate for each $\ell\ge 1$, the quantity $$
\mu\Big(B(x,r)\cap \bigcup_{\widetilde{R}\in \mathcal{G}_{\ell}}\widetilde{R}\Big).
$$

 Since the elements in $\mathcal{G}_{\ell}$ are of almost the same sidelength, from this point on until the formula (\ref{j6}), the following is just a copy from the ubiquity case.

Let $A_{l+1}$ and $A_l$ be the two different consecutive terms in $\mathcal{A}$ such that $$
2^{-\ell\cdot A_{l+1}}\le r< 2^{-\ell\cdot A_l}.
$$
Define the sets $\K$: \begin{equation*}
\K_1=\{i: a_i\ge A_{l+1}\}, \ \K_2=\{i: a_i+t_i\le A_l\},  \ \ \K_3=\{1,\cdots,d\}\setminus (\K_1\cup \K_2).
\end{equation*}

\begin{itemize}\item (a). The number of $\mathcal{G}_{\ell}$. Define an enlarged body of the ball $B(x,r)$: $$
H=\prod_{i=1}^dB(x_i, 3\epsilon_i), \ {\text{where}}\ \epsilon_i=\left\{
                                                              \begin{array}{ll}
                                                                r, & \hbox{$i\in \K_1$;} \\
                                                                2^{-\ell \cdot a_i}, & \hbox{otherwise.}
                                                              \end{array}
                                                            \right.
$$ Then all the big rectangle $\widetilde{R}\in \mathcal{G}_{\ell}$ are contained in $H$. Since these big rectangles $\widetilde{R}$ are disjoint, then a volume argument gives the number of $\mathcal{G}_{\ell}$:
\begin{equation}\label{ee1}\sharp \mathcal{G}_{\ell}\le \prod_{i\in \K_1}\left(\frac{r}{2^{-\ell \cdot a_i}}\right)^{\delta_i}.\end{equation}

\item (b). The number of balls $B_{k+1}\in \F_{k+1}$ contained in $$
B(x,2r)\cap \widetilde{R}, \ {\text{for each}}\ \widetilde{R}\in \mathcal{G}_{\ell}.
$$
Fix a generic big rectangle $\widetilde{R}\in \mathcal{G}_{\ell}$. Let $T$ be the number of the balls \begin{equation}\label{g9}B_{k+1}\in \bigcup_{R\in \D(\widetilde{R})}\mathcal{C}(R)\end{equation} which can intersect $B(x,r)$. Note that the radius of $B_{k+1}$ is about $2^{-\ell \cdot (a_d+t_d)}$.

 Let $\alpha\in J$ be the index such that the center of $\widetilde{R}$ sits in $\R_{\alpha}$. Recall the process of the construction of $\mathcal{D}(\widetilde{R})$, all these balls $B_{k+1}$ in (\ref{g9}) are contained in $$
\widetilde{R}\cap \Delta(\R_{\alpha}, 2^{-\ell\cdot (\a+\bold{t})})\cap B(x,2r).
$$ Still we use a volume argument: \begin{align*}
T\cdot \prod_{i=1}^d\left(2^{-\ell\cdot (a_d+t_d)}\right)^{\delta_i}&\le m\left(\widetilde{R}\cap B(x,2r)\cap \prod_{i=1}^d \Delta(\R_{\alpha, i}, 2^{-\ell\cdot(a_i+t_i)})\right)\\
&=\prod_{i=1}^dm_i\Big(B(z_i, 2^{-\ell\cdot a_i})\cap B(x_i,2r)\cap \Delta(\R_{\alpha, i}, 2^{-\ell\cdot (a_i+t_i)})\Big).\end{align*}
Note that \begin{itemize}\item in the directions $i\in \K_1$, $$
r\ge 2^{-\ell\cdot a_i}\ge 2^{-\ell\cdot (a_i+t_i)},
$$so by the scaling property $$
\prod_{i\in \K_1}m_i\Big(B(z_i, 2^{-\ell\cdot a_i})\cap B(x_i,2r)\cap \Delta(\R_{\alpha, i}, 2^{-\ell\cdot (a_i+t_i)})\Big)\asymp \prod_{i\in \K_1}2^{-\ell\cdot a_i\delta_i\kappa}\cdot 2^{-\ell\cdot (a_i+t_i)\delta_i(1-\kappa)};
$$

\item in the directions $i\in \K_2$, $$
r<2^{-\ell\cdot (a_i+t_i)}\le 2^{-\ell\cdot a_i},
$$ clearly $$ \prod_{i\in \K_2}m_i\Big(B(z_i, 2^{-\ell\cdot a_i})\cap B(x_i,2r)\cap \Delta(\R_{\alpha, i}, 2^{-\ell\cdot (a_i+t_i)})\Big)\ll \prod_{i\in \K_2}r^{\delta_i};$$

\item in the directions $i\in \K_3$, $$
2^{-\ell\cdot (a_i+t_i)}\le r\le 2^{-\ell\cdot a_i},
$$ so by the scaling property$$
\prod_{i\in \K_3}m_i\Big(B(z_i, 2^{-\ell\cdot a_i})\cap B(x_i,2r)\cap \Delta(\R_{\alpha, i}, 2^{-\ell\cdot (a_i+t_i)})\Big)\asymp \prod_{i\in \K_3}r^{\delta_i \kappa}\cdot 2^{-\ell\cdot (a_i+t_i)\delta_i(1-\kappa)}.
$$
\end{itemize}
Thus \begin{align*}
T\cdot \prod_{i=1}^d\left(2^{-\ell\cdot(a_d+t_d)}\right)^{\delta_i}&\le \left(\prod_{i\in \K_1}2^{-\ell\cdot a_i\delta_i\kappa}\cdot 2^{-\ell\cdot(a_i+t_i)\delta_i(1-\kappa)}\right)\times \left(\prod_{i\in \K_2}r^{\delta_i}\right)\times \left(\prod_{i\in \K_3}r^{\delta_i \kappa}\cdot 2^{-\ell\cdot(a_i+t_i)\delta_i(1-\kappa)}\right).
\end{align*}
Recall (\ref{ee1}). Thus the total number of balls $B_{k+1}$ in $\F_{k+1}(B_k)$ which can intersect $B(x,r)$ is less than:\begin{align*}
\prod_{i\in \K_1}\left(\frac{r}{2^{-\ell\cdot a_i}}\right)^{\delta_i}\cdot T:=M.
\end{align*}

\end{itemize}

So, by (\ref{gg5}) one has\begin{align*}
\mu(B(x,r)\cap \mathcal{G}_{\ell})&\le M\cdot \prod_{i=1}^d\left(\frac{2^{-\ell\cdot (a_d+t_d)}}{2^{-\ell\cdot (a_i+t_i)}}\right)^{\delta_i}\cdot \prod_{i=1}^d2^{-\ell\cdot t_i\delta_i\kappa}\cdot \prod_{i=1}^d2^{-\ell\cdot a_i\delta_i}\cdot \frac{\mu(B_k)}{m(B_k)}\\
&= \prod_{i\in \K_1}r^{\delta_i}\cdot \prod_{i\in \K_2}r^{\delta_i}\cdot \prod_{i\in \K_3}r^{\delta_i\kappa}\cdot \prod_{i\in \K_3}2^{-\ell\cdot a_i\delta_i(1-\kappa)}\cdot \prod_{i\in \K_2}\frac{1}{2^{-\ell\cdot t_i\delta_i(1-\kappa)}}\cdot \frac{\mu(B_k)}{m(B_k)}\nonumber\\
&\le {r^s}\cdot \frac{\mu(B_k)}{m(B_k)},
\end{align*}
where the last inequality is true since one can check as before that\begin{align*}
s&\le \sum_{i\in \K_1}\delta_i+\sum_{i\in \K_2}\delta_i+\kappa\sum_{i\in \K_3}\delta_i+(1-\kappa)\frac{(\sum_{i\in \K_3}a_i\delta_i-\sum_{i\in \K_2}t_i\delta_i)\log 2^{-\ell}}{\log r},
\end{align*} for all $r$ inside its range $$
2^{-\ell\cdot A_{l+1}}\le r< 2^{-\ell\cdot A_l}.
$$

Thus we obtain that \begin{align}\label{j6}
\mu\Big(B(x,r)\cap \bigcup_{\widetilde{R}\in \mathcal{G}_{\ell}}\widetilde{R}\Big)\le {r^s}\cdot \frac{\mu(B_k)}{m(B_k)}.
 \end{align} Recall that in applying the $K_{G,B}$-lemma to $B_k$, we can ask $G$ so large that for all $\widetilde{R}$, its sidelength is much small compared with the radius of $B_k$, saying $$
\rho_h^{a_1\epsilon} \le m(B_k)\Longrightarrow \frac{\mu(B_k)}{m(B_k)}\le \frac{1}{m(B_k)}\le \rho_h^{-a_1\epsilon}\le r^{-\epsilon}.
$$ So, $$
\mu\Big(B(x,r)\cap \bigcup_{\widetilde{R}\in \mathcal{G}_{\ell}}\widetilde{R}\Big)\le r^{s-\epsilon}\le r^{s-2\epsilon} \cdot 2^{-\ell a_1\epsilon}.
$$

As a conclusion, we get $$
\sum_{\ell\ge 1}\mu\Big(B(x,r)\cap \bigcup_{\widetilde{R}\in \mathcal{G}_{\ell}}\widetilde{R}\Big)\le \sum_{\ell\ge 1}r^{s-2\epsilon} \cdot 2^{-\ell a_1\epsilon}\asymp r^{s-2\epsilon}.
$$

In a summary, we have shown that for all $x\in \F_{\infty}$ and $r$ small, $$
\mu(B(x,r))\le c\cdot r^{s-2\epsilon}.
$$
Then Proposition \ref{p1} is applied to conclude the desired result.

\section{A last property on the dimensional number}

The proof of Proposition \ref{p6} is elementary but not obvious. Recall that $$
\mathcal{A}=\{a_i, a_i+t_i: 1\le i\le d\}, \ \ \widehat{\mathcal{A}}=\{a_i+t_i: 1\le i\le d\}
$$ The dimensional number in Theorem \ref{t1} corresponding to  $A\in \mathcal{A}$ is given as \begin{equation}\label{h3}
\sum_{k\in \K_1}\delta_k+\sum_{k\in \K_2}\delta_k+\kappa\sum_{k\in \K_3}\delta_k+(1-\kappa)\frac{\sum_{k\in \K_3}a_k\delta_k-\sum_{k\in \K_2}t_k\delta_k}{A},
 \end{equation}
 where $$
 \K_1=\{k: a_k\ge A\},\ \K_2=\{k: a_k+t_k\le A\}\setminus \K_1,\ \ \K_3=\{1, 2,\cdots, d\}\setminus (\K_1\cup \K_2).
 $$

 When $A=a_i$, the dimensional number is denoted by $\sigma_i$ and  when $A=a_i+t_i$, the dimensional number is denoted by $s_i$. So what we need to show is that $$
\min\{\sigma_i:  1\le i\le d\}\ge \min\{s_i: 1\le i\le d\}.
$$

Without loss of generality, we assume that \begin{equation}\label{h1}
a_1\le a_2\le \cdots \le a_d.
\end{equation} If $a_i+t_i=a_k$ for some $i$ and $k$, they define the same dimensional number,
so we merge $a_k$ into $a_i+t_i$. Thus a general part when list the elements in $\mathcal{A}$ in ascending order can be expressed as follows:
\begin{center}
\begin{tabular}{|p{60pt}l|p{121pt}c|p{71pt}c|p{98pt}c|}
  \hline \smallskip
  $\ \ {\text{Terms}} \ a+t$ &   &\smallskip \ \ \ \ \ \ \ \ ${\text{Terms}}\ a$ &   & \smallskip$\ \ \  {\text{Terms}}\ a+t$ \smallskip& &\smallskip \ \ \ \ \ \ \ \ ${\text{Terms}}\ a$ & \\
  \hline \smallskip
 $\cdots\le a_{j_1}+t_{j_1}$  &   &\smallskip $< a_{i_{1}+1}\le a_{i_{1}+2}\le\cdots \le a_{i_{2}}$ &   & \smallskip$< a_{j_2}+t_{j_2}\le \cdots$ & &\smallskip $< a_{i_{2}+1}\le a_{i_{2}+2}\le\cdots $ & \smallskip \\
  \hline
\end{tabular}\end{center}
\smallskip

Let $$
K_2=\{k: a_k+t_k\le a_{j_1}+t_{j_1}\}.
$$ By (\ref{h1}), we know that $$
K_2\subset \{1,\cdots, i_1\}, \ {\text{and}}\ j_2\le i_2, \ j_2\notin K_2.
$$

Now we specify the dimensional numbers corresponding to the elements in the above table:
\begin{itemize}\item when $A=a_{j_1}+t_{j_1}$, $$
\K_1=\{i_1+1,\cdots,d\}, \ \K_2=K_2,\  \ \K_3=\{1,\cdots, i_1\}\setminus K_2.
$$Thus by (\ref{h3})\begin{align*}
s_{j_1}=\sum_{k\ge i_1+1}\delta_k+(1-\kappa)\sum_{k\in K_2}\delta_k+\kappa\sum_{k\le i_1}\delta_k+(1-\kappa)\frac{\sum_{k\le i_1}a_k\delta_k-\sum_{k\in K_2}\delta_k(a_k+t_k)}{a_{j_1}+t_{j_1}}.
\end{align*}

\item when $A=a_{i_1+1}$, $$
\K_1=\{i_1+1,\cdots,d\}, \ \K_2=K_2,\  \ \K_3=\{1,\cdots, i_1\}\setminus K_2.
$$Thus \begin{align*}
\sigma_{i_1+1}&=\sum_{k\ge i_1+1}\delta_k+(1-\kappa)\sum_{k\in K_2}\delta_k+\kappa\sum_{k\le i_1}\delta_k+(1-\kappa)\frac{\sum_{k\le i_1}a_k\delta_k-\sum_{k\in K_2}\delta_k(a_k+t_k)}{a_{i_1+1}}\\
&=\sum_{k\ge i_1+2}\delta_k+(1-\kappa)\sum_{k\in K_2}\delta_k+\kappa\sum_{k\le i_1+1}\delta_k+(1-\kappa)\frac{\sum_{k\le i_1+1}a_k\delta_k-\sum_{k\in K_2}\delta_k(a_k+t_k)}{a_{i_1+1}}.
\end{align*}

\item when $A=a_{i_1+2}$, $$
\K_1=\{i_1+2,\cdots,d\}, \ \K_2=K_2,\  \ \K_3=\{1,\cdots, i_1+1\}\setminus K_2.
$$Thus \begin{align*}
\sigma_{i_1+2}&=\sum_{k\ge i_1+2}\delta_k+(1-\kappa)\sum_{k\in K_2}\delta_k+\kappa\sum_{k\le i_1+1}\delta_k+(1-\kappa)\frac{\sum_{k\le i_1+1}a_k\delta_k-\sum_{k\in K_2}\delta_k(a_k+t_k)}{a_{i_1+2}}\\
&=\sum_{k\ge i_1+3}\delta_k+(1-\kappa)\sum_{k\in K_2}\delta_k+\kappa\sum_{k\le i_1+2}\delta_k+(1-\kappa)\frac{\sum_{k\le i_1+2}a_k\delta_k-\sum_{k\in K_2}\delta_k(a_k+t_k)}{a_{i_1+2}}.
\end{align*}

\item $\vdots$

\item when $A=a_{i_2}$, $$
\K_1=\{i_2,\cdots, d\}, \ \K_2=K_2, \ \K_3=\{1,\cdots, i_2-1\}\setminus K_2.
$$ Thus
\begin{align*}
\sigma_{i_2}&=\sum_{k\ge i_2}\delta_k+(1-\kappa)\sum_{k\in K_2}\delta_k+\kappa\sum_{k\le i_2-1}\delta_k+(1-\kappa)\frac{\sum_{k\le i_2-1}a_k\delta_k-\sum_{k\in K_2}\delta_k(a_k+t_k)}{a_{i_2}}\\
&=\sum_{k\ge i_2+1}\delta_k+(1-\kappa)\sum_{k\in K_2}\delta_k+\kappa\sum_{k\le i_2}\delta_k+(1-\kappa)\frac{\sum_{k\le i_2}a_k\delta_k-\sum_{k\in K_2}\delta_k(a_k+t_k)}{a_{i_2}}.
\end{align*}

\item when $A=a_{j_2}+t_{j_2}$, $$
\K_1=\{i_2+1,\cdots,d\}, \ \K_2=K_2\cup {j_2},\  \ \K_3=\{1,\cdots, i_2\}\setminus \{K_2 \cup {j_2}\}.
$$Thus \begin{align*}
s_{j_2}=\sum_{k\ge i_2+1}\delta_k+(1-\kappa)\sum_{k\in K_2}\delta_k+\kappa\sum_{k\le i_2}\delta_k+(1-\kappa)\frac{\sum_{k\le i_2}a_k\delta_k-\sum_{k\in K_2}\delta_k(a_k+t_k)}{a_{j_2}+t_{j_2}}.
\end{align*}
\end{itemize}

Now we compare $$
s_{j_1}, \ \sigma_{i_1+1}, \sigma_{i_1+2},\cdots, \sigma_{i_2}, \ s_{j_2}.
$$ Look at the numerators of the fractions in these dimensional numbers.
\begin{enumerate}\item When $$
\sum_{k\le i_1}a_k\delta_k-\sum_{k\in K_2}\delta_k(a_k+t_k)\ge 0.
$$ Then all the numerators there are non-negative.
Look at the second expression of $\sigma_{i_1+1}$ and the first expression of $\sigma_{i_1+2}$, it is clear that $$
\sigma_{i_1+1}\ge \sigma_{i_1+2}.
$$ With the same observation, we see that $\sigma_{i}$ is decreasing with respect to $i\in (i_1, i_2]$ and $
\sigma_{i_2}\ge s_{j_2}.
$ In other words, we have $$
\min\{\sigma_i: i_1+1\le i\le i_2\}=\sigma_{i_2}\ge s_{j_2}.
$$

\item When $$
\sum_{k\le i_2}a_k\delta_k-\sum_{k\in K_2}\delta_k(a_k+t_k)< 0.
$$
Then all the numerators there are negative. One can observe that $\sigma_{i}$ is increasing and $\sigma_{i_1+1}\ge s_{j_1}$. In other words, we have $$
\min\{\sigma_i: i_1+1\le i\le i_2\}=\sigma_{i_1+1}\ge s_{j_1}.
$$

\item When $$
\sum_{k\le i_1}a_k\delta_k-\sum_{k\in K_2}\delta_k(a_k+t_k)< 0, \ \ \ \sum_{k\le i_2}a_k\delta_k-\sum_{k\in K_2}\delta_k(a_k+t_k)\ge 0.
$$ Then some numerators are negative until some point on and from which on, the numerators are positive. Thus $\sigma_i$ increases at the beginning of $i\in (i_1+1, i_2]$ to some point and then decreases from that point on until $i=i_2$. Moreover, it is easy to see $$
s_{j_1}\le \sigma_{i_1+1}, \ \ \sigma_{i_2}\ge s_{j_2}.
$$ Thus $$
\min\{\sigma_i: i_1+1\le i\le i_2\}=\min\{\sigma_{i_1}, \sigma_{i_2}\}\ge \min\{s_{j_1}, s_{j_2}\}.
$$
\end{enumerate}
This completes the proof.

\section{Application 1: Simultaneous Diophantine approximation}

Consider the simultaneous Diophantine approximation: let $\psi_i:\N\to \mathbb{R}^+$ be non-increasing for each $1\le i\le m$ and write $\Psi=(\psi_1,\cdots,\psi_d)$. Define
\begin{align*}
W(\Psi)&=\Big\{x\in [0,1]^m: \|qx_i\|<\psi_i(q), \ 1\le i\le m, \ \ {\text{i.m.}}\ q\in \N\Big\}
\end{align*} with $$
\psi_1(q)\cdots \psi_m(q)\le q^{-1}, \ \ {\text{for all}}\ q\gg 1
$$ otherwise by Minkowski's theorem for convex body, $W(\Psi)$ is of full Lebesgue measure.

\subsection{Special case}
We begin with the special case that $\psi_i(q)=q^{-\tau_i}$, $1\le i\le m$ for all $q\ge 1$, i.e. \begin{align*}
W({\bold{\tau}})&=\Big\{x\in [0,1]^m: \|qx_i\|<q^{-\tau_i}, \ 1\le i\le m, \ \ {\text{i.m.}}\ q\in \N\Big\}\\
&=\Big\{x\in [0,1]^m: x\in \prod_{i=1}^mB\left(\frac{p_i}{q}, \Big(\frac{1}{q}\Big)^{1+\tau_i}\right), \ {\text{i.m.}}\ (q; p_1,\cdots,p_m)\in \N^{m+1}\Big\}
\end{align*} where ${\bold{\tau}}=(\tau_1,\cdots,\tau_m)$ with $$
\tau_1+\tau_2+\cdots+\tau_m>1, \ \ \tau_1\ge \tau_2\ge \cdots \ge \tau_m.
$$

The upper bound of $\hdim W(\tau)$ can be obtained by a direct argument on its natural cover, so we only focus on its lower bound.

By Proposition \ref{p5}, we have a ubiquitous system with $$
J=\Big\{\alpha=(q; p_1,\cdots, p_m): q\in \N,\  0\le p_i<q, 1\le i\le m\Big\}, \ \ \ R_{\alpha}=\Big(\frac{p_1}{q},\cdots,\frac{p_m}{q}\Big),
$$
and  $$
\beta_{\alpha}=q, \ \ \rho(q)=q^{-1},
$$  for any $a_1,\cdots, a_m$ with \begin{align}\label{d1}
 a_1+\cdots+a_m=m+1, \ \ {\text{and}}\ a_i\ge 1, \ 1\le i\le m.
\end{align}
Rewrite $W(\tau)$ as the form in Theorem \ref{t1}: $$
W(\tau)=\Big\{x\in [0,1]^m: x\in \Delta(R_{\alpha}, \rho(\beta_{\alpha})^{\a+\bold{t}}), \ {\text{i.m.}}\ \alpha\in J\Big\}
$$ for any $\a$ and $\bold{t}$ positive  satisfying (\ref{d1}) and $$
a_i+t_i=1+\tau_i, \ 1\le i\le m.
$$

(1). $\tau_m\ge 1/m$. Choose $$
a_i=1+\frac{1}{m}, \ \ t_i=(1+\tau_i)-a_i=\tau_i-\frac{1}{m}, \ \ 1\le i\le d.
$$
Then the order of the elements in the alphabet $\mathcal{A}$ is $$
a_1+t_1\ge \cdots \ge a_m+t_m\ge a_1=\cdots=a_m.
$$
By a direct substitution to the formula in Theorem \ref{t1}, one has \begin{align*}\hdim W(\tau)&=\min\left\{\frac{m+1+\sum_{k=i}^m(\tau_i-\tau_k)}{1+\tau_i}, 1\le i\le m\right\}. \end{align*}

(2). $\tau_m<1/m$. Let $K$ be the largest integer such that $$
\tau_K> \frac{1-(\tau_{K+1}+\cdots+\tau_m)}{K}.
$$
Then choose $$
a_i=\tau_i+1, \ \ {\text{for}}\ \ i\ge K+1, \ \ \ \ {\text{and}}\ \ a_i=\frac{1-(\tau_{K+1}+\cdots+\tau_m)}{K}+1,\ \ {\text{for}} \ 1\le i\le K.
$$
and let $$
t_i=1+\tau_i-a_i, \ \ {\text{for all}} \ 1\le i\le m.
$$
So
the order of the alphabet $\mathcal{A}$ is \begin{align*}
a_1+t_1(=\tau_1+1)\ge \cdots &\ge a_K+t_K(=\tau_K+1)\\
&> a_1=\cdots =a_K> a_{K+1}=a_{K+1}+t_{K+1}\ge \cdots \ge a_m=a_m+t_m.
\end{align*}

By a direct substitution to the formula in Theorem \ref{t1}, one has\begin{theorem}
\begin{align*}
\hdim W(\tau)&\ge \min\left\{\frac{m+1+(m-i)\tau_i-\sum_{k=i+1}^m\tau_k}{1+\tau_i}: 1\le i\le K\right\}\\&=\min\left\{\frac{m+1+(m-i)\tau_i-\sum_{k=i+1}^m\tau_k}{1+\tau_i}: 1\le i\le m\right\},
\end{align*}
where the second equality follows that the terms for $i>K$ are always not the smallest one by direct comparison or by Proposition \ref{p6}.
\end{theorem}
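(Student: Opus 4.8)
The plan is to exhibit $W(\tau)$ as one of the sets $W(\t)$ governed by Theorem \ref{t1} and then read off the dimension by a direct substitution. First I would invoke Proposition \ref{p5}: with $J=\{\alpha=(q;p_1,\dots,p_m)\}$, resonant sets the rational points $\R_\alpha=(p_1/q,\dots,p_m/q)$, weight $\beta_\alpha=q$, ubiquitous function $\rho(q)=q^{-1}$ and $\ell_k=M^{k-1}$, $u_k=M^k$, the pair $(\{\R_\alpha\},\beta)$ is a (uniform) local ubiquity system for rectangles with respect to $(\rho,\a)$ for every $\a$ with $a_i\ge 1$ and $a_1+\cdots+a_m=m+1$, and the $\kappa$-scaling property holds with $\kappa=0$, $\delta_i=1$. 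Since $\Delta(\R_\alpha,\rho(\beta_\alpha)^{\a+\t})=\prod_{i}B(p_i/q,\,q^{-(a_i+t_i)})$, any admissible $\a$ together with a nonnegative $\t$ satisfying $a_i+t_i=1+\tau_i$ makes $W(\tau)$ coincide with the set $W(\t)$ of Theorem \ref{t1}.

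Next I would fix the index $K$ of the statement; here $1\le K\le m-1$, because $j=1$ satisfies its defining inequality (that is just $\tau_1+\cdots+\tau_m>1$) while $j=m$ does not (that would read $\tau_m>1/m$). Writing $S_K=\tau_{K+1}+\cdots+\tau_m$, I set $a_i=1+\tfrac{1-S_K}{K}$ and $t_i=\tau_i-\tfrac{1-S_K}{K}$ for $1\le i\le K$, and $a_i=1+\tau_i$, $t_i=0$ for $K<i\le m$. Then I would check admissibility: $\sum a_i=(1-S_K)+K+S_K+(m-K)=m+1$; $a_i\ge 1$ because $S_K\le 1$ (immediate from the failure of the defining inequality at $j=K+1$); $t_i>0$ for $i\le K$ since $\tau_i\ge\tau_K>\tfrac{1-S_K}{K}$; and $a_i+t_i=1+\tau_i$ for every $i$. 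A short computation from the inequality defining $K$ then gives the ordering of $\mathcal A=\{a_i,a_i+t_i\}$ recorded in the excerpt, namely $a_1+t_1\ge\cdots\ge a_K+t_K>a_1=\cdots=a_K\ge a_{K+1}=a_{K+1}+t_{K+1}\ge\cdots\ge a_m=a_m+t_m$.

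With this data Theorem \ref{t1} gives $\hdim W(\tau)\ge s(\t)$, and by Proposition \ref{p6} it suffices to evaluate the dimensional number at $A\in\widehat{\mathcal A}=\{1+\tau_i:1\le i\le m\}$. For $A=1+\tau_i$ with $i\le K$ one checks, using $\tau_K>\tfrac{1-S_K}{K}$ to see that $\K_1=\emptyset$ and placing any tie $a_k=A=a_k+t_k$ into $\K_2$ via Remark \ref{r2}, that $\K_2=\{i,\dots,m\}$ and $\K_3=\{1,\dots,i-1\}$; since $\kappa=0$ and $\delta_k=1$, the dimensional number is $(m-i+1)+\big(\sum_{k<i}a_k-\sum_{k\ge i}t_k\big)/(1+\tau_i)$, and substituting $\sum_{k<i}a_k=(i-1)\big(1+\tfrac{1-S_K}{K}\big)$ and $\sum_{k\ge i}t_k=\sum_{i\le k\le K}\tau_k-(K-i+1)\tfrac{1-S_K}{K}$, the $\tfrac{1-S_K}{K}$ terms telescope and $\sum_{i\le k\le K}\tau_k+S_K=\sum_{k\ge i}\tau_k$, leaving exactly $\dfrac{m+1+(m-i)\tau_i-\sum_{k=i+1}^m\tau_k}{1+\tau_i}$. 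For $A=1+\tau_j$ with $j>K$ the same bookkeeping gives $\K_3=\emptyset$ and, since $t_k=0$ for every $k>K$, the dimensional number equals $m$. To conclude, set $h(j)=\tau_j-\tfrac{1}{j}\big(1-\sum_{k>j}\tau_k\big)$; then $j\,h(j)$ is non-increasing because $\tau$ is non-increasing (indeed $(j+1)h(j+1)-j\,h(j)=j(\tau_{j+1}-\tau_j)\le 0$), so $h(j)>0$ precisely for $j\le K$, and $h(i)>0$ is exactly the statement that the displayed quotient is $<m$; hence the minimum over $1\le i\le m$ is attained among the indices $i\le K$ and equals $s(\t)$, which yields both the lower bound and the asserted second equality.

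The main obstacle I anticipate is not a single computation but the combinatorial bookkeeping of the third paragraph: correctly reading off $\K_1,\K_2,\K_3$ at each relevant $A$ (in particular $\K_1=\emptyset$ for $A=1+\tau_i$, $i\le K$, genuinely uses the defining inequality for $K$), absorbing the boundary ties among the $\tau_i$ into the appropriate $\K$ via Remark \ref{r2}, and then carrying the telescoping simplification through cleanly. The matching upper bound, not needed for the stated inequality, follows from a direct Hausdorff-measure estimate on the natural cover $\bigcup_q\bigcup_{p}\prod_i B(p_i/q,q^{-(1+\tau_i)})$ of $W(\tau)$.
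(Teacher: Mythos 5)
Your proposal is correct and follows essentially the same route as the paper: invoking Proposition~\ref{p5} for the ubiquity data, choosing $a_i=1+\tfrac{1-S_K}{K}$ for $i\le K$ and $a_i=1+\tau_i$ for $i>K$ with $t_i=1+\tau_i-a_i$, and reading off the dimensional numbers from Theorem~\ref{t1}. You have simply supplied, in explicit form, the "direct substitution" and "direct comparison" that the paper leaves to the reader; your verification that $jh(j)$ is non-increasing and that the numbers at $i>K$ equal $m$ is a clean way to justify the second equality that the paper asserts without proof.
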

\subsection{General case}

With the choices of $\rho$ and $u_n$ as above, it is clear that
$$\lim_{n\to\infty}\frac{\log \rho(u_{n+1})}{\log \rho(u_n)}=1,$$ so Theorem \ref{t3} applies to give the lower bound of $
\hdim W(\Psi)$.
So in this case, $\mathcal{U}$ is the set of the accumulation points of the sequences $$
 \left\{\Big(\frac{-\log \psi_1(n)}{\log n},\cdots, \frac{-\log \psi_d(n)}{\log n}\Big): n\in \N\right\}.
$$

For the upper bound, we require that $\mathcal{U}$ is bounded.
we give the following partition on $\N$.
The boundedness of $\mathcal{U}$ implies that there exist positive numbers $L_1,\cdots, L_m$ finite such that
$$\mathcal{U}\subset [0,~L_{1})\times\cdots\times [0,~L_{m}).$$
Fix $\varepsilon>0$. Cover the set $[0,~L_{1})\times\cdots\times [0,~L_{m})$ by cubes of sidelength no more than $\varepsilon$:
$$[0,~L_{1})\times\cdots\times [0,~L_{m})=\bigcup\limits_{k_{1},\cdots,k_m\in \mathbb{N}}[a_{k_{1}}^{(1)},a_{k_{1}+1}^{(1)})\times\cdots \times [a_{k_{m}}^{(m)},a_{k_{m}+1}^{(m)}).$$
The union over $(k_{1},\cdots,k_m)$~is finite since $\{L_i\}$ are finite. We denote  all the possible choices of $(k_{1},\cdots,k_m)$~by $\mathcal{B}.$ So,~$\#\mathcal{B}< \infty.$

Choose $q_0$ sufficiently large such that
$$-\frac{\log\psi_{i}(q)}{\log q}< L_{i},~~~~~ \ {\text{for all}}\ q\ge q_0, \ 1\leq i\leq m.$$
Then for any $q\geq q_0,$~$\exists(k_{1},\cdots,k_d)\in \mathcal{B},$ such that
$$\Big(\frac{-\log\psi_{1}(q)}{\log q},\cdots,\frac{-\log\psi_{m}(q)}{\log q}\Big)\in [a_{k_{1}}^{(1)},a_{k_{1}+1}^{(1)})\times\cdots \times [a_{k_{m}}^{(m)},a_{k_{m}+1}^{(m)}).$$

For each $(k_{1},\cdots,k_m)\in \mathcal{B},$ define
\begin{align*}\mathcal{B}(k_{1},\cdots,k_m)=\Big\{q\geq q_0: \Big(\frac{-\log\psi_{1}(q)}{\log q},&\cdots,\frac{-\log\psi_{m}(q)}{\log q}\Big)\\
 &\in [a_{k_{1}}^{(1)},a_{k_{1}+1}^{(1)})\times\cdots \times [a_{k_{m}}^{(m)},a_{k_{m}+1}^{(m)})\Big\}.
\end{align*}
Then we get a partition of $\mathbb{N}\setminus\{1,2,\cdots,q_{0}-1\}:$
$$\mathbb{N}\setminus\{1,2,\cdots,q_{0}-1\}=\bigcup\limits_{(k_{1},\cdots,k_{m})\in \mathcal{B}}\mathcal{B}(k_{1},\cdots,k_{m}).$$

Define $$\mathcal{B}_{1}=\Big\{(k_{1},\cdots,k_{m})\in \mathcal{B}:~\#\mathcal{B}(k_{1},\cdots,k_{m})=+\infty\Big\}.$$
Since $\mathcal{B}$~is finite,
$$\max\Big\{q\in \mathcal{B}(k_{1},\cdots,k_{m}):~(k_{1},\cdots,k_{m})\not\in \mathcal{B}_1\Big\}:=q_1<+\infty.$$

Finally, we have for any $\tilde{q}\ge \max\{q_0, q_1\}$, there is a cover of $W(\psi)$, saying $$
W(\Psi)\subset \bigcup_{(k_1,\cdots,k_m)\in \mathcal{B}_1}\bigcup_{q\ge \tilde{q}, q\in \mathcal{B}(k_1,\cdots, k_m)}\Big\{x\in [0,1)^m: \|qx_i\|<\psi_i(q),  \ 1\le i\le d\Big\},
$$ i.e. the natural cover of $W(\Psi)$ is divided into a finite class. Along each class, for example $q\in \mathcal{B}(k_1,\cdots, k_m)$, the function $\psi_i(q)$ behaves like $q^{-a_{k_i}^{(i)}}$ for each $1\le i\le m$. Then a direct argument for the Hausdorff measure of $W(\Psi)$ gives the upper bound
of its dimension. We omit the details.

Thus in a conclusion, we have \begin{theorem} Let $\psi_i:\mathbb{R}^+\to \mathbb{R}^+$ be non-increasing for all $1\le i\le m$. If $\mathcal{U}$ is bounded,
$$\hdim W(\Psi)=\sup\Big\{s(\tau): \tau\in \mathcal{U}\Big\}.$$
\end{theorem}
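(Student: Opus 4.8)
The statement follows by combining a lower bound coming from the ubiquity machinery with an elementary covering upper bound; both ingredients have essentially been prepared above.

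\emph{Lower bound.} Fix $\tau=(\tau_1,\dots,\tau_m)\in\mathcal{U}$ and write $W(\Psi)$ in the framework of Theorem \ref{t1} with target exponents satisfying $a_i+t_i=1+\tau_i$. By Proposition \ref{p5}, for every admissible weight vector $\a$ (i.e. $a_i\ge1$ and $a_1+\cdots+a_m=m+1$) the pair $(\{\R_\alpha\}_{\alpha\in J},\beta)$ is a uniform local ubiquity system for rectangles with respect to $(\rho,\a)$ with $\rho(u)=u^{-1}$, and the $\kappa$-scaling property holds with $\kappa=0$. Since $u_n=M^n$ we have $\log\rho(u_{n+1})/\log\rho(u_n)=(n+1)/n\to1$, so condition (\ref{d3}) holds and Theorem \ref{t3} applies, giving $\hdim W(\Psi)\ge s_\a(\t)$ for every admissible $\a$, where $\t=(\tau_i-a_i+1)_{i=1}^m$ and $s_\a$ is the dimensional number of Theorem \ref{t1}. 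Taking the supremum over $\a$: applied to the pure power set $W(\tau)$, Theorem \ref{t3} gives $s_\a(\t)\le\hdim W(\tau)$ for every $\a$, while the explicit choice of $\a$ made in the pure power analysis achieves equality (here Proposition \ref{p6} is what reconciles the two expressions of the dimensional number). Hence $\sup_\a s_\a(\t)=\hdim W(\tau)=:s(\tau)$, and therefore $\hdim W(\Psi)\ge s(\tau)$. As $\tau\in\mathcal{U}$ was arbitrary, $\hdim W(\Psi)\ge\sup\{s(\tau):\tau\in\mathcal{U}\}$.

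\emph{Upper bound.} This is the partition argument sketched above. Since $\mathcal{U}$ is bounded it lies in a box $\prod_{i=1}^m[0,L_i)$; fix $\varepsilon>0$, cut the box into finitely many sub-boxes of sidelength $\le\varepsilon$, and choose $q_0$ so that $v_q:=\big(-\log\psi_1(q)/\log q,\dots,-\log\psi_m(q)/\log q\big)$ lies in the box for all $q\ge q_0$. This partitions $\{q\ge q_0\}$ into the finitely many classes $\mathcal{B}(k_1,\dots,k_m)$, and $W(\Psi)$ is contained in the union over these classes of $\limsup_{q\in\mathcal{B}(k_1,\dots,k_m)}\{x:\|qx_i\|<\psi_i(q),\ 1\le i\le m\}$; a class with only finitely many $q$ gives the empty set. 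For an infinite class with sub-box $\prod_i[a^{(i)}_{k_i},a^{(i)}_{k_i}+\varepsilon)$ we have $\psi_i(q)\le q^{-a^{(i)}_{k_i}}$ along the class, so the corresponding limsup set lies in $W(a^{(1)}_{k_1},\dots,a^{(m)}_{k_m})$, whose Hausdorff dimension is at most $s(a^{(1)}_{k_1},\dots,a^{(m)}_{k_m})$ by the elementary estimate on the natural cover (with the convention that this number is $m$ when the exponents sum to at most $1$). The sub-box meets $v_q$ for infinitely many $q$, hence contains an accumulation point $\tau^*\in\mathcal{U}$ within distance $\varepsilon\sqrt m$ of its corner; by continuity of $s$, which is uniformly continuous on the bounded box, $s(a^{(1)}_{k_1},\dots,a^{(m)}_{k_m})\le s(\tau^*)+\omega(\varepsilon)\le\sup_{\tau\in\mathcal{U}}s(\tau)+\omega(\varepsilon)$ with $\omega(\varepsilon)\to0$. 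Taking the maximum over the finitely many infinite classes and then letting $\varepsilon\to0$ gives $\hdim W(\Psi)\le\sup\{s(\tau):\tau\in\mathcal{U}\}$, which together with the lower bound completes the proof.

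\emph{Main difficulty.} The genuinely delicate point is on the lower-bound side: reading off $\hdim W(\tau)$ from the family $\{s_\a(\t)\}_\a$ produced by Theorem \ref{t3} uses the explicit optimal choice of $\a$ in the pure power case and Proposition \ref{p6}, and one should record the mild technical point that the targets in the framework formulation involve the rescaled functions $q^{a_i-1}\psi_i(q)$, for which $a_i+t_i=1+\tau_i\ge1$ together with the monotonicity of $\psi_i$ keep the set-up within the scope of Theorem \ref{t3}. The upper bound is routine once one records the continuity of $s$ and the fact that finite classes are negligible; the only borderline situation, where the sub-box corner has coordinate sum slightly below $1$, is harmless because then $\sum_i\tau^*_i$ is within $m\varepsilon$ of $1$ and $s(\tau^*)$ is correspondingly close to $m$.
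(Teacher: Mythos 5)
Your proposal is correct and follows essentially the same route as the paper: the lower bound is obtained by feeding the ubiquity system of Proposition \ref{p5} into Theorem \ref{t3} (checking condition (\ref{d3}) and matching the framework $\t$ to the application's $\tau$ via $a_i+t_i=1+\tau_i$), and the upper bound is the finite-box partition of $\N\setminus\{1,\dots,q_0-1\}$ followed by a natural-cover estimate on each infinite class. You supply several details the paper compresses or omits — the negligibility of finite classes, the need for uniform continuity of $s$ to let $\varepsilon\to 0$, the cap at $m$ when the corner exponents sum to at most one, and the remark that monotonicity of Theorem \ref{t3} is stated for the framework functions $q^{a_i-1}\psi_i(q)$ rather than $\psi_i$ itself — but these are fleshings-out rather than a different argument, and the ``$\sup$ over $\a$'' framing of the lower bound is a presentational variant of the paper's fixed explicit choice of $\a$.
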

\section{Application 2: Linear form}

Consider the application to the dimension of linear forms:
\begin{align*}
W_{mn}(\Psi)=\Big\{x\in [0,1]^{m\times n}: \|q_1x_{i1}+\cdots +q_nx_{in}\|<\psi_i(|\q|)|\q|, \ 1\le i\le m, \ {\text{i.m.}}\ \q\in \N^n\Big\}.
\end{align*}
Here the product space is $$
\prod_{i=1}^m\mathbb{R}^n.
$$ For each $i$, the resonant set $\R_i$ is an $(n-1)$-dimensional hyperplane in $\mathbb{R}^n$. So, $$
\delta_i=n, \ \kappa=1-n^{-1}, \ 1\le i\le m.
$$

By Proposition \ref{p3}, we have a ubiquity system with: $$
\R_{\bold{p},\q}=\prod_{i=1}^m\R_{p_i,\q}=\prod_{i=1}^m\Big\{x_i\in \mathbb{R}^n: q_1x_{i1}+\cdots+q_nx_{in}-p_i=0\Big\}.
$$
$$
\beta(\alpha)=\beta(\bold{p},\q)=|\q|, \ \ \rho(u)=u^{-1},\ \ell_k=u_k=M^{k},\
$$ and for all $a_1\ge 1,\cdots, a_m\ge 1$ such that
$$
\sum_{i=1}^ma_i=m+n.
$$

We still begin with the special case. \begin{align*}
W_{mn}(\lambda)&=\Big\{x\in [0,1]^{m\times n}: \|q_1x_{i1}+\cdots +q_nx_{in}\|<|\q|^{-\lambda_i}|\q|, \ 1\le i\le m, \ {\text{i.m.}}\ \q\in \N^n\Big\}\\
&=\bigcap_{Q=1}^{\infty}\bigcup_{\q\in \N^n: |\q|=Q}\bigcup_{0\le p_1,\cdots, p_n\le Q}\prod_{i=1}^m\Delta(\R_{p_i,\q}, |\q|^{-\lambda_i}),
\end{align*} with
$$\lambda_1\ge \lambda_2\ge \cdots\ge \lambda_m\ge 1,  \ \ \ \sum_{i=1}^m\lambda_i\ge m+n.$$

\subsection{Upper bound}\

The upper bound follows easily by considering its natural cover. For the set $$\prod_{i=1}^m\Delta(\R_{p_i,\q}, |\q|^{-\lambda_i}),$$
we cover it by balls of radius $$
|\q|^{-\lambda_i}, \  \ {\text{for all}} \ 1\le i\le m.
$$
Then $m$ dimensional numbers will arise and we take the smallest one which gives an upper bound of $\hdim W(\lambda)$. Easy computation shows that
\begin{align*}
\hdim W_{mn}(\lambda)\le \min\left\{m(n-1)+\frac{m+n+\sum_{k>i}(\lambda_i-\lambda_k)}{\lambda_i}, 1\le i\le m\right\}.
\end{align*}

\subsection{Lower bound}\


Now we give a choice of $a_i$ for $1\le i\le m$ in the following way.

(1). $\lambda_m\ge 1+n/m$.  Let $a_i=1+n/m$, for all $1\le i\le m$ and $a_i+t_i=\lambda_i$.
The alphabet $\mathcal{A}$ can be described as $$
a_1+t_1(=\lambda_1)\ge \cdots \ge a_m+t_m(=\lambda_n)\ge a_1=\cdots=a_m(=1+n/m).
$$

(2). $\lambda_m<1+n/m$. Let $K$ be the largest integer such that $$
\lambda_K> \frac{m+n-(\lambda_{K+1}+\cdots+\lambda_m)}{K}.
$$
Let $$
a_i=\lambda_i, \ \ {\text{for}}\ \ i\ge K+1, \ \ \ \ {\text{and}}\ \ a_i=\frac{m+n-(\lambda_{K+1}+\cdots+\lambda_m)}{K}+1,\ \ {\text{for}} \ 1\le i\le K.
$$
Then $$
1\le a_i\le \lambda_i, \ {\text{for all}}\ \ 1\le i\le m, \ \ {\text{and}}, \ \ a_1=\cdots=a_K>a_{K+1}=\lambda_{K+1}\ge \cdots\ge a_m=\lambda_m.
$$
Let $t_i$ be given as $$
t_i+a_i=\lambda_i, \ 1\le i\le m.
$$
The order of the alphabet $\mathcal{A}$ can be expressed as \begin{align*}
a_1+t_1(=\lambda_1)\ge \cdots \ge a_K+t_K&(=\lambda_K)> a_1=\cdots =a_K\\
&> a_{K+1}=a_{K+1}+t_{K+1}\ge \cdots \ge a_m=a_m+t_m.
\end{align*}

Then a direct substitution to Theorem \ref{t1} gives that \begin{theorem}$$
\hdim W_{mn}(\lambda)=\min_{1\le i\le m}\left\{m(n-1)+\frac{m+n+\sum_{k=i}^m(\lambda_i-\lambda_k)}{\lambda_i}\right\}.
$$\end{theorem}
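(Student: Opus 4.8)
The upper bound is already in hand: covering each block $\prod_{i=1}^m\Delta(\R_{p_i,\q},|\q|^{-\lambda_i})$ by balls of the various admissible radii $|\q|^{-\lambda_i}$ produces $m$ exponents whose smallest value is exactly the asserted right-hand side. So the whole plan is to establish the matching lower bound by putting $W_{mn}(\lambda)$ into the framework of Theorem \ref{t1} and applying it with the ubiquity system of Proposition \ref{p3}. Here the ambient space is $\prod_{i=1}^m[0,1]^n$, each factor carries Lebesgue measure so $\delta_i=n$, each $\R_{p_i,\q}$ is an $(n-1)$-dimensional hyperplane so the $\kappa$-scaling property holds with $\kappa=1-n^{-1}$, and $\rho(u)=u^{-1}$, $\beta_\alpha=|\q|$. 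For any admissible $\bold{a}=(a_1,\dots,a_m)$ (meaning $a_i\ge1$ and $\sum_i a_i=m+n$) together with $\bold{t}$ defined by $a_i+t_i=\lambda_i$, Proposition \ref{p3} makes $(\{\R_\alpha\},\beta)$ a ubiquity system for rectangles with respect to $(\rho,\bold{a})$, and then Theorem \ref{t1} gives $\hdim W_{mn}(\lambda)\ge s(\bold{t})$. The remaining work is to choose an admissible $\bold{a}$ making $s(\bold{t})$ equal to the asserted value.

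I would split according to whether $\lambda_m\ge1+n/m$. In that first case I would take $a_i=1+n/m$ for every $i$, which is admissible and gives $t_i=\lambda_i-1-n/m\ge0$; the alphabet then orders as $\lambda_1\ge\cdots\ge\lambda_m\ge1+n/m$ with $\widehat{\mathcal A}=\{\lambda_1,\dots,\lambda_m\}$. By Proposition \ref{p6} it suffices to evaluate the dimensional number of Theorem \ref{t1} only at $A\in\widehat{\mathcal A}$. For $A=\lambda_j$ one has $\K_1=\emptyset$, $\K_2=\{j,\dots,m\}$, $\K_3=\{1,\dots,j-1\}$, and after inserting $\delta_k=n$, $\kappa=1-n^{-1}$ and using $\sum_{k<j}a_k-\sum_{k\ge j}t_k=m+n-\sum_{k\ge j}\lambda_k$ the number collapses to $m(n-1)+(m-j+1)+(m+n-\sum_{k=j}^m\lambda_k)/\lambda_j$, which is precisely the $j$-th term $m(n-1)+(m+n+\sum_{k=j}^m(\lambda_j-\lambda_k))/\lambda_j$. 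Minimising over $j$ then gives the claimed value.

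In the second case $\lambda_m<1+n/m$, I would let $K$ be the largest index with $\lambda_K>(m+n-(\lambda_{K+1}+\cdots+\lambda_m))/K$, set $a_i=\lambda_i$ for $i>K$ and $a_i=(m+n-(\lambda_{K+1}+\cdots+\lambda_m))/K$ for $i\le K$, and $t_i=\lambda_i-a_i$. Admissibility is exactly what the two inequalities defining $K$ buy: maximality of $K$ and failure at $K+1$ force $1\le\lambda_{K+1}\le a_i<\lambda_K$ for $i\le K$, hence $a_i\ge1$, $t_i\ge0$ throughout and $\sum a_i=m+n$, and they also yield the alphabet ordering $\lambda_1\ge\cdots\ge\lambda_K>a_1=\cdots=a_K>a_{K+1}=\lambda_{K+1}\ge\cdots\ge a_m=\lambda_m$. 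Applying Proposition \ref{p6} again: for $A=\lambda_i$ with $i\le K$ the same substitution (now with the identity $\sum_{k<i}a_k-\sum_{k\ge i}t_k=m+n-\sum_{k\ge i}\lambda_k$, which uses $Ka_i=m+n-\sum_{k>K}\lambda_k$) reproduces the $i$-th term; for $A=\lambda_i=a_i$ with $i>K$ one gets $\K_3=\emptyset$ and $t_i=0$, so the dimensional number is the full value $mn$. The defining inequality for $K$ gives $(K-1)\lambda_K>m+n-\sum_{k=K}^m\lambda_k$, whence $m(n-1)+(m+n+\sum_{k=K}^m(\lambda_K-\lambda_k))/\lambda_K<mn$; so the letters $i>K$ never realise the minimum, and $s(\bold{t})=\min_{1\le i\le m}\{m(n-1)+(m+n+\sum_{k=i}^m(\lambda_i-\lambda_k))/\lambda_i\}$, which matches the upper bound.

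The substitutions into the formula of Theorem \ref{t1} are routine. The part I expect to be delicate is the bookkeeping of the second case: verifying that the proposed $\bold{a}$ is admissible (this rests wholly on the two inequalities characterising $K$), pinning down $\K_1,\K_2,\K_3$ correctly for each letter of the alphabet under the displayed ordering, and confirming that the letters $A=\lambda_i$ with $i>K$ do not lower the minimum --- which is exactly the phenomenon Proposition \ref{p6}, and the block-monotonicity argument used to prove it, is meant to control. I would also dispose separately of the degenerate case $\sum_i\lambda_i=m+n$ (where the index $K$ need not exist), simply taking $\bold{t}=0$ so that $W_{mn}(\lambda)$ has full Lebesgue measure and dimension $mn$, which one checks equals the value of the formula.
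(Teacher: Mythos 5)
Your proposal is correct and follows essentially the same route as the paper: the upper bound from the natural cover, the lower bound via Proposition~\ref{p3} feeding the ubiquity system for rectangles into Theorem~\ref{t1}, the same split at $\lambda_m\gtrless 1+n/m$, the same choices of $\bold a$ and $\bold t$, and Proposition~\ref{p6} to restrict the minimisation to $\widehat{\mathcal A}$. One useful thing you do is silently correct a typo in the paper's second case: the paper writes $a_i=\frac{m+n-(\lambda_{K+1}+\cdots+\lambda_m)}{K}+1$ for $i\le K$, which would give $\sum_i a_i=m+n+K$ rather than the required $m+n$; your choice $a_i=\frac{m+n-(\lambda_{K+1}+\cdots+\lambda_m)}{K}$ is the admissible one, and your verification that $1\le\lambda_{K+1}\le a_i<\lambda_K$ via maximality of $K$ and failure of the defining inequality at $K+1$ is exactly the needed check. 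The only step you leave a bit compressed is the passage from $\min_{1\le i\le K}$ to $\min_{1\le i\le m}$ in the claimed formula: you show the $K$-th term is strictly below $mn$, but to equate the two minima one should also note that for every $i>K$ the failure of the defining inequality gives $(i-1)\lambda_i\le m+n-\sum_{k\ge i}\lambda_k$, hence the $i$-th term is $\ge mn$ and cannot be the minimum; this is the "direct comparison" the paper invokes (in the analogous simultaneous-approximation step) and is entirely parallel to the inequality you already derived for $i=K$.
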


With the choices of $\rho$ and $u_k$ as above, it is clear that
$$\lim_{k\to\infty}\frac{\log \rho(u_{k+1})}{\log \rho(u_k)}=1,$$ so Theorem \ref{t3} applies.
With the same argument as in the last section, we have \begin{theorem}Let $\psi_i:\mathbb{R}^+\to \mathbb{R}^+$ be non-increasing for all $1\le i\le m$. If $\mathcal{U}$ is bounded, then$$
\hdim W_{mn}(\Psi)= \sup\Big\{s(\lambda): \lambda\in \mathcal{U}\Big\}.
$$\end{theorem}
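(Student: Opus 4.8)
The plan is to prove the two inequalities separately, each time reducing to the homogeneous case $\psi_i(q)=q^{-\lambda_i}$ already treated in this section and absorbing the oscillation of a general $\psi_i$ by a finite partition of $\N$, exactly as was done for simultaneous approximation in Application~1.

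For the lower bound I would apply Theorem~\ref{t3}. Proposition~\ref{p3} already guarantees that the linear-form data --- the index set $J$, the hyperplane resonant sets $\R_{\bold{p},\q}$, the weight $\beta_\alpha=|\q|$, the ubiquitous function $\rho(u)=u^{-1}$, and $\ell_k=u_k=M^k$ --- form a uniform local ubiquity system for rectangles with the $\kappa$-scaling property ($\delta_i=n$, $\kappa=1-1/n$) for every admissible $\a$ (i.e. $a_i\ge1$, $\sum_i a_i=m+n$). Since $u_k=M^k$, one has $\log\rho(u_{k+1})/\log\rho(u_k)=(k+1)/k\to1$, so hypothesis (\ref{d3}) of Theorem~\ref{t3} is met, and the theorem gives $\hdim W_{mn}(\Psi)\ge\max\{s(\t):\t\in\mathcal U\}$, with $\mathcal U$ the accumulation set of $\big((-\log\psi_1(n))/\log n,\dots,(-\log\psi_m(n))/\log n\big)_n$. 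For a fixed $\lambda\in\mathcal U$ I would then choose, as in cases (1)--(2) of the homogeneous subsection, admissible exponents with $a_i+t_i=\lambda_i$ (legitimate because $\lambda_i\ge1$ and $\sum_i\lambda_i\ge m+n$); by Proposition~\ref{p6} the number $s(\t)$ depends only on $\{a_i+t_i\}=\{\lambda_i\}$, so it equals the dimensional number $s(\lambda)$ of the homogeneous theorem. Taking the supremum over $\lambda\in\mathcal U$ yields $\hdim W_{mn}(\Psi)\ge\sup\{s(\lambda):\lambda\in\mathcal U\}$.

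For the upper bound I would use the natural cover together with a finite partition of $\N$. Boundedness of $\mathcal U$ lets me pick finite $L_1,\dots,L_m$ with $\mathcal U\subset\prod_i[0,L_i)$; fixing $\vep>0$ and tiling $\prod_i[0,L_i)$ by finitely many boxes of side $\le\vep$ partitions $\{\q:|\q|\ge q_0\}$ into finitely many classes $\mathcal B(k_1,\dots,k_m)$ along which $|\q|^{-a^{(i)}_{k_i}-\vep}\le\psi_i(|\q|)\le|\q|^{-a^{(i)}_{k_i}+\vep}$ for all $i$. After discarding the finitely many finite classes, $W_{mn}(\Psi)$ is contained in a finite union of limsup sets, one per infinite class. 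On an infinite class with exponent vector $\lambda^\vep=(a^{(1)}_{k_1},\dots,a^{(m)}_{k_m})$, I would cover each rectangle $\prod_i\Delta(\R_{p_i,\q},|\q|^{-\lambda^\vep_i\pm\vep})$ by balls of radius $|\q|^{-A}$ with $A$ ranging over the alphabet $\mathcal A(\lambda^\vep)$ and run the volume count of Section~\ref{s4} (now with $\delta_i=n$, $\kappa=1-1/n$; cf.\ Corollary~\ref{c1}); the $s$-volume of the resulting cover, summed over $\q$, converges once $s>s(\lambda^\vep)+C\vep$, so the limsup set of that class has dimension $\le s(\lambda^\vep)+C\vep$. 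Since every infinite class has an accumulation point of its $\lambda$-values lying in $\mathcal U$, and $\lambda\mapsto s(\lambda)$ is continuous (a minimum of finitely many rational functions of $\lambda$), taking the maximum over the finitely many classes gives $\hdim W_{mn}(\Psi)\le\sup\{s(\lambda):\lambda\in\mathcal U\}+C'\vep$, and letting $\vep\to0$ finishes.

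I expect the main obstacle to be the upper-bound covering count, not the structural steps: the ubiquity input, the verification of (\ref{d3}), the reduction through Proposition~\ref{p6}, and the finite partition are all either in hand or a verbatim repeat of Application~1. What needs care is transferring the geometry of Section~\ref{s4} --- the partition into $\K_1,\K_2,\K_3$ and the fact (Corollary~\ref{c1}) that an optimal cover of a packed family of such rectangles always uses balls of radius $\rho^A$ for some $A\in\mathcal A$ --- from the $\kappa=0$ ``points'' setting to the $\kappa=1-1/n$ hyperplane setting, and checking that the $\pm\vep$ perturbations of the exponents do not disturb the minimum defining $s(\lambda)$.
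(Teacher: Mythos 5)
Your proposal follows the same route as the paper: the paper's own proof of this theorem is a one-liner that checks $\lim_k\log\rho(u_{k+1})/\log\rho(u_k)=1$ so that Theorem~\ref{t3} applies, then refers back to the partition argument of Application~1 for the upper bound, and you are filling in exactly those two halves in the same way (ubiquity from Proposition~\ref{p3} and condition~(\ref{d3}) for the lower bound; the finite box-tiling of $\mathcal U$ and per-class natural cover for the upper bound).

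One point of care in your lower-bound paragraph: you write that ``by Proposition~\ref{p6} the number $s(\t)$ depends only on $\{a_i+t_i\}=\{\lambda_i\}$,'' but Proposition~\ref{p6} does not say that. It only says the minimization in $s(\t)$ may be restricted to $A\in\widehat{\mathcal A}$; the quantity $\hat s(\t)$ still depends on $a_k$ and $t_k$ individually through $\sum_{k\in\K_3}a_k\delta_k-\sum_{k\in\K_2}t_k\delta_k$ and through the partition $\K_1,\K_2,\K_3$, and different admissible $\a$ with the same $a_i+t_i=\lambda_i$ generally yield different values of $s(\a,\t)$. What makes your argument work is what you also invoke in the same breath: the explicit optimal choice of $\a$ from cases (1)--(2) of the homogeneous subsection, under which the substitution into the Theorem~\ref{t1} formula collapses to the $\lambda$-only expression. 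It is that substitution, not Proposition~\ref{p6}, that identifies the lower bound with $s(\lambda)$. With that correction the lower bound is sound, and the upper-bound plan (including your caveat about carrying Corollary~\ref{c1}'s covering count from $\kappa=0$ to the hyperplane case $\kappa=1-1/n$, and absorbing $\pm\vep$ via continuity of $\lambda\mapsto s(\lambda)$) matches what the paper intends by ``the same argument as in the last section.''
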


\section{Application 3: Shrinking target problems}

Let $b_1,\cdots, b_d\ge 2$ be $d$ integers. Recall the Cantor sets $\C_i$ and other notations are defined in Section \ref{s5.1.3}. For $x_o\in \prod_{i=1}^d\mathcal{C}_i$, define
$$S(\Psi):=\Big\{(x_1,\cdots, x_d)\in \prod_{i=1}^d\mathcal{C}_i: \|b_i^nx_i-x_{o,i}\|<\psi_i(n), \ {\text{i.m.}}\ n\in \N\Big\}.
$$
At first, consider the special case:  let $t_1,\cdots, t_d$ be $d$ positive numbers and write
$$S(\bold{t}):=\Big\{(x_1,\cdots, x_d)\in \prod_{i=1}^d\mathcal{C}_i: \|b_i^nx_i-x_{o,i}\|<e^{-nt_i}, \ {\text{i.m.}}\ n\in \N\Big\}.
$$

With the notation given in Section \ref{s5}  and recall the set $\widehat{W}(t)$ in Section \ref{s4}, the set $S(\Psi)$ can be re-expressed as \begin{align*}
S(\bold{t})&=\{x\in \prod_{i=1}^d\mathcal{C}_i: x\in \Delta(R_{\alpha}, \rho(\beta)^{\a+\bold{t}}), \ {\text{i.m.}}\ \alpha\in J\}\\
&=\bigcap_{N=1}^{\infty}\bigcup_{n=N}^{\infty} \bigcup_{\alpha\in J_n}\Delta(R_{\alpha}, \rho(u_n)^{\bold{a}+\bold{t}})=\widehat{W}(\bold{t}).
\end{align*}
Clearly, the big rectangles $$
\Big\{\Delta(R_{\alpha}, \rho(u_n)^{\bold{a}}): \alpha\in J_n\Big\}
$$are disjoint. So, Corollary \ref{c1} and Theorem \ref{t1} together give the exact dimension of $S(\bold{t})$:\begin{theorem} Write
$$s(\bold{t}):=\min_{A_i\in \mathcal{A}}\Big\{\sum_{k\in \K_1}\delta_k+\sum_{k\in \K_2}\delta_k+\frac{\sum_{k\in \K_3}\delta_k\log b_k-\sum_{k\in \K_2}\delta_kt_k}{A_i}\Big\}
$$ where $$
\mathcal{A}=\{\log b_i, t_i+\log b_i, 1\le i\le d\}
$$ and for each $A_i\in \mathcal{A}$, $\K_1, K_2,\K_3$ gives a partition of $\{1,\cdots,d\}$ defined as \begin{align*}&\K_1=\Big\{k: \log b_k\ge A_i\Big\},\ \ \K_2=\Big\{k: {t_k}+\log b_k\le A_i\Big\}\setminus \K_1,\ \
\K_3=\{1,\cdots,d\}\setminus (\K_1\cup \K_2).\end{align*} Then one has $$\hdim S(\bold{t})=s(\bold{t}), \ {\text{and}}\ \ \mathcal{H}^{s(\bold{t})}(S(\bold{t}))=\infty.$$\end{theorem}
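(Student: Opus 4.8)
The plan is to obtain the theorem by specialising the general machinery of Sections \ref{s4}--\ref{s5} to the data exhibited in Section \ref{s5.1.3}. Under the dictionary $X=\prod_{i=1}^d\mathcal{C}_i$, $m_i=\mu_i$ (which is $\delta_i$-Ahlfors regular with $\delta_i=\log\sharp\Lambda_i/\log b_i$), resonant sets $\R_\alpha=(x_1(v_1),\ldots,x_d(v_d))$ consisting of single points (so $\kappa=0$), $\beta_\alpha=n$ for $\alpha\in\prod_i\Lambda_{b_i}^n$, $\rho(u)=e^{-u}$, $\ell_n=u_n=n$ and $\mathbf{a}=(\log b_1,\ldots,\log b_d)$, one has $\rho(\beta_\alpha)^{a_i+t_i}=b_i^{-n}e^{-nt_i}$, so $S(\mathbf{t})$ is exactly the set $W(\mathbf{t})$ of Theorem \ref{t1}; moreover, as recorded just above the statement, it also coincides with the set $\widehat{W}(\mathbf{t})$ of Section \ref{s4}, because the big rectangles $\Delta(\R_\alpha,\rho(u_n)^{\mathbf{a}})$ exhaust $\prod_i\mathcal{C}_i$ at each level $n$. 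Since $\kappa=0$, the dimensional number $s(\mathbf{t})$ produced by Theorem \ref{t1} is precisely the quantity displayed in the statement (the $\kappa\sum_{k\in\K_3}\delta_k$ term disappears and $(1-\kappa)=1$).

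For the lower bound together with the divergence of the Hausdorff measure I would invoke Proposition \ref{p4}, which asserts that $(\{\R_\alpha\}_{\alpha\in J},\beta)$ is a (uniform) local ubiquity system for rectangles with respect to $(\rho,\mathbf{a})$ and that the $\kappa$-scaling property holds with $\kappa=0$; the $\delta_i$-Ahlfors regularity of each $\mu_i$ is recorded in Section \ref{s5.1.3}. Theorem \ref{t2} then gives $\mathcal{H}^{s(\mathbf{t})}(B\cap S(\mathbf{t}))=\mathcal{H}^{s(\mathbf{t})}(B)$ for every ball $B\subset X$. Because each $t_i>0$, taking $A=\max_{1\le i\le d}(\log b_i+t_i)$ in the formula for $s(\mathbf{t})$ gives $\K_1=\K_3=\emptyset$, $\K_2=\{1,\ldots,d\}$, whence $s(\mathbf{t})\le\sum_k\delta_k-\bigl(\sum_k\delta_k t_k\bigr)/A<\delta_1+\cdots+\delta_d=\hdim X$; consequently $\mathcal{H}^{s(\mathbf{t})}(B)=\infty$ for every ball $B$, so $\mathcal{H}^{s(\mathbf{t})}(S(\mathbf{t}))=\infty$ and in particular $\hdim S(\mathbf{t})\ge s(\mathbf{t})$.

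For the matching upper bound I would appeal to Corollary \ref{c1}. Its hypotheses hold here: the big rectangles $\{\Delta(\R_\alpha,\rho(u_n)^{\mathbf{a}}):\alpha\in J_n\}$ are (up to comparable constants) the level-$n$ cylinders $\prod_i I_{n,b_i}(v_i)$ and hence disjoint, which is the separation condition \eqref{ff1} with $\widetilde J_n=J_n$; and $\sum_n\rho(u_n)^\epsilon=\sum_n e^{-n\epsilon}<\infty$ for every $\epsilon>0$. Corollary \ref{c1} then yields $\hdim S(\mathbf{t})=\hdim\widehat{W}(\mathbf{t})\le\min_{A_i\in\mathcal{A}}\{\,\cdots\,\}=s(\mathbf{t})$, and combining with the previous paragraph gives $\hdim S(\mathbf{t})=s(\mathbf{t})$, completing the proof.

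The genuine work is already absorbed into Proposition \ref{p4} (whose ubiquity verification is trivial here, since the big rectangles tile $\prod_i\mathcal{C}_i$) and into Theorems \ref{t1}--\ref{t2} and Corollary \ref{c1}; so I do not expect a substantial obstacle, only two bookkeeping points. First, the neighbourhood $B(x_i(v_i),b_i^{-n})$ in terms of which $S(\mathbf{t})$ is phrased has radius equal to the length of $I_{n,b_i}(v_i)$, so the big ``rectangles'' are not literally disjoint; replacing them by the comparable genuinely disjoint cylinders alters only implied constants, which by the remark following Proposition \ref{p6} changes none of the conclusions. Second, Corollary \ref{c1} is stated for a subset $\widehat{W}(\mathbf{t})$ built from an arbitrary disjoint subfamily $\widetilde J_n\subset J_n$; here one legitimately takes $\widetilde J_n=J_n$ precisely because disjointness already holds for the full family. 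Thus the theorem is a clean specialisation, with the burden of proof front-loaded into the general results.
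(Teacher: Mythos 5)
Your proposal is correct and follows essentially the same route the paper takes: the paper simply records that $S(\mathbf{t})=\widehat{W}(\mathbf{t})$ under the dictionary of Section \ref{s5.1.3} and Proposition \ref{p4}, and derives the result from Corollary \ref{c1} (upper bound) together with Theorems \ref{t1}--\ref{t2} (lower bound and full Hausdorff measure). Your two "bookkeeping" remarks — replacing the balls $B(x_i(v_i),b_i^{-n})$ by the comparable genuinely disjoint cylinders and taking $\widetilde J_n=J_n$ — are exactly the implicit content behind the paper's brief claim that the big rectangles are disjoint.
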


Generally, by denoting $\mathcal{U}$ the set of the accumulation points of the sequences $$
 \left\{\Big(\frac{-\log \psi_1(n)}{n},\cdots, \frac{-\log \psi_d(n)}{n}\Big): n\in \N\right\},
$$\begin{theorem}Let $\psi_i:\mathbb{R}^+\to \mathbb{R}^+$ be non-increasing for all $1\le i\le d$. If $\mathcal{U}$ is bounded, then$$
\hdim S(\Psi)= \sup\Big\{s(\bold{t}): \bold{t}\in \mathcal{U}\Big\}.
$$\end{theorem}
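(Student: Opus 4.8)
The plan is to prove the two inequalities $\hdim S(\Psi)\ge\sup\{s(\bold{t}):\bold{t}\in\mathcal{U}\}$ and $\hdim S(\Psi)\le\sup\{s(\bold{t}):\bold{t}\in\mathcal{U}\}$ separately, in each case reducing to machinery already established.

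For the lower bound I would apply Theorem \ref{t3} directly, keeping in mind the identification $S(\Psi)=W(\Psi)$ from Section \ref{s5.1.3}. By Proposition \ref{p4} the pair $(\{\R_{\alpha}\}_{\alpha\in J},\beta)$ is a uniform local ubiquity system for rectangles with respect to $(\rho,\bold{a})$, where $\rho(u)=e^{-u}$, $\bold{a}=(\log b_1,\dots,\log b_d)$ and $\ell_n=u_n=n$ (its proof in fact shows $\bigcup_{\ell_n\le\beta_{\alpha}\le u_n}\Delta(\R_{\alpha},\rho(u_n)^{\bold{a}})=\prod_{i=1}^d\mathcal{C}_i$), and the $\kappa$-scaling property holds with $\kappa=0$. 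Since $\log\rho(u_{n+1})/\log\rho(u_n)=(n+1)/n\to 1$, condition (\ref{d3}) is satisfied; moreover, because $u_n=n$ exhausts $\N$, the sequence $\{(\log\psi_i(u_n)/\log\rho(u_n))_i\}_n$ coincides with the one defining $\mathcal{U}$, so that $\widehat{\mathcal{U}}=\mathcal{U}$. Theorem \ref{t3} then gives $\hdim S(\Psi)=\hdim W(\Psi)\ge\max\{s(\bold{t}):\bold{t}\in\mathcal{U}\}$, the maximum being attained because $\mathcal{U}$, the accumulation set of a bounded sequence, is compact and $\bold{t}\mapsto s(\bold{t})$ is continuous.

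For the upper bound I would copy the partition argument used for simultaneous Diophantine approximation. Boundedness of $\mathcal{U}$ yields finite numbers $L_1,\dots,L_d$ with $-\log\psi_i(n)/n<L_i$ for all large $n$. Fixing $\epsilon>0$, cover $\prod_i[0,L_i)$ by finitely many half-open boxes $Q_{\bold{k}}=\prod_i[a^{(i)}_{k_i},a^{(i)}_{k_i+1})$ of side $\le\epsilon$, and set $\mathcal{N}_{\bold{k}}=\{n\ge n_0:(-\log\psi_i(n)/n)_i\in Q_{\bold{k}}\}$; these partition $\{n\ge n_0\}$, only finitely many are infinite, and the union of the finite ones covers only finitely many integers. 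Since removing a finite initial block of $\N$ does not change the $\limsup$ set, $S(\Psi)\subseteq\bigcup_{\bold{k}:\#\mathcal{N}_{\bold{k}}=\infty}S_{\bold{k}}$, where $S_{\bold{k}}$ is the set built from the indices in $\mathcal{N}_{\bold{k}}$ only. For $n\in\mathcal{N}_{\bold{k}}$ one has $\psi_i(n)\le e^{-na^{(i)}_{k_i}}$, so each rectangle $\prod_iB(x_i(v_i),\psi_i(n)b_i^{-n})$ sits inside $\prod_iB(x_i(v_i),\rho(n)^{\log b_i+a^{(i)}_{k_i}})$; hence $S_{\bold{k}}\subseteq\widehat{W}(\bold{t}^{(\bold{k})})$ with $\bold{t}^{(\bold{k})}=(a^{(1)}_{k_1},\dots,a^{(d)}_{k_d})$, where $\widehat{W}$ is the set of Section \ref{s4}. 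The big rectangles $\{\Delta(\R_{\alpha},\rho(u_n)^{\bold{a}}):\alpha\in J_n\}$ tile $\prod_i\mathcal{C}_i$, so they satisfy the separation condition (\ref{ff1}), and $\sum_n\rho(u_n)^\epsilon=\sum_n e^{-n\epsilon}<\infty$; therefore Corollary \ref{c1} applies and gives $\hdim S_{\bold{k}}\le s(\bold{t}^{(\bold{k})})$. Taking the union over the finitely many active $\bold{k}$ yields $\hdim S(\Psi)\le\max_{\bold{k}}s(\bold{t}^{(\bold{k})})$.

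To finish, let $\epsilon\to 0$. Each active box $Q_{\bold{k}}$ contains infinitely many terms of the sequence, hence an accumulation point $\bold{u}^{(\bold{k})}\in\mathcal{U}\cap\overline{Q_{\bold{k}}}$, which differs from $\bold{t}^{(\bold{k})}$ by at most $\epsilon$ in each coordinate. By uniform continuity of $s$ on the compact box $\prod_i[0,L_i]$, with modulus $\omega$, we get $s(\bold{t}^{(\bold{k})})\le s(\bold{u}^{(\bold{k})})+\omega(\epsilon)\le\sup\{s(\bold{t}):\bold{t}\in\mathcal{U}\}+\omega(\epsilon)$, hence $\hdim S(\Psi)\le\sup\{s(\bold{t}):\bold{t}\in\mathcal{U}\}+\omega(\epsilon)$; letting $\epsilon\to 0$ closes the upper bound, and combining with the first paragraph completes the proof. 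The main obstacle is this upper bound, and within it the two delicate points are: reducing general $\psi_i$ to the pure-power case handled by Corollary \ref{c1} by uniformly trapping $\psi_i$ between $e^{-na^{(i)}_{k_i}}$ and $e^{-na^{(i)}_{k_i+1}}$ along $\mathcal{N}_{\bold{k}}$, and verifying that $\bold{t}\mapsto s(\bold{t})$ is continuous — for which one observes, as in (\ref{d4}), Remark \ref{r2} and Proposition \ref{p6}, that although the alphabet $\mathcal{A}$ and the partition $\K_1,\K_2,\K_3$ jump where two elements of $\mathcal{A}$ coincide, the competing expressions in the minimum agree there, so no discontinuity occurs.
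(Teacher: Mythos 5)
Your proof is correct and follows the paper's intended approach: the lower bound via Theorem \ref{t3} with the ubiquity structure of Proposition \ref{p4} (noting $u_n=n$ makes $\widehat{\mathcal{U}}=\mathcal{U}$ and condition (\ref{d3}) trivial), and the upper bound via the partition-of-$\N$ argument from Application 1 combined with Corollary \ref{c1} for each pure-power class, closed by letting $\epsilon\to 0$ using continuity of $\bold{t}\mapsto s(\bold{t})$. The paper gives no details for this theorem (it states only the exact-exponent case $S(\bold{t})$ explicitly and implicitly defers the general case to the analogous argument sketched for simultaneous approximation), so your reconstruction, which correctly fills in the reduction of $\psi_i$ to $e^{-na_{k_i}^{(i)}}$ along $\mathcal{N}_{\bold{k}}$ and the continuity of $s$, matches what the authors intended.
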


%

\section{Application 4: Multiplicative Diophantine approximation}\label{s11}

We give an example where the known method fails to give a complete solution, so one has to go back to explore its relation to limsup sets defined by rectangles.

Before that, we give some words about Diophantine approximation on Cantor set which is initiated from a problem asked by K. Mahler \cite{Mah}: how well the points on triadic Cantor set can be approximated by rational numbers. This problem is still far away from being solved completely. See Levesley, Salp \& Velani \cite{LeSV}, Bugeaud \cite{Bu2} and Seuret \& Wang \cite{SeW} for partial results and see Bugeaud \& Durand \cite{BuD} for a random version.
The following can be viewed as a multiplicative version of the setting after \cite{LeSV}.

Let $a,b\ge 3$ be two integers. Use the notation from Subsection \ref{s5.1.3}.
The exponents of the Cantor measure supported on $\C_a$ and $\C_b$
are denoted by $\delta_1$ and $\delta_2$. Assume that $\delta_1\ge \delta_2$.

For any $(x_o,y_o)\in \mathcal{C}_a\times \mathcal{C}_b$, define$$M_c(\psi):=\Big\{(x,y)\in \mathcal{C}_a\times \mathcal{C}_b: \|a^nx-x_o\|\cdot \|b^ny-y_o\|<\psi(n), \ {\text{i.m.}}\ n\in \N\Big\}.
$$
We consider the dimension of $M_c(t)$ at first, i.e. the set when $\psi(n)=e^{-nt}$ for some $t\ge 0$.

One can also present a similar covering lemma  for the setting of Cantor space as did by Bovey \& Dodson.\begin{lemma}\label{ll1}
Assume $\delta_1\ge \delta_2$. The set $$
\Big\{(x,y)\in \C_a\times C_b: xy<\rho\Big\}
$$ has a covering of $2$-dimensional hypercubes $\mathscr{C}=\{C_k\}_{k\geq 1}$ whose $s$-volume satisfies
\begin{equation*}
\sum\limits_{C_i\in \mathscr{C_i}}|C_i|^s\ll\rho^{s-\delta_1}, \ {\text{for any}}\ \delta_1\le s\le \delta_1+\delta_2.\end{equation*}
\end{lemma}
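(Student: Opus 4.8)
The plan is to adapt the Bovey--Dodson covering argument (Lemma \ref{l5}) to the self-similar Cantor setting by exploiting the product structure of $\C_a\times\C_b$ together with the Ahlfors regularity of the natural Cantor measures. First I would set up symbolic coordinates: a point $(x,y)\in\C_a\times\C_b$ lies in a cylinder $I_{k,a}\times I_{l,b}$ of sidelengths $a^{-k}$ and $b^{-l}$, so the natural parameter measuring ``scale'' in the $x$-direction is $a^{-k}$ and in the $y$-direction is $b^{-l}$. The condition $xy<\rho$ forces, for a point at cylinder-scale $(a^{-k},b^{-l})$, a relation of the shape $a^{-k}b^{-l}\lesssim\rho$ (after discarding the trivial neighbourhoods of the coordinate axes, where one coordinate is already tiny). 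The key combinatorial input is a count: for each dyadic-type scale pair, how many cylinders $I_{k,a}\times I_{l,b}$ meeting $\{xy<\rho\}$ are there, and what is the $s$-volume of a cover of $\{xy<\rho\}$ by squares of the smaller sidelength $\min(a^{-k},b^{-l})$.

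The main steps, in order, would be: (1) Partition $\{(x,y)\in\C_a\times\C_b: xy<\rho\}$ into regions $R_j$ where $2^{-j-1}\le x<2^{-j}$ (so $y<2^{j+1}\rho$), together with a leftover region where $x$ is very small. (2) In region $R_j$, cover the $x$-part $\{x\in\C_a: 2^{-j-1}\le x<2^{-j}\}$ by cylinders of $\C_a$ of sidelength $\asymp 2^{-j}$; by Ahlfors regularity of the Cantor measure $\mu_1$ on $\C_a$ with exponent $\delta_1$, there are $\asymp 2^{j\delta_1}\cdot\bigl(\text{normalization}\bigr)$ — more precisely $\mu_1$-mass considerations give the number of such cylinders needed is $\ll 1$ at scale $2^{-j}$ but we instead need them at the finer scale $2^{-j}\rho 2^{j}=\rho$... here I would be careful: the correct cover uses squares of sidelength $\asymp 2^{-j}\cdot(2^{j+1}\rho)=2\rho$ only if $2^j\rho\le 2^{-j}$, i.e. the $y$-side is the smaller one. (3) Cover the $y$-part $\{y\in\C_b: y<2^{j+1}\rho\}$ by $\C_b$-cylinders of sidelength $\asymp 2^{-j}$, of which by Ahlfors regularity there are $\ll (2^{j+1}\rho/2^{-j})^{\delta_2}\asymp (2^{2j}\rho)^{\delta_2}$ — again one must split according to which of $2^{-j}$, $2^{j}\rho$ is smaller to choose the square sidelength. (4) Multiply the two cardinalities, weight by (sidelength)$^s$, and sum over $j$; the geometric series in $j$ converges precisely when $\delta_1\le s\le\delta_1+\delta_2$ (the hypothesis $\delta_1\ge\delta_2$ ensures the split is governed by the $\C_a$-direction), and the total is $\ll\rho^{s-\delta_1}$. (5) Handle the leftover region (where one coordinate is below $\rho$, say $x<\rho$) directly: the whole strip $\{x<\rho\}\cap(\C_a\times\C_b)$ is covered by $\ll 1$ square of sidelength $\rho^{\delta_1/?}$... more carefully, by $\asymp\rho^{-0}$ squares of sidelength $\rho$ in $x$ and $\ll\rho^{-\delta_2}$ in $y$ when using sidelength $\rho$, giving $s$-volume $\ll\rho^{-\delta_2}\rho^s=\rho^{s-\delta_2}\le\rho^{s-\delta_1}$ using $\delta_1\ge\delta_2$ and $s\le\delta_1+\delta_2$.

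The hard part will be bookkeeping the three regimes that arise in each strip $R_j$ depending on the relative sizes of $2^{-j}$ (the $\C_a$-cylinder scale), $2^{j}\rho$ (the height of the strip), and $\rho$: one must in each regime choose the right square sidelength (the smaller of the two relevant scales, to respect the product geometry) and verify that after summation the exponent works out to exactly $s-\delta_1$ with the convergence valid on the full range $[\delta_1,\delta_1+\delta_2]$. I expect the endpoint cases $s=\delta_1$ and $s=\delta_1+\delta_2$ to need the most care, since there the geometric series is borderline and one must make sure the implied constant is genuinely independent of $\rho$. The analogue of Bovey--Dodson is cleanest when one replaces the literal $s$-volume count by a $\mu_1\times(\text{Lebesgue-type})$ mass argument: estimate $\mu_1\bigl(\{x\in\C_a: x\ge 2^{-j-1}\}\bigr)$ and the number of $\C_b$-cylinders of a given scale inside $[0,2^{j+1}\rho)$ via $\delta_2$-Ahlfors regularity of $\mu_2$, then convert the mass bounds into cylinder counts using the standard comparison $\mu_i(I)\asymp(\mathrm{diam}\,I)^{\delta_i}$; this conversion and the final summation are the routine calculations I would not grind through here.
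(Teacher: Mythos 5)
Your proposal follows the same route the paper intends: the paper itself supplies no argument, stating only that ``the proof is almost identical to that given by Bovey \& Dodson, so we omit it,'' and your dyadic decomposition of $\{xy<\rho\}$ into strips $2^{-j-1}\le x<2^{-j}$, each covered by squares of sidelength $\min(2^{-j},\rho 2^{j})$ counted via the $\delta_i$-Ahlfors regularity of the natural Cantor measures, is precisely the Bovey--Dodson cover transplanted to $\mathcal{C}_a\times\mathcal{C}_b$. Your worry about the top endpoint is well founded --- when $\delta_1=\delta_2$ and $s=\delta_1+\delta_2$ the sum over $j$ is genuinely borderline and acquires a $\log(1/\rho)$ factor (and a mass argument shows no cover can do better there, so the closed interval in the statement should really be open at $s=\delta_1+\delta_2$, exactly as in Bovey--Dodson's original $s\in(d-1,d)$) --- but this corner case never arises in the paper's applications, which only use $s$ strictly below $\delta_1+\delta_2$.
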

\begin{proof}The proof is almost identical to that given by Bovey \& Dodson, so we omit it.
\end{proof}
Now we turn back to the dimension of $M_c(t)$. If the covering lemma and the slicing lemma work well as in the classic multiplicative Diophantine approximation \cite{BV15,BD}, one should have \begin{equation}\label{fff3}
\hdim M_c(t)=\max\Big\{\delta_1+\frac{\delta_2\log b}{t+\log b}, \ \delta_2+\frac{\delta_1\log a}{t+\log a}\Big\}.\end{equation}
In fact, we have the following result.\begin{theorem}\label{t4}
Assume $\delta_1\ge \delta_2$. \begin{itemize}
\item When $a\le b$, the formula (\ref{fff3}) holds.
\item When $a>b$,
\begin{itemize}
\item when $\log a \le t+\log b$,  the formula (\ref{fff3}) holds.

\item when $\log a>t+\log b$, \begin{itemize}
\item when $\delta_2(t+\log a)\ge \delta_1\log a$, the formula (\ref{fff3}) holds.
\item when $\delta_2(t+\log a)< \delta_1\log a$, the formula (\ref{fff3}) does not hold.
\end{itemize}
\end{itemize}
\end{itemize}
\end{theorem}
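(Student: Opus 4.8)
\textbf{Proof plan for Theorem \ref{t4}.}

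The plan is to translate the multiplicative set $M_c(t)$ into a union/intersection of limsup sets defined by rectangles, in exactly the manner suggested by the inclusions (\ref{fff4}) from the introduction, and then apply the exact dimension formula for $S(\bold{t})$ established in Application 3 together with the continuity of the dimensional number $s(\bold{t})$. First I would note that
$$\|a^nx-x_o\|\cdot\|b^ny-y_o\|<e^{-nt}$$
is, after the usual dyadic-type slicing over the product of the two exponents, sandwiched between unions of sets of the form $S(t_1,t_2)$ with $t_1+t_2=t$, $t_1,t_2\ge 0$ (more precisely between $\bigcup_{j_1+j_2=N}S(j_1t/N,j_2t/N)$ and $\bigcup_{j_1+j_2=N-2}S(j_1t/N,j_2t/N)$ for every large $N$). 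Since the dimension of $S(t_1,t_2)$ is continuous in $(t_1,t_2)$ (it is the minimum of finitely many continuous functions of $(t_1,t_2)$, by the explicit formula in Application 3), letting $N\to\infty$ gives
$$\hdim M_c(t)=\sup_{t_1+t_2=t,\ t_1,t_2\ge 0}\ \hdim S(t_1,t_2)=\sup_{t_1+t_2=t,\ t_1,t_2\ge 0}\ s(t_1,t_2),$$
where here $\delta_1,\delta_2$ are the Cantor-measure exponents, $\bold{a}=(\log a,\log b)$, and $\mathcal{A}=\{\log a,\ t_1+\log a,\ \log b,\ t_2+\log b\}$.

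Next I would carry out the optimization $\sup_{t_1+t_2=t} s(t_1,t_2)$ explicitly. Writing $s(t_1,t_2)=\min_{A\in\mathcal{A}}(\cdots)$, each branch of the minimum is a monotone function of the parameter $t_1$ along the segment $t_1+t_2=t$, so the supremum of the minimum is attained either at an endpoint ($t_1=0$ or $t_2=0$, i.e. $t_1=t$) or at a point where two branches cross. The two endpoint values are precisely
$$\delta_1+\frac{\delta_2\log b}{t+\log b}\qquad\text{and}\qquad \delta_2+\frac{\delta_1\log a}{t+\log a},$$
which are the two terms in (\ref{fff3}): at $t_2=0$ the $y$-direction is essentially unshrunk so it contributes full dimension $\delta_2$ while the $x$-direction contributes the one-dimensional shrinking-target value $\delta_1\log a/(t+\log a)$, and symmetrically at $t_1=0$. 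Thus (\ref{fff3}) is exactly the ``$\sup$ over the two endpoints''. The content of the theorem is then the comparison of the endpoint maximum with the possible interior crossing value. I would compute this crossing value, show that the ordering of the elements of $\mathcal{A}$ (which depends on the sign of $\log a-\log b$, of $\log a-(t+\log b)$, etc.) produces exactly the case division stated, and in each ``formula holds'' case verify that the interior critical value does not exceed the endpoint maximum, while in the remaining case ($a>b$, $\log a>t+\log b$, $\delta_2(t+\log a)<\delta_1\log a$) exhibit an interior point $(t_1,t_2)$ with $t_1,t_2>0$ at which $s(t_1,t_2)$ strictly exceeds both (\ref{fff3})-terms, which proves the strict failure of (\ref{fff3}).

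The main obstacle I expect is precisely the bookkeeping of the interior optimization: keeping track of which of the sets $\K_1,\K_2,\K_3$ each of the two coordinates lands in as $(t_1,t_2)$ moves along the segment, since $d=2$ already produces several sub-regimes according to the relative sizes of $\log a,\log b,t_1,t_2$, and one must identify the finitely many candidate optimal $A\in\mathcal{A}$ and solve the resulting rational equations. Proposition \ref{p6} helps here: it says we need only test $A\in\widehat{\mathcal{A}}=\{t_1+\log a,\ t_2+\log b\}$, which halves the candidate radii. The remaining work is then the routine but delicate verification that the unique interior critical value of $\min\{s_1,s_2\}$ along $t_1+t_2=t$ beats $\max$ of the endpoints exactly under the arithmetic condition $\delta_2(t+\log a)<\delta_1\log a$ with $a>b$ and $\log a>t+\log b$, and not otherwise; this is where the stated four-way case split comes from, and I would organize it as a short lemma computing the critical point followed by a sign analysis.
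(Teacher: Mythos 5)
Your proposal is correct and follows essentially the same route as the paper: decompose $M_c(t)$ via the sandwich of the form (\ref{fff4}) into the two-parameter sets $M_c(t_1,t_2)$, invoke the exact dimension formula for shrinking-target-by-rectangles sets on the Cantor product (the paper's Corollary~\ref{c2}, which is itself a specialization of Theorem~\ref{t1}/Corollary~\ref{c1} with $A$ ranging over $\widehat{\mathcal{A}}$ per Proposition~\ref{p6}), then optimize $\sup_{t_1+t_2=t}s(t_1,t_2)$ by analyzing the monotonicity of each branch along the segment, noting that the endpoints $t_1=0$ and $t_2=0$ reproduce exactly the two terms of (\ref{fff3}) and that the formula fails precisely when the branch minimum has an interior maximum strictly exceeding both endpoints, which happens exactly under $a>b$, $\log a>t+\log b$, $\delta_2(t+\log a)<\delta_1\log a$. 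The only organizational difference is that the paper disposes of the $a\le b$ case via the covering/slicing lemmas (Lemmas~\ref{ll1}, \ref{l7}) rather than the unified rectangle argument you propose; both are valid and reach the same conclusion.
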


Before the proof, we give some notation.
For any $t_1,t_2\ge 0$, define
$$
M_c(t_1,t_2)=\Big\{(x,y)\in \C_a\times \C_b: \|a^nx-x_o\|<e^{-t_1n}, \ \|b^ny-y_o\|<e^{-t_2n}, \ {\text{i.m.}}\ \ n\in \N\Big\}.
$$
Proposition \ref{p4} shows the ubiquity property with the resonant sets being $$
\Big\{B(\frac{u_1}{a}+\cdots+\frac{u_n+x_o}{a^n}, e^{-n\log a})\times B(\frac{v_1}{b}+\cdots+\frac{v_n+y_o}{b^n}, e^{-n\log b}): u\in \Lambda_a^n, v\in \Lambda_b^n, n\ge 1\Big\},
$$

It is clear that the resonant sets satisfy the separation condition of (\ref{ff1}). So, Corollary \ref{c1} and Theorem \ref{t1} together give the exact dimension of $M_c(t_1,t_2)$. Note the alphabet $\mathcal{A}$ is $$
\mathcal{A}=\{\log a, \log b, \log a +t_1, \log b+t_2\}.
$$
\begin{corollary}\label{c2}\begin{itemize}
\item When $t_1+\log a\ge \log a \ge t_2+\log b\ge \log b$, one has $$
\hdim M_c(t_1,t_2)=\min\{\delta_1+\delta_2-\frac{t_2\delta_2}{t_2+\log b}, \ \delta_1+\delta_2-\frac{\delta_1t_1+\delta_2t_2}{t_1+\log a}\}.
$$

\item When $t_1+\log a \ge t_2+\log b\ge \log a \ge \log b$, one has
$$
\hdim M_c(t_1,t_2)=\min\{\frac{\delta_1\log a +\delta_2\log b}{t_2+\log b}, \ \delta_1+\delta_2-\frac{\delta_1t_1+\delta_2t_2}{t_1+\log a}\}.
$$

\item When $t_2+\log b\ge t_1+\log a \ge \log a \ge \log b$,
$$\hdim M_c(t_1,t_2)=\min\{\frac{\delta_1\log a +\delta_2\log b}{t_1+\log a}, \ \delta_1+\delta_2-\frac{\delta_1t_1+\delta_2t_2}{t_2+\log b}\}.
$$
So, the dimension of $M_c(t_1,t_2)$ is continuous with respect to $t_1, t_2$.
\end{itemize}
\end{corollary}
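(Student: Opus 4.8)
The plan is to realise $M_c(t_1,t_2)$ as the rectangular limsup set of Theorem \ref{t1}, invoke that theorem for the lower bound and Corollary \ref{c1} for the matching upper bound, and then substitute the relevant parameters into the dimensional number $s(\bold t)$, using Proposition \ref{p6} to reduce the amount of case-checking.

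First I would fix the framework of Section 3 with $d=2$, $(X_1,X_2)=(\C_a,\C_b)$, Ahlfors exponents $\delta_1\ge\delta_2$ (so $\kappa=0$), $\rho(u)=e^{-u}$, $\ell_n=u_n=n$, $\bold a=(\log a,\log b)$, $\bold t=(t_1,t_2)$, with $J$, $\beta$, $\R_\alpha$ exactly as in Subsection \ref{s5.1.3}. A direct check shows that $\Delta(\R_\alpha,\rho(\beta_\alpha)^{\bold a+\bold t})$ is the rectangle of sidelengths $(a^{-n}e^{-nt_1},\,b^{-n}e^{-nt_2})$ centred at a point of $\R_\alpha$, so that $W(\bold t)=M_c(t_1,t_2)$; and since $\ell_n=u_n$ this limsup set is the set $\widehat W(\bold t)$ of Section \ref{s4}. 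Proposition \ref{p4} supplies the (uniform) local ubiquity for rectangles with respect to $(\rho,\bold a)$ together with the $\kappa$-scaling property for $\kappa=0$, so Theorem \ref{t1} gives $\hdim M_c(t_1,t_2)\ge s(\bold t)$. For the upper bound, the level-$n$ big rectangles $\{\Delta(\R_\alpha,\rho(u_n)^{\bold a}):\alpha\in J_n\}$ are precisely the level-$n$ cylinders of $\C_a\times\C_b$ and hence satisfy the separation hypothesis \eqref{ff1} (after, if necessary, the constant rescaling permitted by the last remark of Section 3), while $\sum_n\rho(u_n)^\epsilon=\sum_n e^{-n\epsilon}<\infty$ for every $\epsilon>0$; Corollary \ref{c1} then yields $\hdim M_c(t_1,t_2)\le s(\bold t)$. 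Hence $\hdim M_c(t_1,t_2)=s(\bold t)$, and it remains to evaluate $s(\bold t)$ in the three regimes.

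By Proposition \ref{p6} it suffices to take the minimum over $A\in\widehat{\mathcal A}=\{\log a+t_1,\log b+t_2\}$ of $\sum_{\K_1}\delta_k+\sum_{\K_2}\delta_k+A^{-1}\bigl(\sum_{\K_3}a_k\delta_k-\sum_{\K_2}t_k\delta_k\bigr)$ (with $\kappa=0$), where $\K_1=\{k:a_k\ge A\}$, $\K_2=\{k:a_k+t_k\le A\}\setminus\K_1$ and $\K_3$ is the complement, ties at endpoints of $\mathcal A$ being resolved as in Remark \ref{r2}. In the regime $t_1+\log a\ge\log a\ge t_2+\log b\ge\log b$ one finds, for $A=\log b+t_2$, that $\K_1=\{1\},\K_2=\{2\},\K_3=\emptyset$, giving $\delta_1+\delta_2-\tfrac{t_2\delta_2}{t_2+\log b}$, and for $A=\log a+t_1$, that $\K_1=\emptyset,\K_2=\{1,2\},\K_3=\emptyset$, giving $\delta_1+\delta_2-\tfrac{\delta_1t_1+\delta_2t_2}{t_1+\log a}$. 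In the regime $t_1+\log a\ge t_2+\log b\ge\log a\ge\log b$ the index $k=1$ now lies in $\K_3$ for $A=\log b+t_2$, so that number becomes $\delta_2+\tfrac{\delta_1\log a-t_2\delta_2}{\log b+t_2}=\tfrac{\delta_1\log a+\delta_2\log b}{\log b+t_2}$, while the value at $A=\log a+t_1$ is unchanged. In the regime $t_2+\log b\ge t_1+\log a\ge\log a\ge\log b$ the index $k=2$ lies in $\K_3$ for $A=\log a+t_1$, so that value becomes $\delta_1+\tfrac{\delta_2\log b-\delta_1t_1}{\log a+t_1}=\tfrac{\delta_1\log a+\delta_2\log b}{\log a+t_1}$, and the value at $A=\log b+t_2$ is $\delta_1+\delta_2-\tfrac{\delta_1t_1+\delta_2t_2}{t_2+\log b}$. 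Taking the minimum of the two numbers in each regime reproduces the three displayed formulas.

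For the continuity assertion in the third bullet, note that (for fixed $a,b$ with $\log a\ge\log b$) the three regimes exhaust all configurations of $\mathcal A$, meeting along the loci $\{t_2+\log b=\log a\}$ and $\{t_2+\log b=t_1+\log a\}$. On the first locus the identity $\delta_1+\delta_2-\tfrac{t_2\delta_2}{\log a}=\delta_1+\tfrac{\delta_2\log b}{\log a}$ shows the first and second formulas agree (their $t_1$-terms being identical), and on the second locus the substitution $t_1+\log a=t_2+\log b$ makes the second and third formulas literally coincide; since each formula is continuous on its own region, $\hdim M_c(t_1,t_2)$ is continuous throughout. I do not expect any deep obstacle here: the corollary is in essence a substitution into Theorem \ref{t1} and Corollary \ref{c1}. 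The two points that call for a little care are confirming that the cylinder rectangles genuinely meet the disjointness hypothesis of Corollary \ref{c1}, and the bookkeeping of $\K_1,\K_2,\K_3$ at the endpoints of $\mathcal A$ together with the small algebraic simplifications above.
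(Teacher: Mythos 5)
Your argument is correct and follows essentially the same route as the paper: identify $M_c(t_1,t_2)$ with the set $\widehat W(\bold t)$ for the shrinking-target ubiquitous system of Proposition \ref{p4} (with $\kappa=0$, $\rho(u)=e^{-u}$, $\bold a=(\log a,\log b)$), invoke Theorem \ref{t1} for the lower bound and Corollary \ref{c1} for the matching upper bound, and then substitute into $s(\bold t)$. The only cosmetic difference is that you invoke Proposition \ref{p6} explicitly to restrict to $\widehat{\mathcal A}=\{\log a+t_1,\log b+t_2\}$, which the paper leaves implicit; your case-by-case evaluation of the $\K_i$ and the resulting two-term minima, as well as the continuity check along the two boundary loci, are all correct.
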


By a similar decomposition as (\ref{fff4}) and the continuity of the dimension of $M_c(t_1,t_2)$, it follows that \begin{align*}
\hdim M_c(t)&=\sup_{(t_1,t_2): t_1\ge 0, t_2\ge 0, t_1+t_2=t}\hdim M_c(t_1,t_2)=\sup_{0\le t_2\le t}\hdim M_c(t-t_2, t_2)\\
&:=\sup_{0\le t_2\le t}f(t_2).
\end{align*}

{\em Proof of Theorem \ref{t4}}. (1). When $a\le b$, the covering lemma (Lemma \ref{ll1}) and the slicing lemma (Lemma \ref{l7}) work well to get the exact dimension of $M_c(t)$.

However when $a>b$, the covering lemma can only give the following upper bound $$
\hdim M_c(t)\le \delta_1+\frac{\delta_2\log a}{t+\log a}.
$$ On the other hand, using the slicing lemma, one will get $$
\hdim M_c(t)\ge \max\{\delta_1+\frac{\delta_2\log b}{t+\log b}, \delta_2+\frac{\delta_1\log a}{t+\log a}\}.
$$

The upper bound and the lower bound do not coincide, so at least one is not the exact dimension of $M_c(t)$. Thus, we have to use Theorem \ref{t1} to give the exact dimension of $M_c(t)$ in other cases. At the remaining cases,
it can happen that neither of the above dimensions given by covering lemma and slicing lemma is the exact dimension of $M_c(t)$.

Look at the dimensional numbers in Corollary \ref{c2}. Assume $t_1=t-t_2$ and view all the terms as functions of $t_2$. It is easy to see that for the first terms in the three dimensional formulas, the first two are decreasing and the third one is increasing.

For the second term in the last case, $$
\delta_1+\delta_2-\frac{t_1\delta_1+t_2\delta_2}{t_2+\log b}=2\delta_1-\frac{\delta_1t+(\delta_1-\delta_2)\log b}{t_2+\log b}
$$ which is increasing with respect to $t_2$.
For the second term in the first two cases,
$$
\delta_1+\delta_2-\frac{t_1\delta_1+t_2\delta_2}{t_1+\log a}=2\delta_2-\frac{\delta_2(t+\log a)-\delta_1\log a}{t+\log a-t_2},
$$ so its monotonicity  depends on
 the relation between $\delta_2(t+\log a)$ and $\delta_1\log a$.
Thus if we further distinguish the cases $$
\delta_2(t+\log a)\ge \delta_1\log a, \ \ \ {\text{and}} \ \ \ \delta_2(t+\log a)< \delta_1\log a,
$$ the monotonicity of all the dimensional numbers involved are clear.

We only go into the details of the last case and state it as a proposition. For other cases, we always get formula (\ref{fff3}).

\begin{proposition}
Assume $\delta_1\ge \delta_2$, $a>b$ and \begin{equation}\label{g5}
\log a> t+\log b, \ {\text{and}}\ \
\delta_2(t+\log a)< \delta_1\log a.
\end{equation} Then there exists a unique $\widehat{t}_2\in (0,t)$ satisfying (with $t_1=t-t_2$), \begin{equation}\label{g6}
\frac{\delta_1\log a +\delta_2\log b}{t_1+\log a}=\delta_1+\delta_2-\frac{\delta_1t_1+\delta_2t_2}{t_2+\log b}.
\end{equation}Moreover one has $$
\hdim M_c(t)=f(\widehat{t_2})
$$ and $$
\delta_1+\frac{\delta_2\log a}{t+\log a}>\hdim M_c(t)>\max\Big\{\delta_1+\frac{\delta_2\log b}{t+\log b}, \ \delta_2+\frac{\delta_1\log a}{t+\log a}\Big\}.
$$
\end{proposition}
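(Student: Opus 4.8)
\medskip
\noindent\textbf{Proof plan.}
The plan is to turn the statement into a one-variable optimization and read everything off the monotonicity of two elementary functions. By the decomposition recorded just before the proposition---which rests on the continuity in $(t_1,t_2)$ of $\hdim M_c(t_1,t_2)$ from Corollary \ref{c2} together with the pigeonhole splitting of $M_c(t)$ over the index $n$ of the i.m.\ condition---we have $\hdim M_c(t)=\sup_{0\le t_2\le t}f(t_2)$, where $f(t_2)=\hdim M_c(t-t_2,t_2)$. The first step is to identify which formula of Corollary \ref{c2} is in force on the whole segment $t_2\in[0,t]$: writing $t_1=t-t_2$, the hypothesis $\log a>t+\log b$ gives $t_2+\log b\le t+\log b<\log a\le t_1+\log a$ for every admissible $t_2$, so the ordering $t_1+\log a\ge\log a\ge t_2+\log b\ge\log b$ holds throughout, and the first bullet of Corollary \ref{c2} yields
\begin{equation*}
f(t_2)=\min\bigl\{g_1(t_2),\,g_2(t_2)\bigr\},\qquad g_1(t_2)=\delta_1+\delta_2-\frac{t_2\delta_2}{t_2+\log b},\qquad g_2(t_2)=\delta_1+\delta_2-\frac{\delta_1 t_1+\delta_2 t_2}{t_1+\log a}.
\end{equation*}

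Next I would establish the monotonicity of $g_1$ and $g_2$ on $[0,t]$. That $g_1$ is strictly decreasing is immediate. For $g_2$, substituting $t_1=t-t_2$ and differentiating the quotient $\frac{\delta_1 t_1+\delta_2 t_2}{t_1+\log a}$ in $t_2$, a short computation shows its derivative has the sign of $\delta_2(t+\log a)-\delta_1\log a$, which is negative by the second inequality of (\ref{g5}); hence $g_2$, which equals $\delta_1+\delta_2$ minus that quotient, is strictly increasing. Evaluating at the endpoints, $g_1(0)=\delta_1+\delta_2>g_2(0)=\delta_2+\frac{\delta_1\log a}{t+\log a}$, while $g_1(t)=\delta_1+\frac{\delta_2\log b}{t+\log b}$ and $g_2(t)=\delta_1+\delta_2\,\frac{\log a-t}{\log a}$; here $g_1(t)<g_2(t)$ is equivalent to $(\log a-t)(t+\log b)>\log a\log b$, which holds because $\log a-t-\log b>0$. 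Therefore $g_1-g_2$ is strictly decreasing, positive at $0$ and negative at $t$, so it has a unique zero $\widehat t_2\in(0,t)$---the unique point at which the two dimensional numbers of Corollary \ref{c2} coincide, i.e.\ equation (\ref{g6}). It follows that $f=g_2$ is increasing on $[0,\widehat t_2]$ and $f=g_1$ is decreasing on $[\widehat t_2,t]$, whence $\sup_{[0,t]}f=f(\widehat t_2)$, which is the claimed identity $\hdim M_c(t)=f(\widehat t_2)$.

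The two strict inequalities then fall out of this strict unimodality. Since $g_2$ is strictly increasing and $\widehat t_2>0$, $f(\widehat t_2)=g_2(\widehat t_2)>g_2(0)=\delta_2+\frac{\delta_1\log a}{t+\log a}$; since $g_1$ is strictly decreasing and $\widehat t_2<t$, $f(\widehat t_2)=g_1(\widehat t_2)>g_1(t)=\delta_1+\frac{\delta_2\log b}{t+\log b}$; together these give $\hdim M_c(t)>\max\bigl\{\delta_1+\frac{\delta_2\log b}{t+\log b},\,\delta_2+\frac{\delta_1\log a}{t+\log a}\bigr\}$. For the upper bound, strict monotonicity of $g_2$ with $\widehat t_2<t$ gives $\hdim M_c(t)=g_2(\widehat t_2)<g_2(t)=\delta_1+\delta_2\,\frac{\log a-t}{\log a}$, and since $1-\frac{t}{\log a}<\frac{1}{1+t/\log a}=\frac{\log a}{t+\log a}$ (equivalently $(\log a)^2-t^2<(\log a)^2$), this is $<\delta_1+\frac{\delta_2\log a}{t+\log a}$, the Bovey--Dodson covering bound.

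I do not expect a serious obstacle: once Corollary \ref{c2} and the $\sup$-decomposition are available, the argument is elementary one-variable calculus. The points requiring care are (i) checking that the single ordering $t_1+\log a\ge\log a\ge t_2+\log b\ge\log b$ of Corollary \ref{c2} governs the entire range $t_2\in[0,t]$ under (\ref{g5}); (ii) the sign computation making $g_2$ increasing, which is precisely where $\delta_2(t+\log a)<\delta_1\log a$ enters; and (iii) the endpoint comparison $g_1(t)<g_2(t)$, where $\log a>t+\log b$ is used. The conceptual content of the proposition---that (\ref{fff3}) fails---is then immediate: the true dimension $f(\widehat t_2)$ sits strictly between the covering upper bound $\delta_1+\frac{\delta_2\log a}{t+\log a}$ of Lemma \ref{ll1} and the slicing lower bound $\max\{\delta_1+\frac{\delta_2\log b}{t+\log b},\,\delta_2+\frac{\delta_1\log a}{t+\log a}\}$ coming from Lemma \ref{l7}, so neither classical tool reaches it.
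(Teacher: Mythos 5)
Your proof is correct and follows essentially the same route as the paper: identify that case (i) of Corollary \ref{c2} governs the whole interval $t_2\in[0,t]$, show the first dimensional number is decreasing and the second increasing (using $\delta_2(t+\log a)<\delta_1\log a$), compare endpoints to get a unique crossing $\widehat t_2\in(0,t)$ where $\sup f$ is attained, and then read off the strict bounds from the monotonicity and the inequality $(\log a-t)/\log a<\log a/(t+\log a)$. The only difference is that you spell out the endpoint computations and sign analysis that the paper leaves as "direct calculation."
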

\begin{proof}By the first inequality in (\ref{g5}), only case (i) in Corollary \ref{c2} happens; and by the second inequality in (\ref{g5}),
the second dimensional number in case (i) is increasing while the first one is always decreasing with respect to $t_2$.

As functions of $t_2$, the two dimensional numbers really intersect at some $t_2\in [0,t]$ by comparing their values
 at $t_2=0$ and $t_2=t$.
So the supremum of $f(t_2)$ occurs at the unique point $\widehat{t}_2$ of $t_2$ where the two dimensional number coincide, i.e. the equation (\ref{g6}).
Thus $$
\hdim M_c(t)=f(\widehat{t_2}).
$$

Directly calculation shows that the equality (\ref{g6}) cannot happen at $t_2=0$ and $t_2=t$.  Thus $$
\hdim M_c(t)>\max\{f(0), f(t)\}=\max\Big\{\delta_1+\frac{\delta_2\log b}{t+\log b}, \ \delta_2+\frac{\delta_1\log a}{t+\log a}\Big\}.
$$

At the same time, if we neglect the first dimensional number in case (i), one has $$
\hdim M_c(t)\le \sup_{0\le t_2\le t}\left(\delta_1+\delta_2-\frac{(t-t_2)\delta_1+t_2\delta_2}{t-t_2+\log a}\right)=\delta_1+\delta_2-\frac{t\delta_2}{\log a},
$$ since the function involved is increasing with respect to $t_2$ by the second inequality (\ref{g5}).
It is clear that \begin{align*}
\delta_1+\delta_2-\frac{t\delta_2}{\log a}<\delta_1+\delta_2-\frac{t\delta_2}{t+\log a}=\delta_1+\frac{\delta_2\log a}{t+\log a}.
\end{align*}
\end{proof}
Generally, one has\begin{theorem}
Let $\psi:\mathbb{R}^+\to \mathbb{R}^+$ be non-increasing. Then $$
\hdim M_c(\psi)=\hdim M_c(t), \ \ {\text{where}}\ t=\liminf_{n\to\infty}\frac{-\log \psi(n)}{n}.
$$
\end{theorem}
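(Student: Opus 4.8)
The plan is to squeeze $\hdim M_c(\psi)$ between $\hdim M_c(t-\epsilon)$ and $\hdim M_c(t+\epsilon)$ for every small $\epsilon>0$ and then to let $\epsilon\to0$, using that $t\mapsto\hdim M_c(t)$ is continuous. This continuity I would read off directly from Theorem \ref{t4}: there $\hdim M_c(t)$ is in every case a finite maximum of functions of $t$ of the shape $\delta_1+\frac{\delta_2\log b}{t+\log b}$ (and in the exceptional regime it equals $f(\widehat t_2)$, with $\widehat t_2$ the root of an equation depending continuously on $t$), hence it is a continuous, non-increasing function of $t$; this also matches the continuity of $(t_1,t_2)\mapsto\hdim M_c(t_1,t_2)$ recorded in Corollary \ref{c2}.

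For the upper bound, fix $0<\epsilon<t$. By the very definition of $t=\liminf_{n\to\infty}\big(-\log\psi(n)/n\big)$ there is $n_0$ with $\psi(n)\le e^{-n(t-\epsilon)}$ for all $n\ge n_0$, so for $n\ge n_0$ the inequality $\|a^nx-x_o\|\cdot\|b^ny-y_o\|<\psi(n)$ forces $\|a^nx-x_o\|\cdot\|b^ny-y_o\|<e^{-n(t-\epsilon)}$. Since membership in a $\limsup$ set is decided by large $n$ only, this gives $M_c(\psi)\subseteq M_c(t-\epsilon)$, whence $\hdim M_c(\psi)\le\hdim M_c(t-\epsilon)$; letting $\epsilon\downarrow0$ yields $\hdim M_c(\psi)\le\hdim M_c(t)$. (When $t=0$ this half is trivial, since $M_c(\psi)\subseteq\mathcal{C}_a\times\mathcal{C}_b$.)

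For the lower bound, fix $\epsilon>0$ and choose, again from the definition of the $\liminf$, an infinite set $\mathcal{N}=\{n_k\}\subseteq\N$ with $-\log\psi(n_k)/n_k\to t$, so that $\psi(n_k)\ge e^{-n_k(t+\epsilon)}$ for $k$ large. Then
$$
M_c(\psi)\ \supseteq\ \Big\{(x,y)\in\mathcal{C}_a\times\mathcal{C}_b:\ \|a^{n_k}x-x_o\|\cdot\|b^{n_k}y-y_o\|<e^{-n_k(t+\epsilon)}\ \text{ for i.m. }k\Big\}=:M_{c,\mathcal{N}}(t+\epsilon),
$$
and, by the arithmetic inclusion (\ref{fff4}) applied along $\mathcal{N}$, for each integer $N$ one has $M_{c,\mathcal{N}}(t+\epsilon)\supseteq\bigcup_{j_1+j_2=N}M_{c,\mathcal{N}}\big(\tfrac{j_1(t+\epsilon)}{N},\tfrac{j_2(t+\epsilon)}{N}\big)$, where $M_{c,\mathcal{N}}(t_1,t_2)$ is the rectangular limsup set of Corollary \ref{c2} but with the approximation times restricted to $n\in\mathcal{N}$. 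The point I would verify is that this restriction is harmless: the resonant sets still obey the separation condition (\ref{ff1}) and the $\kappa$-scaling property with $\kappa=0$, and since $\bigcup_{\alpha\in J_n}\Delta(\R_\alpha,\rho(u_n)^{\a})=\mathcal{C}_a\times\mathcal{C}_b$ for every single $n$, the uniform local ubiquity for rectangles holds along $\mathcal{N}$ as well. Hence Theorem \ref{t1} applies and gives $\hdim M_{c,\mathcal{N}}(t_1,t_2)\ge s(t_1,t_2)$, with $s(t_1,t_2)$ the dimensional number of Corollary \ref{c2} (its definition involves only the $\delta_i$, $\kappa$, $\log a$, $\log b$ and $(t_1,t_2)$, not the chosen subsequence of scales). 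Letting $N\to\infty$ and invoking the continuity of $(t_1,t_2)\mapsto s(t_1,t_2)$ then gives $\hdim M_c(\psi)\ge\sup_{t_1+t_2=t+\epsilon}s(t_1,t_2)=\hdim M_c(t+\epsilon)$, and $\epsilon\downarrow0$ together with continuity of $\hdim M_c(\cdot)$ finishes the lower bound.

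I do not expect a genuine obstacle: the argument is a reduction to the already-established formula for $\hdim M_c(t)$ (Theorem \ref{t4}) and the continuity of $(t_1,t_2)\mapsto\hdim M_c(t_1,t_2)$ (Corollary \ref{c2}), glued together by the two one-sided estimates above. The only place where care is needed — and the only place where the metric geometry re-enters — is verifying that confining the approximation times to the infinite subsequence $\mathcal{N}$ does not spoil the hypotheses behind the value of $\hdim M_{c,\mathcal{N}}(t_1,t_2)$: the local ubiquity for rectangles along $\mathcal{N}$ (trivial here, as the relevant union is the whole space at each scale) and, if one wants the exact value via Corollary \ref{c1}, the convergence of $\sum_{n\in\mathcal{N}}\rho(u_n)^{\epsilon}$. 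Once these routine checks are in place, the sandwich closes.
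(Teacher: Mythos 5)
The paper states this theorem without proof, so there is no argument of its own against which to compare; what you have written is a correct and reasonably complete filling-in. The upper half ($M_c(\psi)\subseteq M_c(t-\epsilon)$) is immediate from the definition of $\liminf$, and the lower half is the natural one: pick the subsequence $\mathcal{N}=\{n_k\}$ along which $-\log\psi(n_k)/n_k\to t$, note $M_c(\psi)\supseteq M_{c,\mathcal{N}}(t+\epsilon)$, and reduce to the rectangular sets $M_{c,\mathcal{N}}(t_1,t_2)$ via the arithmetic inclusion (\ref{fff4}). The one place where something substantive has to be checked is that Theorem \ref{t1} still applies when the scales are confined to $\mathcal{N}$, and you identify that correctly: for this Cantor/shrinking-target system the union $\bigcup_{\alpha\in J_n}\Delta(\R_\alpha,\rho(u_n)^{\a})$ equals the \emph{whole} product Cantor set at every single $n$ (Proposition \ref{p4}), so the local ubiquity is unaffected by passing to any infinite subsequence, and the dimensional number $s(t_1,t_2)$ it produces depends only on $\delta_1,\delta_2,\kappa,\log a,\log b,(t_1,t_2)$ and not on which scales were used. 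The sandwich then closes via the monotonicity of $t\mapsto\hdim M_c(t)$ (trivial by set inclusion) and its continuity, which you correctly extract from the explicit formula $\hdim M_c(t)=\sup_{0\le t_2\le t}\hdim M_c(t-t_2,t_2)$ together with the continuity of $(t_1,t_2)\mapsto\hdim M_c(t_1,t_2)$ recorded in Corollary \ref{c2}. One small remark: the convergence of $\sum_{n\in\mathcal{N}}\rho(u_n)^{\epsilon}$ that you mention at the end is not actually needed for the lower bound (Theorem \ref{t1} suffices), and in any case holds trivially here since $\rho(n)=e^{-n}$; it would only matter if you wanted to re-derive the exact value of $\hdim M_{c,\mathcal{N}}(t_1,t_2)$ via Corollary \ref{c1}.
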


{\small}


\begin{thebibliography}{99}

\bibitem{AllBa}  Allen, D., Baker, S.: A general mass transference principle. Selecta Math. (N.S.) \textbf{25}, no. 3, Art. 39, 38 pp (2019)

\bibitem{AllB} Allen, D., Beresnevich, V.: A mass transference principle for systems of linear forms and its applications. Compos. Math. 154, no. 5, 1014--1047 (2018)


\bibitem{AllT} Allen, D., Troscheit, S.: The Mass Transference Principle: Ten years on.
Horizons of fractal geometry and complex dimensions, 1--33, Contemp. Math., 731, Amer. Math. Soc., Providence, RI, (2019)



\bibitem{BakS} Baker, A., Schmidt, W.: Diophantine approximation and Hausdorff dimension. Proc. London Math.
Soc. (3) \textbf{21}, 1--11 (1970)

\bibitem{Ber99} Beresnevich, V.: On approximation of real numbers by real algebraic numbers. Acta Arith. \textbf{90}, no. 2, 97--112 (1999)

\bibitem{BBD02} Beresnevich, V., Bernik, V., Dodson, M.:
Regular systems, ubiquity and Diophantine approximation. A panorama of number theory or the view from Baker's garden (Z\"{u}rich, 1999), 260--279, Cambridge Univ. Press, Cambridge (2002)

\bibitem{BDV06} Beresnevich, V., Dickinson D., Velani, S.:  Measure theoretic laws for lim sup sets. Mem. Amer.
Math. Soc. \textbf{179}, no. 846, x+91 (2006)



\bibitem{B2} Beresnevich, V., Dickinson D., Velani, S.: Diophantine approximation on planar curves and the distribution of rational points. With an Appendix II by R. C. Vaughan. Ann. of Math. (2) \textbf{166}, no. 2, 367--426 (2007)

\bibitem{BV06} Beresnevich, V., Velani, S.: A mass transference principle and the Dufn-Schaefer conjecture for
Hausdorff measures. Ann. of Math. (2) \textbf{164}, no. 3, 971--992 (2006)


\bibitem{BV15} Beresnevich, V., Velani, S.:
A note on three problems in metric Diophantine approximation. (English summary) Recent trends in ergodic theory and dynamical systems, 211--229,
Contemp. Math., 631, Amer. Math. Soc., Providence, RI (2015)



\bibitem{BerD} Bernik, V.,  Dodson, M.: Metric Diophantine approximation on manifolds, Cambridge Tracts in Mathematics,
vol. 137. Cambridge University Press, Cambridge (1999)

\bibitem{B1} Bernik, V., Kleinbock, D., Margulis, M.: Khintchine-type theorems on manifolds: the convergence case for standard and multiplicative versions. Int. Math. Res. Not. no. 9, 453--486 (2001)

\bibitem{Besi} Besicovitch, A.: Sets of Fractional Dimensions (IV): On Rational Approximation to Real Numbers. J. London Math. Soc. \textbf{9}, no. 2, 126--131  (1934)

\bibitem{BD} Bovey, J., Dodson, M.: The fractional dimension of sets whose simultaneous rational approximations have errors with a small product.
Bull. London Math. Soc. \textbf{10}, no. 2, 213--218 (1978)

\bibitem{BovD1} Bovey, J., Dodson, M.: The Hausdorff dimension of systems of linear forms. Acta Arith. \textbf{45}, no. 4, 337--358 (1986)

\bibitem{Bug1} Bugeaud, Y.:
Approximation by algebraic integers and Hausdorff dimension.
J. London Math. Soc. (2) \textbf{65}, no. 3, 547--559 (2002)

\bibitem{Bug} Bugeaud, Y.: An inhomogeneous Jarn\'{i}k theorem. J. Anal. Math. \textbf{92}, 327--349 (2004)

\bibitem{Bu2}
Bugeaud, Y.: Diophantine approximation and {C}antor sets. Math. Ann.
  \textbf{341}, no.~3, 677--684 (2008)

\bibitem{BuD} Bugeaud, Y., Durand, A.: Metric Diophantine approximation on the middle-third Cantor set.
J. Eur. Math. Soc. \textbf{18}, 1233--1272 (2016)

\bibitem{DRyn} Dickinson, D., Rynne, B. P.: Hausdorff dimension and a generalized form of simultaneous Diophantine approximation. Acta Arith. \textbf{93}, no. 1, 21--36  (2000)

\bibitem{DickV} Dickinson, D., Velani, S.: Hausdorff measure and linear forms. J. Reine Angew. Math. \textbf{490}, 1--36 (1997)

\bibitem{Dod} Dodson, M.: Hausdorff dimension, lower order and Khintchine's theorem in metric Diophantine approximation. J. reine angew. Math. \textbf{432}, 69--76 (1992)

\bibitem{DoRV} Dodson, M., Rynne, B., Vickers, J.: Diophantine approximation and a lower bound for Hausdorff
dimension. Mathematika \textbf{37}, no. 1, 59--73 (1990)

\bibitem{Durand} Durand, A.:  Sets with large intersection and ubiquity. Math. Proc. Cambridge Philos. Soc. \textbf{144}, 119--144, (2010)

\bibitem{EJJ} Ekstr\"{o}m, F., J\"{a}rvenp\"{a}\"{a}, E., J\"{a}rvenp\"{a}\"{a}, M., Suomala, V.: Hausdorff dimension of limsup sets of random rectangles in products of regular spaces. Proc. Amer. Math. Soc. \textbf{146}, no. 6, 2509--2521 (2018)

\bibitem{Fal} Falconer, K.: Fractal geometry, John Wiley \& Sons, Ltd., Chichester, Mathematical foundations
and applications (1990)

\bibitem{Fal2} Falconer, K.:
The Hausdorff dimension of self-affine fractals.
Math. Proc. Cambridge Philos. Soc. \textbf{103}, no. 2, 339--350 (1988)

\bibitem{FanWu} Fan, A.H., Wu, J.: On the covering by small random intervals. Ann. Inst. Henri Poincar\'{e} Probab. Stat. 40 125-131, (2004)

\bibitem{FJJ} Feng, D.J., J\"{a}rvenp\"{a}\"{a}, E., J\"{a}rvenp\"{a}\"{a}, M., Suomala, V.:
Dimensions of random covering sets in Riemann manifolds.
Ann. Probab. \textbf{46}, no. 3, 1542--1596 (2018)



\bibitem{Gall} Gallagher, P.: Metric simultaneous diophantine approximation. J. London Math. Soc. \textbf{37}, 387--390 (1962)


\bibitem{Gro} Groshev, A.: Un Th\'{e}or\`{e}me sur les syst\`{e}mes des formes lin\'{e}aires. Dokl. Akad. Nauk SSSR. \textbf{19}, 151--152 (1938)

\bibitem{HV99}
Hill, R., Velani, S.:
The shrinking target problem for matrix transformations of tori.
 J. London Math. Soc. (2) \textbf{60}, no. 2, 381--398 (1999)

\bibitem{HusDa} Hussain, M., Simmons, D.: A general principle for Hausdorff measure. Proc. Amer. Math. Soc. \textbf{147}, no. 9, 3897--3904 (2019)

\bibitem{Hutch} Hutchinson, J.: Fractals and self-similarity. Indiana Univ. Math. J. \textbf{30}, no. 5, 713--747 (1981)

\bibitem{Jarn1} Jarnik, V.: Diophantischen Approximationen und Hausdorffsches Mass. Mat. Sb. \textbf{36}, 371--381 (1929)

\bibitem{Jarn} Jarnik, V.: \"{u}ber die simultanen Diophantischen Approximationen. Math. Z. \textbf{33}, 505--543 (1931)

\bibitem{Khint} Khintchine, A.: Einige S\"{a}tze \"{u}ber Kettenbr\"{u}che, mit Anwendungen auf die Theorie der Diophantischen Approximationen. Math. Ann. \textbf{92}, 115--125 (1924)




\bibitem{Lev} Levesley, J.:
A general inhomogeneous Jarnik-Besicovitch theorem.
J. Number Theory \textbf{71}, no. 1, 65--80 (1998)

\bibitem{LeSV}
Levesley, J., Salp, C.,  Velani, S.: On a problem of K. Mahler: Diophantine approximation and Cantor sets. Math. Ann.
  \textbf{338}, no.~1, 97--118 (2007)

\bibitem{Mah}
Mahler, K.: Some suggestions for further research. Bull. Austral. Math.
  Soc. \textbf{29}, no.~1, 101--108 (1984)

\bibitem{Mi} Minkowski, H.: Geometrie der Zahlen, Teubner: Leipzig, Berlin (1986)

\bibitem{Per} Persson, T.: A note on random coverings of tori. Bull. Lond. Math. Soc. \textbf{47}, 7--12 (2015)

\bibitem{Ram} Koivusalo, H., Rams, M.: Mass transference principle: from balls to arbitrary shapes. arXiv:1812.08557 (2018) (to appear in Int. Math. Res. Not.).


\bibitem{Ryn}
Rynne, B.: Hausdorff dimension and generalized simultaneous Diophantine approximation. Bull. London Math. Soc. \textbf{30}, no. 4, 365--376 (1998)

\bibitem{Ryn1}
Rynne, B.: Regular and ubiquitous systems, and $\mathcal{M}^s_{\infty}$-dense sequences. Mathematika \textbf{39}, no. 2, 234--243 (1992)

\bibitem{Sch1} Schmidt, W.M.: A metrical theorem in diophantine approximation. Canadian J. Math. \textbf{12}, 619--631 (1960)

\bibitem{Sch} Schmidt, W.M.: Diophantine approximation. Lecture Notes in Mathematics, 785. Springer, Berlin x+299 pp (1980)

\bibitem{SeW} Seuret, S., Wang, B.: Quantitative recurrence properties in conformal iterated function
systems. Adv. Math. \textbf{280}, 472--505 (2015)

\bibitem{WWX} Wang, B., Wu, J.,  Xu, J.: Mass transference principle for lim sup sets generated by rectangles. Math. Proc. Cambridge Philos. Soc.
\textbf{158}, 419--437 (2015)




\end{thebibliography}
\end{document}